\documentclass[reqno]{amsart}
\usepackage{amssymb, amsfonts, amsmath, amsthm}
\usepackage{remark}
\usepackage{enumerate}
\usepackage{xypic}
\usepackage{tikz-cd}
\usepackage{mathrsfs} 
\usepackage[normalem]{ulem}
\usepackage{hyperref}

\newtheorem{thm}{Theorem}[section]
\newtheorem{theorem}[thm]{Theorem}
\newtheorem{lemma}[thm]{Lemma}
\newtheorem{corollary}[thm]{Corollary}
\newtheorem{proposition}[thm]{Proposition}
\newremark{definition}[thm]{Definition}
\newremark{remark}[thm]{Remark}
\newremark{example}[thm]{Example}
\newremark{notation}[thm]{Notation}
\newremark{algo}[thm]{Algorithm}

\newcommand{\disc}{\mathrm{disc}}
\newcommand{\ord}{\operatorname{ord}}
\newcommand{\Gal}{\mathrm{Gal}}
\newcommand{\Img}{\mathrm{Im}\, }
\newcommand{\Spec}{\operatorname{Spec}}
\newcommand{\Spf}{\operatorname{Spf}}

\newcommand{\Pic}{\mathrm{Pic}}

\newcommand{\Res}{\mathrm{Res}}
\newcommand{\Jac}{\mathrm{Jac}}
\newcommand{\Z}{\mathbb Z}
\newcommand{\Q}{\mathbb Q}
\newcommand{\cO}{\mathcal O}
\newcommand{\FF}{\mathbb F}
\newcommand{\PP}{\mathbb P}

\newcommand{\p}{\mathfrak p}

\newcommand{\wW}{\widetilde{W}}
\newcommand{\whW}{\widehat{W}}

\newcommand{\wZ}{\widetilde{Z}}
\newcommand{\whZ}{\widehat{Z}}
\newcommand{\chara}{\mathrm{char}}
\newcommand{\cont}{\mathrm{cont}}

\newcommand{\Gl}{\mathrm{Gl}}
\newcommand{\cA}{\mathcal{A}}
\newcommand{\cE}{\mathcal{E}}
\newcommand{\cC}{\mathcal{C}}

\newcommand{\caD}{\mathcal{D}}

\newcommand{\cX}{\mathcal{X}}
\newcommand{\cZ}{\mathcal{Z}}
 
\newcommand{\cK}{\mathcal K}
\newcommand{\qi}{\langle \sigma \rangle}

\newcommand{\tp}{\tilde{p}}

\newcommand{\can}{\mathrm{can}}
\newcommand{\tI}{{\rm I}}
\newcommand{\tII}{{\rm II}}
\newcommand{\tIII}{{\rm III}}
\newcommand{\tIV}{{\rm IV}}

\begin{document}

\title[Minimal Weierstrass models]{\bf Minimal Weierstrass models and
regular models of hyperelliptic curves}
\author{Qing Liu}

\address{Univ. Bordeaux, CNRS, IMB, UMR 5251, F-33400 Talence, France}
\email{qing.liu@math.u-bordeaux.fr}

\date{} 

\dedicatory{Dedicated to the memory of Jean Fresnel,  1939-2025}

\begin{abstract} Let $C$ be a hyperelliptic curve of genus $g\ge 2$
  over a discrete valuation field $K$ with perfect residue field. We
  study the minimal Weierstrass models of $C$. When there is more
  than one such model, we find interesting properties on the minimal
  regular model and the canonical model of $C$.  
  For curves of genus 2, we characterize the existence of stable reduction using these minimal Weierstrass models. Furthermore, when $g=2$ and there is more than
  one minimal Weierstrass model, we can compute the Euler factor, Tamagawa
  number, and conductor of $\Jac(C)$, as well as a volume form for its 
  N\'eron model, by utilizing two specific minimal Weierstrass models.
  \end{abstract}

\subjclass[2020]{11G20, 14H25, 14G20} 
\maketitle

\tableofcontents

Let $\cO_K$ be a discrete valuation ring with perfect residue field $k$
and field of fractions $K$. Let $C$ be a hyperelliptic curve of genus $g\ge 2$
over $K$. Many local arithmetic invariants of $C$ and of the Jacobian
$\Jac(C)$ of $C$, such as the Euler factor (when $\cO_K$ is a local ring of
a ring of algebraic integers), the Tamagawa number and the volume form 
can be determined using a regular model of $C$.
But from the computational point of view, it is not easy to find
a regular model (see however \cite{Mus} and
\cite{Ld}), and the global structure of a regular 
model is rather complicated, as its closed fiber can have
many irreducible components. 
On the other hands, Weierstrass models (see \S~\ref{basics}) 
are very simple objects because they are defined by a single equation. 
Our main goal is to get as much information as possible
using only Weierstrass models, and especially the minimal
Weierstrass models (\S~\ref{basics}). It turns out this is
possible when $g$ is even and (more importantly) when $C$ has, up to isomorphisms,
more than one minimal Weierstrass model.  
When this is the case, a first natural question is how
they are related to each other and how many they can be. 

\begin{theorem}[Corollary~\ref{cor:equa_term}, see also Theorem~\ref{chain-mwm}] 
\label{thm:01}  Suppose $g$ is even and that $C$ has more than one minimal  
  Weierstrass model. Then there exists an even $n\ge 2$ and
  a Weierstrass equation
  \[ y^2+Q(x)y=P(x) \] 
of $C$ such that the minimal Weierstrass models of $C$ are given by the
equations (for even $0\le i\le n$):   
  \begin{equation} \label{eq:mini_W_i} 
y_i^2+ \pi^{-(g+1)i/2} Q(\pi^i x_i) y_i= 
\pi^{-(g+1) i} P(\pi^i x_i) 
  \end{equation} 
with $x_i=x/\pi^{i}$ and $y_i=y/\pi^{(g+1)i/2}$. 
\end{theorem}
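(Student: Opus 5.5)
The plan is to work with the standard description of Weierstrass models via $\mathrm{GL}_2$-type changes of variables. Fix one minimal Weierstrass equation $y^2+Q(x)y=P(x)$ and call its model $W_0$. Any other Weierstrass model is obtained by a substitution
\[ x=\frac{ax'+b}{cx'+d},\qquad y=\frac{e\,y'+H(x')}{(cx'+d)^{g+1}}, \]
with $a,b,c,d\in K$, $ad-bc\neq 0$, $e\in K^*$, $\deg H\le g+1$. The discriminant then changes by a factor of the form $e^{-4(2g+1)}(ad-bc)^{2g(g+1)}$ up to a unit. Minimality means this valuation is unchanged. Working modulo $\mathrm{GL}_2(\cO_K)$ and integral translations in $y$, which do not change the model, I would reduce the matrix to an elementary divisor form $\mathrm{diag}(1,\pi^{m})$ composed with an integral matrix.

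Then I normalize $x$ so that the second model has $x=\pi^{m}x'$ and $y=\pi^{r}y'$, with $m\ge 1$. Equal discriminant valuation forces $8(2g+1)\,r$ to equal $2g(g+1)m\cdot 2$, up to the precise normalization, which gives $r=(g+1)m/2$. Since $y'$ must be related to $y$ by an integral power of $\pi$ up to a polynomial shift, $(g+1)m$ must be even. As $g$ is even, $g+1$ is odd, so $m$ is even. This is exactly where the hypothesis that $g$ is even enters.

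Next I show that all minimal models are nested along a single chain. The natural object is the Bruhat--Tits tree of $\mathrm{PGL}_2(K)$: each minimal Weierstrass model corresponds to a vertex, namely the lattice class determined by the $x$-coordinate. I would show that the set $S$ of vertices giving minimal models is convex. The argument compares the valuation of the discriminant of the model attached to a lattice with the vertex. By the Newton-polygon description of $\pi^{-(g+1)i}F(\pi^i x)$ with $F=4P+Q^2$, the function "minimal possible discriminant valuation at a vertex" behaves like a convex function along geodesics. It is a maximum of affine functions in $i$ coming from the coefficients of $F$, corrected by the $y$-shifts. Hence its minimum set is a geodesic segment. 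I would then rule out branching: if three directions from one vertex all gave minimal models, the reductions $\bar F$ would have to vanish to order at least $g+1$ at three distinct points of $\mathbb P^1_k$, which exceeds $\deg=2g+2$. Allowing half-integer parity constraints from the even-$m$ step, this forces $S$ to be a path. Choosing coordinates so that its endpoints are $x$ and $x/\pi^n$ gives the vertices $x_i=x/\pi^i$ for even $0\le i\le n$. Here $n$ is even by the previous step and $n\ge 2$ because there is more than one model. The equations \eqref{eq:mini_W_i} follow by direct substitution.

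The hard step is convexity together with the no-branching claim. The difficulty is controlling the best $y$-translation $H$ at each vertex, since the minimal discriminant valuation at a vertex is not simply read off from $F$ when the residue characteristic is $2$. To handle this, I would first treat odd residue characteristic, where one may take $Q=0$ and everything reduces to Newton polygons of $P$. I would then adapt the argument to residue characteristic $2$ using the earlier results on minimal Weierstrass models from \S~\ref{basics}.
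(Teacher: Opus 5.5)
Your evenness step is correct, and more direct than the paper's route through Theorem~\ref{chain-mwm}(2.a). One correction: the discriminant factor should be $e^{-4(2g+1)}(ad-bc)^{(2g+1)(2g+2)}$, not $(ad-bc)^{2g(g+1)}$. With the correct exponent, $x=\pi^m x'$, $y=\pi^r y'$ gives $2r=(g+1)m$ at once.

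\textbf{The convexity step fails as stated.} The discriminant valuation is not convex along geodesics. Along the geodesic joining the two extremal models it takes the values $\nu(\Delta_C),\ \nu(\Delta_C)+2(2g+1),\ \nu(\Delta_C),\dots,\nu(\Delta_C)$ (Theorem~\ref{chain-mwm}(2.a)), so its minimum set is not a segment. The requirement that the $y$-rescaling exponent be an integer is exactly what destroys convexity, and the appeal to ``half-integer parity constraints'' does not repair it. Your claim that $S$ is a path also contradicts your own final answer, in which only even $i$ occur.

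What is convex along every geodesic is $\Phi(W)=\nu(\Delta_W)/(2(2g+1))-\varepsilon(W)$. By \eqref{eq:disc-p0}, its increment from $W$ to $W(p)$ is $g+1-(\lambda(p)-\varepsilon(W))$. Moreover, \cite{LTR}, Lemme 9(c) (Inequality~\eqref{eq:lambda'}) says precisely that $\lambda(p_i)-\varepsilon(W_i)$ is non-increasing along a chain. You would then still need to show three things about the minimum set $M$ of $\Phi$. First, $\varepsilon$ alternates along the edges of $M$; this is where $g+1$ odd enters again. Second, no vertex of $M$ has three neighbours in $M$. Third, if $\varepsilon\equiv 1$ on $M$, then $M$ is a single vertex and the minimal model is unique. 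Only then are the minimal models the vertices of one parity class of a path.

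\textbf{Residue characteristic $2$ is not handled.} You postpone it, but it is where the content lies. There $\overline{4P+Q^2}=\bar Q^2$, so neither the convexity nor your count of three points where $\bar F$ vanishes to order $\ge g+1$ can be read off $\bar F$. The relevant invariants are $\lambda$ and $\delta$, and the no-branching count must come from Lemma~\ref{lem:d0}(3)--(4). This includes the non-reduced, non-minimal vertices $W_i$ with $i$ odd. There one needs $\lambda(p_i^*)=\lambda(p_i)=g+2$ and $\delta(p_i^*)=\delta(p_i)=g+1$, hence $\delta=0$ and $\lambda\le 1$ elsewhere on $(W_i)_k$ (Lemma~\ref{lem:WW'} and Theorem~\ref{chain-mwm}(2.b), derived from \cite{LTR}, Lemme 9 and Proposition 3). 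This multiplicity analysis, applied to a pair of minimal models at maximal distance together with Lemma~\ref{sum-dist}, is the paper's actual proof.

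\textbf{``Direct substitution'' hides a real step in residue characteristic $2$.} The $y$-translation that normalizes the equation at one vertex need not normalize it at another. To get a single pair $(Q,P)$ you need Proposition~\ref{prop:same-y}, which iterates Lemma~\ref{lem:change_y} with translations $\pi^{-m_i}H(\pi^i x_i)$ compatible along the whole chain. After that, $m_i=(g+1)i/2$ for even $i$ follows from the multiplicities in Theorem~\ref{chain-mwm}.

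In odd residue characteristic your tree argument can be completed along these lines. As written, however, the proposal establishes neither the chain structure nor the single-equation statement in general.
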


When the conditions of Theorem~\ref{thm:01} are satisfied, we denote
by $W_i$, for all even $0\le i\le n$,  the minimal Weierstrass model of $C$ defined by Equation~\eqref{eq:mini_W_i}. 

\begin{theorem}[Theorem \ref{thm:bound_n}] Keep the hypothesis of Theorem~\ref{thm:01}. Then the number $m_C$ of the minimal Weierstrass models (up to isomorphisms) satisfies
  \[
  m_C \le 1 + \frac{\nu(\Delta_{C})}{2g(g+1)} 
  \]
  where $\nu(\Delta_{C})$ is the valuation of the
  discriminant of any minimal Weierstrass 
  model of $C$. This inequality is sharp. 
\end{theorem}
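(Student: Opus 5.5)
The plan is to reduce the statement to a valuation bound on a discriminant. By Theorem~\ref{thm:01} the minimal Weierstrass models are exactly the $W_i$ for even $0\le i\le n$, so $m_C=n/2+1$, and the inequality becomes
\[ \nu(\Delta_C)\ \ge\ g(g+1)\,n. \]
I will work with the binary form $F=Q^2+4P$ of degree $2g+2$. The problem is symmetric, so I will centre it at the middle model $W_{n/2}$; note $n/2$ need not be even, but the same substitution still defines a Weierstrass model.

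\emph{Step 1 (coefficient bounds).} Write $F=\sum_j a_jx^j$ for the equation defining $W_0$. Set $F_i(x_i)=\pi^{-(g+1)i}F(\pi^ix_i)$; its coefficients are $a_j\pi^{i(j-g-1)}$. Put $t=n/2$ and $G=F_t=\sum_j c_jx^j$. Integrality of $W_0$ and of $W_n$ (up to the usual harmless factor coming from $2$) gives
\[ \nu(c_j)\ \ge\ t\,|j-(g+1)| \qquad \text{for all } 0\le j\le 2g+2. \]
In words, $G$ is $\pi$-adically close to $c_{g+1}x^{g+1}z^{g+1}$, with weights increasing linearly away from the middle index. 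Next, the change $x\mapsto \pi^s x$, $y\mapsto \pi^{(g+1)s}y$ multiplies the discriminant by $\pi^{0}$: the isobaric degree $(2g+2)(2g+1)$ exactly compensates the homogeneous degree $2(2g+1)$. Hence $\nu(\Delta(G))=\nu(\Delta_0)=\nu(\Delta_C)$.

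\emph{Step 2 (the key estimate, and the main obstacle).} I need: if $\nu(c_j)\ge t|j-g-1|$ for all $j$, then $\nu(\operatorname{disc}G)\ge 2g(g+1)\,t$. I would first try a monomial argument. Each monomial $\prod_j a_j^{e_j}$ of the discriminant satisfies $\sum_j e_j=4g+2$ and $\sum_j je_j=(2g+2)(2g+1)$. So it suffices to show
\[ \sum_j e_j\,|j-(g+1)|\ \ge\ 2g(g+1) \]
for every monomial occurring in the discriminant.

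The left side is a convex (piecewise-linear) function of the exponent vector. It is therefore enough to check it at the vertices of the Newton polytope of the discriminant. By Gelfand--Kapranov--Zelevinsky these vertices correspond to subdivisions $0=i_0<\dots<i_r=2g+2$, with explicit exponents. The check is an elementary sum: for instance $b^2c^2d^2$ in the quartic case gives exactly $4=2g(g+1)$. This convexity computation is where I expect the real work.

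If it becomes messy, I would fall back on roots and the Newton polygon of $G(x,z)$. The conditions force the roots to split into a cluster near $0$ and a cluster near $\infty$ with separation $\ge t$. One then counts the contributions of $\prod(\alpha_i-\alpha_j)^2$ together with the leading coefficient, taking care of cancellation when $\nu(c_{g+1})>0$. For that, minimality (primitivity of $G$ in the appropriate sense) would be invoked.

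\emph{Step 3 (sharpness).} I would exhibit, for each even $g$, an explicit curve attaining equality with $n=2$. A candidate is $y^2=x^{g+1}(x^{g+1}-\pi^{\,g+1})+$ a small perturbation making it squarefree, or more simply $y^2=(x^{g+1}-1)(x^{g+1}-\pi^{2(g+1)})$. For this curve one checks:
\begin{itemize}
\item $W_0$ and $W_2$ are both integral, minimal, and not isomorphic;
\item there are no further minimal models, by the criterion of Theorem~\ref{chain-mwm};
\item the discriminant has valuation exactly $2g(g+1)$ from the root-difference formula, since the two clusters of $g+1$ roots have separation $2$.
\end{itemize}
This gives $m_C=2=1+\nu(\Delta_C)/(2g(g+1))$.
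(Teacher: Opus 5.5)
The gap is residue characteristic $2$, which the statement covers. Your reduction to $\nu(\Delta_C)\ge g(g+1)n$ is correct, and so are the bounds $\nu(c_j)\ge t|j-(g+1)|$ and the scaling invariance of the discriminant. The vertex check you postponed also goes through. For the GKZ vertex attached to $0=i_0<\dots<i_r=2g+2$, the weight equals $\sum_k (i_{k+1}-i_k)\bigl(f(i_k)+f(i_{k+1})\bigr)-2(g+1)$ with $f(x)=|x-(g+1)|$. This is a trapezoid sum of a convex function, hence at least $2(g+1)^2-2(g+1)=2g(g+1)$. So in residue characteristic $\neq 2$ your argument is complete.

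The problem is that the discriminant of a Weierstrass model is not $\disc(Q^2+4P)$ evaluated on the actual polynomial. It is $\Delta_W=2^{-4(g+1)}\disc(4P+Q^2)$, understood as the specialization of an integral universal polynomial in the coefficients of $P,Q$. Your Step 2 only bounds $\nu(\disc G)$.
\begin{itemize}
\item If $\chara(K)=0$ and $\chara(k)=2$, you only get $\nu(\Delta_C)\ge g(g+1)n-4(g+1)\nu(2)$.
\item If $\chara(K)=2$, then $G=Q_t^2$, so $\disc G=0$ and the estimate says nothing about $\Delta_C$.
\end{itemize}
So the factor $2$ is not harmless. Your root/Newton-polygon fallback has the same defect. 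Worse, the coefficient of $x^{g+1}$ in $Q^2+4P$ is $4a_{g+1}+2\sum_{i<l,\,i+l=g+1}b_ib_l$ (there is no square term since $g+1$ is odd). In residue characteristic $2$ it therefore always has positive valuation, and no appeal to minimality makes it a unit.

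The repair is to run your weight argument universally; this is the device of Lemma~\ref{lem:discF_u}.
\begin{itemize}
\item Give $a_j$ weight $|j-(g+1)|$ and $b_j$ weight $|j-(g+1)/2|$. By the triangle inequality, every term of $4a_j+\sum_{i+l=j}b_ib_l$ has weight at least $|j-(g+1)|$.
\item Hence every monomial of $\disc(4\mathbf P+\mathbf Q^2)$, written in $(\underline a,\underline b)$, has weight at least $2g(g+1)$. Dividing by the integer $2^{4(g+1)}$ does not change the support, so the same holds for $D=2^{-4(g+1)}\disc(4\mathbf P+\mathbf Q^2)\in\Z[\underline a,\underline b]$.
\item Now specialize to the coefficients of $W_0$, where $\nu(a_j)\ge n\max(0,g+1-j)$ and $\nu(b_j)\ge n\max\bigl(0,\tfrac{g+1}{2}-j\bigr)$.
\item Give $a_j$ degree $2$ and $b_j$ degree $1$, both of weight $j$. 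Homogeneity and isobaricity of $D$ then force $\sum_j e_j(g+1-j)+\sum_j f_j(\tfrac{g+1}{2}-j)=0$ for each monomial $\prod a_j^{e_j}b_j^{f_j}$. So each monomial has valuation at least $\tfrac n2$ times its weight, which gives $\nu(\Delta_{W_0})\ge ng(g+1)$.
\end{itemize}
Working in $W_0$-coordinates also avoids your middle model. For odd $n/2$ it is not a Weierstrass model at all, since $(g+1)n/4\notin\Z$; only the polynomial $G$ made sense there.

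The paper does the same thing with roots instead of Newton polytopes. It works in $\Z[t,\underline u,\underline v]$ with the $t$-adic valuation on $\Q(\underline u,\underline v)(t)$, where both $2$ and the coefficient of $x^{g+1}$ are units. A Hensel/root count then gives a single cluster of $g+1$ roots of absolute value at most $|t|^n$ (Lemma~\ref{lem:discF}), so $t^{ng(g+1)}$ divides $D$. One then specializes $t\mapsto\pi$ (Corollary~\ref{cor:dl}).

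For sharpness, your second example is the paper's example with $n=2$. You must, however, fix a base whose residue characteristic is prime to $2(g+1)$, e.g.\ $\mathbb C[[t]]$ as in the paper; otherwise the roots of $x^{g+1}=1$ collide and the valuation count fails. No separate uniqueness check is needed: $W_0$ and $W_2$ are minimal by Remark~\ref{rmk:more_than_one}, and the inequality itself then gives $m_C\le 2$.
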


Now what can be said about the regular models of $C$, based 
on information from the minimal Weierstrass models ? Consider
the smallest model $W_0\vee W_n$ of $C$ dominating both $W_0$ and
$W_n$ (Definition~\ref{dfn:ccW}). Then the 
strict transforms of $(W_0)_k, (W_n)_k$ intersect at a single rational
point $p_{0,n}$ (\S~\ref{subsect:mrm}). 

\begin{theorem}[Theorem~\ref{regular-even}]
  Keep the hypothesis of Theorem~\ref{thm:01}. 
  \begin{enumerate}[\rm (1)] 
  \item The minimal regular model  of $C$ over $\cO_K$
    dominates $W_0\vee W_n$.  
\item  The model $W_0\vee W_n$ is semi-stable at $p_{0,n}$, and this possibly
  singular point is solved by a chain of $(n/2)-1$ projective lines over $k$, 
  each of them being the strict transform of $(W_i)_k$ for an even $2\le i\le n-2$.  
    \end{enumerate} 
\end{theorem}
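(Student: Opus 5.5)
We work throughout with the explicit equations \eqref{eq:mini_W_i} of Theorem~\ref{thm:01}, and we place everything on the quotient by the hyperelliptic involution. Let $Z_i=W_i/\langle\sigma\rangle$; this is the projective line $\PP^1_{\cO_K}$ with coordinate $x_i=x/\pi^i$. The models $Z_0,Z_2,\dots,Z_n$ correspond to the vertices of a geodesic segment in the Bruhat--Tits tree of $\PP^1_K$. Let $\cZ$ be the smallest model of $\PP^1_K$ dominating all of the $Z_i$. Its closed fiber is a chain of projective lines $\Gamma_0,\Gamma_2,\dots,\Gamma_n$, where $\Gamma_i$ is the strict transform of $(Z_i)_k$.

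We first describe $W_0\vee W_n$. Its quotient by $\sigma$ is $Z_0\vee Z_n$, which is obtained from $\cZ$ by contracting the interior components $\Gamma_2,\dots,\Gamma_{n-2}$. The singular point $\bar p$ of $Z_0\vee Z_n$ is therefore the image of that chain. Locally at $\bar p$ the model $Z_0\vee Z_n$ is given by $uv=\pi^{n}$, with $u=x/\pi^{n}=x_n$ and $v=\pi^{n}/x$. Hence $W_0\vee W_n$ equals the normalization of $Z_0\vee Z_n$ in $K(C)$, and above $\bar p$ lies the point $p_{0,n}$.

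Next we compute the local structure at $p_{0,n}$. Minimality of both $W_0$ and $W_n$, together with the existence of the intermediate minimal models $W_i$, should force a strong condition on the equation. Modulo $\pi$, the polynomial $F=Q^2+4P$ should have all its roots, with the right multiplicities, concentrated so that near $\bar p$ it factors as a unit times a monomial $x^{g+1}$. We plan to extract this from the explicit description of the chain in Theorem~\ref{chain-mwm}. Granting it, the equation of $W_0\vee W_n$ near $p_{0,n}$ takes, after a change of variable, the form $w^2=\text{unit}$, which is étale over $uv=\pi^n$, or $w^2+\dots$ with the same property. Two alternatives are possible:
\begin{itemize}
\item $p_{0,n}$ is a single point over which the double cover is étale, giving an ordinary double point $st=\pi^{n/2}$ of thickness $n/2$ after accounting for the fact that $x_i$ is only defined with $y_i$ scaled by $\pi^{(g+1)i/2}$ with $g+1$ odd;
\item the double cover is ramified in a way that halves the thickness.
\end{itemize}
In either case the point is semi-stable. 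Blowing it up minimally yields a chain of $(n/2)-1$ projective lines, one for each even $i$ with $2\le i\le n-2$. We identify the $i$-th line with the strict transform of $(W_i)_k$ by checking that the point of $\cZ$ generic on $\Gamma_i$ has a unique preimage in $W_i$. This uses the fact that $(W_i)_k$ is irreducible and birational to $\Gamma_i$ of degree $1$ onto its image: it is a double line, reduced but inseparable, or rather $y_i^2$ equal to a square. The main obstacle is this local computation. Concretely, it requires showing precisely that near $p_{0,n}$ the reduction of $F$ forces the double cover to be a "split" or "étale" cover of the annulus, and pinning down the thickness $n/2$, with special care needed when the residue characteristic is $2$.

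For part (1), let $X$ be the minimal regular model and let $X'$ be the minimal desingularization of $W_0\vee W_n$. This $X'$ is obtained by resolving $p_{0,n}$ via the chain from part (2), together with the singularities elsewhere, which lie on $W_0$ or $W_n$ away from the intersection. Since $g\ge2$, $X$ is obtained from $X'$ by contracting $(-1)$-curves. It therefore suffices to show that no component of the strict transform of $(W_0)_k$, $(W_n)_k$, or the chain is ever contracted. For the chain components this follows because each meets two other components, and for $W_0$ and $W_n$ it follows from their arithmetic genus or their self-intersection. A cleaner route is Castelnuovo's criterion together with the fact that a minimal Weierstrass model is dominated by $X$ whenever its closed fiber is not contracted. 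We would attempt to show $X$ dominates each $W_i$ separately, using the universal property of contraction of $X$ to the component set $\{(W_0)_k,(W_n)_k\}$. It then dominates $W_0\vee W_n$ by the definition of $\vee$.
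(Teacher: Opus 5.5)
\textbf{There is a genuine gap: neither part is proved.} The central step of (2), the local structure of $W_0\vee W_n$ at $p_{0,n}$, is deferred (``should force'', ``granting it''). The method you sketch for it also does not work as set up.

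\emph{The cover is never étale over $\bar p$.} Since $\delta(p_0)=g+1$ is odd, $p_0$ is a cusp. So there is a single point above $\bar p$, and the cover is ramified there: Proposition~\ref{prop:reg}(1) would need $\lambda(p_0)$ even. Moreover, an étale cover of $uv=\pi^n$ has thickness $n$, not $n/2$. (Also, your $u=x_n$, $v=\pi^n/x$ satisfy $uv=1$; take $u=x$.)

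\emph{Residue characteristic $\ne 2$.} Here your second alternative can be completed. By Corollary~\ref{cor:equa_term}(2) with $Q=0$, on the open annulus $|\pi^n|<|x|<1$ one has $P=a_{g+1}x^{g+1}\phi^2$ with $\phi$ invertible. Then $w=y/(x^{g/2}\phi)$ satisfies $w^2=a_{g+1}x$, so the formal fiber at $p_{0,n}$ is an annulus of modulus $n/2$. You would still have to turn this into a statement about thickness and identify the exceptional curves.

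\emph{Residue characteristic $2$.} Here the route breaks down. $\overline{Q^2+4P}=\bar Q^2$ carries no information. Moreover, $Q^2+4P$ need not be a unit times $x^{g+1}$ on the annulus. In the last example of \S\ref{subsect:rig}, where $n=2$, it equals $\pi^2x^g(1+\eta)$ with $|\eta|<1$ there. So your factorization does not exist.

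\emph{The missing idea is to insert the odd models and work on $Y=W_0\vee W_1\vee\cdots\vee W_n$.}
\begin{itemize}
\item Theorem~\ref{chain-mwm} gives, for even $i$, $\varepsilon(W_i)=0$ and $\lambda(p_i)=\delta(p_i)=g+1$.
\item For odd $i$ it gives $\varepsilon(W_i)=1$ and $\delta(p_i)=g+1<\lambda(p_i)$.
\item These are exactly the hypotheses of Proposition~\ref{prop:reg}(2),(4). So each $p_{i,i+1}$ is a regular point where $\Gamma_i,\Gamma_{i+1}$ meet transversally, in every characteristic.
\item The odd $\Gamma_i$ are $\PP^1_k$ of multiplicity $2$, so $\Gamma_i^2=-1$. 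Contracting them leaves the even inner $\Gamma_i$ as a chain of $(-2)$-curves.
\item That chain is the minimal resolution of an ordinary double point of thickness $n/2$, and it identifies the exceptional curves with the strict transforms of the $(W_i)_k$.
\end{itemize}
You also have the even and odd fibers reversed. For even $i$, $(W_i)_k$ is reduced, with two cusps $p_i^*,p_i$ and normalization $\PP^1_k$, of degree $2$ over $(Z_i)_k$. The double lines are the odd $(W_i)_k$.

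\textbf{Part (1) is also left open.} You need that the strict transforms of $(W_0)_k$ and $(W_n)_k$ are not $(-1)$-curves.
\begin{itemize}
\item ``Arithmetic genus or self-intersection'' is not an argument. The strict transform can be $\PP^1_k$: for $g=2$ and type $[\tI_\nu-\cK_2-m]$ with $\nu\ge 2$, the principal component is a rational line.
\item Meeting two components does not protect the chain either. If an end component were contracted, the next chain component would become a $(-1)$-curve.
\end{itemize}
The paper uses an input independent of (2). Since $(W_0)_k$ and $(W_n)_k$ are reduced, Proposition~\ref{prop:mini}(2.a) excludes the situation of Proposition~\ref{prop:mini}(1). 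Hence, by \cite{LTR}, Propositions 5 and 6 (Remark~\ref{rmk:mini_mini}), both $W_0$ and $W_n$ are dominated by $\cC$. Therefore so is $W_0\vee W_n$, the smallest model dominating both. Your ``cleaner route'' needs precisely this result, which you neither prove nor cite.
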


We see that the Weierstrass models $W_0, W_n$ contain the most important
information. Let $p_0\in (W_0)_k$, $p_n^*\in (W_n)_k$ be the images of
$p_{0,n}$ above. In \S~\ref{subsect:rig} when $K$ is complete, we see that
as rigid analytic curve over $K$, 
$C$ is the union of the affinoid curves
$U_0:=(\widehat{W}_0 \setminus \{ p_0 \})\otimes K$,  
$U_n:=(\widehat{W}_n \setminus \{ p_n^* \})\otimes K$ and
an open annulus $\{ z\in K \ |  \ |\pi^n|< |z| < 1 \}$. Here 
$\widehat{-}$ means the formal completion with respect to the closed fiber,
and $-\otimes K$ means the rigid analytic generic fiber. One can embed $U_i$ into a
smooth projective curve $C_i$ of genus $g/2$ over $K$ (Theorem~\ref{thm:FM}).
The restriction to $U_i$ of the hyperelliptic involution  of $C$ can
be extended to a hyperelliptic involution on $C_i$ if $\chara(k)\ne 2$
(Proposition~\ref{prop:tame}). Otherwise
a necessary and sufficient condition for the existence of the extension 
is given in Proposition~\ref{prop:extend_h}. 
As an application, we have : 

\begin{corollary}[Corollary \ref{cor:gl}] Keep the hypothesis of Theorem~\ref{thm:01}. 
  Then $\cC_k$ is union of two projective curves $X_0, X_n$ over $k$, attached
  together by a chain of $(n/2)-1$ projective lines over $k$, and such that
  $X_i$ is the closed fiber of the minimal regular model of a projective
  smooth curve $C_i$ of genus $g/2$ over the completion $\hat{K}$ of $K$.
  If $\chara(k)\ne 2$ or if the conditions of Proposition~\ref{prop:extend_h} 
  for $p_0, p_{n}^*$ are satisfied, one can take $C_i$ hyperelliptic. 
\end{corollary}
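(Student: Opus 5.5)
The plan is to deduce the statement from the structure of $W_0\vee W_n$ given by Theorem~\ref{regular-even}, combined with the rigid-analytic decomposition of $C$ from \S~\ref{subsect:rig} and the embedding results Theorem~\ref{thm:FM}, Proposition~\ref{prop:tame} and Proposition~\ref{prop:extend_h}. Since the formation of minimal regular models commutes with completion, we may replace $K$ by $\hat K$ throughout.

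\textbf{Step 1: decompose $\cC_k$.} By Theorem~\ref{regular-even}(1), $\cC$ dominates $W_0\vee W_n$. Let $X_0$ and $X_n$ be the unions of the components of $\cC_k$ mapping onto the strict transforms of $(W_0)_k$ and $(W_n)_k$, together with the exceptional components lying over points of $(W_0)_k\setminus\{p_0\}$ and $(W_n)_k\setminus\{p_n^*\}$ respectively. By Theorem~\ref{regular-even}(2), the point $p_{0,n}$ is semi-stable and its resolution in $\cC$ is the chain of $(n/2)-1$ projective lines given by the strict transforms of $(W_i)_k$, $2\le i\le n-2$. Away from $p_{0,n}$, the morphism $\cC\to W_0\vee W_n$ is the minimal desingularization of $W_0\setminus\{p_0\}$ and of $W_n\setminus\{p_n^*\}$. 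So $\cC_k=X_0\cup(\text{chain})\cup X_n$, and the chain meets $X_0$ and $X_n$ only at its two ends.

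\textbf{Step 2: realize each $X_i$ as a minimal regular model.} Theorem~\ref{thm:FM} gives a smooth projective curve $C_i$ of genus $g/2$ into which $U_i$ embeds; concretely, $C_i$ is obtained by gluing an open disc to $U_i$ along the boundary annulus at $p_0$ (resp.\ $p_n^*$). On the formal side this means there is a normal model $\mathcal W_i$ of $C_i$ in which $\widehat W_i\setminus\{p\}$ is glued to a formal open disc whose reduction is a single smooth rational point. Taking the minimal desingularization of $\mathcal W_i$ gives a regular model whose closed fiber is exactly $X_i$. This is because the desingularization is local, the added point is already regular, and away from that point we recover the part of $\cC$ over $W_i\setminus\{p\}$.

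It remains to check that this regular model is minimal, i.e.\ that $X_i$ contains no $(-1)$-curve. Self-intersections of components of $X_i$ not meeting the chain are the same in both models. The only issue is the component $E$ of $X_i$ containing the chain's endpoint: replacing the chain by a smooth point raises $E^2$ by at most $1$.

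\textbf{Step 3 (main obstacle): minimality for $E$ and the genus count.} Since $\cC$ is minimal, in $\cC$ we have $E^2\le-1$ when $E$ is rational, which is not enough on its own. I would instead use the genus identity: $C_i$ has genus $g/2$ and arithmetic genus equals genus, so contracting any $(-1)$-curve in the new model would force a corresponding contraction in $\cC$. One argues this by adjunction, since a rational $E$ with $E\cdot K=-1$ in the new model has $E\cdot K_{\cC}=0$ and so cannot be exceptional in $\cC$.

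The cleaner route, which I would try first, is to note that $E$ is the strict transform of $(W_i)_k$. This component has positive arithmetic genus contribution, since $(W_i)_k$ is a Weierstrass fiber whose normalization accounts for genus $g/2$ by the genus count of Theorem~\ref{thm:FM}; hence it is not a $(-1)$-curve. If $E$ turns out to be rational, I would fall back on a direct self-intersection computation.

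\textbf{Step 4: hyperellipticity.} If $\chara(k)\ne2$, Proposition~\ref{prop:tame} extends the hyperelliptic involution of $U_i$ to $C_i$. In residue characteristic $2$, the same conclusion follows under the hypotheses of Proposition~\ref{prop:extend_h} at $p_0$ and $p_n^*$.
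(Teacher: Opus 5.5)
Your Steps 1, 2 and 4 follow the paper's own route. You compactify $\widehat{W}_0\setminus\{p_0\}$ by Theorem~\ref{thm:FM} and Remark~\ref{rmk:lg}; the genus is $p_a(\Gamma_0)=g/2$ by Proposition~\ref{prop:W0}. You then take the minimal desingularization, observe that its closed fiber is $X_0$, and note that the only candidate $(-1)$-curve is the strict transform $\Theta_0$ of $(W_0)_k$, whose self-intersection is exactly one less in $\cC$.

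But Step~3, which you correctly flag as the crux, has a genuine gap. Your adjunction computation is right, but the conclusion does not follow. If $\Theta_0\simeq\PP^1_k$ had self-intersection $-1$ in the new model, then in $\cC$ it would have $\Theta_0^2=-2$ and $K_{\cC}\cdot\Theta_0=0$, i.e.\ it would be a $(-2)$-curve. A $(-2)$-curve is perfectly compatible with the minimality of $\cC$, so no contradiction has been derived.

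Your ``cleaner route'' does not rescue this. In general $\Theta_0$ is not $\Gamma_0$ (Remark~\ref{rmk:thick}(2)): resolving the other singular point of $(W_0)_k$ can normalize it further. In fact $\Theta_0\simeq\PP^1_k$ is typical; for $g=2$ it happens whenever the second singular point of $(W_0)_k$ has type $\mathrm{I}_\nu$ with $\nu\ge 2$, $\mathrm{III}$, $\mathrm{IV}$, $\mathrm{I}_0^*$, \dots. So the rational case is not a fallback but the whole difficulty, and no local self-intersection count settles it.

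The missing input is Proposition~\ref{prop:positive}: the canonical model $\cC^{\can}$ dominates $W_0$ and $W_n$, i.e.\ $\Theta_0$ and $\Theta_n$ are not $(-2)$-curves in $\cC$. This is a global statement that uses the \emph{extremality} of $W_0,W_n$. If, say, $\Theta_n$ were a $(-2)$-curve, it would meet another multiplicity-one component transversally at a rational point $\tilde p_n\ne p_{n-2,n}$. Blowing up there produces a chain $W_n$, $W_{n+1}=W_n(p_n)$, $W_{n+2}$ with $\lambda(p_n)=g+1$ and $\lambda(p_{n+1})=g+2$. Hence $\nu(\Delta_{W_{n+2}})=\nu(\Delta_{W_n})$, and $W_{n+2}$ would be a minimal Weierstrass model beyond $W_n$, a contradiction.

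Once you have this, your Step~3 closes exactly as in the paper. A $(-1)$-curve $\Theta_0$ in the desingularized compactification would be a $(-2)$-curve in $\cC$, which Proposition~\ref{prop:positive} forbids.
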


The canonical model $\cC^{\can}$ of $C$ (\S~\ref{subsect:cm}) is obtained by
contracting the $\PP^1_k$'s of self-intersection $-2$ in $\cC_k$. Its closed
fiber has at most $2g-2$ irreducible components (while $\cC_k$ can
have arbitrary large number of irreducible components),  
but it still contains useful information. Indeed, under mild conditions 
(satisfied under the hypothesis of Theorem~\ref{thm:01}) 
we can compute the volume form of the Jacobian of $C$ with $\cC^{\can}$ and 
the Euler factor with $\cC^{\can}_k$, see Proposition~\ref{prop:volume_can} and
\S~\ref{subsect:EF}.   

\begin{proposition}[Proposition \ref{prop:positive}]
Keep the hypothesis of Theorem~\ref{thm:01}. Let $\cC^{\can}$ be the
canonical model of $C$. Then $\cC^{\can}$ dominates $W_0$ and $W_n$. 
\end{proposition}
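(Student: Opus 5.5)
The approach is to use the minimal regular model $\cC$ and the contraction $\cC\to\cC^{\can}$. By Theorem~\ref{regular-even}(1), $\cC$ dominates $W_0\vee W_n$, hence dominates $W_0$ and $W_n$; write $f_0:\cC\to W_0$ for the resulting birational morphism. The model $\cC^{\can}$ is obtained from $\cC$ by contracting the $(-2)$-curves, i.e.\ the vertical $\PP^1_k$'s $E$ with $K_{\cC/\cO_K}\cdot E=0$. So the claim reduces to the following: $f_0$ factors through the contraction $\cC\to\cC^{\can}$. Since $\cC^{\can}$ is normal and $W_0$ is separated, it is enough to show that every irreducible component contracted in $\cC^{\can}$ is also contracted by $f_0$. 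Equivalently, the strict transform $\Gamma_0$ of $(W_0)_k$ in $\cC$ is not a $(-2)$-curve. Note that $(W_0)_k$ is irreducible, being a Weierstrass model, so $\Gamma_0$ is the only component not contracted by $f_0$. By symmetry the same argument applies to $W_n$.

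Once this is settled, $f_0$ contracts every connected chain of $(-2)$-curves. These chains form connected closed subsets of the exceptional locus, so they map to points. By the universal property of contraction (or Zariski's main theorem applied to the normal surface $\cC^{\can}$), $f_0$ then descends to a morphism $\cC^{\can}\to W_0$.

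The key step is to show that $K_{\cC}\cdot\Gamma_0>0$. My plan is to use the decomposition from Corollary~\ref{cor:gl}. The fiber $\cC_k$ consists of $X_0$ and $X_n$ joined by a chain of $(n/2)-1$ projective lines, and $X_0$ is the special fiber of the minimal regular model of a genus $g/2\ge1$ curve $C_0$. The component $\Gamma_0$ lies in $X_0$. I expect $\Gamma_0$ to be the component of $X_0$ meeting the chain, or equivalently the one carrying the point above $p_0$. The adjunction formula gives
\[
K_\cC\cdot\Gamma_0 = 2p_a(\Gamma_0)-2-\Gamma_0^2 .
\]
If $\Gamma_0$ were a $(-2)$-curve, then $\Gamma_0\cong\PP^1_k$ and $\Gamma_0^2=-2$. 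I would test this against the geometry of $W_0$: since $W_0$ is minimal, $(W_0)_k$ is a degree-$2$ cover of a line, so $\Gamma_0$ is either of positive arithmetic genus or a rational curve with large intersection with the rest of the fiber.

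A cleaner route is to count intersections. $\Gamma_0$ meets the exceptional locus of $f_0$ only above $p_0$ and above the singular points of $W_0$. Above $p_0$, the whole part of the fiber coming from $X_n$ and the chain is attached. If $\Gamma_0$ were a $(-2)$-curve, then contracting all $(-2)$-curves in the connected configuration containing $\Gamma_0$ would produce a rational double point. However, that configuration would then have to be an ADE tree of $(-2)$-curves. It would also be attached, through the chain, to $X_n$, which has genus $g/2>0$ and so contains a component with $K\cdot E>0$. A maximal ADE configuration together with its neighbours can be handled by the negative-definiteness argument: a rational component that meets both a genus-$g/2$ piece and the rest of $X_0$ forces $K\cdot\Gamma_0>0$. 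This can be made precise via the numerical criterion $K_\cC\cdot\Gamma_0=-\Gamma_0\cdot(\cC_k-\Gamma_0)\cdot(\text{correction})$, computed with $K_\cC\cdot\cC_k=2g-2$.

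I expect the main obstacle to be exactly this exclusion of $\Gamma_0$ being a $(-2)$-curve. My first attempt will be a direct computation. The minimal Weierstrass equation of $W_0$ together with the relation to $W_2$ shows that $(W_0)_k$ has a singularity of a prescribed shape at $p_0$. In $\cC$ this gives $\Gamma_0\cdot(\cC_k-\Gamma_0)\ge 3$, or $p_a(\Gamma_0)\ge1$. With $\Gamma_0^2=-\Gamma_0\cdot(\cC_k-\Gamma_0)$ this yields $K\cdot\Gamma_0>0$. If the direct computation becomes messy, I will fall back on the genus additivity from Corollary~\ref{cor:gl}: $K_\cC$ restricted to $X_0$ has degree $g-1$ on the genus-$g/2$ part plus the attaching point, and this positivity has to concentrate on the component through which $X_0$ is attached, which is $\Gamma_0$.
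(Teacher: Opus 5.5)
Your reduction matches the paper's. Because $(W_0)_k$ is irreducible, $\cC^{\can}$ dominates $W_0$ as soon as the strict transform $\Theta_0$ of $(W_0)_k$ in $\cC$ is not a $(-2)$-curve. (Irreducibility comes from Proposition~\ref{prop:W0}; it does not hold merely because $W_0$ is a Weierstrass model, since such closed fibers can be reducible.)

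The gap is the key step. None of your arguments shows that $\Theta_0$ is not a $(-2)$-curve, and none uses the hypothesis that makes it true, namely that $W_0$ is \emph{extremal}. That hypothesis cannot be avoided:
\begin{itemize}
\item For even $2\le i\le n-2$, each $\Theta_i$ is a rational $(-2)$-curve meeting multiplicity-one components on both sides.
\item Near the point above $p_0$, $\Theta_0$ looks exactly like $\Theta_i$ near the point above $p_i^*$: both are cusps with $\lambda=\delta=g+1$, and there is one transverse intersection with the neighbouring curve of the chain.
\item So the singularity at $p_0$ contributes only $1$ to $\Theta_0\cdot(\cC_k-\Theta_0)$. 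A $(-2)$-curve needs exactly one more transverse intersection with a multiplicity-one component of $X_0$, and nothing local rules that out.
\item Hence your claim that a rational component meeting both a genus-$g/2$ piece and the rest of $X_0$ must have $K_{\cC}\cdot\Theta_0>0$ is false as stated.
\end{itemize}
Your fallback through Corollary~\ref{cor:gl} is circular. In the paper, the minimality of the regular model $\cC_0$ of $C_0$ with closed fiber $X_0$ is deduced from this very proposition. Indeed $\Theta_0^2$ in $\cC$ is one less than in $\cC_0$, so $\Theta_0$ is a $(-2)$-curve in $\cC$ exactly when it is a $(-1)$-curve in $\cC_0$, which is the same statement. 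Knowing the total degree of $K_{\cC}$ on $X_0$ also says nothing about how it is distributed among the components of $X_0$.

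The missing idea is to show that a $(-2)$-curve $\Theta_n$ would let you prolong the minimality chain beyond $W_n$. The paper argues as follows.
\begin{enumerate}
\item $\Theta_n$ has multiplicity $1$ and $\Theta_n\cdot(\cC_k-\Theta_n)=2$. It already meets $\Theta_{n-2}$ transversally once, so it meets exactly one further multiplicity-one component, transversally at a rational $\sigma$-fixed point $\tp_n\neq p_{n-2,n}$.
\item Blowing up $\tp_n$ and passing to $\cC/\qi$ gives smooth models $Z_{n+1},Z_{n+2}$ such that $Z_n,Z_{n+1},Z_{n+2}$ is a chain. This yields Weierstrass models $W_{n+1}=W_n(p_n)\neq W_{n-1}$ and $W_{n+2}=W_{n+1}(p_{n+1})$, where $p_n$ is the image of $\tp_n$.
\item Since $\Theta_n\simeq\PP^1_k$ is the normalization of $(W_n)_k$, with cusps at $p_n^*$ and $p_n$, the genus formula gives $\delta(p_n)=g+1$.
\item Proposition~\ref{prop:reg}, Lemma~\ref{lem:delta_p0_p1*} and \eqref{eq:lambdap1*} then give $\lambda(p_n)=g+1$ and $\lambda(p_{n+1})=g+2$.
\item By \eqref{eq:disc-p0}, $W_{n+2}$ is minimal, contradicting the extremality of $W_n$.
\end{enumerate}
Your proof needs an argument of this kind, one that actually uses extremality.
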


When $g=2$, the above results are particularly interesting.

\begin{theorem}[See Theorem \ref{K1-K2}] \label{thm:07} Let $g=2$. Then 
  $C$ has more than one minimal Weierstrass model if and only
  if $\cC_k$ has type $[\cK_1-\cK_2-m]$ over $k$ (with $m\ge 1$), 
  where $\cK_1, \cK_2$ are Kodaira symbols for the reduction
  of elliptic curves over $K$. The integer $m$ is equal to 
  $n/2$. 
\end{theorem}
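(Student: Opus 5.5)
The plan is to prove the two implications separately, using the structural results on $\cC$ stated above for even $g$, here $g=2$.

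\emph{Only if.} Assume $C$ has more than one minimal Weierstrass model. By Theorem~\ref{thm:01} there are an even $n\ge 2$ and models $W_0,\dots,W_n$. By Corollary~\ref{cor:gl}, $\cC_k$ is the union of $X_0$ and $X_n$ joined by a chain of $(n/2)-1$ projective lines, where each $X_i$ is the closed fiber of the minimal regular model of a smooth projective curve $C_i$ of genus $g/2=1$ over $\hat K$. Each $C_i$ is a genus-one curve. Its minimal regular model has the same closed-fiber type as that of its Jacobian (Liu--Lorenzini--Raynaud: the same Kodaira configuration, possibly with multiplicity), so $X_i$ carries a Kodaira symbol $\cK_i$. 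The chain hits $X_0$ and $X_n$ at the points coming from $p_{0,n}$, i.e.\ at the images of $p_0$ and $p_n^*$.

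To get exactly the Namikawa--Ueno type $[\cK_1-\cK_2-m]$ with $m=n/2$, I must check three things:
\begin{enumerate}[(i)]
\item The attachment points lie on components of multiplicity one. This holds because the strict transforms of $(W_0)_k$ and $(W_n)_k$ are reduced (Weierstrass closed fibers are reduced) and they meet the chain at the rational point $p_{0,n}$.
\item The chain, together with the two end attachments, consists of $n/2$ links. The $(n/2)-1$ lines supply $n/2$ intersection edges, and this matches the convention for $m$ in the $[\cK_1-\cK_2-m]$ notation.
\item The point $p_0$, viewed on $X_0$, is a smooth point of the fiber of the genus-one minimal regular model. This is where the proof of Corollary~\ref{cor:gl} comes in: $U_0$ is completed by adding a disc.
\end{enumerate}
This direction is mostly bookkeeping against the Namikawa--Ueno conventions.

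\emph{If.} Suppose $\cC_k$ has type $[\cK_1-\cK_2-m]$. I would contract everything except the two multiplicity-one end components $E_1,E_2$, which lie respectively in the $\cK_1$-part and the $\cK_2$-part. Take the model $W$ in which only $E_1$ survives. I claim $W$ is a Weierstrass model, and argue via its dualizing sheaf. The component $E_1$ is reduced and rational with arithmetic genus $2$ in the contracted fiber, and the hyperelliptic involution acts, because it preserves the configuration: the involution acts trivially on the dual graph modulo symmetry of the two ends, and I must argue it does not swap $E_1$ and $E_2$. Then $\omega$ is generated by sections giving a degree-2 map to $\PP^1_{\cO_K}$, which makes $W$ a Weierstrass model. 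Do the same for $E_2$.

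Next, both models are minimal. I would compare discriminants using the Saito/Liu formula $\nu(\Delta)=\text{Artin conductor}$ contribution plus $d$. The closed fibers of $W$ and $W'$ are symmetric in the construction, so I would try to show that a non-minimal Weierstrass model would have a closed fiber with a non-reduced or multiple component over the chosen point, contradicting the configuration. They are non-isomorphic because their closed fibers are distinct components of $\cC_k$. Finally $m=n/2$ follows by applying the first direction.

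The main obstacle is this converse: showing that the contracted models are Weierstrass, which requires handling the involution possibly swapping the two ends, and showing that they are minimal. If the direct argument stalls, my fallback is to use the characterization in Theorem~\ref{chain-mwm}: the existence of two minimal Weierstrass models is equivalent to a certain point on a minimal one being "shared". I would exhibit that point as the image of the chain.
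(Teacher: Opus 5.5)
Your ``only if'' direction is the paper's (iii)$\Rightarrow$(i), namely Theorem~\ref{regular-even} plus Corollary~\ref{cor:gl}, and your count $m-1=(n/2)-1$ is right. Two small slips. Closed fibers of Weierstrass models need not be reduced (the odd $W_i$ are not); what you actually use is Lemma~\ref{lem:WW'}(2) for $W_0,W_n$. Also, the ``possibly with multiplicity'' worry is moot, since $\Theta_0$ has multiplicity one.

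The ``if'' direction has a genuine gap. The three points you flag as obstacles are the entire content of the converse (the paper's Lemma~\ref{lem:can-mini}), and your sketch proves none of them.

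\emph{Non-swapping.} Your dual-graph remark is circular. Moreover, with the non-canonical contraction ``everything but $E_1$'' you cannot let $\sigma$ act on $W$ until you already know $\sigma(E_1)=E_1$. The paper instead works on $\cC^{\can}$: contracting only the $(-2)$-curves keeps both principal components $\Gamma,\Gamma'$, each of arithmetic genus $1$, and $\sigma$ acts by uniqueness. If $\sigma$ swapped $\Gamma$ and $\Gamma'$, then $\Gamma$ would map birationally onto the closed fiber of $\cC^{\can}/\qi$, which is a model of $\PP^1_K$. This would give $0\ge p_a(\Gamma)=1$, which is absurd.

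\emph{Weierstrass-ness.} Your claim that sections of $\omega$ define a degree-$2$ map to $\PP^1_{\cO_K}$ is unsupported: $W$ is not known to be Gorenstein, and $E_1$ need not be rational (think of type I$_0$). The paper argues differently. Both components of $(\cC^{\can}/\qi)_k$ are reduced, so $\cC^{\can}/\qi=Z\vee Z'$ with $Z,Z'$ smooth models. Then $W,W'$ are defined as the normalizations of $Z,Z'$ in $K(C)$, and $\cC^{\can}\simeq W\vee W'$.

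\emph{Minimality.} The discriminant and Artin-conductor comparison you propose does not give this. The missing idea is a multiplicity count.
\begin{enumerate}
\item $\Gamma\to W_k$ is an isomorphism away from $p_0\in W\wedge W'$, and $\Gamma\cap\Gamma'$ is a single point. Hence $\Gamma$ is the normalization of $W_k$ at a cusp $p_0$, and $\delta(p_0)$ is odd.
\item From $1=p_a(\Gamma)=2-(\delta(p_0)-1)/2$ you get $\delta(p_0)=3$.
\item Lemma~\ref{lem:d0}(2),(4) then forces $\lambda(p)\le\delta(p)\le 3=g+1$ for every $p\in W_k$.
\item \cite{LRN}, Proposition~4.3(1), now gives minimality of $W$, and likewise of $W'$.
\end{enumerate}
Your fallback via Remark~\ref{rmk:crit_2} presupposes a minimal model whose $\lambda$'s are known, so it does not bypass this step. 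Once these points are supplied, $m=n/2$ follows from the first direction, as you say.
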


As an application, we can characterize the existence of the stable
reduction over $K$. Note that a curve $C$ has stable
reduction over $K$ if and only if $\cC^{\can}$ is stable and it is then the stable
model of $C$. 

\begin{theorem}[See Theorem~\ref{thm:stableg2}] Let $g=2$.
    \begin{enumerate}[\rm (1)] 
  \item The curve $C$ has stable reduction of type {\rm (I)-(IV)} 
    (i.e. the quotient of $\cC^{\can}_k$ by the hyperelliptic involution is
    isomorphic to $\PP^1_k$) if and only if $C$ has a unique minimal
    Weierstrass model $W$ and if the latter is semi-stable.  
  \item The curve $C$ has stable reduction equal to the union of two
    irreducible semi-stable curves of arithmetic genus $1$ if and only
    if $C$ has more than one minimal Weierstrass model and if
    $(W_0)_k, (W_n)_k$ are semi-stable away from $p_0$ and $p_{n}^*$. 
  \end{enumerate} 
\end{theorem}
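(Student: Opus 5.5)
The plan is to handle the two parts separately, using Theorem~\ref{thm:07} and the structure results for the minimal regular model $\cC$ stated above (Theorem~\ref{regular-even} and Corollary~\ref{cor:gl}). Throughout, I use the standard fact that $C$ has stable reduction over $K$ if and only if $\cC^{\can}$ is stable, in which case $\cC^{\can}$ is the stable model. I also use that, for $g=2$, a stable curve of genus $2$ is either irreducible or the union of two curves of arithmetic genus $1$ meeting at one point. Up to the hyperelliptic quotient, these are exactly the types (I)--(IV) on one side and the "two elliptic pieces" type on the other. In the first case, the quotient of the stable fiber by the hyperelliptic involution is $\PP^1_k$ (possibly after checking the types (I)--(IV) list). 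In the second case, the quotient is two $\PP^1$'s.

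For (1), assume first that $C$ has stable reduction of type (I)--(IV). Then $\cC_k$ cannot be of type $[\cK_1-\cK_2-m]$: in that type the contraction to $\cC^{\can}$ keeps the two genus-$1$ parts $X_0,X_n$ separate, so the stable fiber is reducible with two components of genus $1$. By Theorem~\ref{thm:07}, the minimal Weierstrass model $W$ is therefore unique. Next, the quotient $\cC^{\can}/\sigma$ is a smooth $\PP^1$ model, so $\cC^{\can}/\sigma$ is a smooth projective line over $\cO_K$. This lets me build a Weierstrass model directly as the normalization of $\PP^1_{\cO_K}$ in $K(C)$. By construction it equals $\cC^{\can}$ when $\chara(k)\neq 2$, and I will need to check this carefully in residue characteristic $2$. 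That Weierstrass model is then semi-stable, and by Proposition~\ref{prop:positive}-type domination arguments (a semi-stable Weierstrass model has minimal discriminant) it is minimal, hence equal to $W$.

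For the converse, suppose $W$ is unique and semi-stable. Its normalization is semi-stable, so $C$ has semistable reduction. Moreover $W_k/\sigma=\PP^1_k$, and $W$ is dominated by $\cC$. I then show that $\cC^{\can}=W$ up to contracting chains, so the stable model's hyperelliptic quotient is $\PP^1_k$. This is the step I expect to be the main obstacle: I must rule out that the stable fiber is a union of two genus-$1$ curves even though $W$ is semi-stable. My plan there is to argue that if it were, then Theorem~\ref{thm:07} would force more than one minimal Weierstrass model, contradicting uniqueness.

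For (2), by Theorem~\ref{thm:07}, having more than one minimal Weierstrass model is equivalent to $\cC_k$ being of type $[\cK_1-\cK_2-m]$. By Corollary~\ref{cor:gl}, $\cC_k$ is then $X_0$ and $X_n$ joined by a chain of $\PP^1$'s, with $X_i$ the minimal regular fibre of a genus-$1$ curve $C_i$. Contracting the $(-2)$-chains gives $\cC^{\can}$. This is stable exactly when each $X_i$ contracts to an irreducible semi-stable curve of arithmetic genus $1$, that is, when each Kodaira type $\cK_i$ is $\tI_\nu$ with $\nu\ge 0$. Since $U_0$ and $U_n$ are the complements of $p_0$ and $p_n^*$ in $W_0$ and $W_n$, the fibre $X_0$ near $(W_0)_k\setminus\{p_0\}$ is just $(W_0)_k\setminus\{p_0\}$ plus the resolution of its singularities. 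Hence $\cK_1$ is multiplicative or good exactly when $(W_0)_k$ is semi-stable away from $p_0$, and likewise for $\cK_2$ and $(W_n)_k$ away from $p_n^*$. Together with the characterization of (1), this exhausts both directions.
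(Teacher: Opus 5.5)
Your skeleton is the paper's: the stable model is $\cC^{\can}$, a quotient $\cC^{\can}/\sigma\simeq\PP^1_{\cO_K}$ makes $\cC^{\can}$ a Weierstrass model, $\lambda\le\delta\le 2$ gives minimality and uniqueness, and part (2) comes from Theorem~\ref{K1-K2}. However, the converse of (1) contains a step that fails.

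You plan to exclude a stable fiber made of two genus-$1$ curves by using Theorem~\ref{thm:07} to produce a second minimal Weierstrass model. That theorem only detects the \emph{split} types. Suppose the stable reduction were of non-split type (V) or (VII). Then $\cC^{\can}_k$ is irreducible over $k$, and $\cC_k$ is only of non-split type $[\cK-\cK-m]$. In that case $C$ really does have a unique minimal Weierstrass model (Proposition~\ref{prop:non_split}, Theorem~\ref{thm:stableg2}(3)), so uniqueness yields no contradiction. Your dichotomy ``irreducible or two genus-$1$ curves'' is also inaccurate: type (IV) is two rational curves meeting in three points.

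The missing observation, which removes your ``main obstacle'' altogether, is that a semi-stable Weierstrass model of a genus-$2$ curve is already stable. Indeed, $W_{\bar k}$ is a double cover of $\PP^1_{\bar k}$ with only ordinary double points, so it is either irreducible or two copies of $\PP^1_{\bar k}$ meeting in three points, and it has no unstable components. Thus $W$ is the stable model and $W=\cC^{\can}$ exactly, not merely ``up to contracting chains''. Then $\cC^{\can}/\sigma=W/\sigma\simeq\PP^1_{\cO_K}$, which is type (I)--(IV) by definition. If you keep your route, the non-split case must be handled separately. For instance, it forces a quadratic point of $W_k$ with $\lambda=3+\varepsilon(W)$, hence $\delta\ge 3$, contradicting semi-stability.

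The remaining issues are minor.

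\begin{itemize}
\item In the forward direction of (1), your characteristic-$2$ worry is vacuous. $\cC^{\can}$ is normal and finite over $\cC^{\can}/\sigma$, so it is the normalization of that quotient in $K(C)$ in every characteristic.
\item What you do owe is a proof that $\cC^{\can}/\sigma\simeq\PP^1_{\cO_K}$. Its closed fiber is geometrically integral (it is dominated by $\cC^{\can}_k$, and in type (IV) $\sigma$ swaps the two components) and has arithmetic genus $0$. Hence it is a smooth conic with a rational point, i.e.\ $\PP^1_k$.
\item Minimality and uniqueness then follow at once from $\lambda(p)\le\delta(p)\le 2$ and \cite{LRN}, Proposition~4.6(1); this is Proposition~\ref{prop:stab}. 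Neither Proposition~\ref{prop:positive} nor the detour through Theorem~\ref{thm:07} is needed.
\item For (2), your route through the Kodaira types $\cK_i=\tI_\nu$ and the genus-one curves of Corollary~\ref{cor:gl} works but is longer than necessary. Theorem~\ref{K1-K2} gives $\cC^{\can}\simeq W_0\vee W_n$. Its closed fiber consists of the normalizations of $(W_0)_k$ at $p_0$ and of $(W_n)_k$ at $p_n^*$, meeting transversally at a rational point. So stability is visibly equivalent to semi-stability of $(W_0)_k$ away from $p_0$ and of $(W_n)_k$ away from $p_n^*$.
\item For the ``only if'' direction of (2), apply (ii)$\Rightarrow$(iii) of Theorem~\ref{K1-K2} directly rather than passing through the type of $\cC_k$.
\end{itemize}
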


Some local arithmetic invariants can be explicitly computed under the
hypothesis of Theorem~\ref{thm:07}

\begin{theorem}[Proposition \ref{prop:volume} and Theorem \ref{thm:g2-Euler}] 
  Suppose $g=2$ and that $C$ has more than one minimal
  Weierstrass models. Let $\cA$ be the N\'eron model of the Jacobian of $C$. 
\begin{enumerate}[\rm (1)] 
\item (Volume form) We
  have a canonical isomorphism
  \[ \det \omega_{\cA/\cO_K} \simeq \pi^{n/2} \det H^0(W_0, \omega_{W_0/\cO_K}) \] 
\item (Euler factors) Suppose moreover that $k$ is finite. Let $\Gamma_0$, $\Gamma_n$ be
  respectively the normalization of $(W_0)_k$ at $p_0$ and of $(W_n)_k$ at
  $p_{n}^*$. Then we have the equality 
  \[ P(\cA^0_k, T)= P(\Pic^0_{\Gamma_0/k}, T) P(\Pic^0_{\Gamma_n/k}, T) \]
  of the characteristic polynomials of the Frobenius of $k$
  acting on the Tate modules 
  of $\cA^0_k$, $\Pic^0_{\Gamma_0/k}$ and $\Pic^0_{\Gamma_n/k}$. 
\end{enumerate} 
\end{theorem}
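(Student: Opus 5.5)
The plan is to reduce both statements to the geometry of the minimal regular model $\cC$ described in Theorem~\ref{regular-even} and Corollary~\ref{cor:gl}. For $g=2$, Theorem~\ref{K1-K2} says that $\cC_k$ has type $[\cK_1-\cK_2-m]$ with $m=n/2$. Concretely, $\cC_k$ consists of the configuration $X_0$, of Kodaira type $\cK_1$, containing the strict transform of $(W_0)_k$; a chain of $m-1$ projective lines, the strict transforms of $(W_i)_k$ for even $2\le i\le n-2$; and the configuration $X_n$, of type $\cK_2$, containing the strict transform of $(W_n)_k$. Here $X_0$ and $X_n$ are the closed fibers of minimal regular models of elliptic curves $C_0$ and $C_n$ over $\hat K$.

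For (1), I compare the dualizing sheaves. The first input is the standard identification $\omega_{\cA/\cO_K}\simeq H^0(\cC,\omega_{\cC/\cO_K})$, valid because $\cC$ is regular and the relevant Picard functor is representable under our hypotheses (Proposition~\ref{prop:volume_can}). By Proposition~\ref{prop:positive} I may work on $\cC^{\can}$, which dominates $W_0$. Let $f:\cC\to W_0$ be the resulting morphism. The canonical divisor satisfies
\[
K_{\cC}=f^*K_{W_0}+D ,
\]
where $D$ is supported on the components contracted by $f$. These are the chain, $X_n$, and the exceptional part of $X_0$ above $p_0$. Since $H^0(W_0,\omega_{W_0/\cO_K})$ is free with the explicit basis $x^j\,dx/(2y+Q)$, $j=0,1$, the whole task is to compute how far these two differentials must be scaled so that they become regular generators on $\cC$.

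I will carry out this computation using the explicit equations \eqref{eq:mini_W_i}. Passing from $W_i$ to $W_{i+2}$, we have $x_{i+2}=x_i/\pi^2$ and $y_{i+2}=y_i/\pi^{3}$ (since $g=2$). Hence
\[
\frac{x_i^j\,dx_i}{2y_i+Q_i}=\pi^{2j+2-3}\,\frac{x_{i+2}^j\,dx_{i+2}}{2y_{i+2}+Q_{i+2}}.
\]
So along the chain, the basis differential $dx/(2y+Q)$ acquires an order $-1$ on each successive model, while $x\,dx/(2y+Q)$ acquires an order $+1$. Tracking the orders of the two basis elements along the components $W_0,W_2,\dots,W_n$ will show that the lattice $H^0(\cC,\omega)$ is spanned by the two forms
\[
\frac{x\,dx}{2y+Q}\quad\text{and}\quad \pi^{n/2}\,\frac{dx}{2y+Q},
\]
with the second suitably normalized, for instance via a symmetric rescaling centred in the middle of the chain. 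Taking the determinant then gives the factor $\pi^{n/2}$.

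The main obstacle is this step: verifying that no further correction comes from the Kodaira configurations. I plan to use the decomposition $C=U_0\cup\text{annulus}\cup U_n$. Each $U_i$ embeds into the elliptic curve $C_i$, and the Néron differential of $C_i$ restricts to a generator on $X_i$. This localizes the computation on each side, so that only the annulus of width $n$ contributes, giving $n/2$ once the contribution is split between the two forms.

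For (2), I use that $\cA^0_k$ is an extension of an abelian variety by a torus and a unipotent group. By Raynaud's theorem, $\cA^0_k\simeq \Pic^0_{\cC_k/k}$. The chain of lines and the tree structure of $[\cK_1-\cK_2-m]$ contribute no toric part beyond what already appears inside each $X_i$. Therefore $P(\cA^0_k,T)=P(\Pic^0_{X_0},T)\,P(\Pic^0_{X_n},T)$. For an elliptic configuration $X_i$, the group $\Pic^0_{X_i}$ has the same semi-abelian part as $\Pic^0$ of the strict transform of $(W_i)_k$, because the remaining components are rational trees. This strict transform is the normalization $\Gamma_i$ of $(W_i)_k$ at the single point $p_0$ (respectively $p_n^*$), which is what the statement asserts. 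Unipotent parts do not affect the characteristic polynomials of Frobenius on Tate modules.
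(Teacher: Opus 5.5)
\textbf{Verdict.} Your formulas are right and part of the argument is fine, but the argument for (2) contains a false step, and the key step of (1) is only sketched.

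\textbf{The false step in (2).} You assert that $\Pic^0_{X_0}$ has the same semi-abelian part as $\Pic^0$ of the strict transform $\Theta_0$ of $(W_0)_k$ in $\cC$, \emph{because the remaining components are rational trees}. You also assert that $\Theta_0$ is the normalization $\Gamma_0$ of $(W_0)_k$ at $p_0$. Both claims fail when the other singular point $w_0$ of $(W_0)_k$ is a node with $\lambda(w_0)\ge 2$, so that $X_0$ has type $\tI_\nu$ with $\nu\ge 2$ (Proposition~\ref{prop:exT}(2)).
\begin{itemize}
\item In $\cC$ the node is separated, so $\Theta_0\simeq\PP^1_k$.
\item The chain of $\nu-1$ lines over $w_0$ meets $\Theta_0$ in two points and closes a cycle.
\item Hence $\Pic^0_{X_0}$ is a one-dimensional torus while $\Pic^0_{\Theta_0}=0$, and $\Theta_0\neq\Gamma_0$ (cf.\ Remark~\ref{rmk:thick}(2)).
\end{itemize}
Your final formula survives only because these two errors cancel. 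Relatedly, in (1) the components of $X_0$ other than $\Theta_0$ lie over $w_0$, not over $p_0$.

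\textbf{The missing idea: $\cC^{\can}\simeq W_0\vee W_n$.} You invoke Proposition~\ref{prop:positive}, but you do not combine it with the adjunction bound: $\cC^{\can}_k$ has at most $2g-2=2$ components. Together these make the finite birational morphism $\cC^{\can}\to W_0\vee W_n$ an isomorphism (Lemma~\ref{lem:can-mini}). This repairs (2) and removes your \emph{main obstacle} in (1).

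For (2): $\cC^{\can}_k=\Gamma_0\cup\Gamma_n$ is reduced, so $\Pic^0_{\cC^{\can}/\cO_K}\simeq\cA^0$ (\cite{BLR}, Theorem 9.7/1). Since $\Gamma_0,\Gamma_n$ meet transversally at one rational point, $\Pic^0_{\cC^{\can}_k/k}\simeq \Pic^0_{\Gamma_0/k}\times\Pic^0_{\Gamma_n/k}$. On $\cC^{\can}$, $\Gamma_0$ really is the strict transform of $(W_0)_k$.

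For (1):
\begin{itemize}
\item By \eqref{eq:can_o}, $H^0(\cC,\omega_{\cC/\cO_K})=H^0(\cC^{\can},\omega_{\cC^{\can}/\cO_K})$.
\item $W_0\vee W_n$ is normal with invertible dualizing sheaf, so sections extend over $p_{0,n}$. This module is therefore $H^0(W_0,\omega_{W_0/\cO_K})\cap H^0(W_n,\omega_{W_n/\cO_K})$.
\item Your change of variables, applied only at the two ends, then gives the lattice $\cO_K\,x\,dx/(2y+Q)\oplus \pi^{n/2}\cO_K\,dx/(2y+Q)$.
\item No Kodaira components need to be examined: they are $(-2)$-curves, contracted crepantly.
\end{itemize}

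Your rigid-analytic route via N\'eron differentials of $C_0,C_n$ could be made rigorous. The singularity at $w_0$ is formally that of a model of an elliptic curve, whose minimal resolution is crepant. As written, though, it is only a sketch, and the bookkeeping is off. Nothing is \emph{split between the two forms}, and no symmetric rescaling is needed: the whole factor $\pi^{n/2}$ sits on $dx/(2y+Q)$.
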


Keep the hypothesis of Theorem~\ref{thm:07}. Then in $W_0$, there
is at most one singular point $w_0\ne p_0$.
We then have $\delta(w_0)\le 3$ and $w_0$ is rational over $k$.
For such a singular point (even if $g(C)>2$), the classical Tate's
algorithm can be used almost {\it mutatis mutandis} (\S~\ref{subsect:eTa}). Similarly for
$W_n$. This allows us to determine precisely $\cK_1, \cK_2$ over $k$. Once this
is done, we have the Tamagawa number and the conductor of $\Jac(C)$ for
free. 

\begin{theorem}[Propositions~\ref{prop:tkk}, \ref{prop:cond}] Suppose $g=2$ and
  $\cC_k$ has type $[\cK_1-\cK_2-m]$, $m\ge 0$ over $k$. Then  
  \begin{enumerate}[\rm (1)] 
  \item The group of connected components of $\cA_k$
    is isomorphic to the product $\Phi_1\times \Phi_2$,  
    where $\Phi_i$ is the group of components given by $\cK_i$.
  \item The conductor $f_C$ of $\Jac(C)$ is given by the formula
    \[
\nu(\Delta_C) = f_C + N-1+ 12m, 
\]
where $N$ is the number of irreducible components of $\cC_{\bar{k}}$. 
  \end{enumerate}  
\end{theorem}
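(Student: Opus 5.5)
We need to prove (1) that the component group of $\cA_k$ is $\Phi_1\times\Phi_2$, and (2) the conductor formula $\nu(\Delta_C)=f_C+N-1+12m$.

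\textbf{Part (1).} The plan is to compute the component group from the intersection matrix of the closed fiber of the minimal regular model $\cC$. By Raynaud's theorem (applicable since $\cC$ is regular and, after an unramified extension, has a rational point), $\Phi$ is the cokernel-type group $\ker(\deg)/\mathrm{Im}(M)$, where $M$ is the intersection matrix on the free group generated by the components, weighted by multiplicities. Galois equivariance then recovers $\Phi$ over $k$.

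The fiber of type $[\cK_1-\cK_2-m]$ consists of a configuration $\cK_1$ and a configuration $\cK_2$. Each has a distinguished component of multiplicity one, namely the one carrying the genus-one part, and these two are joined by a chain of $m-1$ rational $(-2)$-curves of multiplicity $1$ (or are meeting directly when $m=1$, or glued at a point when $m=0$). The key step is a standard lemma: if the dual graph is obtained by joining two subgraphs by a chain of multiplicity-one components attached to multiplicity-one components, then $\Phi$ splits as a product of the two pieces. Each piece equals the component group attached to the elliptic Kodaira fiber $\cK_i$.

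I would verify this via Lorenzini's description of $\Phi$ through the Laplacian of the weighted graph: a bridge-chain of multiplicity-one vertices contributes a tree part, and tree parts do not contribute to the Jacobian of the graph. Concretely, the chain can be eliminated by row operations. An alternative route is to compare with $E_1\times E_2$ via the description in Corollary~\ref{cor:gl}: $\cC_k$ is $X_0\cup X_n$ plus a chain, with each $X_i$ the fiber of the minimal regular model of a genus-one curve $C_i$, and one then computes the matrix blockwise.

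\textbf{Part (2).} I would use Saito's formula for a minimal regular model,
\[
-\mathrm{Art}(\cC/\cO_K)=\nu(\Delta_{\min}),
\]
where $\Delta_{\min}$ is the discriminant of the Deligne--Mumford (minimal) discriminant, together with the identity
\[
-\mathrm{Art}=f_C+N-1.
\]
The latter holds because $\mathrm{Art}=\chi(\cC_{\bar K})-\chi(\cC_{\bar k})-\delta$, and $\chi$ of a fiber with $N$ components and genus-zero normalized components with tree-like dual graph gives $f_C+N-1$. It then remains to compare $\nu(\Delta_C)$, the discriminant of the minimal Weierstrass model, with Saito's minimal discriminant $\nu(\Delta_{\min})$.

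For $g=2$, the work of the author on Igusa invariants (or the comparison between $\Delta$ of a Weierstrass model and the discriminant of the canonical/regular model, via $\det\omega$) gives $\nu(\Delta_C)=\nu(\Delta_{\min})+$ a correction. That correction comes from the difference between $\det H^0(W_0,\omega)$ and $\det\omega_{\cA}$, which equals $\pi^{n/2}=\pi^m$ by Proposition~\ref{prop:volume}. Since the discriminant has weight $10$ in $\omega^{\otimes}$, while the $\Delta_{\min}$ of Saito relates to $\lambda^{\otimes 10}$ twisted by $\delta$, one expects the correction $12m$: $10m$ from the Hodge bundle plus $2m$ from the extra singular contributions at the separating node region. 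More precisely, I would use Saito's relation $\Delta_{\min}\sim \Delta^{\text{W}}\cdot\lambda^{-10}$-type comparison together with the fact that the separating chain contributes $\delta$ with a factor.

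\textbf{Main obstacle.} Pinning down the exact constant $12m$ is the main obstacle. I would check it on an explicit example, such as $E_1\times E_2$ glued, or $y^2=(x^3+\pi^a)(\pi^{3b}x^3+1)$, to calibrate.
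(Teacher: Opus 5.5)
\textbf{Part (2).} This part fails, and the statement you are proving is itself wrong: the constant is $11m$, not $12m$. The introduction misquotes Proposition~\ref{prop:cond}, which states and proves $\nu(\Delta_C)=f_C+N-1+11m$. The paper's own data confirm this.
For $X_0(22)$ at $p=2$ (\S\ref{exp:22}) one has $\nu(\Delta_C)=12$, split type $[\tI_0-\tI_0-1]$, $N=2$, and $f_C=0$ (the conductor is $11^2$). Your formula gives $0+1+12=13\neq 12$.
Similarly, $y^2=(x^3+1)(x^3+t^{6m})$ over $\mathbb{C}[[t]]$ (proof of Theorem~\ref{thm:bound_n}) has $\nu(\Delta_C)=12m$, $f_C=0$, $N=m+1$.

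Your ``$10m+2m$'' double counts. The $10m$ is correct: it is the change of basis $\det\omega_{\cA/\cO_K}=\pi^m\det H^0(W_0,\omega_{W_0/\cO_K})$. The $2m$ is also correct as the order of the Weierstrass discriminant at a separating node of thickness $m$, since the genus-$2$ discriminant vanishes to order $2$ on the separating boundary divisor of $\overline{\mathcal M}_2$.
But in type $[\tI_0-\tI_0-m]$ their sum $12m$ is all of $\nu(\Delta_C)$. Saito's discriminant vanishes only to order $1$ on that divisor, giving $-\mathrm{Art}=f_C+N-1=m$. So the excess of $\nu(\Delta_C)$ over $f_C+N-1$ is $12m-m=11m$. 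The calibration you planned would have refuted your target.

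Even with the right constant, the key step is missing. For non-semistable $\cK_i$, the comparison between the minimal Weierstrass discriminant and Saito's (Deligne's) discriminant cannot be read off $\overline{\mathcal M}_2$. Moreover, $f_C$ and $N$ do not behave simply under the ramified extension needed to reach semistability.
The paper does not derive this comparison by hand. It quotes \cite{Lcd}: Proposition~1 and Th\'eor\`eme~2 there give $\nu(\Delta_C)=f_C+N-1+11c(\cC)$ for an invariant $c(\cC)$ of the minimal regular model, and Propositions~4 ($m=0$) and~6 ($m\ge 1$) compute $c(\cC)=m$.
Your plan (Saito's formula plus a discriminant comparison) is the strategy of \cite{Lcd}, but the comparison is the entire content.
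Incidentally, $-\mathrm{Art}=f_C+N-1$ needs no hypothesis on the genera of the components or on the dual graph; such hypotheses fail here anyway. It follows from $\dim H^1(\cC_{\bar k},\mathbb{Q}_\ell)=2a_C+t_C$ and $f_C=2g-2a_C-t_C+\mathrm{sw}$, with $\mathrm{sw}$ the Swan conductor.

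\textbf{Part (1).} There is no proof in the paper to compare with: Proposition~\ref{prop:tkk} is stated and its proof deferred to later work. Your route through Raynaud's description of $\Phi_C(\bar k)$ by the intersection matrix is sound in principle.
Joining two arithmetical graphs at multiplicity-one vertices by a chain of multiplicity-one vertices does split the cokernel as a product. In split type all vertices involved are fixed by $\Gal(\bar k/k)$, so the splitting is Galois-equivariant and passes to $k$-points.
However, that lemma is the whole argument and you only assert it. When you prove it, note two points. First, the pieces to identify with $\Phi(\cK_i)$ are the Kodaira configurations with the principal component's self-intersection corrected for the extra edge. Second, for $m=0$ the two configurations share a component, so you need a one-vertex-union version of the lemma, not a chain version.
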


For simplicity, in the introduction here we only present the results when $C$
has more than one minimal Weierstrass. In fact, if we have a Weierstrass
model $W$ such that $\delta(p)\le 3$ (see the discussions above
Lemma~\ref{lem:change_y} for the definition of $\delta(p)$ and $\lambda(p)$) 
for all $p\in W_k$, then the minimal 
desingularization of $W$ can be obtained by performing repeatedly the following
operations: find rational points $p\in W_k(k)$ with $\lambda(p)\ge 2$, 
replace $W$ by $W(p)$ if necessary (\S~\ref{subsect:eTa}).  The exceptional
locus of the minimal desingularization is described in terms of the Kodaira symbols
for the reduction of elliptic curves.  The arithmetic invariants such as
Euler factor, Tamagawa number, conductor and volume form
are then easily derived. See \S~\ref{sect:g2} for precise statements. 

As an illustration, we consider in \S~\ref{exp:22}
the modular curve $X_0(22)$ over $\mathbb Q$. We
give the Euler factors and the Tamagawa numbers at the primes of bad reduction
$p=2, 11$, and we determine a volume form of the N\'eron model of the Jacobian of
$X_0(22)$. 
\smallskip

From the practical point of view, given an arbitrary Weierstrass model $W$ of $C$,
there is a simple criterion, in terms of the multiplicities $\lambda(p)$ for
$p\in W_k(k)$, to decide whether $W$ is minimal and, when $W$ is minimal,
whether it is the unique one (\cite{LRN}, Propositions 4.3 and 4.6).
In \cite{LRN} we gave an algorithm to find a minimal Weierstrass
model. In the present work we provide an algorithm \ref{algo:mwm},
also based on the computation of the multiplicities $\lambda$, 
giving all the minimal Weierstrass models of $C$, when $g$ is even. It has
been implemented in PARI by Bill Allombert. 
\medskip

Most of the results in this work are for even $g$. The case when $g$ is odd
is slightly more complicated and will be treated in a future work.   
\medskip

{\bf Notation} Throughout this paper, $K$ is a discrete valuation field with
valuation ring $\cO_K$ and perfect residue field $k$.
We denote by $\pi$ a uniformizing element of $\cO_K$,
$\nu$ or $\nu_K$ the normalized valuation ($\nu(\pi)=1$) on $K$. 

Unless specified otherwise,  $C$ is a hyperelliptic curve over $K$
with hyperelliptic involution $\sigma$; $\cC$ is 
the minimal regular model of $C$; $\cC^{\can}$ is the canonical model of $C$.
Weierstrass models are usually denoted by $W$, sometimes by $U$. 
\medskip

\noindent {\bf Acknowledgments} I would like to thank 
Bill Allombert for stimulating discussions, Michel Matignon and
Mohamed Sa\"idi for helpful conversations concerning the rigid analytic
structure of $C$, and Michael Stoll to pointing out \cite{MuSt} for
works relating to reduction of genus $2$ curves and for suggesting
a correct statement of Proposition~\ref{prop:tkkc}. 

\begin{section}{Minimal Weierstrass models} \label{sect:1}

We describe in this section the relation between the minimal
Weierstrass models of a given hyperelliptic curve $C$, when there is
more than one such model. The main result is Theorem~\ref{chain-mwm}. 

\subsection{Weierstrass models}\label{basics}  We recall some basic definitions
and results on Weierstrass models of hyperelliptic curves.  In this work
we only consider \emph{normal models}. 
A \emph{Weierstrass model $W$ of $C$ over $\cO_K$} is the normalization
of a smooth model $Z$ of $C/\qi=\PP^1_K$ in $K(C)$. Note that $Z\simeq \PP^1_{\cO_K}$.  A \emph{coordinate function of $Z$}
is a rational function $x\in K(Z)$ such that $Z\setminus \mathrm{div}_\infty(x)\simeq \Spec \cO_K[x]$. The pre-image of $Z\setminus \mathrm{div}_\infty(x)$ in $W$ is an affine scheme defined by an equation as \eqref{eq:start} below. By construction,  
$\sigma$ acts on $W$ and we have $W/\qi =Z$. 

Unless specified otherwise, we always consider equations 
\begin{equation}
  \label{eq:start}
  y^2+Q(x)y=P(x), \quad P, Q\in \cO_K[x] 
\end{equation}
with $\deg Q(x) \le g+1$ and $\deg P(x) \le 2g+2$. The
discriminant $\Delta_W$ of $W$ is the discriminant of the
equation~\eqref{eq:start} (\cite{LRN}, Definition 1.2). The 
valuation $\nu(\Delta_W)\in \mathbb N$ depends  only on $W$, not
on the choice of the equation.

A Weierstrass model $W$ is said to be \emph{minimal} if $\nu(\Delta_W)$
is minimal among all Weierstrass models of $C$. A criterion is
given in \cite{LTR}, Corollaire 2 (see also \cite{LRN}, Proposition 4.3) for
$W$ to be minimal, in terms of the multiplicity $\lambda(p_0)$ of 
rational points $p_0\in W_k(k)$ (\cite{LRN}, Definition 3.2, and
\cite{LTR}, D\'efinition 10 for all closed points $p_0\in W_k$).   
There is a treatment of minimal Weierstrass models when $\chara(k)\ne 2$ and
$C$ has semi-stable reduction in \cite{DDMM}, \S 17. 

For a given curve $C$ there are only finitely many
minimal Weierstrass models over $\cO_K$ up to isomorphisms
of models (\cite{LTR}, Corollaire 4).  
A criterion is given in \cite{LTR}, Corollaire 2 (see also \cite{LRN},
Prop. 4.6\footnote{In (2.a) therein, $W$ must be assumed to be
minimal.}) for the uniqueness of the minimal Weierstrass model. 

Let us recall (\cite{LTR}) the notation $\varepsilon(W)=0$ or $1$ depending on
whether $W_k$ is reduced or not. For $p_0\in W_k(k)$, a new Weierstrass model
$W(p_0)$ is defined (\cite{LTR}, D\'efinition 12, see also
Definition~\ref{dfn:ccW}(4)  below), and 
we have 
\begin{equation} \label{eq:disc-p0}
\left\lbrace\begin{matrix}   
\nu(\Delta_{W(p_0)})& = &
\nu(\Delta_{W})+2(2g+1)(g+1-2[\lambda(p_0)/2]),\\[0.8em] 
              \varepsilon(W(p_0))& =& \lambda(p_0)-2[\lambda(p_0)/2].\hfill
\end{matrix}
\right.
\end{equation}
(\cite{LTR}, Lemme 9(a) and \cite{LRN}, Remark 4.2).

\subsection{Chains of Weierstrass models}\label{chain-w} We start by
describing the relations between the smooth models of $\PP^1_K$. 
Let $Z$ be a smooth model of $\PP^1_K$ over $\cO_K$. Let $q_0\in Z_k(k)$.
Let $x$ be a coordinate function of $Z$ (\S \ref{basics}) such that
$x(q_0)=0$, we define $Z(q_0)$ as the smooth model with coordinate function $x/\pi$.
It is immediate to check that all coordinate functions vanishing at $q_0$
give the same $Z(q_0)$. Let $q_1^* \in Z(q_0)_k(k)$ be the pole of $x/\pi$. Then
$Z=Z(q_0)(q_1^*)$. 

We call a \emph{chain of smooth models of $\PP^1_K$} 
a sequence
\[ Z_0, Z_1, \dots, Z_n\]
of pairwise non-isomorphic smooth models of $\PP^1_K$ such that
for all $i\le n-1$, $Z_{i+1}=Z_i(q_i)$ for some $q_i\in (Z_i)_k(k)$.
Note that $Z_n, Z_{n-1}, \dots, Z_0$ is then also a chain. 

\begin{lemma}\label{lem:csm} 
  Let $Z, Z'$ be two distinct ({\it i.e.} non-isomorphic) smooth models
  of $\PP^1_K$ over $\cO_K$.
  \begin{enumerate}[\rm (1)] 
  \item (\cite{LTR}, \S 4.2) There exist $n\ge 1$ and a coordinate function of 
    $Z$ such that $x/\pi^n$ is a coordinate function of $Z'$.
  \item For all $i\le n$, let $Z_i$ be the smooth model of $\PP^1_K$ with 
    $x/\pi^i$ as a coordinate function. Then
$(Z_i)_{0\le i\le n}$ is a chain of smooth models of $\PP^1_K$ with
 $q_i\in (Z_i)_k$ the zero of $x/\pi^i$.
\item The above chain is the unique one starting with $Z$ and ending with $Z'$.
\end{enumerate}
\end{lemma}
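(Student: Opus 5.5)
Part (1) is quoted from \cite{LTR}, so I only sketch why it holds. Two smooth models of $\PP^1_K$ correspond to two coordinate functions related by a transformation in $\mathrm{PGL}_2(K)$. Using the elementary divisor theorem over the DVR $\cO_K$, that is, the Cartan decomposition $\mathrm{GL}_2(K)=\mathrm{GL}_2(\cO_K)\,\mathrm{diag}(\pi^a,\pi^b)\,\mathrm{GL}_2(\cO_K)$, we can rechoose the coordinate of $Z$ by an element of $\mathrm{GL}_2(\cO_K)$ and the coordinate of $Z'$ likewise. After this, the transformation becomes $x\mapsto \pi^{a-b}x$, possibly composed with $x\mapsto 1/x$. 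Replacing $x$ by $1/x$ if necessary, and then $x'$ by $1/x'$ if necessary, we may assume $x'=x/\pi^n$ with $n\ge 0$. Since $Z\ne Z'$, we get $n\ge 1$.

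For (2), set $Z_i$ to be the model with coordinate $x/\pi^i$. By the definition of $Z(q)$ in \S\ref{chain-w}, if $q_i$ is the zero of $x/\pi^i$ on $(Z_i)_k$, then $Z_i(q_i)$ is the model with coordinate $(x/\pi^i)/\pi=x/\pi^{i+1}$, which is $Z_{i+1}$. It remains to check that the $Z_i$ are pairwise non-isomorphic models. For $i<j$, suppose $Z_i=Z_j$ as models. Then $x/\pi^j$ would be obtained from $x/\pi^i$ by an element of $\mathrm{PGL}_2(\cO_K)$. That element would be $\mathrm{diag}(1,\pi^{j-i})$ up to a scalar, whose determinant has odd or nonzero valuation relative to $\mathrm{PGL}_2(\cO_K)$. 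More concretely, the valuation of $\det$ modulo $2\nu(\text{entries})$ is an invariant of $\mathrm{PGL}_2(\cO_K)\backslash \mathrm{PGL}_2(K)/\mathrm{PGL}_2(\cO_K)$, namely the difference of the elementary divisors, and here it equals $j-i\ne 0$. So the models are distinct.

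For (3), the plan is to translate into the Bruhat--Tits tree, but I would phrase it elementarily. Let $Z=Y_0,Y_1,\dots,Y_m=Z'$ be any chain. I would attach to each pair of models the integer $d(Y,Y')$, defined as the difference of the elementary divisors of the transition matrix, so that $d(Z,Z')=n$. The key facts are:
\begin{itemize}
\item[(a)] $d(Y,Y(q))=1$;
\item[(b)] the models $Y(q)$, for $q\in Y_k(k)$, together with the models at distance $1$ corresponding to closed points of higher degree, are all distinct;
\item[(c)] $d$ satisfies the tree property: if $Y_{i+1}\ne Y_{i-1}$, then $d(Y_0,Y_{i+1})=d(Y_0,Y_i)+1$.
\end{itemize}
Fact (c) is the main obstacle. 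I would prove it by normalizing the coordinate so that $Y_0$ has coordinate $t$ and $Y_i$ has coordinate $t/\pi^{d}$. A neighbour of $Y_i$ is $Y_i(q)$, and its coordinate is $(t/\pi^d-c)/\pi$ for some $c\in\cO_K$, or else $\pi^{d-1}/t$ when $q$ is the pole. Among these, exactly one neighbour is closer to $Y_0$: it is $Y_{i-1}$, the one given by the pole, i.e. the model with coordinate $t/\pi^{d-1}$. All the others have transition matrix with elementary divisors differing by $d+1$, which is a direct computation.

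Granting (c), a chain has no backtracking, since its members are pairwise distinct. Hence $d(Z,Y_j)=j$ for all $j$, and so $m=n$. Uniqueness then follows by descending induction from $Z'$. The predecessor of $Y_j$ in any chain from $Z$ is forced to be the unique neighbour of $Y_j$ closer to $Z$, and for $Y_j=Z_j$ this neighbour is $Z_{j-1}$. Hence $Y_j=Z_j$ for all $j$.
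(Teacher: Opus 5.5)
Your proposal is correct, but it takes a somewhat different route from the paper.

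\textbf{Part (1).} The paper argues by descent on the invariant $d(Z,Z')=|\nu(ad-bc)|$. It inverts and translates $x$ so that $x$ has positive valuation at the generic point of $Z'_k$. It then passes to $Z(q_0)$, with $q_0$ the zero of $x$, which lowers the invariant by one, and inducts. You instead obtain the normalized coordinate in one step from the Cartan (Smith normal form) decomposition $\mathrm{GL}_2(K)=\mathrm{GL}_2(\cO_K)\,\mathrm{diag}(\pi^a,\pi^b)\,\mathrm{GL}_2(\cO_K)$. This is cleaner and makes the well-definedness of $n$ transparent.

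\textbf{Part (3).} The paper takes an arbitrary chain $U_0,\dots,U_m$ and uses the no-backtracking condition $U_{j+1}\ne U_{j-1}$ at each step. This produces a single coordinate $t$ of $Z$ with $t/\pi^j$ a coordinate of $U_j$. It then compares $\pi^{-n}x$ and $\pi^{-m}t$ on $Z'$ through one $\Gl_2(\cO_K)$ matrix identity, which forces $n=m$ and $u_0=q_0$, and inducts forward. You instead prove the tree property of the distance: a model at distance $d\ge 1$ from $Y_0$ has exactly one neighbour at distance $d-1$ (the one given by the pole of $t/\pi^d$), and all its other neighbours are at distance $d+1$. From this you get $d(Z,Y_j)=j$ along any chain, hence $m=n$, and uniqueness by descending induction from $Z'$ via the unique ``parent''.

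The core computation is shared: a non-pole step from $t/\pi^d$ gives $(t-\pi^d c)/\pi^{d+1}$. The paper's version is more economical for this lemma alone. Yours isolates the geodesic structure, which also gives Lemma~\ref{sum-dist} essentially for free.

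Some points need tidying, though none is a real gap.
\begin{enumerate}
\item Your fact (b) is confused and should be dropped. By (1) with $n=1$, every model at distance one from $Y$ is $Y(q)$ for a rational point $q\in Y_k(k)$, so no distance-one models are attached to closed points of higher degree. In any case (b) is never used.
\item In (2), the invariant you mean is $\nu(\det M)-2\min_{i,j}\nu(m_{ij})$. It is $0$ on $\mathrm{PGL}_2(\cO_K)$ and $j-i$ on $\mathrm{diag}(1,\pi^{j-i})$. More simply, $x/\pi^j=\pi^{i-j}(x/\pi^i)$ has a pole at the generic point of $(Z_i)_k$, so it cannot be a coordinate function of $Z_i$.
\item Fact (c) is only valid inside an induction on $i$, with hypothesis $d(Y_0,Y_j)=j$ for $j\le i$, and you should state it that way.
\item You should also say why $Y_{i-1}$ is a neighbour of $Y_i$. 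If $s$ is a coordinate of $Y_{i-1}$ vanishing at $q_{i-1}$, then $Y_{i-1}=Y_i(q_i^*)$ with $q_i^*$ the pole of $s/\pi$. This is what forces $Y_{i-1}$ to be the unique closer neighbour.
\end{enumerate}
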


\begin{proof} (1)-(2) Let $x$, $x'$ be respective coordinate functions of $Z$ and
  $Z'$. There exist $a, b, c, d\in \cO_K$, with $\gcd(a, b, c, d)=1$ such
  $x=(ax'+b)/(cx'+d)$. The integer $n=|\nu(ad-bc)|>0$ is independent
  on the choice of $x, x'$.  Denote it by $d(Z, Z')$. 
  Inverting $x$ if necessary   we can suppose that $x$ is a rational
  function on $Z'_k$. There exists 
  $r\ge 0$ such that $x/\pi^r \ne 0$ in $k(Z'_k)$. Replacing $x$ by 
  $x-\pi^r c$ for some $c\in \cO_K$  we can suppose that $r>0$.
  Let $q_0\in Z_k$ be the zero of $x$ and let $Z_1=Z(q_0)$. Then
  $d(Z_1, Z')=n-1$. If $n=1$, then $x/\pi$ is a coordinate function of
  $Z'$ and $Z'=Z(q_0)$. Otherwise we start again with the pair $Z_1, Z'$
  and the rational function $x/\pi$ on $Z'_k$.  By induction on $n$
  we construct the $Z_i$, $q_i$ and a coordinate function $x$ of $Z$
  such that $x/\pi^{i}$ is a coordinate function of $Z_i$ for all $i$
  (translating $x$ by a suitable constant in $\cO_K$ if necessary
  at each step). 

  (3) Let $U_0=Z, \ U_1, \ \dots, \ U_m = Z'$ be a chain
  with $U_{j+1}=U_j(u_j)$ for all $j\le m-1$. Let $t$ be a coordinate function
  of $Z$ such that $t(u_0)=0$. Then $u_1$ is not the pole of $t/\pi$
  in $(U_1)_k$ because otherwise $U_2=U_0$.  Replacing $t$ by $t-\pi c$ for
  some $c\in \cO_K$, we can suppose that $(t/\pi)(u_1)=0$. Repeating this
  process we find as in (1) a coordinate function $t$ of $Z$ such that
  $t/\pi^j$ is a coordinate function of $U_j$ for all $j\le m$. 
  There exist $r\in \Z$ and 
    \[
     \begin{pmatrix}
        a_{11} & a_{12} \\ a_{21} & a_{22}
    \end{pmatrix},
 \begin{pmatrix}
        b_{11} & b_{12} \\ b_{21} & b_{22}
    \end{pmatrix} \in \Gl_2(\cO_K) 
  \] 
  such that
  $\pi^{-n}x=(a_{11} \pi^{-m}t + a_{12})/(a_{21}\pi^{-m}t + a_{22})$ and 
  \[
\begin{pmatrix}
        a_{11}\pi^n & \pi^{n+m}a_{12} \\ a_{21} & \pi^m a_{22}
    \end{pmatrix}=\pi^r
 \begin{pmatrix}
        b_{11} & b_{12} \\ b_{21} & b_{22}
    \end{pmatrix}. 
  \] 
  As $\gcd(a_{11}, a_{21})=1$, this implies easily that
  $a_{11}, a_{22}\in \cO_K^*$, $a_{21}\in \pi\cO_K$ and $n=m$. 
 Hence $x(u_0)=0$ and $u_0=p_0$. Therefore $U_1=Z_1$. Again by induction we find
  $U_i=Z_i$ for all $i\le n-1$. 
  \end{proof}

  Let $Z, Z'$ be two smooth models of $\PP^1_K$.
With the notation of Lemma~\ref{lem:csm}(1), we denote by
\[
  Z\wedge Z'=\{ x=0 \} \subset Z_k, \quad
  Z'\wedge Z:=\{ (\pi^n/x)= 0 \} \subset Z'_k.  
\] 
We should think them as the
``\emph{intersection points}'' of $Z_k$ and $Z'_k$. From the
geometric point of view, there is a smallest model $Z\vee Z'$ of $\PP^1_K$
dominating $Z$ and $Z'$. The closed fiber of $Z\vee Z'$ is union of two
projective lines $\PP^1_k$ intersecting transversely at a rational
point $\tilde{q}_0$. Then the points of $Z\wedge Z'$ and
$Z'\wedge Z$ are the respective images of $\tilde{q}_0$ in $Z$ and $Z'$. 
See Figure~\ref{inter-ZZ'}.

\begin{figure}[h] 
 \centering 
\begin{tikzpicture}[scale=1] 
  \draw (0, 1.5) node [anchor=east]  {$\Omega$} -- (1.5, 0);
  \draw (1, 0) -- (2.5, 1.5) node [anchor=west] {$\Omega'$};
  \draw[->] (1.45,0.25) -- (5, 0.25 );
  \draw (-0.5, 0.5) node {$(Z\vee Z')_k$};
  \draw  (1, 0.25)  node [left] {$\tilde{q}_0$ };
    \draw (1.25, 0.25)  node {$\bullet$}; 
  \draw (4, 1.5) node [anchor=west] {$Z_k$} -- (5.5,0); 
  \draw (5.25, 0.25) node {$\bullet$};
  \draw (5.30, 0.25) node [right] {$x=0$};
  \coordinate (P) at (1.40, 0.23);
  \coordinate (Q) at (6.5, 0.25);
  \draw[->] (P) to [bend right] (Q); 
  \draw (6.75, 0.25) node [right] {$(\pi^n/x)=0$};
  \draw (6.70, 0.25) node {$\bullet$}; 
  \draw (6.5, 0) -- (7.5, 1.5) node [anchor=west] {$Z'_k$}; 
\end{tikzpicture} 
\caption{$\Omega$, $\Omega'$ are the respective strict transforms of
  $Z_k$ and $Z'_k$.}
\label{inter-ZZ'}
\end{figure}

We call the sequence $Z, Z_1, \dots, Z_{n-1}, Z'$ of Lemma~\ref{lem:csm}
the \emph{chain of smooth models} connecting $Z$ to $Z'$, and we denote by 
$d(Z, Z')$ the length $n$ of the chain. If $Z=Z'$, $d(Z, Z')=0$. 
Note that for all $0<i<n$, we have
\[ Z\wedge Z_i=Z\wedge Z', \quad Z'\wedge Z_i=Z'\wedge Z.\] 
The model $Z\vee Z'$ is semi-stable over $\cO_K$ with at most
a singular point $\tilde{q}$. The minimal resolution
$\widetilde{Z\vee Z'}$ of this singularity dominates $Z_i$ for all $0\le i\le n$,
the strict transform of $(Z_i)_k$ is isomorphic to $(Z_i)_k$ and 
$(\widetilde{Z\vee Z'})_k$ is 
a chain of $n+1$ projective lines over $k$. See Figure~\ref{tree}.

\begin{figure}[h] 
\begin{tikzpicture}[scale=1] 
  \draw (0, 1.5) node [anchor=east]  {$Z_k$} -- (1.5, 0);
  \draw (1, 0) -- (2.5, 1.5);
  \draw (3, 1.5) node {$(Z_1)_k$};
  \draw (1.5, 1.5) -- (3,0);
    \draw (2.5, 0) node {$(Z_2)_k$};
  \draw[dashed]  (2,0.5) -- (5, 0.5);
  \draw (4,0) -- (5.5,1.5);
    \draw (6.7, 0.1) node {$(Z_{n-1})_k$};
  \draw (4.5,1.5) -- (6,0);
  \draw (5.5, 0) -- (7,1.5) node [anchor=west] {$Z'_k$}; 
\end{tikzpicture} 
\caption{The closed fiber of $(\widetilde{Z\vee Z'})_k$.}
\label{tree} 
\end{figure}

\begin{lemma} \label{sum-dist} Let $Z, Z', Z''$ be three pairwise distinct
  smooth models of $\PP^1_K$. Then
  \[ d(Z', Z'') \le d(Z', Z)+d(Z, Z'') \]
  with equality if and only if $(Z\wedge Z') \cap (Z\wedge Z'')=\emptyset$.
    In particular $d(-, -)$ is a distance on the set of the isomorphism classes
  of the smooth models of $\PP^1_K$. 
\end{lemma}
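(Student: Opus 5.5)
We will work entirely with coordinate functions, using Lemma~\ref{lem:csm}. Put $n=d(Z,Z')$ and $m=d(Z,Z'')$, both positive. By Lemma~\ref{lem:csm}(1) there are coordinate functions $x$ and $t$ of $Z$ such that $x/\pi^{n}$ is a coordinate function of $Z'$ and $t/\pi^{m}$ is a coordinate function of $Z''$. Then $Z\wedge Z'$ is the zero of $x$ and $Z\wedge Z''$ is the zero of $t$ in $Z_k$. Since $x$ and $t$ are both coordinate functions of $Z$, we have $t=(ax+b)/(cx+d)$ with $\begin{pmatrix} a&b\\ c&d\end{pmatrix}\in\Gl_2(\cO_K)$.

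The plan is to compute $d(Z',Z'')$ as $|\nu(\det)|$ of a primitive integral matrix relating $x/\pi^n$ and $t/\pi^m$. Write $x'=x/\pi^n$ and $t''=t/\pi^m$. Then
\[ t''=\frac{a\pi^{n}x'+b}{\pi^{m}(c\pi^{n}x'+d)}, \qquad M=\begin{pmatrix} a\pi^n & b\\ c\pi^{n+m} & d\pi^m\end{pmatrix}, \]
and $\det M=\pi^{n+m}(ad-bc)$ has valuation $n+m$. By the proof of Lemma~\ref{lem:csm}, $d(Z',Z'')=n+m-2r$, where $r$ is the minimum of the valuations of the entries of $M$; we divide $M$ by $\pi^r$ to make it primitive.

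\emph{Case 1: the two intersection points are disjoint.} Reducing modulo $\pi$, the zeros of $x$ and $t$ differ exactly when $t(q)\neq 0$ at the point $q$ where $x=0$. This means $\bar b\neq0$, i.e. $b\in\cO_K^*$. Then $r=0$ and the distance is $n+m$, which gives equality.

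\emph{Case 2: the two points coincide.} Then $b\in\pi\cO_K$, and invertibility of the matrix forces $a,d\in\cO_K^*$, so every entry of $M$ has valuation at least $1$ and $r\ge1$. The distance is therefore strictly less than $n+m$. We must also check $r\le \min(n,m)$ so that the result is nonnegative; this is automatic because $d$ is a unit. In fact we could replace $t$ by $t-b$ to arrange $b=0$ and get the explicit value $n+m-2\min(n,m,\nu(b))$, but this is not needed.

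Finally, $d$ is a distance. Symmetry follows from $|\nu(\det)|$ being unchanged when the matrix is inverted. Positivity on distinct models follows from Lemma~\ref{lem:csm}(1). The triangle inequality is what we have just shown in the pairwise distinct case, and it is trivial when two of the models coincide.

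The main obstacle is justifying the formula $d=|\nu(\det)|$ for a primitive matrix relating two arbitrary coordinate functions. The proof of Lemma~\ref{lem:csm} asserts this independence, noting that $\Gl_2(\cO_K)$ changes of coordinates on either side preserve both primitivity and the valuation of the determinant. We would cite or reprove this via the elementary divisor (Smith normal form) theorem: a primitive matrix is equivalent to $\mathrm{diag}(1,\pi^{N})$, with $N$ equal to the valuation of the determinant.
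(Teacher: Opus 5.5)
Your proof is correct, but it takes a different route from the paper's.

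The paper argues with chains. It lets $(Z_i)_{0\le i\le n}$ and $(U_j)_{0\le j\le m}$ be the chains from $Z$ to $Z'$ and from $Z$ to $Z''$.
\begin{itemize}
\item If the two intersection points differ, it concatenates $U_m,\dots,U_0=Z_0,\dots,Z_n$ into a chain of length $n+m$ from $Z''$ to $Z'$. Equality then follows from the uniqueness of chains, Lemma~\ref{lem:csm}(3).
\item If the points coincide, it notes $U_1=Z_1$, so that $d(Z',Z)+d(Z,Z'')=d(Z_1,Z')+d(Z_1,Z'')+2$, and finishes by induction on $\min\{n,m\}$.
\end{itemize}

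You bypass both the uniqueness of chains and the induction. You compute $d(Z',Z'')$ in one stroke as $\nu(\det \pi^{-r}M)$. This also yields a closed formula. When $\bar b=0$ one has $a,d\in\cO_K^*$, hence $r=\min\{n,m,\nu(b)\}$ and $d(Z',Z'')=n+m-2\min\{n,m,\nu(b)\}$. Here $\min\{n,m,\nu(b)\}$ is exactly the length of the common initial segment of the two chains, which the paper's induction strips off one step at a time.

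The cost is that you use the determinant description of $d$ from the proof of Lemma~\ref{lem:csm}(1) rather than the chain-length one; they agree by that lemma. Its independence of the coordinates is more elementary than Smith normal form. Multiplying by $\Gl_2(\cO_K)$ on either side preserves primitivity and $\nu(\det)$. Also, the primitive matrix of a given fractional linear map is unique up to $\cO_K^*$.

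Two minor slips do not affect the argument.
\begin{itemize}
\item $d\in\cO_K^*$ alone only gives $r\le m$; in Case 2 you also have $a\in\cO_K^*$, which gives $r\le n$. In any case $n+m-2r=\nu(\det \pi^{-r}M)\ge 0$ holds automatically, since $\pi^{-r}M$ is integral.
\item The aside about replacing $t$ by $t-b$ to make $b=0$ is not legitimate in general. The shifted function divided by $\pi^m$ is a coordinate function of $Z''$ only when $\nu(b)\ge m$. The formula above should be read off from $M$ as it stands.
\end{itemize}
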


\begin{proof} Let $(Z_i)_{0\le i\le n}$, $(U_j)_{0\le j\le m}$ 
 be the chains connecting $Z$ to respectively $Z'$ and $Z''$.
    Let $Z\wedge Z'=\{ q_0 \}$ and $Z\wedge Z''=\{ u_0\}$. 
  First suppose that $q_0\ne u_0$.  Then
  $Z''=U_m, U_{m-1}, \dots, U_0=Z_0, \dots, Z_n=Z$ is a chain of length $n+m$
  and we have the desired equality. 
  
  Suppose now $q_0=u_0$. Then $U_1=Z_1$ and
  \[ d(Z', Z)+d(Z, Z'')=d(Z_1, Z')+ d(Z_1, Z'')+2.\] 
  If $Z_1=Z'$ or $Z''$ then we have the
  desired strict inequality. Otherwise we proceed by induction on
  $\min\{ n, m\}$. 
  \end{proof}

\begin{definition} \label{dfn:ccW}
Let $W, W'$ be Weierstrass models of $C$.
Let $Z=W/\qi$ and  $Z'=W'/\qi$.
\begin{enumerate}[\rm (1)] 
\item Let $(Z_i)_{0\le i \le n}$ be the chain of smooth models of $C/\qi$ connecting
$Z$ to $Z'$. Let $W_i$ be the normalization of $Z_i$ in $K(C)$. We call
the sequence $(W_i)_{0\le i\le n}$ the \emph{chain of Weierstrass models connecting
  $W$ to $W'$}. 
We define the \emph{distance} $d(W, W'):=d(Z, Z')$. 
\item If $W\ne W'$, we denote by $W\wedge W'$ the pre-image of
$Z\wedge Z'$ by $W\to Z$. Note that $W\wedge W'\ne W'\wedge W$ and
for all $0<i<n$, we have
\[ W\wedge W_i=W\wedge W', \quad W'\wedge W_i=W'\wedge W.\] 
\item We denote by $W\vee W'$ the normalization of $Z\vee Z'$ in $K(C)$.
  This is then the
smallest model of $C$ dominating $W$ and $W'$. The intersection of the
strict transforms of $W_k$ and $W'_k$ is the pre-image of $W\wedge W'$
and of $W'\wedge W$.  
\item Let $p_0\in W_k$ be a point lying over a rational point $q_0\in Z_k(k)$. We 
define $W(p_0)$ as the normalization of $Z(q_0)$ in $K(C)$. This generalizes
slightly \cite{LTR}, D\'efinition 12 (where $p_0\in W_k(k)$).
\end{enumerate} 
\end{definition}

\begin{remark} Keep the above notation. 
  \begin{enumerate}[\rm (1)] 
  \item (\emph{Caution}) The notation $Z\wedge Z'$ and $W\wedge W'$ differ
 slightly from 
  \cite{LTR}, \S 4. The $Z\wedge Z'$ there is the $(Z\wedge Z')\cup (Z'\wedge Z)$
  here. Similarly for $W\wedge W'$. The new (non commutative) notation is
  simpler. 
\item The dual graph of $\widetilde{Z\vee Z'}_k$ is a simple chain 
  (Figure~\ref{tree}). Consider the minimal desingularization of
  $W\vee W'$ at the points lying over $W\wedge W'$. Then the same property
  holds if $g$ is even and $W, W'$ are minimal
  (Theorem~\ref{regular-even}). But this is not true in general.    
  \end{enumerate}
\end{remark}

\begin{lemma} \label{lem:Wp0} Let $W, W'$ be distinct Weierstrass models of $C$.
  \begin{enumerate}[\rm (1)] 
  \item Let $p_0\in W\wedge W'$. If $\delta(p_0)\ge 1$ (see the summary above Lemma~\ref{lem:change_y}), then $W\wedge W'$ is reduced to a
    single point of $W_k$, rational over $k$.
\item  We have $d(W, W')=1$ if and only if $W'=W(p_0)$ for some
  $p_0\in W_k$ lying over a rational point of $Z_k$. This then implies that
  $p_0\in W\wedge W'$. 
  \end{enumerate}  
\end{lemma}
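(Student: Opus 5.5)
The plan is to push everything down to the quotients $Z=W/\qi$ and $Z'=W'/\qi$, which are smooth models of $\PP^1_K$. Both statements then become statements about a single rational point $q_0\in Z_k(k)$ and the degree-$2$ finite morphism $W\to Z$. Throughout I use that $W$ is the normalization of $Z$ in $K(C)$, so $W$ and $Z$ determine each other: $Z=W/\qi$ and $W$ is the normalization of $Z$.

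For (2), I would argue directly from the definitions. Suppose $d(W,W')=d(Z,Z')=1$. By Lemma~\ref{lem:csm}(1)--(2) there is a coordinate function $x$ of $Z$ such that $x/\pi$ is a coordinate function of $Z'$. Hence $Z'=Z(q_0)$, where $q_0\in Z_k(k)$ is the zero of $x$. Taking normalizations in $K(C)$ gives $W'=W(p_0)$ for any $p_0\in W_k$ over $q_0$, by Definition~\ref{dfn:ccW}(4).

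Conversely, if $W'=W(p_0)$ with $p_0$ over a rational point $q_0$, then $Z'=W'/\qi=Z(q_0)$. Since $Z'\neq Z$, the chain $Z, Z(q_0)$ is the chain connecting $Z$ to $Z'$, by uniqueness in Lemma~\ref{lem:csm}(3). So the distance is $1$. Finally, $Z\wedge Z'=\{x=0\}=\{q_0\}$ for this coordinate $x$, and $W\wedge W'$ is by definition the preimage of $q_0$ in $W$. Therefore $p_0\in W\wedge W'$.

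For (1), first note that $W\wedge W'$ is the fibre of $W_k\to Z_k$ over the single rational point $q_0=Z\wedge Z'$. This fibre is the spectrum of a $k$-algebra of rank $2$. So it consists either of two rational points, or of one point with residue field a degree-$2$ extension of $k$, or of one point with residue field $k$. I would split on the separability of this fibre.
\begin{enumerate}[(a)]
\item Suppose the fibre is étale over $k$. This covers both two rational points and a single point with separable quadratic residue field. Then $W\to Z$ is finite, flat (since $W$ is Cohen--Macaulay of dimension $2$ over the regular $Z$), and unramified at $p_0$, hence étale at $p_0$. As $Z$ is smooth over $\cO_K$, $W$ is smooth at $p_0$. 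This should force $\delta(p_0)=0$, contradicting the hypothesis.
\item Otherwise the fibre is a single point with residue field purely inseparable over $k$. Because $k$ is perfect, that residue field is $k$ itself, so $W\wedge W'$ is one rational point.
\end{enumerate}
The non-reduced case $\varepsilon(W)=1$ falls under (b): the fibre is then $k[y]/(y-c)^2$, one rational point.

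The main obstacle is making the implication ``$\delta(p_0)\ge 1\Rightarrow W$ not smooth at $p_0$'' precise. This depends on the definition of $\delta$, which is recalled only later (above Lemma~\ref{lem:change_y}). My first attempt would be to compute in a local equation. Put $q_0$ at $x=0$ and write the fibre as $y^2+Q(0)y-P(0)$ over $k$. Étaleness of this fibre means the reduction of $Q(0)^2+4P(0)$ is a unit, or simply that $Q(0)$ is a unit when $\chara(k)=2$. In that situation one can check directly from the definition that the invariant $\delta(p_0)$ (equivalently $\lambda(p_0)$) is $0$.
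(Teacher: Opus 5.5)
Your proposal follows the paper's proof. Part (2) is Lemma~\ref{lem:csm}(3) applied to the quotients $Z, Z'$. Part (1) is the observation that a non-\'etale rank-$2$ fibre over the rational point $q_0$ is a single $k$-rational point because $k$ is perfect; this is automatic when $W_k$ is non-reduced.

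One thing needs correcting. The implication you single out as the main obstacle, ``$\delta(p_0)\ge 1\Rightarrow W$ not smooth at $p_0$'', is false: by Lemma~\ref{lem:d0}(1) the points with $\delta=1$ are exactly the smooth ramification points. So drop the smoothness detour. What you actually need is that \'etaleness of $W_k\to Z_k$ at $p_0$ forces $\delta(p_0)=0$. This is Lemma~\ref{lem:d0}(1), which the paper cites. Your local computation does prove it once you note that $y\mapsto uy+F(x)$ replaces $Q^2+4P$ by $u^2(Q^2+4P)$, so no admissible equation can have $\bar Q(0)=\bar P(0)=0$.
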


\begin{proof} (1) The quotient morphism 
  $W_k\to Z_k$ is a cover of degree $2$ not \'etale above the point
  $q\in Z_k(k)$ of $Z\wedge Z'$ by Lemma~\ref{lem:d0}(1) if $W_k$ is reduced, 
  and $W_k\to Z_k$ is a universal homeomorphism if $W_k$ is non-reduced.
  So   $W\wedge W'$ consists in a single rational point. 

  (2) This follows from the equivalence
  $d(Z, Z')=1$ if and only if $Z'=Z(q_0)$ for some rational point $q_0\in Z_k$
  (Lemma~\ref{lem:csm}(3)). By definition $\{q_0 \}=Z\wedge Z'$. 
  \end{proof}

\begin{remark} \label{rmk:p0_nr} 
  Let $W$ be a Weierstrass model, $Z=W/\qi$ and let
  $q_0 \in Z_k(k)$. Suppose that the (set-theoretical) pre-image
  $W\times_Z \{ q_0\}$ of $q_0$ is not a single rational point. 
  Then $W_k \to Z_k$ is \'etale over $q_0$. So $W_k$ is reduced and
  for any $p_0\in W_k$ lying over $q_0$,  the multiplicities
$\lambda(p_0), \delta(p_0)$ (see below) are equal to $0$.  The
closed fiber $W(p_0)_k\simeq \PP^1_{k(p_0)}$, hence all points
of $W(p_0)_k$ are smooth over $k$ and therefore have multiplicities
$\lambda=\delta=0$. One can check directly that Formula~\eqref{eq:disc-p0}
holds:
\[
\nu(\Delta_{W(p_0)})=\nu(\Delta_{W})+2(2g+1)(g+1). 
\] 

Let $W'$ be another Weierstrass model and suppose that $W\wedge W'$ is not
a single rational point. Repeating the above discussions we find 
\[ \nu(\Delta_{W'})=\nu(\Delta_{W})+2(2g+1)(g+1)d(W, W').\]  
\end{remark}

Let $W_0, \dots, W_n$ be a chain of Weierstrass models.
We look for an equation of $W_0$ from which we can
deduce directly an equation of $W_i$ for all
$1\le i\le n$ (Proposition~\ref{prop:same-y}). 
\medskip 

\noindent {\bf Multiplicities $\lambda$ and $\delta$.} Let $W$ be a Weierstrass model
of $C$. Let us recall the definition
of the multiplicities $\delta(p_0), \lambda(p_0)$ for closed points $p_0\in W_k$
as given in \cite{LTR}, D\'efinitions 9 and 10. 

Let $y^2+Q(x)y=P(x)$ be an equation of $W$ as in Equation~\eqref{eq:start} and 
such that $x$ is regular at $p_0$. Set 
\begin{equation}
  \label{eq:def_delta}
  \delta_{p_0}(y):=\min\{ 2 \ord_{p_0}(\bar{Q}(x)),
  \ord_{p_0}(\bar{P}(x))\}  
\end{equation}
where $\ord_{p_0}(f(x))$ stands for the vanishing order of $f(x)\in k[x]$ at $p_0$. 
When $x(p_0)=0$, we write $\ord_0$ and $\delta_0(y)$
instead of $\ord_{p_0}$ and $\delta_{p_0}(y)$. 

If $W_k$ is reduced, $\delta(p_0)$ is the maximum of the $\delta_{p_0}(y)$'s for
all possible equations as above, that is,  we can change $y$ by $uy+F(x)$ with
$u\in \cO_K^*$ and $F(x)\in \cO_K[x]$ with $\deg F(x)\le g+1$. 
If $W_k$ is non-reduced, we only consider equations with $\pi \mid Q(x), P(x)$
and we let $\delta(p_0)=\ord_{p_0} \overline{(\pi^{-1}P)}(x)$ (it does not depend
on the choice of $P(x)$).

To define $\lambda(p_0)$, we suppose for simplicity that the image of $p_0$
in $W/\qi$ is rational over $k$ and refer to \cite{LTR}, D\'efinition 10
(with $r=1$ there) for the general case. We then translate $x$ to make
$x(p_0)=0$. We denote by $\mu_0$ the Gauss valuation on $K(x)$ with respect
to the variable $x/\pi$:
\[
\mu_0\left(\sum_j a_jx^j\right) = \min_j \{ \nu(a_j)+j \}
\]
and we put 
\[ \lambda_0(y):=\min\{ 2 \mu_0(Q(x)),
  \mu_0(P(x)) \}.\]
By definition $\lambda(p_0)=\max_y \lambda_0(y)$ for all possible
equations as above (we can change $y$ by $uy+F(x)$ as for the
definition of $\delta(p_0)$).
See also \cite{Ld}, Definition 2.3 in a more general context.  
This multiplicity has the property that $\lambda(p)\le 1$ if and only if $W$ is regular
at $p$ (\cite{LTR}, Lemme 7(a)). See also Lemma~\ref{lem:d0} for more
information.

\begin{lemma} \label{lem:change_y} Keep the above notation and suppose $\chara(k)=2$. 
  Write $P(x)=\sum_{j\ge 0} a_jx^j$. Let 
  \[
H(x)=\sum_{j\ge 0, \ \nu(a_{2j})\in 2\mathbb N}  e_jx^j, \quad \text{with } e_j^2\in {a}_{2j}(1+\pi \cO_K) 
  \]
  ($H(x)=0$ if the sum is on an empty set). Let $z=y+H(x)$ and let
  \[
z^2+R(x)z=S(x)
   \]
   be the equation of $z$.
\begin{enumerate}[\rm (1)] 
\item We have $\lambda_0(z) \ge \lambda_0(y)$ and the inequality is
  strict if $\lambda_0(y)<\lambda(p_0)$.
\item If $W_k$ is non-reduced, then $\bar{R}(x)=\bar{S}(x)=0$.
\item \label{crit_delta} Suppose that $W_k$ is reduced, and
$2\ord_0(\bar{Q}(x))\le \ord_0(\bar{P}(x))$ or 
  $\ord_0(\bar{P}(x))$ is odd, then $\delta_0(y)=\delta(p_0)$.
\item If $W_k$ is reduced, then $\delta_0(z)\ge \delta_0(y)$.
  The inequality is strict if $\delta_0(y)<\delta(p_0)$.  
\end{enumerate}
\end{lemma}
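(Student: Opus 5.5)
Everything reduces to the explicit formulas for $R$ and $S$ obtained from the substitution $y=z-H(x)$:
\[
z^2+Q(x)z-2H(x)z+H(x)^2-Q(x)H(x)=P(x).
\]
Hence
\[
R(x)=Q(x)-2H(x),\qquad S(x)=P(x)+Q(x)H(x)-H(x)^2 .
\]
Since $\chara(k)=2$, we have $2\in\pi\cO_K$. Write $e_j^2=a_{2j}(1+\pi t_j)$ with $t_j\in\cO_K$. The even-degree part of $P-H^2$ is then
\[
\sum_j a_{2j}x^{2j}-\sum_j e_j^2x^{2j},
\]
where the sum is over the $j$ with $\nu(a_{2j})$ even. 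Its coefficients are $-\pi t_j a_{2j}$, or $a_{2j}$ itself when $\nu(a_{2j})$ is odd. The cross terms $2e_je_lx^{j+l}$ carry an extra factor $2$. So $H^2$ cancels the even-valuation even-degree coefficients of $P$ up to a gain of at least one in valuation, and introduces only terms divisible by $2$ otherwise.

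For (2), assume $W_k$ is non-reduced, so $\pi\mid Q,P$. Every $a_{2j}$ then has valuation $\ge 1$. For those with $\nu(a_{2j})$ even, $\nu(a_{2j})\ge 2$, so $\nu(e_j)\ge 1$. Thus $\pi\mid H$, hence $\pi\mid R$, $\pi\mid QH$ and $\pi\mid H^2$, and so $\bar R=\bar S=0$.

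For (3), write $q=\ord_0\bar Q$ and $p=\ord_0\bar P$.
\begin{itemize}
\item A change $y\mapsto uy+F(x)$ replaces $\bar P$ by $\bar P+\bar Q\bar F+\bar F^2$ (up to units) and $\bar Q$ by $\bar u\bar Q$. So $q$ is invariant, and $\delta\le 2q$ always.
\item If $2q\le p$, then $\delta_0(y)=2q$, which is already maximal.
\item If $p$ is odd and $p<2q$, suppose some change gave $\ord_0(\bar P+\bar Q\bar F+\bar F^2)>p$. Then $\bar Q\bar F+\bar F^2$ would have order exactly $p$.
\item If $\ord\bar F<q$, then $\bar F^2$ dominates and the order $2\ord\bar F$ is even, a contradiction.
\item Otherwise the order is at least $\min(2q,\,q+\ord\bar F)\ge$ something exceeding $p$ only if $\ord \bar F$ is large. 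More carefully, $\ord(\bar Q\bar F)\ge q+\ord\bar F\ge 2q>p$ and $\ord\bar F^2\ge 2q>p$, again a contradiction.
\end{itemize}
So $\delta_0(y)=\delta(p_0)$.

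For (4), we have $\bar R=\bar Q$, so $2\ord_0\bar R=2q$. For $\bar S=\bar P+\bar Q\bar H+\bar H^2$ there are two cases.
\begin{itemize}
\item If $p\ge 2q$, nothing is lost.
\item If $p<2q$, the terms of $\bar P$ of even degree $<2q$ with unit coefficients are exactly the ones $\bar H^2$ kills, since in characteristic $2$ over a perfect field $\bar H^2$ equals the even part of $\bar P$ with unit coefficients. What remains of $\bar P$ consists only of odd-degree terms, and $\bar Q\bar H$ has order $\ge q+\ord\bar H$.
\end{itemize}
One checks that $\ord\bar S\ge\min(p,2q)$, which gives $\delta_0(z)\ge\delta_0(y)$. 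For strictness, if $\delta_0(y)<\delta(p_0)$, then by (3) we have $p<2q$ with $p$ even. The leading term of $\bar P$ has even degree $p$ and a unit coefficient, so it lies in $\bar H^2$ and is cancelled. Meanwhile $\ord(\bar Q\bar H)\ge q+p/2>p$. Hence $\ord\bar S>p$.

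For (1), I run the same argument with the Gauss valuation $\mu_0$ in place of $\ord_0$, treating terms of $\mu_0$-value equal to the minimum as the leading part. Then:
\begin{itemize}
\item $\mu_0(R)\ge\mu_0(Q)$, because $2H$ has a factor $2$. I must check that $2\mu_0(2H)\ge\lambda_0(y)$, i.e.\ $2\mu_0(H)+2\nu(2)\ge\lambda_0(y)$. This holds because $\mu_0(H^2)=2\mu_0(H)$ is governed by coefficients of $P$.
\item $\mu_0(P-H^2)\ge\mu_0(P)$, and $\mu_0(QH)\ge\mu_0(Q)+\mu_0(H)\ge\lambda_0(y)$ by AM–GM on $2\mu_0(Q)$ and $2\mu_0(H)$.
\end{itemize}
This gives $\lambda_0(z)\ge\lambda_0(y)$.

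For strictness, I would use the analogous criterion for $\lambda$: if $2\mu_0(Q)\le\mu_0(P)$, or if the minimal $\mu_0$-value of $P$ is attained only at terms $a_jx^j$ with $\nu(a_j)+j$ odd, then $\lambda_0(y)$ is maximal. I would prove this exactly as in (3), via the reduction of $\pi^{-\mu_0}P(\pi x)$. Consequently, if $\lambda_0(y)<\lambda(p_0)$, some leading term has $\nu(a_j)+j$ even. If $j$ is even, then $\nu(a_j)$ is even and the term is cancelled by $H^2$. If $j$ is odd, the total parity still needs care.

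This last point is the main obstacle, namely matching the parity condition in the definition of $H$ (on $\nu(a_{2j})$ only) with what $\mu_0$ requires. I would handle it by restricting to the reduction of $\pi^{-\mu_0(P)}P(\pi x)$, a polynomial over $k$. In it, only even-degree terms can be squares in characteristic $2$, and $x^{2j}$ gets weight $2j$, so the parity of $\nu(a_{2j})+2j$ equals that of $\nu(a_{2j})$. The criterion then says non-maximality forces an even-degree leading term of even valuation, which $H$ removes.
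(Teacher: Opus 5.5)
Your parts (3) and (4), and the non-strict inequality in (1), are correct and follow the paper's computation with $R=Q-2H$, $S=P+QH-H^2$. One small correction in (1): the claim $\mu_0(R)\ge\mu_0(Q)$ is false in general, but what you actually verify, $2\mu_0(R)\ge\lambda_0(y)$, is what is needed.

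The genuine gap is the strictness in (1). Every term realizing $m:=\mu_0(P)$ has $\nu(a_j)+j=m$. So your criterion ``the minimum is attained only at terms with $\nu(a_j)+j$ odd'' just says that $m$ is odd, and its contrapositive only tells you that $m$ is even. Your closing sentence then produces \emph{one} leading term of even degree and even valuation. Strictness requires that \emph{every} $j$ with $\nu(a_j)+j=m$ be even. A leading monomial of odd degree is not touched by $H^2$: its square terms have even degree, and its cross terms $2e_ie_lx^{i+l}$ have $\mu_0>m$. Nor is it touched by $QH$, since $\mu_0(QH)=\mu_0(Q)+\mu_0(H)>m$. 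So that monomial survives in $S$, giving $\mu_0(S)=m$ and $\lambda_0(z)=\lambda_0(y)$. What is missing is exactly the input the paper takes from \cite{LRN}, Lemma 3.3: if $2\mu_0(Q)>m$ and $m=\nu(a_j)+j$ for some odd $j$, then $\lambda_0(y)=\lambda(p_0)$.

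This can be proved in the spirit of your (3), but on the level-$m$ reduction rather than via parities of $\nu(a_j)+j$. Put $x_1=x/\pi$ and $\tilde P=\overline{\pi^{-m}P(\pi x_1)}\in k[x_1]$. For $y'=uy+F$ the new right-hand side is $P'=u^2P+uQF-F^2$.
If $2\mu_0(F)<m$, then $F^2$ dominates and $\mu_0(P')<m$.
If $2\mu_0(F)>m$, then $\mu_0(P')=m$.
If $2\mu_0(F)=m$, then $\mu_0(QF)>m$ and $\overline{\pi^{-m}P'(\pi x_1)}=\bar u^2\tilde P+\tilde F^2$ with $\tilde F=\overline{\pi^{-m/2}F(\pi x_1)}$; since $\tilde F^2\in k[x_1^2]$, it cannot cancel an odd-degree monomial of $\tilde P$.
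Hence $\lambda_0(y')\le\mu_0(P')\le m=\lambda_0(y)$. With this, non-maximality forces $m$ even and $\tilde P\in k[x_1^2]$, so all leading terms have even degree and even valuation. Then $\mu_0(P-H^2)>m$, $\mu_0(QH)>m$ and $2\mu_0(R)>m$ give $\lambda_0(z)>\lambda_0(y)$.

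Separately, in (2) you infer $\pi\mid Q,P$ from the non-reducedness of $W_k$. For an arbitrary equation, non-reducedness only gives $\bar Q=0$ and $\bar P$ a square in $k[x]$. That general case is the one the paper's argument addresses, and the one needed to normalize equations in Proposition~\ref{prop:same-y}. Your own remark from (4) settles it, as in the paper: $\bar H^2$ is the even-degree part of $\bar P$, which is all of $\bar P$ when $\bar P$ is a square. Hence $\bar S=\bar P+\bar H^2=0$ and $\bar R=\bar Q=0$.
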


\begin{proof} We have $R(x)=Q(x)-2H(x)$ and $S(x)=P(x)+Q(x)H(x)-H(x)^2$. 

(1) By construction $\mu_0(H)\ge \mu_0(P)/2$.  
As $\mu_0$ is a valuation on $K(x)$, 
we check immediately that $\lambda_0(z)\ge \lambda_0(y)$. 
Suppose that $\lambda_0(y)<\lambda(p_0)$. By \cite{LRN}, Lemma 3.3,
$\mu_0(Q)> \mu_0(P)/2$ and $\nu(a_j)+j < \mu_0(P)$ for all odd $j$.
This implies that $\mu_0(H)=\mu_0(P)/2$. It is easy to check that 
$\lambda_0(z)> \lambda_0(y)$ (see the proof of \cite{LRN}, {\it loc. cit.}).  

(2) As $W_k$ is non-reduced, we have $\bar{Q}(x)=0$,  and $\bar{P}(x)$ is a
square, hence equal to $\bar{H}(x)^2$. Thus $\bar{R}(x)=\bar{Q}(x)=0$ and
$\bar{S}(x)=0$. 

(3) The proof is similar to (but much easier than) that of \cite{LRN}, Lemma 3.3. 

(4) We have $\ord_0(\bar{H}(x))\ge \ord_0(P(x))/2$. 
If $2\ord_0(\bar{Q}(x)) \le \ord_0(\bar{P}(x))$, then 
$\ord_0(\bar{S}(x))\ge 2\ord_0(\bar{Q}(x))=2\ord_0(\bar{R}(x))$.
So $\delta_0(z)=\delta(p_0)$ by (3). Suppose that 
$2\ord_0(\bar{Q}(x)) > \ord_0(\bar{P}(x))$,
then $\ord_0(\bar{S}(x))\ge\ord_0(\bar{P}(x))$. If moreover
$\delta_0(y)<\delta(p_0)$, then by (3) $\ord_0(\bar{P}(x))$ is even.
This implies that $\ord_0(\bar{S}(x))>\ord_0(\bar{P}(x))$, hence
$\delta_0(z)>\delta_0(y)$. 
\end{proof}

\begin{remark} \label{rmk:lambda'}
  Keep the notation of Lemma~\ref{lem:change_y}. 
  \begin{enumerate}[\rm (1)]
  \item Suppose that $\chara(k)=2$. We know by \cite{LRN}, Lemma 3.3, that
    if $2\mu_0(Q)\le \mu_0(P)$ or $\mu_0(P)=\nu(a_j)+j$ for some odd $j$, 
    then $\lambda(p_0)=\lambda_0(y)$.
    Conversely, Lemma~\ref{lem:change_y}(1) shows that one can always
    translate $Q(x)$ by 
    some $H(x)$ so that $2\mu_0(Q) \le \mu_0(P)$ or, 
    $2\mu_0(Q)> \mu_0(P)$ and $\mu_0(P)=\nu(a_j)+j$ holds
    only when $j$ or $\nu(a_j)$ is odd. 
\item\label{lambda_1} Let $\cO_{K'}$ be a discrete valuation ring dominating
  $\cO_{K}$, 
  with perfect residue field, and such that $\pi$ is a uniformizing element
  of $\cO_{K'}$. Let $y^2+Q(x)y=P(x)$ be an equation of $W$ satisfying
  the condition of (1) and Lemma~\ref{lem:change_y}(3).
  It is also an equation of $W':=W_{\cO_{K'}}$ (which is a 
  Weierstrass model of $C_{K'}$ over $\cO_{K'}$ by \cite{LTR}, Lemme 2). 
 Let $p'_0\in W'_{k'}$ be a point lying over $p_0$. Then
  by (1), $\lambda(p'_0)=\lambda_0(y)=\lambda(p_0)$ (see also
\cite{LTR}, Lemme 10; \cite{Ld}, Proposition 2.13).  
  \end{enumerate} 
\end{remark} 

\begin{proposition}\label{prop:same-y}
Let $W_0, \dots, W_n$ be a chain of Weierstrass models with $n\ge 0$.  
Let $x$ be a coordinate function of $W_0/\qi$ such that for all $0\le i\le n$, 
$x_i:=x/\pi^i$ is a coordinate function of $W_i/\qi$ (Lemma~\ref{lem:csm}(1)). 
Let $p_i\in W_i\wedge W_{i+1}$ (a zero of $x_i$). Then there exists an equation
  \[ y^2+Q(x)y=P(x) \]
  of $W_0$ such that for all $0\le i\le n$, 
\begin{enumerate}[\rm (1)] 
\item an equation of $W_i$ is
  \begin{equation} \label{eq:Wi} 
     y_i^2+Q_i(x_i)=P_i(x_i),
  \end{equation}
where, denoting by $m_i=\sum_{0\le j\le i-1} [\lambda(p_i)/2]$, 
\[   y_i=y/\pi^{m_i}, \ Q_i(x_i)=Q(\pi^i x_i)/\pi^{m_i}, \ 
  P_i(x_i)=P(\pi^i x_i)/\pi^{2m_i},\] 
\item $\lambda_0(y_i)=\lambda(p_i)$ (and when $\chara(k)=2$,
  Equation~\eqref{eq:Wi}  satisfies the condition of
  Remark~\ref{rmk:lambda'}(1)); 
\item $\delta_0(y_i)=\delta(p_i)$ if $(W_i)_k$ is reduced; 
\item $\bar{Q}_i(x_i)=\bar{P}_i(x_i)=0$ if $(W_i)_k$ is non-reduced. 
  \end{enumerate}  
\end{proposition}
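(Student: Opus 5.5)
We argue by induction on $n$, producing the equation of $W_0$ by successive modifications of $y$ that do not destroy the properties already obtained at the earlier points $p_0,\dots,p_{i-1}$. (The exponent should read $m_i=\sum_{0\le j\le i-1}[\lambda(p_j)/2]$, and \eqref{eq:Wi} should have $Q_i(x_i)y_i$.)

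\textbf{How $\lambda$ controls the next model.} Let $y^2+Q(x)y=P(x)$ be an equation of $W_i$ with $x_i(p_i)=0$ and $\lambda_0(y)=\lambda(p_i)$. Put $\ell=[\lambda(p_i)/2]$. By definition of $\mu_0$ (the Gauss valuation in $x_i/\pi$) we have $\mu_0(Q)\ge \ell$ and $\mu_0(P)\ge 2\ell$. Hence $Q(\pi x_{i+1})/\pi^{\ell}$ and $P(\pi x_{i+1})/\pi^{2\ell}$ lie in $\cO_K[x_{i+1}]$. The ring $\cO_K[x_{i+1},y/\pi^\ell]$ is an order of $K(C)$ over $\cO_K[x_{i+1}]$. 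Its discriminant exponent matches \eqref{eq:disc-p0}: by \cite{LTR}, Lemme 9, $\nu(\Delta_{W(p_i)})$ equals the value computed from this equation. So this order is normal, and it is an equation of $W_{i+1}=W_i(p_i)$. The point at infinity is handled by the degree bounds on $Q,P$, which are preserved. Alternatively, one may invoke directly the construction of $W(p_0)$ in \cite{LTR}, D\'efinition 12, which is exactly this substitution. Iterating, and composing the scalings, gives statement (1) with $y_i=y/\pi^{m_i}$.

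\textbf{The induction step.} The real content is to find a single $y$ that simultaneously achieves $\lambda_0(y_i)=\lambda(p_i)$ for every $i$, together with (3) and (4). Suppose $y$ already works for $p_0,\dots,p_{i-1}$. On $W_i$, modify $y_i$ to $y_i'=u\,y_i+F_i(x_i)$ so that $\lambda_0(y_i')=\lambda(p_i)$ and $\delta_0(y_i')=\delta(p_i)$. In characteristic $\ne 2$, take $F_i=-Q_i/2$, which completes the square; the properties then hold automatically. In characteristic $2$, apply Lemma~\ref{lem:change_y} repeatedly, translating $y$ by $H(x_i)$; parts (1) and (4) there guarantee strict increase until the maxima are reached, with the normalization of Remark~\ref{rmk:lambda'}(1).

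Pulled back to $W_0$, this modification is $y\mapsto uy+\pi^{m_i}F_i(x/\pi^i)$. We must check that $G(x):=\pi^{m_i}F_i(x/\pi^i)$ has integral coefficients and does not decrease $\lambda_0(y_j)$ or $\delta_0(y_j)$ for $j<i$. In characteristic $2$, $F_i$ is built from square roots of coefficients of $P_i$, so $\mu$-estimates bound its coefficients from $P_i$. Since $P_i$ came from $P$ through the scalings, the valuation of the $x^k$-coefficient of $G$ is at least half that of the $x^{2k}$-coefficient of $P$. This gives integrality, and it gives $\mu_{0,j}(G)\ge \mu_{0,j}(P)/2\ge \lambda(p_j)/2$ in the $x_j$-Gauss valuation for each $j<i$. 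Therefore $\lambda_0(y_j)$ cannot drop, since $\mu$ is a valuation. Moreover $\lambda_0(y_j)$ was already maximal, so it remains equal to $\lambda(p_j)$. The same argument on reductions, using $\ord_0$, handles $\delta_0$ and the non-reduced case (4), via Lemma~\ref{lem:change_y}(2).

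\textbf{Main obstacle.} I expect the hardest step to be this compatibility check: that a correction made at level $i$ stays integral at all earlier levels and does not spoil them. Those levels use different Gauss valuations, related by $x_j=\pi^{i-j}x_i$, so this is an inequality between weighted Newton polygons of $P$. If the direct estimate fails, I would instead perform the corrections in order of decreasing $i$, so that each later correction is at a coarser level and automatically integral at finer ones. The key point I would exploit is the fact, from Lemma~\ref{lem:csm}, that all the $p_j$ lie over $x=0$.
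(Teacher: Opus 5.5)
Your proposal is correct and follows essentially the paper's proof: induction along the chain, with (1) obtained from \cite{LTR}, Lemme 7(d), the case $\chara(k)\ne 2$ handled by completing the square ($Q=0$), and in residue characteristic $2$ repeated use of Lemma~\ref{lem:change_y} at the newest level while checking that the earlier levels stay maximal (you are also right about the two typos in the statement). The paper phrases your compatibility estimate more economically: rescaling preserves the parity of $\nu(a_{2j})$, so the polynomial $H$ built from $P$ at level $0$ rescales to $H_i(x_i)=\pi^{-m_i}H(\pi^i x_i)$, which is exactly the polynomial built from $P_i$ at level $i$, and Lemma~\ref{lem:change_y} therefore applies to all levels simultaneously; your pulled-back correction $G$ is precisely this $H$, your valuation bound is in fact an equality, and the fallback of correcting in decreasing order of $i$ is unnecessary.
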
 

\begin{proof} If $\chara(k)\ne 2$, we just take $Q(x)=0$. Suppose
  $\chara(k)=2$. We prove the proposition by induction on $n$. The case $n=0$
  is proved directly 
  by using Lemma~\ref{lem:change_y} repeatedly if necessary.

  Suppose that the proposition is proved for a chain of length
  $n-1\ge 0$. Let $y^2+Q(x)y=P(x)$ be a corresponding equation.    
 First, an equation of $W_{n}$ is
 $y_{n}^2+Q_{n}(x_{n})y_{n}=P_{n}(x_{n})$ by \cite{LTR}, Lemme 7(d).  
 Let $H(x)$ be constructed from $P(x)$ as in Lemma~\ref{lem:change_y}.
 Let
 \[ H_i(x_i):=\pi^{-m_i}H(\pi^{i}x_i), \quad 0\le i\le n.\]   
 Write $P(x)=\sum_j a_j x^j$. As $P_i(x_i)=\sum_{j} (\pi^{ij-2m_i} a_j)x_i^j$, we immediately see that
 $H_i(x_i) \in \cO_K[x_i]$ is a polynomial constructed from $P_i(x_i)$
 as in Lemma~\ref{lem:change_y}. Let $z=y+H(x)$. Let 
 $z_i=z/\pi^{m_i}=y_i+H_i(x_i)$ with equation $z_i^2+R_i(x_i)z_i=S_i(x_i)$.
 By Lemma~\ref{lem:change_y}, for all $0\le i\le n$,
 $\lambda_0(z_i)\ge \lambda_0(y_i)$;   $\delta_0(z_i)\ge \delta_0(y_i)$ if
 $(W_i)_k$ is reduced; and Property (4) is satisfied. By the induction hypothesis,
 for all $i\le n-1$, $\lambda_0(z_i)=\lambda(p_i)$,  and
 $\delta_0(z_i)=\delta_0(p_i)$ if $(W_i)_k$ is reduced. Again by
 Lemma~\ref{lem:change_y}, after repeating this operation finitely many times,
 we get an equation such that (2)-(3) are satisfied for $i=n$. 
\end{proof} 
  
\subsection{Minimal Weierstrass models and multiplicities}
Let us first recall and improve some properties of the multiplicities
$\lambda$ and $\delta$ (\cite{LTR}, Lemmes 6-7). 

\begin{lemma} \label{lem:d0} Let $W$ be a Weierstrass model of $C$. 
\begin{enumerate}[\rm (1)] 
\item Suppose $W_k$ reduced. Let $p\in W_k$. Then $\delta(p)=0$ is equivalent to 
  $W_k\to Z_k$ being \'etale at $p$, and 
  $\delta(p)\leq 1$ is equivalent to $W_k$ being smooth at $p$.
\item We have $\min\{ 1, \delta(p)\}\le \lambda(p)-\varepsilon(W)\le \delta(p)$.   
In particular, if $W_k$ is non-reduced, then $W$ is regular at $p$ if and only if
  $\delta(p)=0$.
\item If $W_k$ is non-reduced, then $\lambda(p)\ge 1$ for all $p\in W_k$ 
    and we have
    \[ \sum_{p\in W_k} (\lambda(p)-1)[k(p):k]\le
    \sum_{p\in W_k} \delta(p)[k(p):k]=2g+2. \]
\item Suppose that $W_k$ is reduced.
  Then 
    \[ \sum_{p\in W_k} \lambda(p)[k(p):k]\le
      \sum_{p\in W_k} \delta(p)[k(p):k] \le 2g+2 \]
if $W_k\to Z_k$ is separable. Otherwise, 
 $\delta(p)\ge \lambda(p)\ge 1$ for all $p\in W_k$ and we have 
    \[ \sum_{p\in W_k} (\lambda(p)-1)[k(p):k]\le
    \sum_{p\in W_k} (\delta(p)-1)[k(p):k] =2g. \]
\end{enumerate}
\end{lemma}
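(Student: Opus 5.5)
The plan is to prove the four parts separately. Each part reduces to elementary statements about one equation $y^2+Q(x)y=P(x)$ as in \eqref{eq:start}, chosen well adapted to $p$. Part (2) is the pivot: we compare the Gauss valuation $\mu_0$ with the order of vanishing of the reduction. After translating so that $x(p)=0$ (passing to an unramified extension when $k(p)\ne k$, which by Remark~\ref{rmk:lambda'}(\ref{lambda_1}) and \cite{LTR}, Lemme 10 changes neither $\lambda$ nor $\delta$), the basic observation is as follows. For $f=\sum a_jx^j\in\cO_K[x]$ with $\bar f\ne 0$ and $r=\ord_0(\bar f)$, the coefficient $a_r$ is a unit, so $\mu_0(f)\le r$, while all $a_j$ with $j<r$ lie in $\pi\cO_K$, so $\mu_0(f)\ge\min\{1,r\}$.

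\emph{Part (2), reduced case.} The observation gives $\min\{1,\delta_0(y)\}\le\lambda_0(y)\le\delta_0(y)$ for every admissible $y$. Taking $y$ with $\lambda_0(y)=\lambda(p)$ gives $\lambda(p)\le\delta(p)$. Taking $y$ with $\delta_0(y)=\delta(p)$ gives $\lambda(p)\ge\min\{1,\delta(p)\}$.

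\emph{Part (2), non-reduced case.} We use an equation with $\pi\mid Q,P$, which is possible by Lemma~\ref{lem:change_y}(2), and apply the observation to $P/\pi$. Since $\mu_0(P)=1+\mu_0(P/\pi)$ and $2\mu_0(Q)\ge 2$, we obtain the shifted inequalities with $\varepsilon(W)=1$. The one point to check is that the term $2\mu_0(Q)$ never spoils the upper bound; if necessary, we adjust $y$ by Lemma~\ref{lem:change_y}(1) so that the maximizing equation keeps $\pi\mid Q$. The regularity criterion then follows from \cite{LTR}, Lemme 7(a): $W$ is regular at $p$ iff $\lambda(p)\le 1$, iff $\delta(p)=0$ when $\varepsilon=1$.

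\emph{Part (1).} If $\chara(k)\ne 2$, take $Q=0$. Then $W_k\to Z_k$ is étale at $p$ iff $\bar P(p)\ne 0$, and by the Jacobian criterion $W_k$ is smooth at $p$ iff $\ord_p\bar P\le 1$. Here $\delta(p)=\ord_p\bar P$ by Lemma~\ref{lem:change_y}(3).

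If $\chara(k)=2$, the reduction $\bar Q$ is independent of $y$, and étaleness at $p$ is equivalent to $\bar Q(p)\ne 0$, i.e. to $\delta(p)=0$. When $\bar Q(p)=0$, the point $p$ has coordinates $(0,y(p))$ with $y(p)^2=\bar P(0)$; shifting $y$ by a constant we may assume $\bar P(0)=0$. The Jacobian criterion then says $W_k$ is smooth at $p$ iff $\bar P'(0)\ne 0$, iff $\ord_0\bar P=1$. By Lemma~\ref{lem:change_y}(3) this odd order equals $\delta(p)$, and conversely $\delta(p)=1$ forces it.

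\emph{Part (3).} Since $\varepsilon=1$ and $\delta\ge 0$, part (2) gives $\lambda(p)\ge 1$ and $\lambda(p)-1\le\delta(p)$. It remains to show $\sum_p\delta(p)[k(p):k]=2g+2$. We view $\overline{(P/\pi)}$ as a binary form of degree $2g+2$ on $Z_k\simeq\PP^1_k$, using the other chart for the point at infinity. Because $W_k\to Z_k$ is a homeomorphism, the local orders $\delta(p)$ are exactly the multiplicities of the zeros of this form, and they sum to its degree. The form is nonzero, since otherwise $\pi^2$ would divide $P$ and the model would not be normal.

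\emph{Part (4), separable case.} The first inequality follows from part (2) summed over $p$. For the second: in characteristic $\ne 2$, $\delta(p)=\ord_p\bar P$ and the zeros of $\bar P$ (as a form of degree $2g+2$) have total degree at most $2g+2$. In characteristic $2$ we have $\bar Q\ne 0$ and $\delta(p)\le 2\ord_p\bar Q$. Moreover, $p$ is the unique point over its image whenever $\bar Q(p)=0$. Hence $\sum_p\delta(p)[k(p):k]\le 2\deg\bar Q\le 2(g+1)$, again as forms.

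\emph{Part (4), inseparable case.} Here $\bar Q=0$ and $W_k\to Z_k$ is a universal homeomorphism. As above, $\mu_0(Q)\ge 1$, and $\mu_0(P)\ge 1$ after shifting $y$, so $\lambda(p)\ge 1$; then part (2) gives $\delta(p)\ge\lambda(p)\ge 1$ and $\lambda(p)-1\le\delta(p)-1$. For the equality $\sum_p(\delta(p)-1)[k(p):k]=2g$, we use that the derivative $\bar P'$ is insensitive to $y\mapsto y+F$, since $(F^2)'=0$. In homogeneous terms, $\bar P'$ is a form of degree $2g$: the derivative of a degree-$(2g+2)$ form has degree $2g+1$, but the top coefficient $(2g+2)a_{2g+2}$ vanishes in characteristic $2$. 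It is nonzero because $W_k$ is reduced. The claim is that, for an equation realizing $\delta(p)$ at $p$ (obtained via Lemma~\ref{lem:change_y}(4), where $\ord_p\bar P$ is odd or the reduction is otherwise optimal), one has $\ord_p\bar P'=\delta(p)-1$. Then summing the zeros of $\bar P'$, including the point at infinity, gives exactly $2g$.

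I expect this last local claim to be the main obstacle, together with the bookkeeping at infinity. The delicate points are that the optimal $y$ kills all even-order terms below the first odd one, and that the two charts of $Z_k$ produce compatible degree counts.
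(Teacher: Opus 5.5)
Your proposal is correct, but it takes a genuinely different route from the paper.

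The paper proves by hand only the first half of (1), by looking at the fibre $W_k\times_{Z_k}\{q\}$. Everything else is cited from \cite{LTR}: Lemme 6(a) for the smoothness criterion, Lemme 7(e) for (2), and Lemme 6(c)--(e) for the sums in (3)--(4). Its only further input is that $\lambda(p)=0$ would force $\delta(p)=\varepsilon(W)=0$ and hence \'etaleness, which gives $\lambda\ge 1$ in (3) and in the inseparable case of (4).

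You instead make everything self-contained after reducing to rational points. The sandwich $\min\{1,\ord_0\bar f\}\le\mu_0(f)\le\ord_0\bar f$ gives (2) directly, and the Jacobian criterion gives (1). Parts (3)--(4) become degree counts for the forms $\overline{\pi^{-1}P}$, $\bar P$, $\bar Q$, and, in the inseparable case, $\bar P'$ viewed as a form of degree $2g$. This explains the numbers $2g+2$ and $2g$ transparently. The price is that you use the invariance of $\lambda$ and $\delta$ under unramified extension, which the citations to \cite{LTR} avoid.

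Your characteristic-$2$ argument for the \'etale equivalence is also sharper than the paper's. Shifting $y$ to make $\bar P(q)=0$ whenever $\bar Q(q)=0$ shows that the paper's sub-case ``$\ord_q f>0$ and $\delta(p)=0$'' never occurs. The reducedness of $k(q)[T]/(T^2+g(q))$ invoked there does not hold in characteristic $2$, precisely because $k(q)$ is perfect.

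There are three small points to fix.

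\emph{Non-reduced case of (2).} The term $2\mu_0(Q)$ can only lower $\lambda_0$, so it cannot spoil the upper bound. What you must check is that $\lambda(p)$ is attained by an equation with $\pi\mid Q$ \emph{and} $\pi\mid P$, so that $\lambda(p)\le\mu_0(P)=1+\mu_0(\pi^{-1}P)\le 1+\delta(p)$. Your appeal to Lemma~\ref{lem:change_y}(1) does give this. When $\pi\mid P$, every $a_{2j}$ of even valuation has valuation at least $2$, so $H\in\pi\cO_K[x]$ and $R,S$ stay divisible by $\pi$. The strictly increasing values of $\lambda_0$ along the iteration are bounded by $1+\delta(p)$, hence they reach $\lambda(p)$.

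\emph{Characteristic $\ne 2$.} Lemma~\ref{lem:change_y}(3) is a characteristic-$2$ statement and is not available here. Use instead that a change $y\mapsto uy+F$ with new coefficients $Q_1,P_1$ satisfies $Q_1^2+4P_1=u^2(Q^2+4P)$. For $Q=0$ this gives $\delta(p)=\ord_p\bar P$.

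\emph{The step you flag as the main obstacle.} It is immediate. At a rational $p$ with $x(p)=0$ in the inseparable case, write $\bar P=A^2+xB^2$, using that $k$ is perfect; then $\deg A\le g+1$. Since $2\ord_0(A+\bar F)$ and $1+2\ord_0B$ have different parities, $\ord_0\bigl((A+\bar F)^2+xB^2\bigr)$ is their minimum. Hence $\delta(p)=1+2\ord_0B=1+\ord_0\bar P'$, because $\bar P'=B^2$. At infinity, $\frac{d}{dt}\bigl(t^{2g+2}\bar P(1/t)\bigr)=t^{2g}\bar P'(1/t)$ in characteristic $2$, which is exactly the compatibility of the two charts.
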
 

\begin{proof} (1) We treat the case $\chara(k)=2$. Otherwise the
  proof is easier. Let $q\in Z_k$ be the image of $p$. 
Suppose first that $\delta(p)=0$. 
Write an equation $y^2+f(x)y=g(x)$ for $W_k$ such
  that $\min \{ 2\ord_q(f(x)), \ord_q(g(x)) \}=0$. If $\ord_q(f(x))=0$,
  then $W_k\to Z_k$ is \'etale at $p$. If $\ord_q(f(x))>0$, then
  $g(x)$ does not vanish at $q$ (otherwise $\delta(p)>0$). As 
  $W_k\times_{Z_k} \{ q \} = \Spec k(q)[T]/(T^2+g(q))$ is reduced
  ($k$ is perfect), $W_k\to Z_k$ is \'etale at $p$. 

If $\delta(p)\geq 1$, then $\ord_q(f(x)), \ord_q(g(x))>0$, so 
$W_k\times_{Z_k}\{ q \} \simeq \Spec k(q)[T]/(T^2)$,
and $W_k\to Z_k$ is ramified at $p$.  
  
The second part of (1) is just \cite{LTR}, Lemme 6 (a).

(2) is {\it op. cit.}, Lemme 7 (e) (with $r=1$).
The fact that $\lambda(p)\ge 1$ in Cases (3) and (4) inseparable is because
$\lambda(p)=0$ implies $\delta(p)=\varepsilon(W)=0$ by (2) and
$W\to Z$ \'etale at $p$ by (1). Taking (2) into account, 
(3)-(4) are \cite{LTR}, Lemme 6(c)-(e). 
Note that in the proof of Lemme 6(d), the equalities  
$\delta(p)=\delta(p,y)=2\delta(p, \tilde{Q}(x))$ must be replaced with 
$\delta(p)\le 2\delta(p, \tilde{Q}(x))$. 
\end{proof}

Let $W$ be a Weierstrass model and $p_0\in W_k(k)$.
Let $p_1^*\in W(p_0)\wedge W$. Then \cite{LTR}, Lemme 9(b) says that 
\begin{equation} \label{eq:lambdap1*} 
\lambda(p_1^*)=2g+2-2[\lambda(p_0)/2]+\varepsilon(W). 
\end{equation} 
As $W(p_0)$ depends on $\lambda(p_0)$ and in general we only have 
$\lambda(p_0)\le \delta(p_0)+\varepsilon(W) \le 2g+2 + \varepsilon(W)$, there is no hope to get
a similar relation between $\delta(p_0)$ and $\delta(p_1^*)$.  

\begin{lemma} \label{lem:delta_p0_p1*} Keep the above notation. If
  $\delta(p_0)=\lambda(p_0)-\varepsilon(W)$ and is odd, then
\[ \delta(p_1^*)=2g+2-\delta(p_0). \]   
\end{lemma}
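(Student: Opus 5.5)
Set $\lambda:=\lambda(p_0)$ and $d:=\delta(p_0)=\lambda-\varepsilon(W)$, which is odd by hypothesis. We would compute $\delta(p_1^*)$ directly from an explicit equation. Translate $x$ so that $x(p_0)=0$, and choose an equation $y^2+Q(x)y=P(x)$ of $W$ as in Proposition~\ref{prop:same-y} (chain $W, W(p_0)$ of length $1$). For this equation $\lambda_0(y)=\lambda$, $\delta_0(y)=\delta(p_0)$ when $W_k$ is reduced, and Remark~\ref{rmk:lambda'}(1) holds when $\chara(k)=2$. Put $m=[\lambda/2]$. Then $W(p_0)$ is given by
\[ y_1^2+\pi^{-m}Q(\pi x_1)y_1=\pi^{-2m}P(\pi x_1), \qquad x_1=x/\pi. \]
The point $p_1^*$ lies over the pole of $x_1$. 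So we pass to the chart $t=1/x_1=\pi/x$, $w=y_1 t^{g+1}$, which gives the equation
\[ w^2+\tilde Q(t)w=\tilde P(t), \qquad \tilde Q(t)=t^{g+1}\pi^{-m}Q(\pi/t), \qquad \tilde P(t)=t^{2g+2}\pi^{-2m}P(\pi/t). \]
The task is to read off $\ord_0$ of $\bar{\tilde Q}$ and $\bar{\tilde P}$, or of $\overline{\pi^{-1}\tilde P}$ if $W(p_0)_k$ is non-reduced.

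Write $P=\sum a_jx^j$. The coefficient of $t^{2g+2-j}$ in $\tilde P$ is $a_j\pi^{j-2m}$, so its reduction is governed by $\nu(a_j)+j-2m$. Recall $\mu_0(P)=\min_j(\nu(a_j)+j)\ge\lambda$. Since $d=\lambda-\varepsilon$, the hypothesis forces the index $j$ realizing the minimum to be exactly $d$. The link between $\delta$ and $\lambda$ comes from the inequalities $\nu(a_j)\ge 1$ for $j<d$ (resp.\ $\nu(a_j)\ge 2$ in the non-reduced case) and $\nu(a_j)+j\ge\lambda$. We then split into two cases.

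\emph{Case $\varepsilon(W)=0$.} Here $\lambda=d$ is odd, so $2m=d-1$. The term $a_d$ has $\nu(a_d)=0$, because $d=\delta_0(y)$ is attained by $\bar P$ (its order $d$ is odd, so it is $\bar P$ and not $\bar Q^2$ that realizes it). Hence $\nu(\pi^{d-2m}a_d)=1$, and every other term has $\nu(a_j)+j-2m\ge 1$. So every coefficient of $\tilde P$ is divisible by $\pi$, and similarly $\pi\mid\tilde Q$ using $2\mu_0(Q)\ge\lambda$ (with strict inequality by parity). This shows $W(p_0)_k$ is non-reduced, consistent with \eqref{eq:disc-p0}, which gives $\varepsilon(W(p_0))=1$. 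Then $\delta(p_1^*)=\ord_0\overline{\pi^{-1}\tilde P}$. We need the terms $j>d$ to have $\nu(a_j)+j-2m\ge2$, and here we use $\nu(a_j)+j>d$ for $j>d$, since $j\ge d+1$ and $\nu(a_j)\ge0$. We also need the terms $j<d$ to satisfy $\nu(a_j)+j\ge d$; those with equality would contribute in degree $2g+2-j$, which is higher than $2g+2-d$. So the lowest-degree surviving term is $t^{2g+2-d}$, giving $\delta(p_1^*)=2g+2-d$.

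\emph{Case $\varepsilon(W)=1$.} Here $\lambda=d+1$ is even and $2m=d+1$. Using $\delta(p_0)=\ord_0\overline{\pi^{-1}P}=d$, we have $\nu(a_d)=1$, so $\nu(a_d)+d-2m=0$. The terms $j>d$ have $\nu(a_j)\ge1$ and hence nonnegative valuation. The terms $j<d$ have $\nu(a_j)\ge2$, so $\nu(a_j)+j-2m\ge j+1-d\ge\ldots$; here we would instead invoke $\mu_0(P)\ge\lambda=2m$, which makes all coefficients integral. Thus $\bar{\tilde P}$ has a nonzero coefficient at $t^{2g+2-d}$. The terms with $j<d$ land in higher degree, and the terms with $j>d$ land in lower degree and must vanish mod $\pi$; this needs $\nu(a_j)+j>d+1$ for $j>d$, which holds when $j\ge d+2$ but needs care at $j=d+1$. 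The $\tilde Q$-side is handled similarly, and the oddness of $2g+2-d$ together with Lemma~\ref{lem:change_y}(3) then gives $\delta(p_1^*)=\delta_0(w)=2g+2-d$.

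The main obstacle is controlling the boundary terms ($j=d+1$ in the second case) and the $Q$-contribution in characteristic $2$. In both places we would use the normalization of Remark~\ref{rmk:lambda'}(1), which excludes even-index, even-valuation terms attaining $\mu_0(P)$ unless $Q$ dominates, together with the oddness of $d$, which ensures the final order is odd so that Lemma~\ref{lem:change_y}(3) certifies maximality.
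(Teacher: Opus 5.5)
Your proposal follows the paper's proof essentially verbatim: the same equation from Proposition~\ref{prop:same-y}, the same chart $t=\pi/x$, $w=\pi^{g+1-m}y/x^{g+1}$ at $p_1^*$, and the same split according to $\varepsilon(W)$, with Lemma~\ref{lem:change_y}(3) closing the reduced case. The ``main obstacle'' you flag is not a real one. When $\varepsilon(W)=1$ you already have $\pi\mid P$ and $\pi\mid Q$, so $\nu(a_j)+j-2m\ge 1$ for every $j\ge d+1=2m$, including $j=d+1$ (your ``nonnegative'' should read ``positive''). Likewise $\nu(b_j)+j-m\ge 1$ for $j\ge m$, which gives $2\ord_0\bar{\tilde Q}\ge 2g+4-2m>2g+2-d$, exactly as in the paper. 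Since $\lambda$ is even, $W(p_0)_k$ is reduced by \eqref{eq:disc-p0}, so Lemma~\ref{lem:change_y}(3) applies directly and no appeal to Remark~\ref{rmk:lambda'}(1) is needed.
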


\begin{proof} Let $y^2+Q(x)y=P(x)$ be an equation of $W$ satisfying the
  conditions of Proposition~\ref{prop:same-y}  for the chain $W, W_1=W(p_0)$.
  Write $r=[\lambda(p_0)/2]$, $Q(x)=\sum_j b_jx^j$ and $P(x)=\sum_j a_j x^j$. 
Let $v=\pi/x$, $y_\infty=\pi^{g+1-r}y/x^{g+1}$. Then an equation of $W_1$ with $v(p_1^*)=0$ is  
  \begin{equation}
    \label{eq:at_p1*}
     y^2_\infty+Q_\infty(v)y_\infty=P_\infty(v),
  \end{equation} 
with
  \begin{equation}
    \label{eq:Qinf_Pinf}
    Q_\infty(v)=\sum_{j\ge 0} (b_j \pi^{j-r})v^{g+1-j}, \quad
    P_\infty(v)=\sum_{j\ge 0} (a_j \pi^{j-2r})v^{2g+2-j}. 
   \end{equation}

   Suppose that $\varepsilon(W)=1$. Then $\delta(p_0)=2r-1$ and
   $\lambda(p_0)=2r$. This implies that  
  $\nu(b_j\pi^{j-r})>0$ if $j\ge r$. Hence
  $\ord_0 \bar{Q}_\infty(v) \ge (g+1)-r+1=g-r+2$ and
  $\ord_0 \bar{P}_\infty(v) =2g-2r+3$ ($\nu(a_{2r-1})=1$)
  is odd and $< 2 \ord_0 \bar{Q}_\infty(v)$. 
  As $\lambda(p_0)$ is even, $(W_1)_k$ is reduced. Then Lemma~\ref{lem:change_y}(3)
  says that $\delta(p_1^*)=2g-2r+3=2g+2-\delta(p_0)$. 

  If $\varepsilon(W)=0$, then $(W_1)_k$ is non-reduced,
  $\nu(a_{2r+1})=0$ and 
  \[ \delta(p_1^*)=\ord_0(\overline{\pi^{-1}P_\infty}(v))=(2g+2)-(2r+1)
    =2g+2-\delta(p_0).\]  
\end{proof}

Let $W, W'$ be two non-isomorphic minimal Weierstrass  of $C$. We study the
multiplicities of the points in the chain linking $W$ to $W'$. 

\begin{lemma} \label{lem:WW'}
  Let $g$ be even. Let $W, W'$ be two non-isomorphic minimal Weierstrass models.
  Consider the chain $(W_i)_{0\le i\le n}$ 
  connecting $W$ to $W'$ (Definition~\ref{dfn:ccW}(1)).   
  Then the following properties are true: 
 \begin{enumerate}[{\rm (1)}]
 \item for all $0\le i\le n-1$, $W_i\wedge W_{i+1}=\{ p_i \}$,
   $W_{i+1}\wedge W_{i}=\{ p_{i+1}^{*} \}$ (see Remark~\ref{rmk:p'0} below)
   consist in rational points over $k$;  
 \item $W_k$ is reduced but $(W_1)_k$ is not;
  \item $\lambda(p_0)=g+1$ and
   $\nu(\Delta_{W_1})=\nu(\Delta_{W})+2(2g+1)$; 
 \item $\lambda(p_1^{*})=\lambda(p_1)=g+2$;
 \item  $\delta(p_1^{*})=\delta(p_1)=g+1$ and $\delta(p)=0$ 
   for all $p\in (W_1)_k \setminus \{ p_1^*, p_1 \}$; 
 \item $\delta(p_0)=g+1$. 
\end{enumerate}
\end{lemma}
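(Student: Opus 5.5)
The plan is to follow the function $f(i)=\nu(\Delta_{W_i})$ along the chain and use only Formula~\eqref{eq:disc-p0}, Formula~\eqref{eq:lambdap1*} and the counting bounds of Lemma~\ref{lem:d0}. Minimality gives $f(0)=f(n)\le f(i)$ for all $i$.

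\textbf{Step 1: item (1).} I first show that each $W_i\wedge W_{i+1}$ and $W_{i+1}\wedge W_i$ is a single rational point. Note that $W_i\wedge W_{i+1}=W_i\wedge W_n$ and $W_{i+1}\wedge W_i=W_{i+1}\wedge W_0$. If some $W_i\wedge W_n$ were not a single rational point, Remark~\ref{rmk:p0_nr} would give
\[
f(n)=f(i)+2(2g+1)(g+1)(n-i)>f(i),
\]
which contradicts $f(n)\le f(i)$. The reversed chain handles $W_{i+1}\wedge W_0$ in the same way. So $p_i,p_{i+1}^*$ are rational, and $p_{i+1}^*\ne p_{i+1}$ because the chain does not backtrack (Lemma~\ref{lem:csm}(3)).

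\textbf{Step 2: the increments and their convexity.} Set $d_i=f(i+1)-f(i)$. By \eqref{eq:disc-p0},
\[
d_i=2(2g+1)\bigl(g+1-2[\lambda(p_i)/2]\bigr).
\]
Since $g$ is even, $g+1$ is odd, so every $d_i$ is an odd multiple of $2(2g+1)$; in particular $d_i\ne 0$. Minimality forces $d_0>0$ and $d_{n-1}<0$. Write $\varepsilon_i=\varepsilon(W_i)$.

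I then show the $d_i$ are non-increasing. Formula~\eqref{eq:lambdap1*} gives
\[
\lambda(p_{i+1}^*)=2g+2-2[\lambda(p_i)/2]+\varepsilon_i .
\]
Combine this with the bound on $\lambda(p_{i+1}^*)+\lambda(p_{i+1})$ from Lemma~\ref{lem:d0}: it is at most $2g+2$ when $(W_{i+1})_k$ is reduced and separable, and at most $2g+4$ (from $\sum(\lambda-1)\le 2g+2$, or $\le 2g$ in the inseparable case) otherwise. This yields an inequality
\[
2[\lambda(p_{i+1})/2]\le 2[\lambda(p_i)/2]+c_i ,
\]
where $c_i$ is a small correction depending on $\varepsilon_i$ and $\varepsilon_{i+1}=\lambda(p_i)-2[\lambda(p_i)/2]$.

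The \textbf{main obstacle} is this bookkeeping. I have to check, case by case on the parities of $\lambda(p_i)$ and on the reducedness of $(W_i)_k,(W_{i+1})_k$, that $c_i\le 0$ in every case except one controlled configuration. That configuration is: $W_{i+1}$ non-reduced, $\lambda(p_i)$ odd, and $d_i$ exactly $2(2g+1)$.

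\textbf{Step 3: items (2)--(4).} Because $f$ starts and ends at its minimum, $d_0>0$, and the $d_i$ are odd multiples of $2(2g+1)$ with only the controlled exception to monotonicity, I expect the case analysis to force $d_0=2(2g+1)$. This means $2[\lambda(p_0)/2]=g$, so $\lambda(p_0)\in\{g,g+1\}$.

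It remains to rule out $\lambda(p_0)=g$ and $\varepsilon_0=1$. If $\lambda(p_0)=g$, then $W_1$ is reduced and the reduced bound makes $d_1\ge d_0>0$. Propagating this along the chain gives $d_i>0$ for all $i$, contradicting $d_{n-1}<0$. The same propagation argument, using that $W$ and $W'$ play symmetric roles, should exclude $\varepsilon_0=1$. Hence $W_k$ is reduced and $\lambda(p_0)=g+1$, so $\varepsilon_1=1$ by \eqref{eq:disc-p0}, i.e.\ $(W_1)_k$ is non-reduced. This gives (2) and (3).

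Then \eqref{eq:lambdap1*} yields $\lambda(p_1^*)=2g+2-g+0=g+2$. The equality case forced in the analysis of $d_1$ should give $\lambda(p_1)=g+2$ as well, which is (4).

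\textbf{Step 4: items (5) and (6).} Since $(W_1)_k$ is non-reduced, Lemma~\ref{lem:d0}(2) gives $\delta\ge\lambda-1$. So $\delta(p_1^*),\delta(p_1)\ge g+1$. Lemma~\ref{lem:d0}(3) says $\sum\delta(p)[k(p):k]=2g+2$, so both equal $g+1$ and $\delta(p)=0$ at every other point, which is (5).

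For (6), view $W_0=W_1(p_1^*)$ with $p_0\in W_0\wedge W_1$. We have $\delta(p_1^*)=g+1=\lambda(p_1^*)-\varepsilon_1$, and this is odd. Lemma~\ref{lem:delta_p0_p1*} then gives $\delta(p_0)=2g+2-(g+1)=g+1$.
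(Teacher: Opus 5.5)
Your Steps 1 and 4 are correct and coincide with the paper's arguments for (1), (5) and (6). The gap is in Steps 2--3, which carry the real content of (2)--(4). There the needed inequality is never established, and the version you sketch would not suffice as stated.

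\textbf{Where the sketch fails.}

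\emph{Direction.} Your bound $2[\lambda(p_{i+1})/2]\le 2[\lambda(p_i)/2]+c_i$ with $c_i\le 0$ makes the $d_i$ non-\emph{decreasing}, not non-increasing. You in fact use it that way in Step 3.

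\emph{Misdescribed exception.} Whenever $\varepsilon(W_i)=0$ and $\varepsilon(W_{i+1})=1$, Lemma~\ref{lem:d0}(3) only gives $[\lambda(p_{i+1})/2]\le[\lambda(p_i)/2]+1$. So $d_{i+1}$ can drop by $4(2g+1)$ below $d_i$ whatever the value of $d_i$. Nothing in the local counting forces $d_i=2(2g+1)$ in that configuration. Note also that the reduced inseparable case gives $\lambda(p_{i+1}^*)+\lambda(p_{i+1})\le 2g+2$, not $2g+4$.

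\emph{Unjustified conclusions.} Because a later non-reduced $W_j$ can make $d$ decrease, the ``propagation'' you use to exclude $\lambda(p_0)=g$ and $\varepsilon(W)=1$ is not justified by your monotonicity-with-exception. The key conclusions $d_0=2(2g+1)$, $\varepsilon(W)=0$ and $\lambda(p_1)=g+2$ are stated only as things you ``expect'' or that ``should'' follow.

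\textbf{How to close the argument.} Your ingredients do suffice once you track the right quantity. For $0\le i\le n-2$, the three cases of Lemma~\ref{lem:d0}(3)--(4) give uniformly
\[
\lambda(p_{i+1}^*)+\lambda(p_{i+1})\le 2g+2+2\varepsilon(W_{i+1}).
\]
Combine this with \eqref{eq:lambdap1*} and with $\varepsilon(W_{i+1})=\lambda(p_i)-2[\lambda(p_i)/2]$. The result is exactly
\[
\lambda(p_{i+1})-\varepsilon(W_{i+1})\le \lambda(p_i)-\varepsilon(W_i),
\]
so $\mu_i:=\lambda(p_i)-\varepsilon(W_i)$ is non-increasing along the chain.

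Minimality of the two endpoints gives $d_0\ge 0$ and $d_{n-1}\le 0$. Since every $d_i\neq 0$, this yields $n\ge 2$, $\lambda(p_0)\le g+1$ and $\lambda(p_{n-1})\ge g+2$. Hence
\[
g+2-\varepsilon(W_{n-1})\le\mu_{n-1}\le\mu_0\le g+1-\varepsilon(W_0).
\]
This forces $\varepsilon(W_0)=0$ and $\lambda(p_0)=g+1$, so $\varepsilon(W_1)=1$. It also forces $\mu_i=g+1$ for all $i$, in particular $\lambda(p_1)=g+2$.

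\textbf{Comparison with the paper.} The one-step inequality $\mu_1\le\mu_0$ is precisely \cite{LTR}, Lemme 9(c), since $2[(\lambda+1)/2]=\lambda+\varepsilon(W_1)$. The paper obtains the complementary bounds $\lambda'(p_0)\ge g+2$ and $\lambda(p_1)\ge g+2$ from \cite{LTR}, Proposition 3(iv) and (ii), and $\lambda(p_0)\le g+1$ from \cite{LRN}, Proposition 4.3(1). The completed chain argument above needs only the minimality of $W$ and $W'$.
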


\begin{proof} 
  (1) As $W_i\wedge W_{i+1}=W_i\wedge W_n$ and
  $\nu(\Delta_{W_n})\le \nu(\Delta_{W_i})$, Remark~\ref{rmk:p0_nr} implies that
$W_i\wedge W_{i+1}$ consists in a  rational point. The same is true for 
$W_{i+1}\wedge W_{i}$ by symmetry.

  (2)-(3)  Recall that if $\lambda'(p_0)$ denotes the maximum of the
  $\lambda(p)$'s when $p$ runs through the rational points of
  $(W_1)_k\setminus \{ p_1^*\}$ 
  (\cite{LTR}, D\'efinition 13, where $p_1^*$ is denoted by $p_0'$), 
  then by \cite{LTR}, Lemme 9(c),
  whatever $W$ is minimal or not,    
  \begin{equation}\label{eq:lambda'}
    \lambda'(p_0)\leq 2[(\lambda(p_0)+1)/2]-\varepsilon(W). 
  \end{equation} 
By \cite{LRN}, Proposition~4.3(1), we have $\lambda(p_0)\le g+1$.  
It then follows from
\cite{LTR}, Proposition 3(iv),  that $\lambda'(p_0)\geq g+2$. Applying
Inequality~\eqref{eq:lambda'} above, we find 
\[\lambda(p_0)=g+1, \ \varepsilon(W)=0, \ \lambda'(p_0)=g+2.\]  
As $\lambda(p_0)$ is odd, $(W_1)_k$ is non-reduced.
The equality $\nu(\Delta_{W_1})=\nu(\Delta_W)+2(2g+1)$
is given by Formula~\eqref{eq:disc-p0}. 

(4) We have $\lambda(p_1^*)=g+2$ (Equality~\eqref{eq:lambdap1*}).
Let us compute $\lambda(p_1)$.  
First $\lambda(p_1)\le \lambda'(p_0)=g+2$. 
Applying Inequality~\eqref{eq:lambda'} to $p_1\in W_1$ we find 
$\lambda'(p_1)\le g+1$. On the other hands, 
applying  \cite{LTR}, Proposition 3(ii) 
to $p_1\in W_1\wedge W'$ implies that 
$\lambda(p_1)\ge g+2$, hence $\lambda(p_1)=g+2$. 

(5) By (4) and by Lemma~\ref{lem:d0}(2) applied to $W_1$, we find 
$\delta(p_1^*), \delta(p_1)\ge g+1$. 
By Lemma~\ref{lem:d0}(2)-(3), $\delta(p_1^*)=\delta(p_1)=g+1$ and
$\delta(p)=0$ for the other points of $(W_1)_k$.

(6) By (4) and (5), we have $\delta(p_1^*)=\lambda(p_1^*)-\varepsilon(W_1)$
and is odd.
Applying Lemma~\ref{lem:delta_p0_p1*} to the point $p_1^*\in W_1$ (we
have $W=W_1(p_1^*)$), we get
$\delta(p_0)=2g+2-\delta(p_1^*)=g+1$. 
\end{proof}

\begin{remark}\label{rmk:p'0}
  In \cite{LTR}, the point $p_i^*\in W_i \wedge W_{i-1}$  of $W_i$ is
  denoted by $p'_{i-1}$. It seems more natural to index points of
  $W_i$ with $i$. 
\end{remark}

\subsection{Extremal and inner minimal Weierstrass models}
When $C$ has more than one minimal Weierstrass model, then they are part
of a chain of Weierstrass models. 

\begin{theorem}\label{chain-mwm} Suppose that $g$ is even and that
  $C$ has more than one minimal Weierstrass model. Then there exists
  a chain of Weierstrass models (see Definition~\ref{dfn:ccW}(1)) 
  \[ W_0, \  W_1, \dots, \ W_n \] 
  with an even $n\geq 2$  such that the minimal Weierstrass models of $C$
  are exactly the $W_{i}$'s for even $i\le n$.  
  Moreover,   $W_{i}\wedge W_{i-1}$ (for $i\ge 1$) and 
  $W_i\wedge W_{i+1}$ (for $i\le n-1$) consist 
  respectively in rational points $p_{i}^*, p_i$ with the following properties. 
 \begin{enumerate}[\rm (1)]
 \item We have
$\lambda(p_0)= \delta(p_0)=\lambda(p_{n}^*)=\delta(p_{n}^*)=g+1$. 
\item For odd $1\le i\le n-1$, we have
  \begin{enumerate} 
  \item $\nu(\Delta_{W_i})=\nu(\Delta_{W_0})+2(2g+1)$, 
  \item     $\delta(p)=0$ for all $p\in (W_i)_k \setminus \{ p_{i}^*, p_i \}$ and 
    \[\lambda(p_i^*)=\lambda(p_{i})=g+2, \   \delta(p_i^*)=\delta(p_{i})=g+1,\]
  \item $(W_i)_k$ is non-reduced and the canonical morphism $((W_i)_k)_{\mathrm{red}}
    \to (W_i/\qi)_k=\PP^1_k$ is an isomorphism.   
 \end{enumerate}
\item For even $2\le i\le n-2$, 
  \begin{enumerate} 
  \item $\lambda(p_i^*)= \lambda(p_{i})=\delta(p_i^*)=\delta(p_{i})=g+1$; 
  \item  $(W_i)_k$ has only two singular points    $p_i^*, p_{i}$. 
    The latter are cusps and rational over $k$ and 
the normalization of $(W_i)_k$ is isomorphic to $\PP^1_k$.  
  \end{enumerate}
\end{enumerate}
\end{theorem}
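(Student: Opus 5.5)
The plan is to pick two minimal Weierstrass models $W, W'$ at maximal distance $n=d(W,W')$ among all pairs of minimal models (finitely many by \cite{LTR}, Corollaire 4), set $W_0=W$, $W_n=W'$, and let $(W_i)_{0\le i\le n}$ be the chain connecting them. I would then show three things: every $W_i$ with $i$ even is minimal; no $W_i$ with $i$ odd is minimal; and every minimal model lies on this chain.

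\emph{Local structure along the chain.} Lemma~\ref{lem:WW'} applied to $(W_0,W_n)$ gives (1) at $p_0$, namely $\lambda(p_0)=\delta(p_0)=g+1$ with $(W_0)_k$ reduced. It also gives the data (2)(a)(b) for $W_1$: $(W_1)_k$ is non-reduced, $\lambda(p_1^*)=\lambda(p_1)=g+2$, and $\delta(p_1^*)=\delta(p_1)=g+1$. Applying the same lemma to the reversed chain gives the symmetric statements at $p_n^*$ and $W_{n-1}$.

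\emph{Induction along the chain.} Next I would show that $W_2$ is minimal whenever $n\ge 2$.
\begin{itemize}
\item Since $\lambda(p_1)=g+2$ is even, Formula~\eqref{eq:disc-p0} gives $\nu(\Delta_{W_2})=\nu(\Delta_{W_1})+2(2g+1)(g+1-(g+2))=\nu(\Delta_{W_1})-2(2g+1)=\nu(\Delta_{W_0})$. Hence $W_2$ is minimal, and $\varepsilon(W_2)=0$.
\item The case $n=1$ is impossible. Otherwise $W_n=W_1$ would be minimal, but $\nu(\Delta_{W_1})>\nu(\Delta_{W_0})$; so $n\ge2$.
\item Now apply Lemma~\ref{lem:WW'} to the pair $(W_2,W_n)$, whose connecting chain is $W_2,\dots,W_n$ by uniqueness (Lemma~\ref{lem:csm}(3)). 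By induction, $W_i$ is minimal for even $i$ and non-minimal, with discriminant $\nu(\Delta_{W_0})+2(2g+1)$, for odd $i$. Parity then forces $n$ to be even, since $W_n$ is minimal.
\item At an inner even $i$, we get $\lambda(p_i)=\delta(p_i)=g+1$ from Lemma~\ref{lem:WW'} applied to $(W_i,W_n)$, and $\lambda(p_i^*)=\delta(p_i^*)=g+1$ from the reversed chain $(W_i,W_0)$. This is (3)(a).
\end{itemize}

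\emph{Geometric claims.} For (3)(b), $(W_i)_k$ is reduced and $\delta(p_i)+\delta(p_i^*)=2g+2$. Lemma~\ref{lem:d0}(4) then gives $\delta=0$ elsewhere, so all other points are smooth by Lemma~\ref{lem:d0}(1). Since $g+1$ is odd, the local equation has $\ord\bar P=g+1$ odd and $2\ord\bar Q$ larger, as in the proof of Lemma~\ref{lem:delta_p0_p1*}. Thus the singularity is unibranch, i.e.\ a cusp $y^2=x^{g+1}$ up to higher terms. The curve is then a double cover of $\PP^1_k$ that is totally ramified with genus-$0$ normalization. The Hurwitz/genus count is $g$ minus the $\delta$-invariants $g/2+g/2$, which gives $0$. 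For (2)(c), take the reduced equation $y^2=\bar h(x)^2$ of the non-reduced fiber; its reduced curve maps isomorphically to $\PP^1_k$.

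\emph{Main obstacle: completeness.} The hardest step is showing that every minimal $W''$ lies on the chain. Let $W''$ be minimal and $d_0=d(W_0,W'')$. Lemma~\ref{lem:WW'} for the pair $(W_0,W'')$ forces $W_0\wedge W''=\{p_0\}$, because it is a point of $\lambda=g+1$ in $W_0$. The key input is that $p_0$ is the unique point of $W_0$ with $\lambda(p_0)=g+1$: the other points have $\delta=0$ by the count $\delta(p_0)+\ldots\le 2g+2$ together with the symmetric structure. Hence the chain from $W_0$ to $W''$ starts with $W_1$. Inductively it continues with $W_2$. At $W_1$ the only candidate directions are $p_1^*$ and $p_1$, since all other points have $\delta=0$, and going back to $p_1^*$ returns to $W_0$. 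At the endpoint $W_n$ there is no further allowed point: $p_n^*$ goes back, and a new direction would produce a minimal model at distance greater than $n$ from $W_0$, via the equality case of Lemma~\ref{sum-dist}, contradicting maximality. I expect the care needed is in the uniqueness of the $\lambda=g+1$ point in the reduced fiber, using Lemma~\ref{lem:d0}(4) and Lemma~\ref{lem:WW'}(6).
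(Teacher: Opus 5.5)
Most of your outline coincides with the paper's proof. You choose a pair of minimal models at maximal distance, apply Lemma~\ref{lem:WW'} at both ends and to the pairs $(W_i,W_n)$, $(W_i,W_0)$, and use Formula~\eqref{eq:disc-p0} to make $W_2$ minimal. You then get the parity of $n$ from (2.a) and finish with the genus count for the cusps. The genuine gap is in the completeness step, at the extremal model $W_0$.

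\textbf{The false claim.} You assert that $p_0$ is the unique point of $(W_0)_k$ with $\lambda=g+1$, because the other points have $\delta=0$ ``by the symmetric structure''. That structure (two points with $\delta=g+1$ using up the bound of Lemma~\ref{lem:d0}(4)) holds only for the inner $W_i$. On $W_0$, only $p_0$ is forced to have $\delta=g+1$, and Lemma~\ref{lem:d0}(4) still allows $\sum_{p\ne p_0}\delta(p)[k(p):k]$ up to $g+1$. So another rational point $w$ with $\delta(w)=\lambda(w)=g+1$ can exist. It just does not lead to a second minimal model, because $W_0(w)$ has no point with $\lambda=g+2$ off $W_0(w)\wedge W_0$.

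\textbf{Example.} Take $g=2$, $\chara(k)\ne 2,3$, and $W_0$ given by $y^2=(x^3+\pi^6)(1+\pi^3x^3)$.
\begin{itemize}
\item The fiber $(W_0)_k$ is $y^2=x^3$, with cusps at $x=0$ and $x=\infty$, both having $\delta=\lambda=3=g+1$.
\item The minimality chain is $W_0,W_1,W_2$, where $W_2$ is given by $y_2^2=(x_2^3+1)(1+\pi^9x_2^3)$ with $x_2=x/\pi^2$, $y_2=y/\pi^3$. Here $p_0=\{x=0\}$.
\item $W_0(\infty)$ has equation $z_1^2=\pi\bigl(1+(1+\pi^9)t_1^3+\pi^9t_1^6\bigr)$. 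Its points other than $W_0(\infty)\wedge W_0$ have $\delta\le 1$, hence $\lambda\le 2$.
\end{itemize}
This second point is the kind of $\tI_0^*$-type point allowed by Corollary~\ref{cor:g2desing}. So Lemma~\ref{lem:WW'} alone does not tell you that the chain from $W_0$ to $W''$ leaves through $p_0$.

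\textbf{The fix.} Use the maximality of $n$ at the $W_0$ end. Suppose $W_0\wedge W''=\{u_0\}$ with $u_0\ne p_0$. Then $(W_0\wedge W'')\cap(W_0\wedge W_n)=\emptyset$, and the equality case of Lemma~\ref{sum-dist} gives $d(W'',W_n)=d(W'',W_0)+n>n$. Since $W''$ and $W_n$ are both minimal, this contradicts maximality. This is how the paper's completeness argument begins.

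After that, your step at $W_1$ goes through. So does the analogous step at each inner even $W_i$: the other points there have $\delta\le 1$, hence $\lambda\le 1$, while the direction must have $\lambda=g+1$, and $p_i^*$ leads back. You already had the Lemma~\ref{sum-dist} argument but used it at the $W_n$ end. There it is not needed, since $d(W_0,W'')\le n$ by maximality already prevents the chain from going past $W_n$.

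\textbf{A smaller inaccuracy.} For inner even $W_i$, Lemma~\ref{lem:d0}(4) gives $\delta=0$ at the remaining points only when $(W_i)_k\to\PP^1_k$ is separable. In the inseparable case, possible in characteristic $2$, those points have $\delta=1$. Smoothness still follows from Lemma~\ref{lem:d0}(1), so (3.b) is unaffected.
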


\begin{proof} Let $n>0$ be the maximal 
distance between two minimal Weierstrass models of $C$
(there are only finitely such models, see \cite{LTR}, Corollaire 4).
Let $W_0, W_n$ be a pair realizing the maximal distance. 
We will show that the chain $(W_i)_{0\le i\le n}$ connecting
$W_0$ to $W_n$ satisfies the properties of the theorem. 
We first prove that Properties (1)-(3) hold without assuming the maximality
of $n$. 

(1) is follows from Lemma~\ref{lem:WW'} with $W=W_0, W'=W_n$.  

(2)-(3) We obtain (2.a) and (2.b) for $i=1$ by 
applying Lemma~\ref{lem:WW'} to $W_0, W_n$. 
As $(W_1)_k$ is non-reduced by \ref{lem:WW'}(2), the 
isomorphism in (2.c)  is always true. 
As $\lambda(p_1)=g+2$ and $W_2=W_1(p_1)$,
Formula~\eqref{eq:disc-p0} implies that 
\[
\nu(\Delta_{W_2})=\nu(\Delta_{W_1})-2(2g+1) = \nu(\Delta_{W_0}),
\]
hence $W_2$ is minimal. Applying Lemma~\ref{lem:WW'}
to $W_2, W_n$ and to $W_2, W_0$, we get (3.a) for $i=2$. 
Lemma~\ref{lem:d0}(4) implies that for all
$p\in (W_2)_k\setminus \{ p_2^*, p_2 \}$, we have
$\delta(p)\le 1$, hence $p$ is a smooth point. Let
$\Gamma_2$ be the normalization of $(W_2)_k$.
As $\delta(p_2^*), \delta(p_2)$ are odd, 
\cite{LTR}, Lemme 6(b) implies that $p_2^*, p_2$ are cusps.  
Again by \cite{LTR}, Lemme 6(b) and \cite{LB}, Proposition 7.5.4,
we have 
\[
g=p_a((W_2)_k)=g(\Gamma_2)+\sum_{p\in (W_2)_k} [\delta(p)/2][k(p):k]. 
\] 
As $\delta(p_2^*)=\delta(p_2)=g+1$, this immediately imply (3.b).
So (2) and (3)  are proved for $i=1, 2$. 
Then they can be proved by induction on $i$. Note that $n$ 
is even by (2.a). 

It remains to show that under the maximality hypothesis on $n$,
any  minimal Weierstrass model $U$ of $C$ 
is one of the $W_i$'s with $i$ even. We can suppose $U\ne W_0$. 
Consider the  
chain of Weierstrass models
\[ U_0=W_0, \ U_1, \dots, U_m=U. \] 
We have $m=d(U_0, U_m)\le n$.  Let
$u_0\in W_0\wedge U_1$. Then $U_{1}=W_0(u_0)$ by 
Lemma~\ref{lem:Wp0}(2). 
If $u_0\ne p_0$, as $p_0\in W_0\wedge W_n$, Lemma~\ref{sum-dist}
implies that $d(U_1, W_n)=n+1$, contradiction with the hypothesis on
$n$. So $U_1=W_1$. 

Let $u_1\in W_1\wedge U_2$.
We have $u_1\ne p_1^*$ because otherwise $U_2=W_1(p_1^*)=W_0=U_0$. 
By (2.b),  if $u_1\ne p_1$, then $\lambda(u_1)\le \delta(u_1)+1=1$. 
But applying (2.b) to the chain $W_0, W_1, U_2$, we get 
$\lambda(u_1)=g+2$, contradiction. So $u_1=p_1$ and $U_2=W_2$. 
We see inductively that $U_i=W_i$ for $i\le m$. In particular
$U=W_m$ and $m$ is even by (2.a).
\end{proof} 

\begin{remark} \label{rmk:crit_2} Let $g$ be even. Recall that
  by \cite{LRN}, Proposition 4.6, 
  $C$ has more than one minimal Weierstrass model if and only if for any
  (or some) minimal Weierstrass $W$ of $C$, $W_k$ is reduced and
  there exists $p_0\in W_k(k)$ such that $\lambda(p_0)=g+1$, and 
  $p_1\in W(p_0)_k(k)\setminus (W(p_0)\wedge W)$ with $\lambda(p_1)\ge g+2$.
  If such a $p_1$ exists, then $\lambda(p_1)=g+2$ and $W(p_0)(p_1)\ne W$ is minimal. 
\end{remark}

\begin{definition} \label{extremal-MWM} Under the hypothesis of
  Theorem~\ref{chain-mwm},  we call $W_0, W_n$   
\emph{the extremal minimal Weierstrass models of $C$}. 
The other minimal Weierstrass models are said to be 
\emph{inner}. We call $(W_i)_{0\le i\le n}$ the
\emph{minimality chain of Weierstrass models of $C$}.  
\end{definition}

Let $(W_i)_i$ be as above and let $Z_i=W_i/\qi$. 
Let $Z_0\vee Z_1 \vee ... \vee Z_n$ be the smallest
model of $C/\qi$ dominating all the $Z_i$'s. This is the minimal desingularization
of $Z_0\vee Z_n$ as we saw before. Its normalization in $K(C)$, denoted by 
$W_0\vee W_1 \vee ... \vee W_n$ is also the smallest model of $C$ dominating
all the $W_i$'s. See also Definition~\ref{dfn:ccW}(3). Let $\Gamma_i$ be the
strict transform of $((W_i)_k)_{\mathrm{red}}$. We saw above that
$\Gamma_i$ is a projective line over $k$ (of multiplicity $2$ in
$(W_0\vee W_1 \vee ... \vee W_n)_k$)  
if $i$ is odd, and is rational
if $i$ is even and $\ne 0, n$. We will see later (Theorem~\ref{regular-even})
that $\Gamma_i$ is in fact
also a projective line over $k$ for these $i$'s and $\Gamma_0$, $\Gamma_n$ are the
normalizations of $(W_0)_k$, $(W_n)_k$ at $W_0\wedge W_n$ and $W_n\wedge W_0$
respectively.

\begin{figure} [h] 
 \centering 
\begin{tikzpicture}[scale=1] 
  \coordinate (P1) at   (0, 1.5);
  \coordinate (Q1) at  (1.5, 0);
  \draw  (P1) node [below left] {$\Gamma_0$} to [bend left] (Q1); 
  \draw (1, 0) -- (2.5, 1.5);
  \draw (3, 1.4) node {$2\Gamma_1$};
  \draw (2.6, 0) node {$\Gamma_2$}; 
  \draw (1.5, 1.5) -- (3,0);
  \draw[dashed]  (2,0.5) -- (5, 0.5);
  \draw (4,0) -- (5.5,1.5); 
  \draw (4.5,1.5) -- (6,0);
  \draw (6.7, 0) node {$2\Gamma_{n-1}$}; 
  \coordinate (P2) at   (5.5, 0);
  \coordinate (Q2) at  (7, 1.5); 
\draw (Q2) node [below right] {$\Gamma_n$} to [bend right] (P2); 
\end{tikzpicture} 
\caption{The closed fiber of $W_0\vee \cdots \vee W_n$.}
 \label{figure:mwm} 
\end{figure} 

\begin{corollary} \label{cor:equa_term} Keep the hypothesis of
Theorem~\ref{chain-mwm}. 
  Then there exists an equation
  \[ y^2+Q(x)y=P(x) \]
  of $W_0$ such that
  \begin{enumerate}[\rm (1)] 
\item for all even $i\le n$, 
  \[ y_i^2+\pi^{-(g+1)i/2}Q(\pi^i x_i)y_i
=  \pi^{-(g+1)i} P(\pi^i x_i) \]
with $x_i=x/\pi^{i}$ and $y_i=y/\pi^{(g+1)i/2}$, is an equation of $W_{i}$. 
\item If $P(x)=\sum_j a_jx^j$ and $Q(x)=\sum_j b_j x^j$, then
  $\nu(a_{g+1})=0$, and for all $j\ge 0$, 
\[
\nu(a_j)\ge n(g+1-j), \quad \nu(b_j)\ge n(g+1-2j)/2.  
\] 
\end{enumerate}
\end{corollary}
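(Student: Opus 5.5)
The plan is to apply Proposition~\ref{prop:same-y} to the minimality chain $(W_i)_{0\le i\le n}$ of Theorem~\ref{chain-mwm}. Choose a coordinate function $x$ of $Z_0=W_0/\qi$ such that $x_i=x/\pi^i$ is a coordinate function of $Z_i$ for all $i$ (Lemma~\ref{lem:csm}), with $p_i$ the zero of $x_i$. Proposition~\ref{prop:same-y} then gives an equation $y^2+Q(x)y=P(x)$ of $W_0$ such that $W_i$ has equation $y_i^2+Q_i(x_i)y_i=P_i(x_i)$, where $y_i=y/\pi^{m_i}$ and $m_i=\sum_{0\le j\le i-1}[\lambda(p_j)/2]$. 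This equation also satisfies the optimality properties (2)--(4) of that proposition.

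For part (1), compute $m_i$ from the multiplicities in Theorem~\ref{chain-mwm}. We have $\lambda(p_j)=g+1$ for even $j$ (item (1) for $j=0$, item (3.a) for even $2\le j\le n-2$) and $\lambda(p_j)=g+2$ for odd $j$ (item (2.b)). Since $g$ is even, $[(g+1)/2]=g/2$ and $[(g+2)/2]=g/2+1$. So each consecutive pair $j=2l,2l+1$ contributes $g+1$, and $m_i=(g+1)i/2$ for even $i$. Substituting into $Q_i(x_i)=Q(\pi^ix_i)/\pi^{m_i}$ and $P_i(x_i)=P(\pi^ix_i)/\pi^{2m_i}$ gives exactly the displayed equation.

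For part (2), the key point is integrality of the equation of $W_n$. From $P_n(x_n)\in\cO_K[x_n]$ we get $\nu(a_j)+nj-n(g+1)\ge 0$, i.e.\ $\nu(a_j)\ge n(g+1-j)$. From $Q_n(x_n)\in\cO_K[x_n]$ we get $\nu(b_j)+nj-n(g+1)/2\ge 0$, i.e.\ $\nu(b_j)\ge n(g+1-2j)/2$.

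For $\nu(a_{g+1})=0$, I would use $\lambda_0(y)=\lambda(p_0)=g+1$ together with the fact that $(W_0)_k$ is reduced. First, the bounds above (with $n\ge 2$) force $\nu(a_j)\ge 2(g+1-j)>g+1-j$ for $j<g+1$, and similarly force $2\mu_0(Q)\ge g+2>g+1$. Hence $\mu_0(P)=g+1$ must be attained at some index $j\ge g+1$, i.e.\ $\nu(a_j)+j=g+1$. This forces $j=g+1$ and $\nu(a_{g+1})=0$.

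The step I expect to need care is ruling out the degenerate possibility that $\lambda_0(y)=g+1$ comes only from $Q$. This cannot happen, since $g+1$ is odd and $2\mu_0(Q)$ is even. As an independent check, $\delta_0(y)=\delta(p_0)=g+1$ by Proposition~\ref{prop:same-y}(3): this gives $\ord_0\bar P= g+1$ (odd, so it cannot come from $2\ord_0\bar Q$), which again yields $\nu(a_{g+1})=0$.
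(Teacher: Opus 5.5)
Your proposal is correct and follows the paper's proof. You apply Proposition~\ref{prop:same-y} to the minimality chain, read off $m_i=(g+1)i/2$ for even $i$ from Theorem~\ref{chain-mwm}(1)--(3), and get the valuation bounds in (2) from the integrality of the equation of $W_n$.

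For $\nu(a_{g+1})=0$, the paper uses only what you call the independent check: $\delta_0(y)=\delta(p_0)=g+1$ is odd, which forces $\ord_0\bar P=g+1$. Your primary $\lambda$-based argument through the $W_n$ bounds is also valid, but it is not needed.
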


\begin{proof} Take an equation given by Proposition~\ref{prop:same-y}. As
  $\delta(p_0)=g+1$ is odd, $g+1=\ord_0 \bar{P}(x)$, so $v(a_{g+1})=0$. 
  Using Theorem~\ref{chain-mwm}(1)-(3), we have $m_{i}= (g+1)i/2$ if $i$
  is even. This implies (1). Taking $i=n$ in (1) we get (2). 
\end{proof}

\begin{remark}\label{rmk:more_than_one} Conversely, suppose that
  $C$ has an equation satisfying the 
  inequalities of \ref{cor:equa_term}(2), then the equation of
  \ref{cor:equa_term}(1) defines a minimal Weierstrass model $W_i$ for any 
  even $i\le n$. Indeed, $(W_i)_k$ is reduced and has two rational points
  $x_i=0, \infty$ with $\lambda=g+1$. So the other points of $(W_i)_k$
  have multiplicity $ \lambda\le 1$ by Lemma~\ref{lem:d0}(4)
  and we can apply the criterion of \cite{LRN}, Proposition 4.3(1). 
\end{remark}

The structure of the closed fibers of the extremal minimal Weierstrass
models are given below.

\begin{proposition}\label{prop:W0} Keep the hypothesis of
  Theorem~\ref{chain-mwm}.  
Then  $(W_0)_k$ is geometrically integral over $k$ with a cusp at
   $p_0$. Its normalization at $p_0$  has arithmetic genus
   equal to $g/2$, and the point lying over $p_0$ is rational over $k$.
\end{proposition}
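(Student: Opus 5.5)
We work with the equation of Corollary~\ref{cor:equa_term}, reduced modulo $\pi$. By Theorem~\ref{chain-mwm} and Lemma~\ref{lem:WW'}(2), $(W_0)_k$ is reduced, and $\lambda(p_0)=\delta(p_0)=g+1$ with $g+1$ odd. By Lemma~\ref{lem:d0}(4) this forces every $p\ne p_0$ to satisfy $\delta(p)\le 1$, so $(W_0)_k$ is smooth away from $p_0$. Indeed, if $W_k\to Z_k$ is separable, the sum of the $\delta$'s is at most $2g+2$, leaving at most $g+1$ for the remaining points. That alone does not bound them by $1$, so we refine it using the explicit shape of the equation. From Corollary~\ref{cor:equa_term}(2) with $n\ge 2$ we get $\nu(a_j)>0$ for $j<g+1$, $\nu(a_{g+1})=0$, and $\nu(b_j)>0$ for $2j<g+1$, i.e.\ $j\le g/2$. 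Hence the reduced equation is
\[ y^2+\bar Q(x)y=\bar P(x),\quad x^{g/2+1}\mid \bar Q,\quad \bar P=x^{g+1}\bar P_1,\ \bar P_1(0)\ne 0 . \]

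Next, substitute $y=x^{g/2}t$, which is legitimate in the function field, and divide by $x^{g}$. This gives
\[ t^2+(\bar Q/x^{g/2})\,t=x\bar P_1(x), \]
with $\bar Q/x^{g/2}$ divisible by $x$. This curve $\Gamma_0$ is a hyperelliptic-type double cover of $\PP^1_k$ of degree $2$. Over $x=0$ it has the single point $t=0$, and the equation is regular there because the right side has a simple zero in $x$ (the $\pi$-free coefficient of $x$ is $\bar P_1(0)\ne 0$). So $\Gamma_0\to (W_0)_k$ is a finite birational morphism, an isomorphism away from $p_0$, whose unique point over $p_0$ is rational. This gives both the unibranch (cusp) property and the rationality of the preimage. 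The degrees are $\deg(\bar Q/x^{g/2})\le g/2+1$ and $\deg(x\bar P_1)\le g+2$, so on the chart at infinity the curve is a Weierstrass-type curve of genus at most $g/2$. Its arithmetic genus can be computed exactly: $p_a((W_0)_k)=g$ and the $\delta$-invariant at $p_0$ equals $[\delta(p_0)/2]=g/2$ by \cite{LTR}, Lemme 6(b) and \cite{LB}, Proposition 7.5.4, exactly as in the proof of Theorem~\ref{chain-mwm}(3.b). Hence $p_a(\Gamma_0)=g-g/2=g/2$, and $p_0$ is a cusp since $\delta(p_0)$ is odd (Lemme 6(b)).

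It remains to show geometric integrality. $(W_0)_k$ is reduced and becomes the double cover $\Gamma_0$ after partial normalization, so it suffices to show the equation $t^2+Rt=S$, with $R=\bar Q/x^{g/2}$ and $S=x\bar P_1$, does not factor over $\bar k[x]$. A factorization would mean $S=-u(u+R)$ for a polynomial $u$; since $x\mid R$, this gives $x\mid u$ and hence $x^2\mid S$, contradicting $\ord_0 S=1$. Geometric reducedness follows from the same computation over $\bar k$, since $k$ is perfect and the equation is unchanged by base change. This yields geometric integrality.

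The main obstacle is making sure the substitution $y=x^{g/2}t$ really yields the normalization at $p_0$, i.e.\ that $\Gamma_0$ is regular over $x=0$ in every characteristic. In characteristic $2$ this relies on the coefficient condition for $b_j$ (that $\bar Q$ is divisible by $x^{g/2+1}$) rather than on simple completing of squares; I would verify this directly from the Jacobian criterion at $(x,t)=(0,0)$.
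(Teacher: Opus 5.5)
Your proof is correct once you delete one false sentence. In your first paragraph you claim that Lemma~\ref{lem:d0}(4) forces $\delta(p)\le 1$ for all $p\neq p_0$, so that $(W_0)_k$ is smooth away from $p_0$. That lemma only bounds the total contribution of the other points; in the separable case it gives $\sum_{p\neq p_0}\delta(p)[k(p):k]\le g+1$. The shape of the equation from Corollary~\ref{cor:equa_term} says nothing about points with $x\neq 0$.

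In fact $(W_0)_k$ may well have other singular points. Example~\ref{ex:Wn} only imposes $\delta(p)\le g$ there. For $g=2$, the remark after Theorem~\ref{K1-K2} allows a rational $w_0\neq p_0$ with $\delta(w_0)\le 3$, e.g.\ a node, as in the stable types (VI), (VII) of Theorem~\ref{thm:stableg2}. Fortunately none of your later steps uses this claim.

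Likewise, the obstacle you flag at the end is not one. Regularity of $t^2+Rt=x\bar P_1(x)$ at $(0,0)$ only needs $\bar P_1(0)\neq 0$, in every characteristic. The divisibility $x\mid R$ is what makes the fiber over $x=0$ a single point.

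With that fixed, your route differs from the paper's. The paper argues abstractly from $\delta(p_0)=g+1$ being odd:
\begin{itemize}
\item \cite{LTR}, Lemme 6(b) makes $p_0$ a cusp;
\item a reduced reducible Weierstrass fiber is a union of two $\PP^1_k$'s with only double points, so $(W_0)_k$ is irreducible;
\item since $\delta(p_0)$ is unchanged by extending $k$, the same holds over $\bar k$;
\item the cusp gives a rational point upstairs;
\item $p_a(\Gamma_0)$ comes from $g=p_a(\Gamma_0)+[\delta(p_0)/2]$.
\end{itemize}

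You instead write the normalization at $p_0$ explicitly as $t^2+(\bar Q/x^{g/2})t=x\bar P_1(x)$. You read off unibranchness and rationality from the Jacobian criterion, with no need for Lemme 6(b). You get geometric irreducibility from an elementary valuation argument against a factorization over $\bar k[x]$. This is a bit more computational, but it yields an explicit Weierstrass-type equation for $\Gamma_0$, which is what one uses in practice (compare \S\ref{exp:22}).

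Your degree count also gives the genus exactly, not just ``at most $g/2$''. Since $t/x^{g/2+1}=y/x^{g+1}$, the chart at infinity of $\Gamma_0$ is that of $(W_0)_k$. So $\Gamma_0$ is the projective double cover attached to a Weierstrass-type equation for genus $g/2$, and $p_a(\Gamma_0)=g/2$ directly. The appeal to the $\delta$-invariant formula then becomes optional.
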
 

\begin{proof} Denote $\Gamma=(W_0)_k$. As $\delta(p_0)=g+1$ is odd
  by Lemma~\ref{lem:WW'}(6), $p_0$ is
  a cusp (\cite{LTR}, Lemme 6.b). This implies that $\Gamma$ is irreducible
  (otherwise it is union of two $\PP^1_k$, hence the singular points are
  all double points). As $\delta(p_0)$ does not change by field
  extension, $\Gamma$ is integral and geometrically irreducible,
  hence geometrically integral ($k$ is perfect). As $p_0$ is a cusp,
  the point of the normalization of $\Gamma$ lying over $p_0$
  is rational over $k$.  A 
computation similar  to \cite{LB}, Proposition 7.5.4 shows that 
\[ g=p_a(\Gamma)=p_a(\Gamma_0)+[\delta(p_0)/2], \] 
hence  $p_a(\Gamma_0)=g/2$.
\end{proof}

\begin{example} \label{ex:Wn}
  To construct examples of $C$ with more than one minimal
  Weierstrass model, it suffices to use Remark~\ref{rmk:more_than_one}.
  But the conditions of Corollary~\ref{cor:equa_term}(2) are not enough
  to decide whether $i=0, n$ give extremal 
  Weierstrass models. Below we give examples of minimality chain
  $(W_i)_{0\le i\le n}$ with prescribed $n$, 
  $\Gamma_0$ and $\Gamma_n$ (the respective normalizations of
  $(W_0)_k$, $(W_n)_k$ at $p_0$ and $p_{n}^*$). 

  Let $g=2g_1, n\ge 2$ be even. Let $F_n(x_n), G_n(x_n)\in \cO_K[x_n]$ with
  $\deg F_n(x_n)\le g_1$,  $G_n(x_n)$ monic of degree $2g_1+1$ such that
  the affine curve
  \begin{equation} \label{eq:W_n_infty}
    y_n^2+\bar{F}_n(x_n)y_n = \bar{G}_n(x_n)
  \end{equation} 
  over $k$ is integral and such that
  $\delta(p)\le g$ for all its rational points. This affine curve 
  will be our $(W_n)_k \setminus \{ p_n^* \}$. 
  Let $Q_\infty(t), P_\infty(t)\in \cO_K[t]$ with the similar properties
  for the affine curve
  \begin{equation} \label{eq:W_0_infty}
    z^2+Q_\infty(t)z=P_\infty(t). 
  \end{equation}
  This affine curve will be our
  $(W_0)_k\setminus \{ p_0 \}$. Let $Q_0(x)=x^{g_1}Q_\infty(1/x)$
  and $P_0(x)=x^{2g_1+1}P_\infty(1/x)$. 
Let
\[
G_0(x)=\pi^{(g+1)n}G_n(\pi^{-n}x), \quad  
F_0(x)=\pi^{(g+1)n/2}F_n(\pi^{-n}x) \in \cO_K[x]. 
\]
Consider the equation 
\begin{equation} \label{eq:glue_0n} 
y^2+Q_0(x)(x^{g_1+1}+ F_0(x))y=P_0(x)G_0(x) 
\end{equation} 
and suppose it defines a (smooth) hyperelliptic curve $C$ 
of genus $g$ over $K$. If $\chara(k)\ne 2$, 
we can just take $Q_\infty(t)=F_n(x_n)=0$ and $P_\infty(t), G_n(x_n)$ separable.  

Let $W_0$ be the Weierstrass model of $C$ defined by
Equation~\eqref{eq:glue_0n} above. Let us show that $W_0$ is minimal.
The closed fiber $(W_0)_k$ has an equation
\[
y^2+\bar{Q}_0(x)x^{g_1+1}y=\bar{P}_0(x)x^{2g_1+1}. 
\]
Let $p_0\in (W_0)_k(k)$ be the point with $x=0$.
It is straightforward to check that
$\mu_0(F_0(x))\ge g_1+1$ and $\mu_0(G_0(x))=g+1$. 
Hence $\mu_0(P_0(x)G_0(x))=g+1$ is odd, because
$P_0(0)=1$. Similarly,   
if $\chara(k)=2$, 
\[ \mu_0(Q_0(x)(x^{g_1+1}+F_0(x)))\ge \mu_0(x^{g_1+1}+F_0(x))=g_1+1
> (g+1)/2.\]
This implies that $\lambda(p_0)=g+1$ by Remark~\ref{rmk:lambda'}(1).
The affine curve $(W_0)_k\setminus \{ p_0 \}$ is defined by
Equation~\eqref{eq:W_0_infty} with $t=1/x$ and $z=y/x^{g+1}$. 
Therefore $\lambda(p)\le \delta(p)\le g$ for all $p\in (W_0)_k(k) \setminus
\{ p_0 \}$ by construction. By \cite{LRN}, Proposition 4.3, $W_0$ is minimal. 

Let $x_n=x/\pi^n$ and $y_n=y/\pi^{(g+1)n/2}$. Then we have the relation 
\[
y_n^2+Q_0(\pi^n x_n)(\pi^{n/2}x_n^{g_1+1} + F_n(x_n))y_n=P_0(\pi^n x_n)G_n(x_n) 
\]
which mod $\pi$ is Equation~\eqref{eq:W_n_infty}. So this equation
defines a Weierstrass model $W_n$. Using the formula at the bottom of
\cite{LTR}, page 4581, we find $\nu(\Delta_{W_n})=\nu(\Delta_{W_0})$.
So $W_n$ is minimal.

By the hypothesis on Equations \eqref{eq:W_0_infty} and \eqref{eq:W_n_infty},
on each of $(W_0)_k$ and $(W_n)_k$, there is at most one point with
$\delta=g+1$ (the points $x=0$ and $x_n=\infty$).  
It follows from Theorem~\ref{chain-mwm}(3.a) that $W_0$ and
$W_n$ are extremal.  
\end{example}

\subsection{Number of minimal Weierstrass models} \label{subsect:mwm}

Let $C$ be a hyperelliptic curve over $K$ of genus $g\ge 2$.  
We can ask how many minimal Weierstrass models (up to isomorphisms)
it can have. Denote this (finite) number by $m_C$.

\begin{definition} \label{dfn:mini_disc} 
The \emph{minimal discriminant} $\nu(\Delta_{C})$ of $C$ is
$\nu(\Delta_W)$ for any minimal Weierstrass model of $C$. 
\end{definition}

\begin{theorem} \label{thm:bound_n} Let $g$ be even.
  Then the number $m_C$ of minimal Weierstrass models of $C$ satisfies 
  \begin{equation}
    \label{eq:ineq_2}
     m_C \le 1+ \frac{\nu(\Delta_C)}{2g(g+1)}. 
  \end{equation} 
  This inequality is sharp. 
\end{theorem}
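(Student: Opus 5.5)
The plan is to reduce the inequality to a lower bound on the discriminant, read off from the special equation of Corollary~\ref{cor:equa_term}. If $m_C=1$ there is nothing to prove, since $\nu(\Delta_C)\ge 0$. Otherwise Theorem~\ref{chain-mwm} gives $m_C=n/2+1$, where $n$ is the length of the minimality chain. So \eqref{eq:ineq_2} is equivalent to
\[
\nu(\Delta_{W_0})\ \ge\ n\,g(g+1).
\]
We take the equation $y^2+Q(x)y=P(x)$ of $W_0$ from Corollary~\ref{cor:equa_term}(2). Its coefficients satisfy $\nu(a_{g+1})=0$, $\nu(a_j)\ge n(g+1-j)$ and $\nu(b_j)\ge n(g+1-2j)/2$.

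First consider the case $\chara(k)\neq 2$, where we may take $Q=0$, so $\Delta_{W_0}$ is, up to a unit, the discriminant of $P$ as a binary form of degree $2g+2$. The Newton polygon of $P$ has a segment of length $g+1$ and slope $\ge n$ (from $\nu(a_j)\ge n(g+1-j)$ and $\nu(a_{g+1})=0$), followed by segments of nonpositive slope. Hensel's lemma then factors $P=P_1P_2$ over $\cO_K$ with:
\begin{itemize}
\item $P_1$ monic of degree $g+1$, all of whose roots have valuation $\ge n$ over $\bar K$;
\item $P_2(0)$ a unit and $\deg P_2\le g+1$.
\end{itemize}
The multiplicativity $\disc(P_1P_2)=\disc(P_1)\disc(P_2)\Res(P_1,P_2)^2$ holds for binary forms, with $P_2$ viewed as a form of degree $2g+2-(g+1)$. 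In it, $\disc(P_2)$ and $\Res(P_1,P_2)$ are integral because the coefficients are integral. Moreover
\[
\disc(P_1)=\prod_{i<j}(\alpha_i-\alpha_j)^2
\]
has $g(g+1)/2$ factors, each of valuation $\ge 2n$. This gives $\nu(\Delta)\ge n g(g+1)$.

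For residue characteristic $2$ the same idea should go through with $F=Q^2+4P$ and $\Delta=2^{-4(g+1)}\disc(F)$, but this is the main obstacle, since $4a_{g+1}$ is no longer a unit. I would try first a universal-polynomial argument. View $\Delta$ as an integral polynomial in the $a_j,b_j$ (\cite{LRN}, Definition 1.2). Substitute $a_j=\pi^{n(g+1-j)}a'_j$ for $j\le g+1$ and $b_j=\pi^{\lceil n(g+1-2j)/2\rceil}b'_j$ for $2j\le g+1$, and check that every monomial of $\Delta$ acquires a factor of valuation at least $ng(g+1)$. To do this, use a bi-grading of $\Delta$ by the two torus actions $x\mapsto tx$ and $(x,y)\mapsto (x,sy)$, under which $\Delta$ is isobaric. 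Since this is a purely formal statement about the universal discriminant, it can be verified in characteristic $0$ (over $\Z[a_j,b_j]$ localized suitably), where the root argument above applies to a generic lift.

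As a cross-check, use the symmetry between $W_0$ and $W_n$, which have equal discriminant. For sharpness we use Example~\ref{ex:Wn}: choose $\Gamma_0,\Gamma_n$ so that $(W_0)_k$ and $(W_n)_k$ have no singularities away from $p_0,p_n^*$. In characteristic $\ne 2$, take $P_\infty, G_n$ separable with distinct reductions, so that $P_2$ has unit discriminant and $\Res(P_1,P_2)$ is a unit. Take $P_1=G_0$, which is $\pi^{(g+1)n}G_n(\pi^{-n}x)$ with $G_n$ having roots distinct mod $\pi$, so that its roots differ with valuation exactly $n$. Then $\nu(\Delta_C)=ng(g+1)$ and $m_C=n/2+1$, so equality holds for every even $n\ge2$. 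The case $m_C=1$, $\nu(\Delta_C)=0$ (good reduction) also gives equality.
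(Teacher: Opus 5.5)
Your proof is correct and follows essentially the paper's route. The paper also reduces to $\nu(\Delta_{W_0})\ge n g(g+1)$ using the equation from Proposition~\ref{prop:same-y} (your Corollary~\ref{cor:equa_term}), and it treats all residue characteristics at once by your universal-discriminant idea (Lemma~\ref{lem:discF_u} and Corollary~\ref{cor:dl}). There one checks $t^{ne(e-1)}\mid 2^{-2d}\disc(4{\bf P}+{\bf Q}^2)$ over the $t$-adic field $\Q(\underline{u},\underline{v})(t)$, where the $x^{e}$-coefficient $4u_e+t(\cdots)$ is a unit, using the same Hensel/root estimate you give for $\chara(k)\ne 2$ (Lemma~\ref{lem:discF}). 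So the bi-grading step is unnecessary, and isobaricity by itself only yields the trivial bound. For sharpness the paper uses the explicit curve $y^2=(x^{g+1}+1)(x^{g+1}+t^{n(g+1)})$ over $\mathbb{C}[[t]]$, a special case of your construction from Example~\ref{ex:Wn}.
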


The proof will be given at the end of this subsection.
Note that, when $\chara(k)\ne 2$, one has 
  \[
|\mathrm{Art}(\cC)| \le \nu(\Delta_{C})  
  \] 
 where $\mathrm{Art}(\cC)$ is the Artin conductor
 of the minimal regular model of $C$  (\cite{OS}, Theorem 1.1). 
 Using the (very weak) inequality $m_C-1\le |\mathrm{Art}(\cC)|$, we
also get also a weak bound $m_C-1\le \nu(\Delta_{C})$. 
\medskip

We first prove same preliminary results on discriminants. 

\begin{lemma}\label{lem:disc_prod} Let $| \ |$ be an absolute value on $K$ associated to the
  valuation of $K$.  Endow $K^{\text{alg}}$ with
  a prolongation of $| \ |$. 
  Let $F_0(x), F_1(x) \in K[x]$ of respective degrees $d_0, d_1$ and
  leading coefficients $c_0, c_1$. Suppose that for all roots 
  $\alpha$ of $F_0(x)$ and all roots $\beta$ of $F_1(x)$ in $K^{\text{alg}}$ 
we have $|\alpha|>|\beta|$. Then 
\[ |\disc(F_0(x)F_1(x))| = |c_1|^{2d_0}|F_0(0)|^{2d_1} |\disc(F_0(x))| |\disc(F_1(x))|.\]   
\end{lemma}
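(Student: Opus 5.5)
The plan is to write the discriminant as a product over pairs of roots and split the pairs according to which factor the roots come from. Let $\alpha_1,\dots,\alpha_{d_0}$ be the roots of $F_0(x)$ and $\beta_1,\dots,\beta_{d_1}$ those of $F_1(x)$ in $K^{\text{alg}}$, counted with multiplicity. We use the standard formula
\[ \disc(F)=c^{2d-2}\prod_{i<j}(\gamma_i-\gamma_j)^2 \]
for a polynomial of degree $d$ with leading coefficient $c$ and roots $\gamma_i$.

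Applied to $F_0F_1$, which has degree $d_0+d_1$ and leading coefficient $c_0c_1$, this gives
\[ \disc(F_0F_1)=\disc(F_0)\,\disc(F_1)\,c_0^{2d_1}c_1^{2d_0}\prod_{i,j}(\alpha_i-\beta_j)^2. \]
To see this, note that the exponent $2(d_0+d_1)-2$ of $c_0$ splits as $(2d_0-2)+2d_1$, and similarly for $c_1$. This is the classical identity $\disc(F_0F_1)=\disc(F_0)\disc(F_1)\Res(F_0,F_1)^2$.

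Next we take absolute values. The extension of $|\ |$ to $K^{\text{alg}}$ is ultrametric. Since $|\alpha_i|>|\beta_j|$ for all $i,j$, we get $|\alpha_i-\beta_j|=|\alpha_i|$. Hence
\[ \Bigl|\prod_{i,j}(\alpha_i-\beta_j)^2\Bigr|=\prod_i|\alpha_i|^{2d_1}. \]
We also have $F_0(0)=c_0(-1)^{d_0}\prod_i\alpha_i$, so $|c_0|^{2d_1}\prod_i|\alpha_i|^{2d_1}=|F_0(0)|^{2d_1}$. Substituting this into the identity above gives exactly the claimed formula.

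The only point needing care is degenerate or boundary cases. If $F_0$ has a zero root, the hypothesis forces $|\beta|<0$, which is impossible. So either $F_1$ is constant, in which case the identity still holds, or the hypothesis is vacuous in a consistent way. Another boundary case is a polynomial of degree $0$, where the discriminant convention must match the formula. Here I would simply check that the conventions agree with the paper's discriminant normalization; there is no real obstacle.
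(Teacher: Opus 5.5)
Your proposal is correct and follows the same route as the paper. Both arguments expand $\disc(F_0F_1)$ by the root-product formula, isolate the factor $c_0^{2d_1}c_1^{2d_0}\prod_{i,j}(\alpha_i-\beta_j)^2$, and evaluate it using the ultrametric equality $|\alpha_i-\beta_j|=|\alpha_i|$ together with $|F_0(0)|=|c_0|\prod_i|\alpha_i|$; the paper does not discuss the degenerate cases, so your extra remarks there are harmless but not needed.
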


\begin{proof} We can enlarge $K$ and assume that $F_0(x), F_1(x)$
  have all its roots $\alpha_i, \beta_j$ in $K$. Then 
  \[
    \disc(F_0F_1)=(c_0c_1)^{2(d_0+d_1)-2} \prod_{i_1< i_2} (\alpha_{i_1}-\alpha_{i_2})^2
    \prod_{j_1<j_2} (\beta_{j_1}-\beta_{j_2})^2 \prod_{i, j} (\alpha_i-\beta_j)^2.  
  \]
  So
    \[
|\disc(F_0F_1)|=  |\disc(F_0)||\disc(F_1)| 
|c_1^{2d_0}c_0^{2d_1}\prod_{i, j} (\alpha_i-\beta_j)^2|.  
  \]
We have 
  \[
    |c_1^{2d_0}c_0^{2d_1}\prod_{i, j} (\alpha_i-\beta_j)^2| =
    |c_1^{2d_0}c_0^{2d_1}| |\prod_{i} \alpha_i|^{2d_1} = |c_1^{2d_0}| |F_0(0)|^{2d_1} 
  \]
and the lemma is proved.   
\end{proof}

\begin{notation} \label{nota:nun}
  For any $n\ge 1$, denote by $\nu_{n, 0}$ the Gauss valuation on $K(x)$
  associated to the variable $x/\pi^n$. It is determined  by
  \[ \nu_{n,0}\left(\sum_j a_jx^j\right)=\min_j \{ \nu(a_j)+nj \}\]
 on $K[x]$. 
\end{notation}

\begin{lemma} \label{lem:discF}
  Let $F(x)=\sum_{j\ge 0} a_jx^j\in \cO_K[x]$ with
  $\bar{F}(x)\ne 0$. Let $n\ge 1$ and let $e \ge 0$ be an integer such that
  $\nu(a_e)+ne=\nu_{n,0}(F)$.  
  Then
  \[ \nu(\disc(F))\ge n e(e-1). \]  
\end{lemma}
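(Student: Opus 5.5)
The plan is to use the Newton polygon of $F$ with respect to $\nu$ and then Lemma~\ref{lem:disc_prod}. Enlarge $K$ to a finite extension containing all roots of $F$, keeping the normalization of $\nu$ from $K$ (so valuations may be rational). Write $F(x)=c\prod_{i}(x-\alpha_i)$, with $c$ the leading coefficient of $F$. Since $\bar F\ne 0$, the content of $F$ is a unit. The condition $\nu(a_e)+ne=\nu_{n,0}(F)$ says that the line of slope $-n$ supports the Newton polygon of $F$ at abscissa $e$. Consequently at least $e$ roots (counted with multiplicity) satisfy $\nu(\alpha)\ge n$, i.e. $|\alpha|\le|\pi^n|$.

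Concretely, the Newton polygon has segments whose slopes equal minus the valuations of the roots. The vertex abscissae to the left of the point where slope $-n$ is supported correspond to roots of valuation $\ge n$. Since $(e,\nu(a_e))$ lies on the supporting line of slope $-n$, the number $e'$ of roots with $\nu(\alpha)\ge n$ satisfies $e'\ge e$. I will take $e'$ to be the left end of the right-most point of contact, which is at least $e$.

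Now split $F=F_0F_1$. Here $F_1$ is the monic product of the $(x-\beta_j)$ with $\nu(\beta_j)\ge n$, of degree $d_1=e'$. The factor $F_0$ collects the remaining roots, which satisfy $\nu(\alpha)<n$, so $|\alpha|>|\beta|$. (If the valuations of the other roots are not strictly smaller this strict inequality holds by construction; roots equal to $0$ go into $F_1$.) Lemma~\ref{lem:disc_prod} then gives
\[ \nu(\disc F)=\nu(\disc F_0)+\nu(\disc F_1)+2e'\,\nu(F_0(0)), \]
since $c_1=1$.

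It remains to bound each term below.
\begin{enumerate}[(a)]
\item $\nu(\disc F_1)=\sum_{j\ne j'}\nu(\beta_j-\beta_{j'})\ge n\,e'(e'-1)$, because all differences have valuation $\ge n$.
\item $\nu(F_0(0))=\nu(c)+\sum_\alpha\nu(\alpha)$. This equals the valuation of the coefficient at the left endpoint of the part of the Newton polygon belonging to $F_0$, namely $\nu(a_{e'})-\dots$. More simply, $F_0$ has coefficients whose minimum valuation is $\ge 0$ after absorbing $c$, because the content of $F$ is a unit and $F_1$ is monic with integral roots; by Gauss's lemma $F_0\in\cO[x]$, so $\nu(F_0(0))\ge0$.
\item $\nu(\disc F_0)\ge 0$ because $F_0$ is integral.
\end{enumerate}
Combining these, $\nu(\disc F)\ge n e'(e'-1)\ge ne(e-1)$.

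The main obstacle is the bookkeeping in the Newton polygon step: one must check that the supporting condition at $e$ forces at least $e$ roots of valuation $\ge n$. This holds because the polygon lies above the line $L$ of slope $-n$ through $(e,\nu(a_e))$ and touches it at $e$. Hence every segment left of $e$ has slope $\le -n$, i.e. root valuation $\ge n$, and those segments span horizontal length $e$. One must also check integrality of $F_0$, which follows from Gauss's lemma since $F_1$ is monic with integral roots and the content of $F$ is a unit.
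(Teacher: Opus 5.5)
Your proof is correct and is essentially the paper's argument: split off the factor of $F$ that carries the small roots, apply Lemma~\ref{lem:disc_prod}, and bound the discriminant of that factor by the $e(e-1)$ ordered pairs of roots of valuation $\ge n$. The difference is cosmetic. The paper uses Hensel's lemma to split off the roots with $|\beta|<1$, so that $F_0(0)$ is a unit, and counts the roots in $|x|\le|\pi|^n$ via $\deg\bar G\ge e$ for $F_1=\pi^{\nu_{n,0}(F)}G(x/\pi^n)$. You instead split directly at valuation $n$ using the Newton polygon and control the cross term $\nu(F_0(0))\ge 0$ by Gauss's lemma.
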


\begin{proof} We can suppose $K$ is complete and $F(x)$ has all its roots in
  $K$. Let $d=\ord_0\bar{F}(x)$. By Hensel's lemma, we can decompose 
  $F(x)=F_0(x)F_1(x)$ in $\cO_K[x]$
  with $F_1(x)$ monic and $\bar{F_1}(x)=x^d$. This implies
  that $F_0(0)\in \cO_K^*$. Lemma~\ref{lem:disc_prod} implies that
\[|\disc(F)|=|\disc(F_0)| |\disc(F_1)|\le |\disc(F_1)|.\] 
We have
\[ \nu_{n,0}(F_1)=\nu_{n,0}(F)-\nu_{n,0}(F_0)=\nu_{n,0}(F)\] 
because $F_0(0)\in \cO_K^*$. We can write
$F_1(x)=\pi^{\nu_{n,0}(F)}G(x_n)$ with $x_n=x/\pi^n$ and $G(x_n)\in \cO_K[x_n]$.
As $\deg \bar{G}(x_n)\ge e$, $F_1(x)$ has at least $e$ roots $\beta$
with $|\beta|\le |\pi|^n$, the other roots $\beta'$ have $|\beta'|<1$. 
So $|\disc(F_1)|\le |\pi|^{ne(e-1)}$ and the lemma is proved. 
\end{proof}

\begin{lemma} \label{lem:discF_u}
  Fix positive integers $n, d, e$ with $e\le 2d-1$. Consider  
  the  indeterminates
\[t, \quad \underline{u}= \{ u_0, \dots, u_{2d}\},\quad 
  \underline{v}=\{ v_0, \dots, v_d\}\]
and    the polynomials
  \[ {\bf Q}(x)=\sum_{0\le j<e/2} t^{\lceil ne/2\rceil-nj} v_j x^j
    + \sum_{e/2 <j\le d} v_j x^j\in \Z[t, \underline{u}, \underline{v}] [x]  
  \] 
  \[ {\bf P}(x)=\sum_{0\le j\le e} t^{ne-nj} u_j x^j
    + \sum_{e<j \le 2d} u_j x^j\in \Z[t, \underline{u}, \underline{v}] [x]. 
  \]
  Then
  \begin{equation}
    \label{eq:disc_univ}
    2^{-2d}\disc(4{\bf P}+{\bf Q}^2)\in
    t^{ne(e-1)}\Z[t, \underline{u}, \underline{v}]. 
  \end{equation}
  \end{lemma}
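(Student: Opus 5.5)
The plan is to deduce the universal divisibility \eqref{eq:disc_univ} from Lemma~\ref{lem:discF}, applied over a suitable discrete valuation ring built from the generic polynomial ring.

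\textbf{Setting up the DVR.} Put $A=\Z[t,\underline{u},\underline{v}]$, a UFD, so $t$ is prime. Two facts are needed first.

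First, $2^{-2d}\disc(4{\bf P}+{\bf Q}^2)$ lies in $A$. This is the standard integrality of the discriminant of a hyperelliptic equation (\cite{LRN}, Definition 1.2). With $\deg$ treated formally as $2d$, every coefficient of $\disc(4{\bf P}+{\bf Q}^2)$ is divisible by $2^{2d}$, because the discriminant reduces to a polynomial in the coefficients of ${\bf P},{\bf Q}$ with integer coefficients.

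Second, since $A$ is a UFD with $t$ prime, it suffices to show $\nu_t(D)\ge ne(e-1)$ for $D=2^{-2d}\disc(4{\bf P}+{\bf Q}^2)$. Here $\nu_t$ is the $t$-adic valuation of the DVR $R=A_{(t)}$, whose uniformizer is $t$ and whose residue field $\mathrm{Frac}(\Z[\underline{u},\underline{v}])$ has characteristic $0$.

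\textbf{Applying Lemma~\ref{lem:discF}.} Work over $R$, take $\pi=t$, and set $F=4{\bf P}+{\bf Q}^2$.

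To see $\bar F\neq 0$, note that the coefficient of $x^{2d}$ is $4u_{2d}+v_d^2$ (with $d>e/2$, since $e\le 2d-1$), which is a unit in $R$.

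Next, bound $\nu_{n,0}(F)$ from below by $ne$:
\begin{itemize}
\item For ${\bf P}$: the term $t^{ne-nj}u_jx^j$ has $\nu_{n,0}$-value $ne$ when $j\le e$, and value $nj>ne$ when $j>e$.
\item For ${\bf Q}$: the term of index $j<e/2$ has value $\lceil ne/2\rceil-nj+nj\ge ne/2$, and terms with $j>e/2$ have value $nj>ne/2$. So $\nu_{n,0}({\bf Q})\ge ne/2$ and $\nu_{n,0}({\bf Q}^2)\ge ne$.
\end{itemize}
For the equality $\nu_{n,0}(F)=ne$, look at the coefficient of $x^e$. It is $4u_e$ plus the contribution from ${\bf Q}^2$, which is $\sum_{i+j=e}$ of the corresponding products. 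Its $t$-adic valuation is $0$, since $4u_e$ is a unit modulo $t$ and the ${\bf Q}^2$ contribution cannot cancel it, as $u_e$ is an independent variable. Hence $\nu(a_e)+ne=ne=\nu_{n,0}(F)$.

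Lemma~\ref{lem:discF} then gives $\nu_t(\disc F)\ge ne(e-1)$. Since $2$ is a unit in $R$, the same bound holds for $D$. Because $D\in A$ and $t$ is prime in the UFD $A$, we conclude $D\in t^{ne(e-1)}A$.

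\textbf{Expected obstacle.} The hypothesis $\bar F\ne0$ and the choice of $e$ in Lemma~\ref{lem:discF} must be checked carefully. The subtle point is that Lemma~\ref{lem:discF} requires $\deg F$ to be the formal degree $2d$ used in the discriminant. This holds because the top coefficient $4u_{2d}+v_d^2$ is a unit in $R$, so the roots-based argument in that lemma applies (over a completion and algebraic closure) without degree drop.
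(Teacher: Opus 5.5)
Your proof is correct and is essentially the paper's argument: equip $\Q(\underline u,\underline v)(t)$ with the $t$-adic valuation (your $A_{(t)}$), note that the leading coefficient $4u_{2d}+v_d^2$ and the $x^e$-coefficient are units so that $\nu_{n,0}(4{\bf P}+{\bf Q}^2)=ne$, apply Lemma~\ref{lem:discF}, and descend to $\Z[t,\underline u,\underline v]$ via the UFD property and the known integrality of $2^{-2d}\disc(4{\bf P}+{\bf Q}^2)$. You show the $x^e$-coefficient is a unit by the independence of $u_e$, whereas the paper observes that the ${\bf Q}^2$ contribution lies in $t\Z[\underline v]$; both work. Your parenthetical reason for the $2^{2d}$-divisibility only shows that $\disc$ itself is integral, so that step must rest on the citation, as the paper's does on \cite{LTR}, \S 2.
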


\begin{proof} Let ${\bf F}=4{\bf P}+{\bf Q}^2$.  We know that
    $2^{-2d}\disc(\bar{{\bf F}})\in \Z[\underline{u}, \underline{v}, t]$
    (\cite{LTR}, \S 2). As
    $\Z[\underline{u}, \underline{v},t]$ is an UFD, it is enough to prove that
    $t^{ne(e-1)} \mid \disc({\bf F})$ in $\Q(\underline{u}, \underline{v})[t]$.  
    Endow $L:=\Q(\underline{u}, \underline{v})(t)$ with the discrete
    valuation $\nu_L$ trivial on $\Q(\underline{u}, \underline{v})$
    and such that $\nu_L(t)=1$. We have
    \[ {\bf F}(x)= \sum_{0\le j \le e} t^{ne -nj } a_j x^j +
      \sum_{e < j \le 2d} a_j x^j  \]
    with $a_j \in \Q[\underline{u}, \underline{v}, t]$. We have
    $\bar{\bf F}(x)\ne 0$ because the leading coefficient of ${\bf F}(x)$ is
    $4u_{2d}+v_d^2 \in \cO_L^*$.

    To compute $a_e$, we notice that if $j_1+j_2=e$ and $j_1\le j_2$, then
    $j_1<e/2$ and $\lceil ne/2\rceil-nj_1>0$. Thus
    $a_e\in 4u_e+ t\Z[ \underline{v}]\subset \cO_L^*$.   
This implies that $\nu_{n,0}({\bf F})=ne$. The 
    lemma then follows from Lemma~\ref{lem:discF}. 
  \end{proof}

\begin{corollary}[See also \cite{LTR}, Lemme 13] \label{cor:dl} Let $C$ be a hyperelliptic curve of genus
  $g\ge 1$. Let $U_0, \dots, U_n$ be a chain of Weierstrass models of $C$. 
  Write $U_{i+1}=U_i(p_i)$ for all $0\le i\le n-1$. 
  Suppose that there exists $e\ge 2$ such that $2[\lambda(p_i)/2]\ge e$ for
  all $i\ge 0$. Then
  \[
\nu(\Delta_{U_0}) \ge ne(e-1).  
\]
In particular, if all the $p_i$'s are singular points, then $n\le \nu(\Delta_{U_0})/2$. 
\end{corollary}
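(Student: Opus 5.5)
Taking $U_0$ as the reference model, the plan is to choose a single equation that works along the whole chain and then bound the discriminant by the universal divisibility of Lemma~\ref{lem:discF_u}. By Lemma~\ref{lem:csm}(1)--(2) and Proposition~\ref{prop:same-y}, there are a coordinate function $x$ of $U_0/\qi$ and an equation $y^2+Q(x)y=P(x)$ of $U_0$ such that $x_i=x/\pi^i$ is a coordinate function of $U_i/\qi$ vanishing at the image of $p_i$. Moreover $U_i$ has the equation with $y_i=y/\pi^{m_i}$, $Q_i(x_i)=Q(\pi^ix_i)/\pi^{m_i}$, $P_i(x_i)=P(\pi^ix_i)/\pi^{2m_i}$, and these satisfy $\lambda_0(y_i)=\lambda(p_i)$. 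We have $m_i=\sum_{j<i}[\lambda(p_j)/2]$ (the indexing in Proposition~\ref{prop:same-y} is meant this way). By hypothesis each summand is at least $e/2$, so $m_n\ge ne/2$.

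Next I extract valuation bounds on the coefficients. Write $P=\sum a_jx^j$ and $Q=\sum b_jx^j$. Integrality of $P_n,Q_n$ gives $\nu(a_j)+nj\ge 2m_n\ge ne$ and $\nu(b_j)+nj\ge m_n\ge ne/2$. Hence $\nu(a_j)\ge n(e-j)$ for $j\le e$, and $\nu(b_j)\ge \lceil ne/2\rceil-nj$ for $j<e/2$, since valuations are integers. In the degenerate case $e\ge 2g+2$ I would adjust, or argue directly; note that $e\le 2[\lambda/2]$ and $\lambda\le 2g+3$ anyway. So I take $d=g+1$ and assume $e\le 2d-1$, treating the boundary case $e=2g+2$ separately by the same Lemma~\ref{lem:discF} argument.

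Now specialize Lemma~\ref{lem:discF_u}. Send $t\mapsto\pi$, $u_j\mapsto a_j\pi^{-(ne-nj)}$ for $j\le e$ and $u_j\mapsto a_j$ otherwise, and $v_j\mapsto b_j\pi^{-(\lceil ne/2\rceil-nj)}$ for $j<e/2$ and $v_j\mapsto b_j$ otherwise. All these images lie in $\cO_K$ by the previous step, and under this map ${\bf P}\mapsto P$ and ${\bf Q}\mapsto Q$. The discriminant of the Weierstrass equation is, up to a unit and the standard normalisation of \cite{LTR}, \S2, the polynomial $2^{-2d}\disc(4P+Q^2)$ evaluated universally (with the degree conventions $\deg\le 2g+2$ handled by the formal-degree discriminant). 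So \eqref{eq:disc_univ} gives $\nu(\Delta_{U_0})\ge ne(e-1)$. The main obstacle I expect is this identification step: making sure that $\Delta_W$, as defined in \cite{LRN}, Definition 1.2, really is the specialization of that universal integral polynomial, including when $\chara(k)=2$ and when the degrees drop. I would cite the integrality statement of \cite{LTR}, \S2 for this.

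Finally, the ``in particular''. If every $p_i$ is singular, then $\lambda(p_i)\ge 2$ by \cite{LTR}, Lemme 7(a), so $2[\lambda(p_i)/2]\ge 2$. Taking $e=2$ gives $\nu(\Delta_{U_0})\ge 2n$, that is, $n\le\nu(\Delta_{U_0})/2$.
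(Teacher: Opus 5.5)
Your proposal is correct and is essentially the paper's own proof. You take an equation adapted to the whole chain via Proposition~\ref{prop:same-y}, use the bound $m_n\ge\lceil ne/2\rceil$ to get $\nu(a_j)\ge n(e-j)$ for $j\le e$ and $\nu(b_j)\ge\lceil ne/2\rceil-nj$ for $j<e/2$, and then specialize $t\mapsto\pi$ in Lemma~\ref{lem:discF_u}.

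On your two worries: $d=g+1$ is the right degree, but what you should specialize is the integral universal discriminant $2^{-4(g+1)}\disc(4{\bf P}+{\bf Q}^2)$ of \cite{LTR}, \S 2. The factor $2^{-2(g+1)}$ is off by $2^{2(g+1)}$, which is fatal when $\chara(k)=2$. The divisibility of this normalized discriminant by $t^{ne(e-1)}$ follows verbatim from the argument proving Lemma~\ref{lem:discF_u}. The boundary case $e=2g+2$ is immediate from \eqref{eq:disc-p0}, since then every step lowers $\nu(\Delta)$ by exactly $2(2g+1)(g+1)=e(e-1)$.
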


\begin{proof} With the notation of Proposition~\ref{prop:same-y}(1), we
  have $m_n\ge \lceil ne/2\rceil$ and an equation
  $y^2+Q(x)y=P(x)$ of $U_0$ such that  
\[ \pi^{-m_n} Q(\pi^{n}x_{n}), 
\pi^{-2m_n} P(\pi^{n}x_{n}) \in \cO_K[x_n]. 
\]
Write $Q(x)=\sum_j b_j x^j$ and $P(x)=\sum_j a_jx^j$. 
  Then $b_j\in \pi^{\lceil ne/2\rceil-nj} \cO_K$ for $j<e/2$ and
  $a_j\in \pi^{ne-nj} \cO_K$ for $j\le e$. Therefore there exists
  a ring homomorphism 
  \[ \Z [t, \underline{u}, \underline{v} ]  \to \cO_K,  \quad
    t\mapsto \pi, \quad {\bf Q}(x) \mapsto Q(x), \quad
    {\bf P}(x)\mapsto P(x).\] 
    This implies by Lemma~\ref{lem:discF_u} (with $d=2g+2$) 
    that $\Delta_{U_0}\in \pi^{ne(e-1)}\cO_K$. 
\end{proof}

\noindent {\it Proof of Theorem~\ref{thm:bound_n}}. We can suppose
that $m_C\ge 2$. Let $W_0, W_n$ be the extremal minimal Weierstrass models
of $C$ (Definition~\ref{extremal-MWM}). Then $n=2m_C-2$. With the
notation of Proposition~\ref{prop:same-y} (applied to the minimality
chain of $C$), we have  $m_n=n(g+1)/2$ using Theorem~\ref{chain-mwm}. 
Applying Corollary~\ref{cor:dl} to the minimality chain of $C$, we get 
$\nu(\Delta_{W_0})\ge ne(e-1)$, where $e=g+1$. This implies
Inequality~\eqref{eq:ineq_2}.

  Consider the example $\cO_K={\mathbb C}[[t]]$ and $W_0$ defined by the
  equation
  \[ y^2=(x^{g+1}+1)(x^{g+1}+t^{n(g+1)}).\] 
  Then the minimal Weierstrass models are the
  \[ y_i^2= (\pi^{i(g+1)} x_i^{g+1}+1)(x_i^{g+1}+t^{(n-i)(g+1)}),
\quad y_i=y/\pi^{i(g+1)/2}, \ x_i=x/\pi^i,  
    \] 
for the even $i\le n$. We have $\nu(\Delta_{W_0})=ng(g+1)$ 
and Inequality~\eqref{eq:ineq_2} is an equality. 
\qed

\subsection{Minimal Weierstrass models and base changes} \label{mwm-bc} 

Our primary interest is of arithmetic nature, so $k$ is not algebraically
closed in general.
A natural question is if $C$ has a unique (resp. more than one)
minimal Weierstrass model, is the same property holds for $C_{K'}$ over
an extension of discrete valuation fields $K'/K$ ? 

Our general settings in this subsection is the following: 
$\cO_{K'}/\cO_K$ is an extension of discrete valuation rings such that
the residue field $k'$ of $\cO_{K'}$ is also perfect. We denote by
$e_{K'/K}=\nu_{K'}(\pi)$, where $\nu_{K'}$ is the normalized valuation on $K'$. 
For example $e_{K'/K}=1$ if $K'$ is the completion of $K$ or if $K'/K$ is unramified.
Recall that Weierstrass models are normal in our definition. 

\begin{lemma} \label{lem:wbc} Let $W$ be a Weierstrass model of $C$. Then
  $W_{\cO_{K'}}$ is a Weierstrass model of $C_{K'}$ if $W_k$ is reduced or if
  $e_{K'/K}=1$.   
\end{lemma}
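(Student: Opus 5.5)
The plan is to reduce the statement to a normality check. Take an affine chart of $W$ given by an equation $y^2+Q(x)y=P(x)$ over $\cO_K[x]$, and similarly at infinity. Since $\cO_K\to\cO_{K'}$ is flat, $W_{\cO_{K'}}$ is flat over $\cO_{K'}$. It is covered by the charts $\cO_{K'}[x,y]/(y^2+Q(x)y-P(x))$. It is finite over $Z_{\cO_{K'}}\simeq \PP^1_{\cO_{K'}}$, which is a smooth model of $C_{K'}/\qi=\PP^1_{K'}$. Its generic fiber $C_{K'}$ is smooth, since $C$ is smooth and geometrically integral. So $W_{\cO_{K'}}$ is the normalization of $Z_{\cO_{K'}}$ in $K'(C)$ as soon as it is normal, and everything reduces to proving normality of $W_{\cO_{K'}}$ via Serre's criterion.

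The scheme is a hypersurface in a regular scheme (locally the complete intersection $y^2+Qy-P$ in $\mathbb A^2_{\cO_{K'}}$), so it is Cohen--Macaulay and $S_2$ holds. It remains to check $R_1$. The codimension-one points are of two kinds. Those in the generic fiber are regular, because $C_{K'}$ is smooth. The others are the generic points of the closed fiber $W_{k'}=W_k\otimes_k k'$. At such a generic point $\eta$, the local ring is one-dimensional, and it is a DVR if and only if its maximal ideal is principal, generated by $\pi'$, a uniformizer of $K'$. This is equivalent to two conditions: the fiber $W_{k'}$ is reduced at $\eta$, and $\pi'$ generates $\mathfrak m_\eta$, i.e.\ $\mathfrak m_\eta=\pi'\cO_\eta$.

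In Case 1, $W_k$ is reduced. Because $k$ is perfect, $W_k$ is geometrically reduced, so $W_{k'}=W_k\otimes_k k'$ is reduced. At the generic point $\eta$ of $W_{k'}$, the ring $\cO_\eta/\pi'$ is then a field. So $\mathfrak m_\eta=(\pi')$ and $\cO_\eta$ is a DVR. This case needs no hypothesis on $e_{K'/K}$.

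In Case 2, $W_k$ is non-reduced and $e_{K'/K}=1$. Then $\pi$ is a uniformizer of $K'$. Since $W$ is normal, the local ring of $W$ at the generic point $\xi$ of $W_k$ is a DVR with some uniformizer $s$, and $\pi=us^2$ with $u$ a unit, the multiplicity being $2$. Its residue field $\kappa(\xi)$ is a function field over $k$. The local ring of $W_{\cO_{K'}}$ at $\eta$ is a localization of $\cO_{W,\xi}\otimes_{\cO_K}\cO_{K'}$. Modulo $\pi$, this is $(\cO_{W,\xi}/\pi)\otimes_k k'$, i.e.\ $(\cO_{W,\xi}/s^2)\otimes_k k'$. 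We need $\mathfrak m_\eta$ to be generated by $s$, which requires $\kappa(\xi)\otimes_k k'$ to be reduced. This holds since $k$ is perfect, so $\kappa(\xi)/k$ is separable. The point is that $\cO_{W,\xi}\otimes\cO_{K'}$, localized, is then a one-dimensional local ring whose maximal ideal is $(s,\pi)=(s)$, because $\pi\in(s)$. So it is a DVR. Here $e_{K'/K}=1$ is used exactly to ensure that $\mathfrak m_{K'}\cO_\eta$ is generated by $\pi$, hence lies in $(s)$. If $e_{K'/K}>1$, the uniformizer $\pi'$ would not lie in $(s)$ and this argument would fail.

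Combining the two cases, $R_1$ holds, so $W_{\cO_{K'}}$ is normal and hence a Weierstrass model of $C_{K'}$. I expect the main delicate step to be Case 2. To make it rigorous, I would present it with the explicit equation from Lemma~\ref{lem:change_y}(2), where $\pi\mid Q,P$ and the closed fiber equation is $y^2=0$ after translating $y$.
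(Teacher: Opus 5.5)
Your proof is correct, but in the non-reduced case it takes a different route from the paper.

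\textbf{How the paper argues.} The paper also reduces everything to normality of $W_{\cO_{K'}}$, but then relies on explicit equation-level criteria from earlier work.
\begin{enumerate}
\item If $W_k$ is reduced, $W_{k'}$ stays reduced because $k$ is perfect, and normality follows from \cite{LTR}, Lemme 2(a).
\item If $W_k$ is non-reduced and $e_{K'/K}=1$, it quotes \cite{LTR}, Lemme 2(d) when $\chara(k)\ne 2$. When $\chara(k)=2$, it takes an equation of $W$ satisfying the normality conditions of \cite{LRN}, Lemma 2.3(1), and notes that the same equation still satisfies them over $\cO_{K'}$.
\end{enumerate}

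\textbf{How you argue.} You check Serre's criterion by hand. $S_2$ comes from the hypersurface structure, and $R_1$ on the generic fiber from smoothness of $C_{K'}$. For $R_1$ at the generic points of the closed fiber, you show the local ring has principal maximal ideal. In the reduced case it is generated by $\pi$. In the non-reduced case it is generated by a uniformizer $s$ of $\cO_{W,\xi}$. There, $\pi\in(s)$ is exactly what $e_{K'/K}=1$ guarantees, and the reducedness of $\kappa(\xi)\otimes_k k'$ comes from $k$ being perfect.

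\textbf{Comparison.} Your argument is self-contained, uniform in the residue characteristic, and makes transparent where each hypothesis enters. The paper's is shorter given its references, and it stays within the explicit-equation framework (normalized equations in characteristic $2$) used throughout.

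\textbf{Two small points.}

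First, one sentence is wrong as stated: you claim a one-dimensional $\cO_\eta$ is a DVR if and only if $\mathfrak m_\eta=\pi'\cO_\eta$. The correct criterion is that $\mathfrak m_\eta$ be principal. The condition $\mathfrak m_\eta=\pi'\cO_\eta$ characterizes reducedness of $W_{k'}$ at $\eta$, and it fails in your own Case 2, where $\cO_\eta$ is a DVR with $\mathfrak m_\eta=s\cO_\eta$. Since Case 2 only uses the correct criterion, nothing breaks.

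Second, Case 2 is already rigorous without falling back on explicit equations, once you state two facts:
\begin{enumerate}
\item $\eta$ lies over $\xi$, since $W_{k'}\to W_k$ is flat. Hence $\cO_\eta$ is a localization of $\cO_{W,\xi}\otimes_{\cO_K}\cO_{K'}$.
\item $\cO_\eta/s\cO_\eta$ is the localization of the reduced ring $\kappa(\xi)\otimes_k k'$ at a minimal prime, because $V(s)=V(\pi)$. Hence it is a field.
\end{enumerate}
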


\begin{proof} The generic fiber of $W':=W_{\cO_{K'}}$ over $\cO_{K'}$ is $C_{K'}$. What
  we need is the normality. If $W_k$ is reduced,
  then $W'_{k'}=W_{k'}$ is reduced because $k$ is perfect. So
  $W'$ is normal (\cite{LTR}, Lemme 2(a)). Suppose $W_k$ is non reduced and
  $e_{K'/K}=1$. If $\chara(k)\ne 2$, the normality of
  $W'$ follows from \cite{LTR}, Lemme 2(d). Suppose $\chara(k)=2$, then
  we take an equation of $W$ satisfying the conditions of
  \cite{LRN}, Lemma 2.3(1) (see Algorithm 6.1 therein). Such an equation
  also satisfies the conditions of \cite{LRN}, Lemma 2.3(1) over $\cO_{K'}$,
  and the latter implies  that $W'$ is normal.
\end{proof}

\begin{lemma} \label{lem:mini_down}  Let $W$ be a Weierstrass model of $C$ 
  with reduced $W_k$.
  \begin{enumerate}[\rm (1)] 
  \item If $W_{\cO_{K'}}$ is minimal, then so is $W$.
  \item If $W$ is minimal and $e_{K'/K}=1$, then $W_{\cO_{K'}}$ is minimal.
  \item If $K$ is dense in $K'$, then any Weierstrass model $W'$ of $C_{K'}$ is
    isomorphic to $W_{\cO_{K'}}$ for some Weierstrass model $W$ of $C$. 
  \end{enumerate} 
\end{lemma}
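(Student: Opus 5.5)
The plan is to treat the three parts separately, using the minimality criterion of \cite{LRN}, Proposition 4.3, together with the invariance of multiplicities under base change given by Remark~\ref{rmk:lambda'}(\ref{lambda_1}). Throughout, $W_k$ is reduced, so Lemma~\ref{lem:wbc} guarantees that $W':=W_{\cO_{K'}}$ is a Weierstrass model of $C_{K'}$. Fix an equation $y^2+Q(x)y=P(x)$ of $W$ with coefficients in $\cO_K$. The same equation defines $W'$. Its discriminant is the same element $\Delta_W$, so $\nu_{K'}(\Delta_{W'})=e_{K'/K}\,\nu(\Delta_W)$.

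For (1), I would argue by contraposition. Suppose $W$ is not minimal. By \cite{LRN}, Proposition 4.3 (or \cite{LTR}, Corollaire 2), there is then a Weierstrass model $U$ with $\nu(\Delta_U)<\nu(\Delta_W)$. More concretely, $U=W(p_0)$ for some point $p_0$ with $\lambda(p_0)\ge g+2$, and Formula~\eqref{eq:disc-p0} gives a strict decrease of the discriminant. I would then form $U'$, the normalization of $(U/\qi)_{\cO_{K'}}$ in $K'(C)$. This is a Weierstrass model of $C_{K'}$. Its discriminant has valuation at most $e_{K'/K}\nu(\Delta_U)$, because an equation of $U$ remains an equation of a possibly non-normal model, and normalizing can only decrease the discriminant valuation. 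Hence
\[
\nu_{K'}(\Delta_{U'})\le e_{K'/K}\nu(\Delta_U)<e_{K'/K}\nu(\Delta_W)=\nu_{K'}(\Delta_{W'}),
\]
so $W'$ is not minimal. The one point to justify is that the discriminant of the equation bounds $\nu(\Delta_{U'})$ from above. This follows from the description of discriminants in \cite{LTR}, \S 2: $U'$ is obtained from the equation by a change of $y$ and scaling, which lowers $\nu$.

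For (2), take $e_{K'/K}=1$, and choose the equation of $W$ as in Proposition~\ref{prop:same-y} at each relevant point (for a single model, $n=0$), so that it satisfies Remark~\ref{rmk:lambda'}(1) and Lemma~\ref{lem:change_y}(3). By Remark~\ref{rmk:lambda'}(\ref{lambda_1}), every point $p'\in W'_{k'}$ lying over a rational point $p\in W_k(k)$ has $\lambda(p')=\lambda(p)$. The criterion \cite{LRN}, Proposition 4.3 involves $\lambda$ at rational points and, by \cite{LTR}, at all closed points. The difficulty is that new $k'$-rational points appear over non-rational points of $W_k$. I would handle these with the general definition of $\lambda$ in \cite{LTR}, D\'efinition 10, which is invariant under residue extension (\cite{LTR}, Lemme 10; \cite{Ld}, Proposition 2.13). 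Then $W$ satisfies the criterion if and only if $W'$ does, and minimality transfers. This invariance at non-rational points is the step I expect to be the main obstacle. A fallback is to use the alternative minimality characterization through $\nu(\Delta)$ over the maximal unramified extension, combined with Galois descent of $Z$-models.

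For (3), let $x'$ be a coordinate function of $W'/\qi$. It can be written as $(ax+b)/(cx+d)$ with $a,b,c,d\in K'$ for a fixed coordinate $x$ on $\PP^1_K$. Since $K$ is dense in $K'$, we can approximate this matrix by a matrix over $K$ that is congruent modulo $\Gl_2(\cO_{K'})$. This gives a coordinate function of a smooth model $Z$ over $\cO_K$ with $Z_{\cO_{K'}}\cong W'/\qi$. The normalization $W$ of $Z$ in $K(C)$ then satisfies $W_{\cO_{K'}}\cong W'$ by Lemma~\ref{lem:wbc}: completion does not change $e$, so normality is preserved even when $W_k$ is non-reduced, and normalization commutes with this flat base change.
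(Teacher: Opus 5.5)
Parts (1) and (3) are fine. Your proof of (1) differs from the paper's. The paper applies the $\lambda$-criterion of \cite{LRN}, Proposition 4.3, using that $\lambda$ can only increase under base change (an equation of $W$ is also one of $W_{\cO_{K'}}$); this needs a separate argument for odd $g$. You instead pull back a model $U$ with $\nu(\Delta_U)<\nu(\Delta_W)$ and use that normalizing $U_{\cO_{K'}}$ can only lower the discriminant (the new variable is $(y+F(x))/c$ with $c\in\cO_{K'}$). This is correct, uniform in $g$, and bypasses the criterion entirely. Your aside ``$U=W(p_0)$ with $\lambda(p_0)\ge g+2$'' is wrong for odd $g$ and $\lambda(p_0)=g+2$, where \eqref{eq:disc-p0} gives no decrease, but nothing depends on it. Part (3) is the paper's argument with the approximation written out. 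Density also forces $e_{K'/K}=1$, which is what lets you invoke Lemma~\ref{lem:wbc}.

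Part (2) has a genuine gap. The criterion of \cite{LRN}, Proposition 4.3 constrains $\lambda$ only at $k$-rational points of $W_k$ (and of $W(p_0)_k$ for odd $g$). Minimality of $W$ does \emph{not} give $\lambda(p)\le g+1$ at non-rational closed points. So invariance of $\lambda$ under residue extension tells you nothing about the $k'$-rational points of $W_{\cO_{K'}}$ lying over non-rational points of $W_k$, and ``$W$ satisfies the criterion iff $W'$ does'' does not follow.

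Tellingly, your argument never uses that $W_k$ is reduced (for $e_{K'/K}=1$, Lemma~\ref{lem:wbc} gives normality anyway), yet the statement fails without it. By Proposition~\ref{prop:mini}(2.a),(3.a), a minimal $W$ with non-reduced $W_k$ can have a quadratic point $p_0$ with $\lambda(p_0)=g+2$, and then $W_{\cO_{K'}}$ is not minimal once $k(p_0)\subseteq k'$. Example~\ref{ex:Vns} is an explicit case with $g=2$.

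The missing input is a degree count. For reduced $W_k$, Lemma~\ref{lem:d0}(4) gives $\sum_p\lambda(p)[k(p):k]\le 2g+2$ in the separable case and $\sum_p(\lambda(p)-1)[k(p):k]\le 2g$ otherwise. Either way $\lambda(p)\le g+1$ whenever $[k(p):k]\ge 2$. For odd $g$, Lemma~\ref{lem:d0}(3) applied to the non-reduced $W(p_0)_k$ similarly gives $\lambda(q)\le g+2$ at its non-rational points. With this added, your route goes through.

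The paper instead quotes Proposition~\ref{prop:mini}(1) and (2.a), which encode exactly this: the only obstruction for $e_{K'/K}=1$ is a point with $\lambda=g+2$ when $g$ is even, and that forces $W_k$ to be non-reduced. Your fallback via $K^{\mathrm{nr}}$ and Galois descent does not close the gap, since the minimal discriminant genuinely can drop over an unramified extension (Proposition~\ref{prop:mini}(3.b)).
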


\begin{proof} (1) We will use the minimality criterion
  \cite{LRN}, Proposition 4.3. 
  Let $p_0\in W_k(k)$. Then it induces a point $p'_0\in W'_{k'}(k')$
  and it is clear that $\lambda(p'_0)\ge \lambda(p_0)$ because an equation of
  $W$ is also an equation of $W'$.  
  If $g$ is even, the aforementioned criterion implies that 
  $W$ is minimal. If $g$ is odd, and there exists $p_0\in W_k(k)$ with
  $\lambda(p_0)\ge g+2$, then $\lambda(p'_0)\ge g+2$ and 
  \cite{LRN}, Proposition 4.3(2) implies that $\lambda(p_0)=g+2$. Note that
  that $W'_{k'}$ is reduced. Any $q\in W(p_0)_k(k)$ induces a
  $q'\in W'(p'_0)_{k'}(k')$. So $\lambda(q)\le \lambda(q')\le g+2$ and $W$
  is minimal.

  (2) is a consequence of Proposition~\ref{prop:mini}(1) and (2.a).

  (3) The quotient $Z':=W'/\qi$ is a smooth model of $\PP^1_{K'}$. It is
  clear that $Z'=Z_{\cO_{K'}}$ for some smooth model $Z$ of $\PP^1_K$. The
  normalization $W$ of $Z$ in $K(C)$ is the desired Weierstrass model of $C$.   
\end{proof}

\begin{proposition} \label{prop:more_than_one} Let $g\ge 2$ be even.
  Suppose that $C$ has more than one minimal Weierstrass model. Let
  $(W_i)_{0\le i\le n}$ be its minimality chain    (Definition~\ref{extremal-MWM}).  
\begin{enumerate}[\rm (1)]
\item Let $W=W_i$ for some even $i\le n$. Then $W_{\cO_{K'}}$ is a
  minimal Weierstrass model of $C_{K'}$ over $\cO_{K'}$.
  In particular, $C_{K'}$ has more than one minimal 
  Weierstrass model.
\item We have 
  \[
\nu_{K'}(\Delta_{C_{K'}})=e_{K'/K} \nu_K(\Delta_{C}). 
  \] 
\item If $e_{K'/K}=1$, then $((W_i)_{\cO_{K'}})_{0\le i\le n}$ is the minimality
  chain of $C_{K'}$. 
    \end{enumerate}
 \end{proposition}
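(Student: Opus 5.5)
The plan is to work throughout with the equation of Corollary~\ref{cor:equa_term}, and to use the minimality criterion of \cite{LRN}, Proposition 4.3, in the form of Remark~\ref{rmk:more_than_one}.

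\textbf{Part (1).} Fix such an equation $y^2+Q(x)y=P(x)$ of $W_0$, so that the equations of Corollary~\ref{cor:equa_term}(1) define $W_i$ for even $i$.

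First, $W_{\cO_{K'}}$ is a Weierstrass model of $C_{K'}$ by Lemma~\ref{lem:wbc}. Indeed, $(W_i)_k$ is reduced for even $i$ (Theorem~\ref{chain-mwm}).

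Next, we check minimality over $\cO_{K'}$. In $K'$ the coefficients satisfy $\nu_{K'}(a_j)=e\,\nu(a_j)$ with $e=e_{K'/K}$. Set $x'_i=x/\pi^{i}$, noting that $\pi$ is no longer a uniformizer. The inequalities of Corollary~\ref{cor:equa_term}(2) become, with $\varpi$ a uniformizer of $K'$ and $n'=en$,
\[
\nu_{K'}(a_j)\ge n'(g+1-j), \qquad \nu_{K'}(b_j)\ge n'(g+1-2j)/2,
\]
together with $\nu_{K'}(a_{g+1})=0$. This holds because the right-hand sides simply scale by $e$. Remark~\ref{rmk:more_than_one}, applied over $K'$ with $n'$ (which is even), then says that every even $i'\le n'$ gives a minimal model. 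In particular $i'=ei$ gives $x/\varpi^{ei}$, which up to units is $x/\pi^i$, and this is exactly $(W_i)_{\cO_{K'}}$. Since $n'\ge 2$, there are at least two non-isomorphic minimal models, which proves the second claim of (1).

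The main point to verify is that Remark~\ref{rmk:more_than_one}'s argument works verbatim over $K'$. This uses only that $(W_{i'})_{k'}$ is reduced with rational points $0,\infty$ of $\lambda=g+1$, together with Lemma~\ref{lem:d0}(4) and \cite{LRN}, Proposition 4.3(1). The value $\lambda=g+1$ over $K'$ needs care in characteristic $2$. I would handle it using the explicit form of $\mu_0$: the term $a_{g+1}x^{g+1}$ has odd exponent and unit coefficient, so $\lambda_0=g+1$ exactly by \cite{LRN}, Lemma 3.3, as recalled in Remark~\ref{rmk:lambda'}(1).

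\textbf{Part (2).} This is now immediate. $\Delta_{C_{K'}}$ is the discriminant of the minimal model $(W_0)_{\cO_{K'}}$, which is given by the same equation. Hence its $\nu_{K'}$-valuation is $e\,\nu_K(\Delta_{W_0})$.

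\textbf{Part (3).} Take $e=1$, so that $n'=n$ and the models $(W_0)_{\cO_{K'}}$ and $(W_n)_{\cO_{K'}}$ are minimal. Their chain is $((W_i)_{\cO_{K'}})_i$, since the coordinate functions $x/\pi^i$ are unchanged and each $(W_i)_{\cO_{K'}}$ is normal by Lemma~\ref{lem:wbc}.

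It remains to show these are extremal for $C_{K'}$. Use the criterion in the proof of Theorem~\ref{chain-mwm}(3.a): an inner model has $\delta=g+1$ at two points. By Proposition~\ref{prop:W0}, $(W_0)_k$ has a single point $p_0$ with $\delta=g+1$. Since $\delta$ does not change under the residue field extension $k\to k'$ (the curve is given by the same equation), $(W_0)_{k'}$ also has a single such point, and $(W_0)_{\cO_{K'}}$ is extremal. The same argument applies to $W_n$.

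The step I expect to be the main obstacle is the invariance of $\delta$ under base change in characteristic $2$. A point of $(W_0)_k$ with $\delta<g+1$ might acquire a larger $\delta$ after extending $k$. This should be settled by the separable/inseparable count in Lemma~\ref{lem:d0}(4). The count forces the total $\delta$ outside $p_0$ to be at most $g+1$; a second point with $\delta=g+1$ over $k'$ would then have to be rational over $k$ by Galois invariance, which is impossible.
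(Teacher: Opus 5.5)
Your Parts (1) and (2) are fine. The inequalities of Corollary~\ref{cor:equa_term}(2) do scale by $e_{K'/K}$, and rerunning Remark~\ref{rmk:more_than_one} over $K'$ works. For $i'=0$ (resp.\ $i'=n'$) only $x_{i'}=0$ (resp.\ $\infty$) is guaranteed to have $\lambda=g+1$, but one such point suffices by Lemma~\ref{lem:d0}(4). This route even yields the $1+e_{K'/K}n/2$ minimal models of Corollary~\ref{cor:bc} at once. The paper's route is shorter: $\delta(w_0')=\delta(w_0)=g+1$ after base change, so Lemma~\ref{lem:d0}(4) gives $\lambda\le\delta\le g+1$ on all of $W_{k'}$.

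Part (3), however, has a genuine gap. Proposition~\ref{prop:W0} does not say that $p_0$ is the only point of $(W_0)_k$ with $\delta=g+1$, and this is false in general. Take $g=2$, residue characteristic $\ne 2,3$, $n\ge 2$ even, and $y^2=(x^3+\pi^{3n})(1+\pi x^3)$.
\begin{itemize}
\item This equation satisfies Corollary~\ref{cor:equa_term}(2), and $(W_0)_k$ is $y^2=x^3$.
\item Both $x=0$ and $x=\infty$ have $\delta=3$.
\item In the chart $t=1/x$, $z=y/x^3$ one gets $z^2=(1+\pi^{3n}t^3)(t^3+\pi)$, so $\lambda=1$ at $x=\infty$. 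Hence $x=0$ is the only point with $\lambda=3$, and $W_0$ is extremal.
\end{itemize}
Theorem~\ref{chain-mwm}(3.a) is only a necessary condition for being inner, and $\delta$ cannot detect extremality. A second point with $\delta=g+1$ may have $\lambda<g+1$, or have $\lambda=g+1$ with no point of $\lambda=g+2$ on the next model. So counting $\delta$ over $k'$ proves nothing. The invariance of $\delta$ that you single out as the obstacle is not the real issue; $\delta$ is unchanged by extending the perfect residue field.

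What is needed is a descent argument for $\lambda$ along the two-step criterion.
\begin{itemize}
\item Suppose $(W_n)_{\cO_{K'}}$ is not extremal. Theorem~\ref{chain-mwm} over $K'$ gives a chain $(W_n)_{\cO_{K'}},U'_{n+1},U'_{n+2}$ with $U'_{n+1}\ne (W_{n-1})_{\cO_{K'}}$ and $U'_{n+2}$ minimal. It passes through $p'_n\ne {p'_n}^*$ with $\lambda(p'_n)=g+1$ and $p'_{n+1}$ with $\lambda(p'_{n+1})=g+2$.
\item Since $e_{K'/K}=1$, $\lambda$ is preserved by base change (Remark~\ref{rmk:lambda'}(\ref{lambda_1})). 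So the image $p_n$ of $p'_n$ has $\lambda(p_n)=g+1$.
\item Because $\lambda(p_n^*)=g+1$ already, the count of Lemma~\ref{lem:d0}(4) forces $p_n$ to be $k$-rational. Hence $U'_{n+1}=W_n(p_n)_{\cO_{K'}}$.
\item The same count in the non-reduced fiber (Lemma~\ref{lem:d0}(3), with $\lambda(p_{n+1}^*)=g+2$) makes the image $p_{n+1}$ of $p'_{n+1}$ rational with $\lambda=g+2$.
\item Formula~\eqref{eq:disc-p0} then shows that $W_n(p_n)(p_{n+1})$ is a minimal Weierstrass model of $C$ beyond $W_n$. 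This contradicts the extremality of $W_n$ over $K$.
\end{itemize}
The same argument applies to $W_0$.
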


\begin{proof} (1) By Theorem~\ref{chain-mwm}, $W_k$ is reduced and has
    a rational point $w_0$ with $\delta(w_0)=g+1$. Lemma~\ref{lem:wbc}
    says that $W':=W_{\cO_{K'}}$ is a Weierstrass model of $C_{K'}$. Let 
    $w'_0\in W'_{k'}=W_{k'}(k')$ the point  
    lying over $w_0$. Then it is easy to see that, as a general fact
    (because $k$ is perfect), $\delta(w'_0)=g+1$.  Lemma~\ref{lem:d0}(4) 
     implies that $\lambda(p')\le g+1$ for all $p'\in W'_{k'}$. So $W'$ is 
     minimal by \cite{LRN}, Proposition 4.3(1). 
     
  (2) follows from (1).  
  
  (3) By Lemmas~\ref{lem:wbc} and \ref{lem:mini_down}, for all $i\le n$,
  $W'_i:=(W_i)_{\cO'_K}$ is a Weierstrass model of $C_{K'}$, minimal if
  $i$ is even. As $e_{K'/K}=1$, $(W'_i)_i$ is also a chain.  
  It remains to show that $W'_0$ and $W'_n$ are extremal.
We will use repeatedly the fact that if $p\in W_k$ and
  $p'\in W_{k'}$ is a point lying over $p$, then $\lambda(p')=\lambda(p)$
  (Remark~\ref{rmk:lambda'}(\ref{lambda_1})). 

  Suppose that $W'_n$ is inner. 
  By Theorem~\ref{chain-mwm}, there exists a chain $W'_n, U'_{n+1}, U'_{n+2}$
  of Weierstrass models of $C_{K'}$ such that $U'_{n+1}\ne W'_{n-1}$ and
  $U'_{n+2}$ is minimal. Let $p'_{n}\in W'_n\wedge U'_{n+1}$. 
  We consider ${p'_n}^{*}\in W'_n\wedge W'_{n-1}$ as a point  
  of $W'_n\wedge W'_{n-1}$. Then $p'_n\ne {p'_n}^{*}$.
  Let $p_n$ be the image of $p'_n$ in $W_n$. Then $\lambda(p_n)=\lambda(p'_n)=g+1$
  (Theorem~\ref{chain-mwm}). As $\lambda(p_{n}^*)=g+1$, Lemma~\ref{lem:d0}(4)
  implies that $p_n$ is rational over $k$. As $e_{K'/K}=1$, we have  
  $U'_{n+1}=W'_n(p'_n)=W_n(p_n)_{\cO_{K'}}$. For similar reasons,
  the image $p_{n+1}$ of $p'_{n+1}\in U'_{n+1}\wedge U'_{n+2}$ in $W_{n+1}:=W_n(p_n)$ 
  is rational over $k$ and $U'_{n+2}=(W_{n+2})_{\cO_{K'}}$ where 
  $W_{n+2}:=W_{n+1}(p_{n+1})$. As $\lambda(p_n)=g+1$ and $\lambda(p_{n+1})=g+2$,
  Formula~\eqref{eq:disc-p0} implies that $\nu(\Delta_{W_{n+2}})=
  \nu(\Delta_{W_n})$, hence $W_{n+2}$ is minimal and $W_n$ is not extremal,
contradiction. 
\end{proof}

\begin{corollary}[See also Corollary~\ref{cor:stable}] Suppose that $g$ is
  even. If $C$ has more than one minimal 
    Weierstrass model, then $C$ does not have potentially good reduction. 
  \end{corollary}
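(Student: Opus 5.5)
Assume, for contradiction, that $C$ has potentially good reduction. Then there is a finite extension $K'/K$ (take $\cO_{K'}$ the integral closure of $\cO_K$ in $K'$ localized at a maximal ideal, with perfect residue field since $k$ is perfect and $k'/k$ is finite) such that $C_{K'}$ has good reduction over $\cO_{K'}$. By Proposition~\ref{prop:more_than_one}(1), $C_{K'}$ still has more than one minimal Weierstrass model. So we reduce to proving that a curve with good reduction cannot have more than one minimal Weierstrass model when $g$ is even.

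Let $K$ now denote $K'$, and suppose $C$ has a smooth model $\cC$. By Theorem~\ref{chain-mwm} and Proposition~\ref{prop:W0}, the closed fiber of the extremal model $(W_0)_k$ is geometrically integral with a cusp at $p_0$. Its normalization $\Gamma_0$ has arithmetic genus $g/2$, and $(W_0)_k$ has total $\delta$-invariant $g/2\ge 1$ at $p_0$. I expect the main step to be comparing $W_0$ with $\cC$.

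\textbf{First approach.} Since $C$ has good reduction, the smooth model $\cC$ is the minimal regular model, and the hyperelliptic involution extends to $\cC$. The quotient $\cC/\qi$ is then a model of $\PP^1_K$.

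\begin{itemize}
\item If the special fiber $\cC_k$ is hyperelliptic (with $\chara(k)\ne 2$, or after care in characteristic $2$), the quotient is smooth, $\cC$ is itself a Weierstrass model, and its discriminant has valuation $0$ (the standard smooth-Weierstrass case). Minimal models then have $\nu(\Delta_C)=0$, and Theorem~\ref{thm:bound_n} gives $m_C\le 1$, a contradiction.
\item In general, use the rational map $\cC\dashrightarrow W_0$. Its only candidate point of indeterminacy lies over $p_0$. The strict transform of $\cC_k$ in $W_0$, if it maps onto $(W_0)_k$, would force $(W_0)_k$ to have genus $g$ as a normalization. But its normalization has genus $g/2<g$. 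So $\cC_k$ must be contracted in $W_0$.
\end{itemize}

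\textbf{Cleaner route.} Invoke the later statement cited as Corollary~\ref{cor:stable}, or Theorem~\ref{regular-even} from the introduction: the minimal regular model $\cC$ dominates $W_0\vee W_n$. Its closed fiber then contains the strict transforms $\Gamma_0$ and $\Gamma_n$ as two distinct components, each of genus $g/2\ge1$ (Proposition~\ref{prop:W0} applied to both extremal models). Hence $\cC_k$ is reducible, so $\cC$ is not smooth.

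Since the minimal regular model commutes with the base change to $K'$ only when $C$ already has good reduction (a smooth model is the minimal regular model, and smoothness is preserved by base change), this contradiction is stable under base change. The step I expect to be the main obstacle is justifying that domination without the later theorem. The fallback is the genus count above: a birational morphism from a smooth proper model with irreducible genus-$g$ fiber cannot produce a fiber component whose normalization has genus $g/2$ distinct from $\cC_k$. Hence the fiber of $\cC$ must contain two components, which is impossible.
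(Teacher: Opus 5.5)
Your ``cleaner route'' is a correct proof, but it takes a different path from the paper's. The first step is the same as the paper's: pass to $K'$ where $C_{K'}$ has good reduction, and use Proposition~\ref{prop:more_than_one}(1) to keep more than one minimal Weierstrass model. You then apply Theorem~\ref{regular-even}(1) to $C_{K'}$. Its minimal regular model, which is the smooth model, dominates $W_0\vee W_n$ for the extremal models of $C_{K'}$. A proper birational morphism of models is surjective, so an irreducible $\cC_{k'}$ would force $(W_0\vee W_n)_{k'}$ to be irreducible, which is false.

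The paper instead cites Proposition~\ref{prop:stab}, whose proof shows that a smooth model $\cX$ is itself a Weierstrass model. Indeed, $\sigma$ extends to $\cX$, and $\cX/\qi$ has a geometrically integral closed fiber of arithmetic genus $0$ with a rational point, hence $\cX/\qi\simeq\PP^1_{\cO_{K'}}$. So $\cX$ is the unique minimal Weierstrass model, contradicting Proposition~\ref{prop:more_than_one}(1). Alternatively, $\nu(\Delta_{\cX})=0$ while Proposition~\ref{prop:more_than_one}(2) gives $\nu_{K'}(\Delta_{C_{K'}})>0$.

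The paper's route stays inside Weierstrass models and needs no domination result. Yours imports, through Theorem~\ref{regular-even}(1), the fact from \cite{LTR}, Propositions 5--6, that minimal Weierstrass models with reduced fiber are dominated by the minimal regular model. In exchange you avoid the quotient argument, and you get the stronger conclusion that the minimal regular model over every such $K'$ has reducible closed fiber. There is no circularity: neither Theorem~\ref{regular-even} nor Corollary~\ref{cor:stable} uses this corollary.

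The remaining material should be dropped, because as written it does not stand.
\begin{itemize}
\item Your first bullet is only conditional. You never prove that $\cC/\qi$ is smooth, and that is exactly the step the paper supplies; it holds in residue characteristic $2$ as well.
\item In the second bullet, the claim that the only indeterminacy of $\cC\dashrightarrow W_0$ lies over $p_0$ is unjustified.
\item The fallback's conclusion, ``hence the fiber of $\cC$ must contain two components'', does not follow.
\end{itemize}

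The fallback can be repaired as follows. Resolve $\cC\dashrightarrow W_0$ by blowing up closed points of the regular scheme $\cC$. Every new component is a $\PP^1$ over a finite extension of $k'$, and the strict transform of $\cC_{k'}$ has genus $g$. Some component must map birationally onto the closed fiber of an extremal model, whose normalization has genus $g/2$ with $1\le g/2<g$. No component qualifies, which gives the contradiction.

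Finally, your closing remark about base change of minimal regular models is unnecessary, since the whole argument takes place over $K'$.
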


  \begin{proof} Suppose that $C$ has good reduction over some extension of
    discrete valuation fields $K'/K$. Extending $K'$ again we can suppose the
    residue field $k'$ algebraically closed, hence perfect.
    By Proposition~\ref{prop:stab}, $C_{K'}$ has a unique minimal Weierstrass
    model. Contradiction with Proposition~\ref{prop:more_than_one}(1).
    One can also use Proposition~\ref{prop:more_than_one}(2) as $\nu(\Delta_W)=0$
    if and only if $W$ is smooth. 
\end{proof}

Now we consider the case when $C$ has a unique minimal Weierstrass model.
In general this uniqueness property is not preserved by extensions of $K$.
We will mostly focus on unramified extensions to get more chance to
preserve the uniqueness property.  
First we recall and make more precise some results from \cite{LTR}.

\begin{proposition} \label{prop:mini}
  Let $W$ be a minimal Weierstrass model of $C$ over $\cO_K$.
  \begin{enumerate}[\rm (1)] 
  \item There exists an extension $K'/K$ with $e_{K'/K}=1$ such that
    $W_{\cO_{K'}}$ is non minimal if and only if 
    $g$ is even, and there exists $p_0\in W_k$ with $\lambda(p_0)=g+2$.
    \item Suppose that $W$ satisfies the conditions of {\rm (1)}, then
  \begin{enumerate}
  \item $W_k$ is non-reduced, $[k(p_0) : k]=2$, and $\lambda(p)=1$ for all
    $p\in W_k\setminus \{ p_0 \}$;
  \item $W$ is the unique minimal Weierstrass model of $C$;
\end{enumerate}     
\item Keep the hypothesis of (2). Let $K'/K$ be an extension with $e_{K'/K}=1$ and
  residue field $k'$.
  \begin{enumerate}
\item Suppose that $k(p_0)\subseteq k'$. Then $W_{\cO_{K'}}$ is not minimal,  
  and for any $w'\in W_{k'}(k')$ lying over $p_0$, $(W_{\cO_{K'}})(w')$
  is a minimal Weierstrass model of $C_{K'}$.  
  \item Let $K''/K$ be any extension of discrete valuation fields such that
    the residue field $k''$ of $K''$ is perfect and contains $k(p_0)$, then
    $C_{K''}$ has more than one minimal Weierstrass model and
(see Definition~\ref{dfn:mini_disc}) 
    \[\nu_{K''}(\Delta_{C''})= e_{K''/K}(\nu_K(\Delta_C)-2(2g+1)).\] 
  \item If $k'$ does not contain $k(p_0)$, then $W_{\cO_{K'}}$ is the unique
    minimal Weierstrass model of $C_{K'}$.
\end{enumerate} 
\end{enumerate}
\end{proposition}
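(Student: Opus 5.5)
The plan is to reduce everything to the minimality criterion \cite{LRN}, Proposition 4.3, combined with three tools: the multiplicity bounds of Lemma~\ref{lem:d0}, the invariance $\lambda(p')=\lambda(p)$ under extensions with $e_{K'/K}=1$ (Remark~\ref{rmk:lambda'}(\ref{lambda_1})), and Formula~\eqref{eq:disc-p0}. By Lemma~\ref{lem:wbc}, $W':=W_{\cO_{K'}}$ is a Weierstrass model whenever $e_{K'/K}=1$, so the criterion can be applied to it directly.

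\emph{Step 1: (1) and (2.a).} Suppose $W'$ is not minimal for some $K'$ with $e_{K'/K}=1$.
\begin{enumerate}[\rm (i)]
\item The criterion gives a rational point $p'\in W'_{k'}(k')$ with $\lambda(p')\ge g+2$ in the even case; in the odd case it gives the more elaborate two-step configuration of Proposition 4.3(2).
\item Let $p$ be the image of $p'$ in $W_k$. Then $\lambda(p)=\lambda(p')\ge g+2$.
\item Since $W$ is minimal, the criterion forces $p\notin W_k(k)$, so $[k(p):k]\ge 2$.
\item Now feed this into Lemma~\ref{lem:d0}. If $W_k$ is reduced and separable over $Z_k$, then $\sum\lambda[k(p):k]\le 2g+2$ gives $\lambda(p)\le g+1$. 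If $W_k$ is reduced and inseparable, then $\sum(\lambda-1)[k(p):k]\le 2g$ gives $\lambda(p)\le g+1$. Both contradict (ii).
\item Hence $W_k$ is non-reduced. Then $\sum(\lambda-1)[k(p):k]\le 2g+2$ forces $\lambda(p)=g+2$, $[k(p):k]=2$, and $\lambda=1$ at every other point. This is (2.a).
\end{enumerate}
For $g$ odd I expect the same counting to rule out the two-step configuration. Indeed $\lambda\le g+2$ everywhere, and $2[\lambda/2]\le g+1$, so no single modification lowers $\nu(\Delta)$. Applying \eqref{eq:lambda'} and \eqref{eq:lambdap1*} at the next step of the chain should also block any decrease. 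This odd case is the step I expect to require the most care, since it depends on the exact form of Proposition 4.3(2).

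For the converse of (1), take $K'/K$ unramified with residue field containing $k(p_0)$. A point $p'$ over $p_0$ is then rational with $\lambda(p')=g+2>g+1$, so $W'$ is not minimal.

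\emph{Step 2: (2.b).} By Theorem~\ref{chain-mwm}, if $C$ had more than one minimal Weierstrass model, every minimal model would be some $W_i$ with $i$ even, and such models have reduced closed fiber. Since $W_k$ is non-reduced, $W$ cannot be among them, so there is exactly one minimal model.

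\emph{Step 3: (3.a).} Let $w'\in W'_{k'}(k')$ lie over $p_0$, so $\lambda(w')=g+2$.
\begin{enumerate}[\rm (i)]
\item Formula~\eqref{eq:disc-p0} gives $\nu(\Delta_{W'(w')})=\nu(\Delta_W)-2(2g+1)$ and $\varepsilon(W'(w'))=0$. In particular $W'$ is not minimal.
\item For minimality of $W'(w')$, use the criterion with $g$ even. By \eqref{eq:lambdap1*}, the point $w_1^*$ over $W'$ has $\lambda=2g+2-(g+2)+1=g+1$.
\item All other rational points have $\lambda\le \lambda'(w')\le 2[(g+3)/2]-1=g+1$ by \eqref{eq:lambda'}.
\end{enumerate}
Since $[k(p_0):k]=2$ and $k'\supseteq k(p_0)$, there are two distinct points $w',w''$ over $p_0$. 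This gives two non-isomorphic minimal models, $W'(w')$ and $W'(w'')$.

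\emph{Step 4: (3.b).} Replace $K''$ by its completion. By Hensel's lemma it then contains an unramified extension $K'$ of $K$ with residue field $k(p_0)$. By (3.a), $C_{K'}$ has more than one minimal model, with minimal discriminant $\nu_K(\Delta_C)-2(2g+1)$. Proposition~\ref{prop:more_than_one}(1)-(2), applied to $K''/K'$ with $e_{K''/K'}=e_{K''/K}$, gives the claim.

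\emph{Step 5: (3.c).} If $k'\not\supseteq k(p_0)$, then $k(p_0)\otimes_k k'$ is a field. So the unique point of $W'_{k'}$ over $p_0$ still has degree $2$ and $\lambda=g+2$, and every other point has $\lambda=1$. The criterion then shows $W'$ is minimal. Moreover $W'$ satisfies the hypotheses of (2) over $K'$, so (2.b) applied over $K'$ gives uniqueness.
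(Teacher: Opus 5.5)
Your even-genus arguments are correct, but for (1)--(3.a) you take a different route from the paper. The paper quotes \cite{LTR}, Proposition~4 (proved there for unramified $K'/K$) and transfers it to all extensions with $e_{K'/K}=1$ via Remark~\ref{rmk:lambda'}(\ref{lambda_1}). You instead rederive everything from the minimality criterion, the counting of Lemma~\ref{lem:d0}, and \eqref{eq:disc-p0}, \eqref{eq:lambdap1*}, \eqref{eq:lambda'}, and you get (2.b) from the minimality chain. That step is not circular, since Lemma~\ref{lem:WW'} and Theorem~\ref{chain-mwm} do not use this proposition. Lemma~\ref{lem:WW'}(2) alone already suffices: a minimal model admitting a second, non-isomorphic, minimal model has reduced fiber. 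Your (3.b)--(3.c) are the paper's arguments; for replacing $K''$ by its completion, cite Lemma~\ref{lem:mini_down}(3) as the paper does. Your version is self-contained and shows why the bad point must be quadratic and the fiber non-reduced; the paper's is shorter but rests on \cite{LTR}.

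The genuine gap is the odd-genus half of the ``only if'' in (1). You must show that for $g$ odd no $K'$ with $e_{K'/K}=1$ makes $W':=W_{\cO_{K'}}$ non-minimal, and you only state an expectation. It does close with your tools. Take the criterion in the form the paper uses in the proof of Lemma~\ref{lem:mini_down}(1). Then non-minimality of $W'$ gives either (a) a rational $p'$ with $\lambda(p')\ge g+3$, or (b) a rational $p'$ with $\lambda(p')=g+2$ and a rational $q'\in W'(p')_{k'}$ with $\lambda(q')\ge g+3$.

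In (a), the image $p$ has $\lambda(p)\ge g+3$. If $p$ is not rational, this contradicts the bound $\lambda\le g+2$ that Lemma~\ref{lem:d0}(3)--(4) gives at points of degree $\ge 2$. If $p$ is rational, then $2[\lambda(p)/2]\ge g+3$, so $W(p)$ has smaller discriminant than $W$ by \eqref{eq:disc-p0}.

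In (b), first suppose $W_k$ is non-reduced. Then \eqref{eq:lambda'} and \eqref{eq:lambdap1*} give $\lambda\le g+2$ at every rational point of $W'(p')_{k'}$, so (b) cannot occur. Now suppose $W_k$ is reduced. Points of degree $\ge 2$ then have $\lambda\le g+1$, so $p$ is rational with $\lambda(p)=g+2$. Hence $W'(p')=W(p)_{\cO_{K'}}$ and $\nu(\Delta_{W(p)})=\nu(\Delta_W)$. Moreover $W(p)_k$ is non-reduced, and $\lambda(p_1^*)=g+1$ by \eqref{eq:lambdap1*}. Lemma~\ref{lem:d0}(3) then gives $(\lambda(q)-1)[k(q):k]\le g+2$ for the image $q$ of $q'$, where $\lambda(q)=\lambda(q')$. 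So a non-rational $q$ has $\lambda(q)\le (g+4)/2<g+3$. A rational $q$ with $\lambda(q)\ge g+3$ would make $W(p)(q)$ have smaller discriminant than $W$ by \eqref{eq:disc-p0}. Both contradict the minimality of $W$, and writing this out completes (1).
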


\begin{proof} (1) - (3.a) are contained in \cite{LTR}, Proposition 4 and its  
  proof under the hypothesis $K'/K$ unramified. Everything adapt well
  to the general case by Remark~\ref{rmk:lambda'}(\ref{lambda_1}).

  (3.b) By Lemma~\ref{lem:mini_down}(3), one can suppose $K''$ complete. 
  Then $K''$ contains an unramified subextension $K'/K$ with residue field
  $k'=k(p_0)$. Let $w_1\ne w_2\in W_k(k')$ be the points lying over $p_0$. 
  Then $\lambda(w_1)=\lambda(w_2)=g+2$.  
  By (3.a), $W_{\cO_{K'}}(w_i)$, $i=1, 2$, are minimal with 
  discriminants of valuation $\nu(\Delta_{W})-2(2g+1)$ using
  Formula~\eqref{eq:disc-p0}.
  Thus $C_{K'}$ has more   than one minimal Weierstrass model and
  $\nu_{K'}(\Delta_{C'})=\nu_K(\Delta_C)-2(2g+1)$. Then (3.b) for $K''$ follows
  from Proposition~\ref{prop:more_than_one}.

  For (3.c), notice that $p_0$ induces a quadratic point of $W_{k'}$ with 
  $\lambda=g+2$, then apply (2.b).
   \end{proof}

\begin{remark} \label{rmk:mini_mini}
  Suppose $g$ is even. It turns out that a minimal Weierstrass model $W$ of
  $C$ is not dominated by the minimal regular model of $C$ if and
  only if it satisfies the hypothesis of Proposition~\ref{prop:mini}(1)
  (\cite{LTR}, Propositions 5 and 6). 
  \end{remark}

\begin{corollary} \label{cor:mini_nu} Suppose that $g$ is even.
  Let $W$ be a minimal Weierstrass model of $C$ such that $W_k$
  is non-reduced. 
  \begin{enumerate}[\rm (1)]
  \item If $\lambda(p)\le g+1$ for all $p\in W_k$, then for
    any unramified extension $K'/K$, $W_{\cO_{K'}}$ is the unique
    minimal Weierstrass model of $C_{K'}$, 
  \item otherwise,  there exists a quadratic point
    $p_0\in W_k$ with $\lambda(p_0)=g+2$, and $C_{K'}$ has
    more than one minimal Weierstrass model for any unramified
    extension $K'/K$ with residue field containing $k(p_0)$. 
   \end{enumerate}
\end{corollary}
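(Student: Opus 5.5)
The plan is to reduce everything to Proposition~\ref{prop:mini}, applied to $W$. By hypothesis $W_k$ is non-reduced, so Lemma~\ref{lem:d0}(3) gives $\lambda(p)\ge 1$ for all $p\in W_k$ and
\[
\sum_{p\in W_k}(\lambda(p)-1)[k(p):k]\le 2g+2 .
\]
This constraint separates the two cases of the statement.

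(1) Assume $\lambda(p)\le g+1$ for all $p\in W_k$, and let $K'/K$ be unramified, so $e_{K'/K}=1$.
\begin{itemize}
\item By Lemma~\ref{lem:wbc}, $W':=W_{\cO_{K'}}$ is a Weierstrass model of $C_{K'}$.
\item By Remark~\ref{rmk:lambda'}(\ref{lambda_1}), every point $p'\in W'_{k'}$ lying over $p$ satisfies $\lambda(p')=\lambda(p)\le g+1$. This uses an equation of $W$ as in Proposition~\ref{prop:same-y} with $n=0$, which remains valid over $\cO_{K'}$.
\item Since no point of $W_k$ has $\lambda=g+2$, Proposition~\ref{prop:mini}(1) shows that $W'$ is still minimal.
\end{itemize}
It remains to show that $W'$ is the unique minimal model. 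Suppose $C_{K'}$ has another minimal model. Then Theorem~\ref{chain-mwm} applies over $K'$: every minimal Weierstrass model of $C_{K'}$ has reduced special fiber. But $W'_{k'}=W_k\otimes_k k'$ is non-reduced, which is a contradiction. Hence $W'$ is the unique minimal Weierstrass model of $C_{K'}$.

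(2) Otherwise some $p_0\in W_k$ has $\lambda(p_0)\ge g+2$. The displayed inequality then forces $(\lambda(p_0)-1)[k(p_0):k]\le 2g+2$.
\begin{itemize}
\item First bound $\lambda(p_0)\le g+2$. Here I would use minimality of $W$: by \cite{LRN}, Proposition~4.3, for even $g$ a minimal model with non-reduced fiber has no point with $\lambda\ge g+3$ (equivalently, Formula~\eqref{eq:disc-p0} would otherwise produce a Weierstrass model of smaller discriminant, at least after an unramified extension making $p_0$ rational). So $\lambda(p_0)=g+2$.
\item We are then in the situation of Proposition~\ref{prop:mini}(1). By (2.a) of that proposition, $[k(p_0):k]=2$, so $p_0$ is a quadratic point.
\item Let $K'/K$ be unramified with residue field containing $k(p_0)$. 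Proposition~\ref{prop:mini}(3.b), applied with $K''=K'$ (its residue field is perfect as an algebraic extension of the perfect field $k$), shows that $C_{K'}$ has more than one minimal Weierstrass model.
\end{itemize}

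The main obstacle is the bound $\lambda(p_0)\le g+2$ in part (2), that is, making sure minimality excludes $\lambda\ge g+3$ at non-rational points. If the citation to \cite{LRN} only covers rational points, I would base change to the unramified extension with residue field $k(p_0)$. There $\lambda$ is preserved, and Lemma~\ref{lem:mini_down}(1) lets minimality descend, so a rational point with $\lambda\ge g+3$ upstairs would contradict minimality of $W$.
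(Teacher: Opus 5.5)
Part (1) is correct, by a slightly different route from the paper. You get minimality of $W_{\cO_{K'}}$ from Proposition~\ref{prop:mini}(1). You get uniqueness from Lemma~\ref{lem:WW'}(2), as used in Theorem~\ref{chain-mwm}: when there are several minimal models they all have reduced closed fiber, while $W_k\otimes_k k'$ stays non-reduced. The paper instead transfers the bound $\lambda\le g+1$ to $W_{\cO_{K'}}$ via Remark~\ref{rmk:lambda'}(\ref{lambda_1}) and quotes \cite{LRN}, Proposition 4.6. Both arguments work.

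The gap is in (2), at the step $\lambda(p_0)\le g+2$. The criterion \cite{LRN}, Proposition 4.3(1) only bounds $\lambda$ at rational points of $W_k$. It says nothing directly about a non-rational $p_0$. Your parenthetical remark and your fallback both rest on the implication ``$W_{\cO_{K'}}$ non-minimal for some unramified $K'/K$ $\Rightarrow$ $W$ non-minimal''. This implication fails on three counts:
\begin{itemize}
\item Lemma~\ref{lem:mini_down}(1) goes the other way: it says minimality descends.
\item The upward direction is Lemma~\ref{lem:mini_down}(2), which assumes $W_k$ reduced, and that is not your situation.
\item For non-reduced $W_k$ the implication is actually false. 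Proposition~\ref{prop:mini}(1) says precisely that a minimal $W$ with a point of $\lambda=g+2$ becomes non-minimal after a suitable unramified extension.
\end{itemize}
So, as written, nothing in your argument excludes a non-rational $p_0$ with $\lambda(p_0)\ge g+3$.

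The repair uses the inequality you already displayed. Minimality of $W$ gives $\lambda(p)\le g+1$ for all $p\in W_k(k)$, so $p_0$ is not rational and $[k(p_0):k]\ge 2$. All terms in the sum of Lemma~\ref{lem:d0}(3) are nonnegative, so
\[
2(g+1)\le (\lambda(p_0)-1)\,[k(p_0):k]\le 2g+2 .
\]
This forces $\lambda(p_0)=g+2$ and $[k(p_0):k]=2$ simultaneously, so you do not even need Proposition~\ref{prop:mini}(2.a). Proposition~\ref{prop:mini}(3.b) then finishes the proof as you indicate. This is exactly the paper's argument.
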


\begin{proof} (1) For any unramified extension $K'/K$, the
  closed fiber of $W_{\cO_{K'}}$ is non-reduced. 
  If $\lambda(p)\le g+1$ for all $p\in W_k$,  then the same is true for
  the rational points of the closed fiber of $W_{\cO_{K'}}$
  (Remark~\ref{rmk:lambda'}(\ref{lambda_1})),  this implies that  
  $W_{\cO_{K'}}$ is the unique minimal Weierstrass model of $C_{K'}$
  by \cite{LRN},  Proposition 4.6. 

  (2) Let $p_0\in W_k$ with $\lambda(p_0)\ge g+2$. As $W$ is 
  minimal,   by \cite{LRN}, Propositions 4.3(1),
  we have $\lambda(p)\le g+1$ for all $p\in W_k(k)$. 
    So $[k(p_0):k]\ge 2$. By Lemma~\ref{lem:d0}(3), we have $[k(p_0):k]=2$
  and $\lambda(p_0)=g+2$.  The last part is Proposition~\ref{prop:mini}(3b). 
\end{proof}

\begin{proposition} \label{prop:only_one} Let $g\ge 2$ be even. 
  Suppose that $C$ has a unique minimal Weierstrass model $W$ and $W_k$
  is reduced. Let $K'/K$ be an extension with $e_{K'/K}=1$ and perfect residue
  field $k'$.
 \begin{enumerate}[\rm (1)] 
  \item If $C_{K'}$ has  more than one minimal Weierstrass model, then 
  $W_k$ has a quadratic point $p_0$ with $\lambda(p_0)=g+1$ and
  $k(p_0)\subseteq k'$.
\item Suppose that a $K'$ as in (1) exists. Let $K_0/K$ be an extension
  with $e_{K_0/K}=1$ and residue field equal to $k(p_0)$, then
  $C_{K_0}$ has more than one minimal Weierstrass model. 
\end{enumerate}
\end{proposition}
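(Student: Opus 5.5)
The approach is to transport the configuration over $K'$ back down to $K$, using the minimality chain of $C_{K'}$ and base-change invariance of multiplicities. Set $W'=W_{\cO_{K'}}$. This is a Weierstrass model by Lemma~\ref{lem:wbc}, since $W_k$ is reduced, and it is minimal by Lemma~\ref{lem:mini_down}(2).

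For (1), suppose $C_{K'}$ has more than one minimal Weierstrass model. By Remark~\ref{rmk:crit_2} applied to the minimal model $W'$, there is $p'_0\in W'_{k'}(k')$ with $\lambda(p'_0)=g+1$. There is also $p'_1\in W'(p'_0)_{k'}(k')\setminus (W'(p'_0)\wedge W')$ with $\lambda(p'_1)=g+2$. Let $p_0\in W_k$ be the image of $p'_0$; by Remark~\ref{rmk:lambda'}(\ref{lambda_1}), $\lambda(p_0)=\lambda(p'_0)=g+1$. Two claims remain: $p_0$ is not $k$-rational, and $[k(p_0):k]=2$.

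\emph{$p_0$ is not rational.} Suppose $p_0\in W_k(k)$. Since $e_{K'/K}=1$, we have $W'(p'_0)=W(p_0)_{\cO_{K'}}$. By Lemma~\ref{lem:WW'}(2) (or Theorem~\ref{chain-mwm}(2.c)), $W(p_0)_k$ is non-reduced and $(W(p_0)_k)_{\mathrm{red}}\simeq \PP^1_k$. Hence the image $p_1$ of $p'_1$ in $W(p_0)$ lies outside $W(p_0)\wedge W$, and $\lambda(p_1)=g+2$, again by Remark~\ref{rmk:lambda'}(\ref{lambda_1}). By Lemma~\ref{lem:d0}(3), $\sum_{p}(\lambda(p)-1)[k(p):k]\le 2g+2$ on $W(p_0)_k$. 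The point $p_1^*\in W(p_0)\wedge W$ already has $\lambda=g+2$ by \eqref{eq:lambdap1*}, which contributes $g+1$. So $[k(p_1):k]=1$, i.e.\ $p_1$ is rational. Then Remark~\ref{rmk:crit_2} over $K$ gives a second minimal Weierstrass model of $C$, contradicting uniqueness.

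\emph{$p_0$ is quadratic.} We have $\lambda(p_0)=g+1$ with $p_0$ non-rational. Lemma~\ref{lem:d0}(4) in the separable case gives $\sum_p\lambda(p)[k(p):k]\le 2g+2$, hence $[k(p_0):k]\le 2$, so $[k(p_0):k]=2$. In the inseparable case the same lemma bounds $\sum(\lambda-1)[k(p):k]\le 2g$, which gives $g[k(p_0):k]\le 2g$, again $[k(p_0):k]=2$. Finally $k(p_0)\subseteq k'$ because $p_0$ is the image of the $k'$-rational point $p'_0$.

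For (2), let $K_0/K$ have $e_{K_0/K}=1$ and residue field $k_0=k(p_0)$. The point $p_0$ then splits into two $k_0$-rational points of $W_{k_0}$, each with $\lambda=g+1$. I would reduce to comparing with $K'$: pass to a common extension $K''$ (a compositum or completion) containing both, with $e=1$ and residue field containing $k'$. By Proposition~\ref{prop:more_than_one}(1), non-uniqueness persists from $K'$ to $K''$. If $C_{K_0}$ had a unique minimal model $W_{\cO_{K_0}}$, applying part (1) to $K''/K_0$ would produce a point of $W_{k_0}$ with $\lambda=g+1$ that is quadratic over $k_0$. But every point of $W_{k_0}$ over $p_0$ is $k_0$-rational, and the points with $\lambda(p)\ge g+1$ are exhausted by Lemma~\ref{lem:d0}(4), since the two rational points already contribute $2g+2$. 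This is the contradiction.

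I expect the main obstacle to be this construction of a common extension $K''$ with $e_{K''/K}=1$ and perfect residue field dominating both $K'$ and $K_0$, together with the care needed in the inseparable case of Lemma~\ref{lem:d0}(4). If constructing $K''$ is awkward, the fallback is to argue directly over $K_0$: reproduce the $p'_1$ computation from (1) inside the $K_0$-model $W(p_0)$, with $p_0$ now a $k_0$-rational point.
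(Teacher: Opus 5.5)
Your proposal is correct and follows the paper's proof essentially step by step. In (1) you use the same contradiction via $\lambda(p_1^*)=g+2$ and Lemma~\ref{lem:d0}(3), then the same degree count from Lemma~\ref{lem:d0}(4). In (2) you use the same counting over $k(p_0)$, and you usefully make explicit the extension of $K_0$ with ramification index $1$ that the paper tacitly needs when it ``applies (1) to $C_{K_0}$''. Such an extension exists: take $\widehat{K'}$ together with Proposition~\ref{prop:more_than_one}(1). By Hensel's lemma $\widehat{K'}$ contains a $\widehat{K}$-copy of $\widehat{K_0}$, which is the complete unramified extension of $\widehat{K}$ with residue field $k(p_0)$.

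One citation should be fixed. The non-reducedness of $W(p_0)_k$ comes directly from Formula~\eqref{eq:disc-p0}, which gives $\varepsilon(W(p_0))=\lambda(p_0)-2[\lambda(p_0)/2]=1$. Lemma~\ref{lem:WW'} and Theorem~\ref{chain-mwm} cannot be cited here, since they presuppose two minimal Weierstrass models over $K$, which is exactly what you are assuming fails.
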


\begin{proof} (1) By Lemma~\ref{lem:mini_down}(2), $W':=W_{\cO_{K'}}$ is a minimal
Weierstrass model of $C_{K'}$, but not the unique one by our hypothesis.
According to \cite{LRN}, Proposition 4.6(2.b),
there exists $p'_0\in W'_{k'}(k')$ with $\lambda(p'_0)=g+1$ and 
$p'_1\in W'(p'_0)_{k'}(k')\setminus (W'(p'_0)\wedge W')$ with $\lambda(p'_1)\ge g+2$.
Let $p_0\in W$ be the image of $p'_0$. Then 
$\lambda(p_0)=g+1$ (Remark~\ref{rmk:lambda'}(\ref{lambda_1})) and
$k(p_0)\subseteq k(p'_0)=k'$.

Let us show that $p_0$ is not rational over $k$. Suppose the contrary, then 
  $W'(p'_0)=W(p_0)_{\cO_{K'}}$. Let $p_1^*\in W(p_0)\wedge W$.
  This is a rational point with $\lambda(p_1^*)=g+2$
by Equality~\eqref{eq:lambdap1*}. 
  Let $p_1\in W(p_0)$ be the image of $p'_1$. Then $p_1\ne p_1^*$.
  Lemma~\ref{lem:d0}(3) implies
  that $p_1$ is rational and $\lambda(p_1)=g+2$. Formula~\eqref{eq:disc-p0}
  implies that $W(p_0)(p_1)\ne W$ is minimal, contradiction. 
  By Lemma~\ref{lem:d0}(4), $[k(p_0):k]=2$.

  (2) We know that $W_{\cO_{K_0}}$ is minimal by Lemma~\ref{lem:mini_down}(2). If
  it is unique, then its closed fiber $W_{k(p_0)}$ has a quadratic point (over
  $k(p_0)$) with 
  $\lambda=g+1$ by applying (1) to the curve $C_{K_0}$. But the $k(p_0)$-rational
  points lying over $p_0$ already have multiplicities $\lambda=g+1$, this
  prevents the other points to having $\lambda \ge 2$ by Lemma~\ref{lem:d0}(4).
  Contradiction.
 \end{proof}

\begin{example} \label{ex:mini_nu} We can wonder if the inverse of
  Proposition~\ref{prop:only_one}(1) holds. 
  We construct below examples which shows that the answer is no in
  general. 

Let $f(x)\in \cO_K[x]$ be 
  a monic polynomial of degree $2$ such that $\bar{f}(x)\in k[x]$ is irreducible.
  Let $\ell\ge g+1$ and $h(x)\in \pi^{g+1} \cO_K[x]$ of degree $\le g+1$.
  Consider the hyperelliptic curve $C$ over $K$ of equation
  \[
y^2+h(x)y=f(x)^{g+1} + \pi^{\ell}u, \quad u\in \cO_K^*  
\]
(if it is smooth). This equation defines a Weierstrass model $W$ of $C$ with
reduced $W_k$, and the point $p_0=\{ \pi = f(x) = y=0 \}\in W_k$
is quadratic over $k$, with 
$\lambda(p_0)=g+1$. This implies that all the other points have 
multiplicity $\lambda\le 1$, hence $W$ is the unique minimal 
Weierstrass model. 
Let $K'=K[T]/(f(T))$. This is an unramified extension of $K$ with
residue field $k(p_0)$.

If $\ell\ge 2(g+1)$, then  
\[
(y/\pi^{g+1})^2+(\pi^{-g-1}h(x))(y/\pi^{g+1})=(f(x)/\pi^2)^{g+1} + \pi^{\ell-2(g+1)}u 
\]
is a minimal Weierstrass model of $C_{K'}$, different from $W_{\cO_{K'}}$, hence
$C_{K'}$ has more than one minimal Weierstrass model. 
However, if $\ell \le 2g+1$, then $W_{\cO_{K'}}$ is the unique
minimal Weierstrass model of $C_{K'}$.
  \end{example}

\subsection{Stably minimal Weierstrass models}\label{subsect:stm}
Let $W$ be a Weierstrass model of $C$. M\"uller and Stoll  
  (\cite{MuSt}, \S 5) call $W$ \emph{stably minimal} if for any finite extension 
   $K'/K$ of discrete valuation fields, $W_{\cO_{K'}}$ is minimal. They proved
   that when $g=2$, then $W$ is stably minimal if and only if
   $\delta(p)\le 3=g+1$ for all $p\in W_k$ ({\it op. cit.}, Lemma 5.3). 
   The same result can be proved in higher genus by similar methods as
   follows. 
     
 \begin{proposition} \label{prop:stm} Let $g\ge 2$. Let $W$ be a
  Weierstrass model  of $C$ with reduced fiber $W_k$.
 Then $W_{\cO_{K'}}$ is minimal for all extensions  
  of discrete valuation fields $K'/K$ if and only if
 $\delta(p)\le g+1$ for all $p\in W_k$.  
 \end{proposition}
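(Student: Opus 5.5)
We prove the two implications separately. Throughout, $W_k$ is reduced, so by Lemma~\ref{lem:wbc} $W':=W_{\cO_{K'}}$ is a Weierstrass model of $C_{K'}$ for every extension $K'/K$. We use the behaviour of $\delta$ under base change: since $k$ is perfect, $\delta(p')=\delta(p)$ for any point $p'$ of $W'_{k'}$ lying over $p\in W_k$. This was already used in the proof of Proposition~\ref{prop:more_than_one}(1). The idea is that $\delta$ is computed from the reduced equation $\bar Q,\bar P$, which does not change, and the maximizing change of $y$ persists.

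\emph{Sufficiency.} Suppose $\delta(p)\le g+1$ for all $p\in W_k$. Fix $K'$ and put $W'=W_{\cO_{K'}}$. Then $W'_{k'}=W_k\otimes_k k'$ is reduced, and every $p'\in W'_{k'}$ has $\delta(p')\le g+1$. By Lemma~\ref{lem:d0}(2) with $\varepsilon(W')=0$, we get $\lambda(p')\le\delta(p')\le g+1$ for all $p'\in W'_{k'}(k')$. We then apply the minimality criterion \cite{LRN}, Proposition~4.3.
\begin{enumerate}[\rm (a)]
\item If $g$ is even, the bound $\lambda\le g+1$ on rational points gives minimality directly, as in the proofs of Proposition~\ref{prop:more_than_one}(1) and Remark~\ref{rmk:more_than_one}.
\item If $g$ is odd, \cite{LRN}, Proposition~4.3(2) only requires excluding points with $\lambda\ge g+2$ in $W'$ (and then controlling $W'(p'_0)$), and such points do not exist.
\end{enumerate}
Hence $W'$ is minimal. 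This direction is routine.

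\emph{Necessity.} Suppose some $p\in W_k$ has $\delta(p)\ge g+2$. We construct $K'$ with $W_{\cO_{K'}}$ non-minimal.
\begin{enumerate}[\rm (1)]
\item First pass to a finite unramified extension so that $p$ becomes rational. This keeps $\delta$ and, by Lemma~\ref{lem:mini_down}(2) applied in reverse, does not affect the argument. We may therefore assume $p\in W_k(k)$ with $x(p)=0$.
\item Choose an equation $y^2+Q(x)y=P(x)$ realizing $\delta_0(y)=\delta(p)=:d\ge g+2$, that is $2\ord_0\bar Q\ge d$ and $\ord_0\bar P\ge d$. This means the coefficients satisfy $\nu(b_j)\ge 1$ for $j<d/2$ and $\nu(a_j)\ge 1$ for $j<d$.
\item Take a ramified extension $K'$ with uniformizer $\varpi$, $\varpi^e=\pi$, with $e$ large (and even, to handle half-integers). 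In the $\nu_{K'}$-Gauss valuation for the variable $x/\varpi^s$, we get
\[\mu(P)\ge\min\bigl(\min_{j<d}(e+sj),\,\min_{j\ge d} sj\bigr).\]
With $s$ chosen about $e/d$, this gives $\mu(P)\ge sd$, and similarly $2\mu(Q)\ge sd$. So $\lambda_0$ at the point $x=0$ of $W'$ is at least of order $s d$ in the iterated sense.
\item Concretely, we want to show that the model $W''$ with coordinates $x''=x/\varpi^{s}$ and $y''=y/\varpi^{sd/2}$ is integral. Then we compute its discriminant by the change-of-variables formula (as at the bottom of \cite{LTR}, page 4581):
\[\nu_{K'}(\Delta_{W''})=\nu_{K'}(\Delta_{W'})-s\,(2g+1)(2d-(2g+2)).\]
This is strictly smaller than $\nu_{K'}(\Delta_{W'})$ since $d>g+1$. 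Hence $W'$ is not minimal.
\end{enumerate}

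The main obstacle is step (3)–(4) when $\chara(k)=2$. Knowing $\delta(p)=d$ controls only the reductions $\bar Q,\bar P$. Integrality of $W''$ requires the coefficients $b_j,a_j$ with $j<d/2$, resp. $j<d$, to be divisible by suitable powers of $\varpi$, and after ramified base change some coefficients may only have valuation exactly $e$. To handle this, we choose $s$ with $s\cdot d\le e$ (for instance $e=d$, $s=1$, adjusting parity by taking $e=2d$, $s=2$). Then $e+sj\ge sd$ holds automatically for all $j\ge 0$, and the coarse bound suffices.

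We also need to check that the resulting $W''$ is normal, so that it is a genuine Weierstrass model. If it is not normal, its normalization has even smaller discriminant, so non-minimality of $W'$ follows regardless.
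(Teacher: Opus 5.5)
Your proof is correct. The sufficiency direction is the same as the paper's: $\delta$ is preserved by base change, so $\lambda(p')\le\delta(p')\le g+1$, and you conclude with \cite{LRN}, Proposition 4.3. Your necessity argument takes a genuinely different route.

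The paper stays inside the multiplicity framework. It takes $e_{K'/K}\ge g+3$ to force $\lambda(p_0')\ge\min\{\delta(p_0),g+3\}$ and then applies the $\lambda$-criterion of \cite{LRN}, Proposition 4.3. This settles $g$ even, and $g$ odd with $\delta(p_0)\ge g+3$. For $g$ odd and $\delta(p_0)=g+2$, however, a point with $\lambda=g+2$ is compatible with minimality. The paper therefore passes to $W(p_0)$ and, with $e_{K'/K}\ge 2g+4$, finds a point $p_1$ with $\lambda(p_1)=g+3$.

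You instead write down the better model explicitly over $K'=K(\pi^{1/2d})$, with $d=\delta(p)$. The substitution $x=\varpi^2x''$, $y=\varpi^{d}y''$ keeps the equation integral, because $a_j\in\pi\cO_K$ for $j<d$ and $b_j\in\pi\cO_K$ for $j<d/2$. The transformation rule for the discriminant then gives a drop of $2(2g+1)(2d-2g-2)>0$. This argument is uniform in the parity of $g$ and does not use the minimality criterion for necessity at all. The price is the standard fact from \cite{LTR} that normalizing an integral Weierstrass equation does not increase the discriminant. In effect, your substitution is the composite $W'\to W'(p_0')\to W'(p_0')(p_1')$ from the paper's odd case, and your choice $e=2d$ matches the paper's bound $e_{K'/K}\ge 2g+4$ there.

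Two small remarks. Step (3) becomes superfluous once you fix $e=2d$, $s=2$ in step (4). Also, reducing to $p$ rational needs no appeal to Lemma~\ref{lem:mini_down}(2): an extension of an unramified extension of $K$ is still an extension of $K$.
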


 \begin{proof}  We will use repeatedly the minimality
   criterion \cite{LRN}, Proposition 4.3.

   Denote $W'=W_{\cO_{K'}}$. 
   Suppose that $\delta(p)\le g+1$ for all $p\in W_k$. Then for
   all $p'\in (W')_{k'}=W_{k'}$, we have
   $\lambda(p')\le \delta(p')=\delta(p)\le g+1$, 
   where $p$ is the image of $p'$ in $W_k$. Thus $W'$ is  
   minimal.

   Conversely, suppose that there exists $p_0\in W_k$ with
   $\delta(p_0)\ge g+2$. After extending $K$ (which does not change
   $\delta(p_0)$) we can assume that $p_0\in W_k(k)$. Let 
   $y^2+Q(x)y=P(x)$ be an equation of $W$ with $x(p_0)=0$. 
We notice that if $F(x)=\sum_{j\ge 0} c_j x^j\in \cO_K[x]$, 
     then for any $d\ge 1$ and any finite   
     extension $K'/K$ with $e_{K'/K}\ge d$, we have
      \[ \nu_{K'}(c_j) +j\ge d, \quad \text{for all }  j< \ord_0 \bar{F}(x). \]    
      Applying this to $Q(x)$ and $P(x)$, we see that
      if $e_{K'/K} \ge g+3$, then $\lambda(p'_0)\ge \min\{ \delta(p_0), g+3\}$ 
      for the point 
      $p'_0\in W_{k'}$ lying over $p_0$. If $g$ is even,  or $g$ is odd and
      $\delta(p_0)\ge g+3$, then $W'$ is not minimal. 

      Suppose now $g$ is odd and $\delta(p_0)=g+2$. Replacing $K$ with a
      $K'$ as above, we can assume that $\lambda(p_0)=g+2$. 
      Then an equation of $W(p_0)$ is
      \[
y_1^2+ \pi^{-(g+1)/2} Q(\pi x_1)y_1= \pi^{-(g+1)} P(\pi x_1), \quad x_1=x/\pi  
      \] 
      (\cite{LTR}, Lemme 7(d)).  Let $p_1\in W(p_0)_k$ be the point with
      $x_1=0$.  
      If $e_{K'/K}\ge 2g+4$, then similarly to the above we have
      $\lambda(p_1)=1+\delta(p_1)=g+3$. Therefore $W'$ is not minimal.
       \end{proof} 

 For a Weierstrass model $W$ such that $W_k$ is non-reduced, 
 $W_{\cO_K'}$ is not normal (hence is not a Weierstrass model in our
 terminology) as soon as $e_{K'/K}\ge 2$,  and its discriminant 
 is bigger than that of its normalization. The natural question is then
 whether the normalization of $W_{\cO_{K'}}$ (which is a Weierstrass model)
 is minimal.  

 \begin{proposition} \label{prop:stm2} Let $g\ge 2$.
   Let $W$ be a Weierstrass model of $C$ such that $W_k$ is non-reduced.
   If $\chara(k)\ne 2$, then  
the normalization of $W_{\cO_{K'}}$ is minimal for all extensions  
  of discrete valuation fields $K'/K$ if and only if
 $\delta(p)\le g+1$ for all $p\in W_k$.   
 \end{proposition}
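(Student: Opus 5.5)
We follow the pattern of the proof of Proposition~\ref{prop:stm}, now with non-reduced $W_k$ and $\chara(k)\ne 2$. Since $\chara(k)\ne 2$, take an equation $y^2=P(x)$ of $W$ (so $Q=0$). Non-reducedness means $\pi\mid P$, and normality of $W$ forces $\nu(P)=1$, i.e. $P=\pi P_1$ with $\bar P_1\ne 0$. By definition $\delta(p)=\ord_p\bar P_1$ for $p\in W_k$.

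Let $K'/K$ have ramification index $e=e_{K'/K}$. Over $\cO_{K'}$ we have $P=\pi_{K'}^{e}uP_1$ with $u\in\cO_{K'}^*$. The normalization $W'$ of $W_{\cO_{K'}}$ is then described explicitly, with $[e/2]$ the integer part of $e/2$.
\begin{itemize}
\item If $e$ is even, $W'$ is given by $y'^2=uP_1(x)$ with $y'=y/\pi_{K'}^{e/2}$. Its closed fiber is reduced, and $\delta(p')=\ord_{p'}\bar P_1=\delta(p)$ at points $p'$ over $p$.
\item If $e$ is odd, $W'$ is given by $y'^2=\pi_{K'}uP_1$ with $y'=y/\pi_{K'}^{(e-1)/2}$. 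Its closed fiber is non-reduced, and again $\delta(p')=\delta(p)$.
\end{itemize}
In both cases $W'$ is the normalization of $Z_{\cO_{K'}}$ in $K'(C)$, with $Z=W/\langle\sigma\rangle$, so it is indeed a Weierstrass model.

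\emph{Sufficiency.} Assume $\delta(p)\le g+1$ for all $p\in W_k$. Then $\delta(p')\le g+1$ for all $p'\in W'_{k'}$. By Lemma~\ref{lem:d0}(2), $\lambda(p')\le\delta(p')+\varepsilon(W')$. We apply the minimality criterion \cite{LRN}, Proposition 4.3 to $W'$.
\begin{itemize}
\item If $W'_{k'}$ is reduced, every point has $\lambda\le g+1$. For $g$ even this gives minimality directly. For $g$ odd the criterion also needs control of $\lambda=g+1$ points and the next model. We expect to handle this as in the proof of Lemma~\ref{lem:mini_down}(1), since there are no points with $\lambda\ge g+2$.
\item If $W'_{k'}$ is non-reduced, we have $\lambda(p')\le g+2$. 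Points with $\lambda(p')=g+2$ are precisely those with $\delta(p')=g+1$.
\end{itemize}
So the non-reduced case is the main obstacle: the minimality criterion for non-reduced fibers in \cite{LRN}, Proposition 4.3 must be checked against $\lambda=g+2$. Note that $W$ itself, being non-reduced, is a model reached from a reduced one by an odd-$\lambda$ blow-up. By \eqref{eq:disc-p0}, passing from $W'$ to $W'(p')$ at a point with $\lambda(p')=g+2$ changes the discriminant by $2(2g+1)(g+1-2[(g+2)/2])$. This is $\le 0$ only when $g$ is even, where it equals $-2(2g+1)$. In that case we get $\nu(\Delta_{W'(p')})=\nu(\Delta_{W'})-2(2g+1)$, and this must be excluded.

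Our plan is to show this cannot happen when $\delta(p')=g+1$ and the fiber is non-reduced. We compute the equation of $W'(p')$ directly. Its fiber is reduced, and the image of $p'$ has $\delta=g+1$ by Lemma~\ref{lem:delta_p0_p1*}. Comparing with Theorem~\ref{chain-mwm}, the model $W'$ then sits as an odd member of a minimality chain, hence is not minimal. So this case seems to contradict the claim. We would first recheck the statement, or the criterion, against this example. Most likely the criterion in \cite{LRN} for non-reduced fibers reads $\lambda\le g+1$ at \emph{rational} points, with $\lambda(p')=\delta(p')+1$. If so, the correct computation should show $\lambda(p')=\delta(p')$ in the normalized model when $e$ is odd. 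We would verify this via the Gauss valuation $\mu_0$ on $\pi_{K'}uP_1$.

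\emph{Necessity.} Suppose some $p_0$ has $\delta(p_0)\ge g+2$. Extend $K$ so that $p_0$ is rational, then take $K'$ with large even ramification index $e$, making $W'_{k'}$ reduced. As in the proof of Proposition~\ref{prop:stm}, the coefficients of $P_1$ of degree below $\ord_0\bar P_1$ gain valuation at least $e$. Hence $\lambda(p'_0)\ge\min\{\delta(p_0),g+3\}\ge g+2$, so $W'$ is not minimal by \cite{LRN}, Proposition 4.3. For $g$ odd with $\delta(p_0)=g+2$, we pass additionally to $W'(p'_0)$ exactly as in that proof.
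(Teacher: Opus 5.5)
Your proposal does not prove the statement, and the gap cannot be filled. The unfinished step is the case of odd $e=e_{K'/K}$, where $W'_{k'}$ is non-reduced. There you find a point with $\lambda(p')=g+2$, you note via \eqref{eq:disc-p0} that for even $g$ this lowers the discriminant by $2(2g+1)$, and you then hope to rescue the claim by showing $\lambda(p')=\delta(p')$. That hope is false. With $Q=0$ and $\chara(k)\ne 2$, one has $\lambda(p')=\mu_0(\pi_{K'}uP_1)=1+\min_j\{\nu_{K'}(c_j)+j\}$, where $P_1=\sum_j c_jx^j$ and $x(p)=0$ is arranged over $\cO_K$. Since $\nu_{K'}(c_j)\ge e$ for $j<\delta(p)$, this equals $1+\delta(p')$ as soon as $e\ge\delta(p)$. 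This is exactly the computation the paper uses in its ``only if'' direction.

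So the obstruction you found is real: for even $g$ the ``if'' direction is false. The paper itself supplies a counterexample. For even $g$, let $(W_i)_{0\le i\le n}$ be a minimality chain as in Theorem~\ref{chain-mwm}; for instance, take $y^2=(x^{g+1}+1)(x^{g+1}+t^{n(g+1)})$ over $\mathbb{C}[[t]]$ from the proof of Theorem~\ref{thm:bound_n}. Take $W=W_1$ and $K'=K$. Then $(W_1)_k$ is non-reduced and $\delta\le g+1$ on $(W_1)_k$ by Theorem~\ref{chain-mwm}(2.b),(2.c). Yet $\nu(\Delta_{W_1})=\nu(\Delta_C)+2(2g+1)$ by (2.a), so $W_1$ is not minimal.

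The paper's proof breaks at the same step. It says ``if $e$ is odd, then $W'_{k'}$ is non-reduced and $\lambda(p')\le g+2$, thus $W'$ is again minimal,'' treating $\lambda\le g+2$ as sufficient. For even $g$, however, \cite{LRN}, Proposition~4.3(1) allows only $\lambda\le g+1$ at rational points, whatever $\varepsilon$ is. The paper uses it in exactly this form in the proof of Corollary~\ref{cor:mini_nu}(2).

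Here is what survives.

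For odd $g$ the statement holds, and your missing step is as follows. Suppose $\lambda(p')=g+2$ at a rational point $p'$ of the non-reduced $W'_{k'}$. Then $W'(p')$ is again non-reduced, and $\lambda(p_1^*)=g+2$ by \eqref{eq:lambdap1*}. Every other rational point of $W'(p')_{k'}$ has $\lambda\le 2[(g+3)/2]-1=g+2$ by \eqref{eq:lambda'}. Hence \cite{LRN}, Proposition~4.3(2) gives minimality. The even-$e$ case is immediate, since no rational point has $\lambda\ge g+2$.

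For even $g$ the correct threshold is $\delta(p)\le g$. Under that condition, $\lambda(p')\le\delta(p')+\varepsilon(W')\le g+1$ for either parity of $e$. Conversely, take any $p_0$ with $\delta(p_0)\ge g+1$ and an extension whose residue field contains $k(p_0)$ and whose $e$ is large and odd. Over it, $p_0$ gives a rational point with $\lambda=1+\delta(p_0)\ge g+2$, so minimality fails.

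Your necessity argument is fine: large even $e$, then Proposition~\ref{prop:stm} for odd $g$ with $\delta(p_0)=g+2$. The paper instead takes $e$ odd, which gives $\lambda(p_0')=1+\delta(p_0')\ge g+3$ in one step for both parities.
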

  
 \begin{proof} We use again the minimality criterion 
   \cite{LRN}, Proposition 4.3.   

   An equation of $W$ is
   $y^2=\pi P_0(x)$ with $P_0(x)\in \cO_K[x]$ and 
   $\bar{P}_0(x)\ne 0$ in $k[x]$. For any extension $K'/K$ with uniformizing
   element $t$, an equation 
of $W'$, the normalization of $W_{\cO_{K'}}$, is 
\[ z^2=t^{e-2[e/2]} P_0(x), \quad \text{where } 
    e=e_{K'/K},  \ z=y/\pi^{[e/2]}.       
    \]
    Suppose $\delta(p)\le g+1$ for all $p\in W_k$.
    Then for any $p'\in W'_{k'}$, we have 
    \[ \lambda(p')\le (e-2[e/2])  + \delta(p') \le  (e-2[e/2]) + g+1.\]
    If $e$ is 
    even, then $\lambda(p')\le g+1$ and $W'$ is minimal. If $e$ is odd, then
    $W'_{k'}$ is non-reduced and $\lambda(p')\le g+2$, thus $W'$ is again 
    minimal. 

    Conversely, if there exists $p_0\in W_k$ with $\delta(p_0)\ge g+2$,
    then after finite extension of $K$ we can suppose $p_0\in W_k(k)$. As
    in  the proof of Proposition~\ref{prop:stm}, taking  $e_{K'/K}$ odd
    and big enough, we get
    $\lambda(p_0')=1+\delta(p'_0)\ge g+3$. Hence
    $W$ is non minimal. 
     \end{proof}
     
 \begin{remark} Proposition~\ref{prop:stm2} does not hold when 
   $\chara(k)=2$ as shows the following example. Let $K$ be an unramified
   extension 
   of $\Q_2$ with infinite residue field. 
Take a monic $F_0(x)\in \cO_K[x]$ such that $\bar{F}_0(x)\in k[x]$ is separable 
      of degree $g+1$ and $F_0(0)\in \cO_K^*$. Consider $W$ defined by
      \[ y^2=2 (F_0(x)^2+ 2 x^{g+2}).\]
      Then $\delta(p)\le 2$ for all $p\in W_k$. Let $K_1=K[\sqrt{2}]$ and 
$y_1=2^{-1}y-\sqrt{2}^{-1}F_0(x)$. We have  
      \[
y_1^2+\sqrt{2}F_0(x)y_1= x^{2g+1}.   
     \]
     As $2g+1$ is odd, this equation is reduced mod $\sqrt{2}$, hence
     defines the normalization $W_1$  of $W_{\cO_{K_1}}$.
The point $p_0:=\{ x=0 \}$ in the closed fiber of $W_1$ 
     has $\delta(p_0) = 2g+1$,  so $W_1$ is not stably minimal by
     Proposition~\ref{prop:stm}, hence the normalization of
     $W_{\cO_{K'}}$ is not minimal for some finite extension $K'$ of $K_1$. 
 \end{remark}
  
\subsection{Algorithm for determining all minimal Weierstrass models}
In  \cite{LRN}, \S 6 we gave an algorithm to find a minimal Weierstrass model of $C$. Below is an algorithm, based on Theorem~\ref{chain-mwm}, 
to find all of them.   
This algorithm has been implemented in PARI \cite{pari} by Bill Allombert.

\begin{algo} \label{algo:mwm} Suppose that $g$ is even. This algorithm
  takes a minimal Weierstrass model $U_0$ as input. The output is $U_0$ if it is
  the unique minimal Weierstrass model (Step (\ref{st50})), otherwise
  (Step (\ref{st5})) we get the extremal minimal
  Weierstrass models $W_0, W_n$ together with the points
  $p_0\in W_0\wedge W_n$ and $p_n^*\in W_n\wedge W_0$, as well as
  the number $m$ of minimal Weierstrass models.  

\begin{enumerate}[\rm (1)] 
\item If $\varepsilon(U_0)=1$ or if there is no $u\in U_0(k)$ with
  $\lambda(u)=g+1$, go to (\ref{st50}).
\item Set $U=U_0$, $S=\emptyset$, $m=1$, $r=1$.   
\item \label{st3} If there exists $u\in U(k)\setminus S$ with $\lambda(u)=g+1$. 
  \begin{enumerate}[\rm (a)]
    \item If $m=1$, set $u_0=u$. 
    \item If there exists $u_1\in U(u)(k) \setminus (U(u)\wedge U)$
      with $\lambda(u_1)=g+2$. 
      Then $m\gets m+1$,
      $S\gets U(u)(u_1)\wedge U(u)$,  $U\gets U(u)(u_1)$, go to  (\ref{st3}).  
    \item Otherwise,
      \begin{enumerate}
      \item if $r=1$, go to (\ref{st3f}).
      \item otherwise, go to (\ref{st4}). 
      \end{enumerate} 
  \end{enumerate}
\item \label{st3f} If $m=1$, go to (\ref{st50}). 
\item   If $m>1$, let $p_n^*$ be the point of $S$, $W_n=U$. 
Then $r\gets 2$, $U\gets U_0$, $S\gets \{ u_0 \}$, go to (\ref{st3}).   
 \item \label{st4} $n=2m-2$, $W_0=U$ and $p_0$ to be the point of $S$,
   go to (\ref{st5}).  
 \item \label{st50} $U_0$ is the unique minimal Weierstrass model. 
\item \label{st5} There are exactly $m$   
  minimal Weierstrass models,  $W_0$ and $W_{n}$ are the extremal ones, 
and $\{ p_0 \}=W_0\wedge W_n$, $p_n^*=W_n\wedge W_0$. 
\end{enumerate} 
\end{algo} 
\medskip

\noindent {\bf Explanations.}
We use the notation of Theorem~\ref{chain-mwm}. 

Step (1) This is a sufficient condition for $U_0$ to be unique
(\cite{LRN}, Proposition~4.6).

Step (2) We think $U_0$ as some $W_{2i_0}$.  The variable $m$ is the number
of minimal Weierstrass models.  At the first round, $r=1$, we look for $W_n$.

Step (3) We have $U=W_{2i}$.

When $r=1$, we have $i\ge i_0$. We look for the point $p_{2i}$ 
which is different from $p_{2i}^*\in S$ (empty if $U=U_0$). The conditions
$\lambda=g+1$  and that of (3.b) follow from 
Theorem~\ref{chain-mwm}. The point $u_1$ is $p_{2i+1}$. When this 
point is found, $W_{2i+2}=W_{2i}(p_{2i})(p_{2i+1})$, we get one 
more minimal Weierstrass model, and $S=\{ p_{2i+2}^* \}$.  

Step (5)  When in Step (3) there is no more $p_{2i}$ (Step (3.c.i)),
the last $U$ is $W_n$ and $S=\{ p_n^* \}$.   
Then we start the second round $r=2$ to find $W_0$. We come back to
$U_0=W_{2i_0}$. We rerun (3) for $U=W_{2i}$ ($i\le i_0$) and look for
$p_{2i}^* \in W_{2i}$ different from $p_{2i}\in S$ (which is $u_0$ when $i=i_0$). 

Step (6) The second round is terminated and we get $W_0$ as well $p_0$. 
\qed
\medskip

A practical question is then how to find the rational points
with $\lambda=g+1$ (or $\ge g+1$). The following lemma gives
strong restrictions on the potential candidates. 

\begin{lemma} Suppose that $W$, defined by Equation~\eqref{eq:start},
  is normal, and if $\chara(k)=2$,  that $(Q, P)$ satisfies the
  conditions of Lemma 1.3 in \cite{LRN}.  Let
  $\varepsilon=\varepsilon(W)$.
  Let $p_0\in W_k(k)$ with $x$-coordinate
  $\bar{c}$. Suppose that $\lambda(p_0)\ge d$ for some positive
  integer $d$.
\begin{enumerate}[\rm (1)] 
\item If $\chara(k)\ne 2$, then $\ord_{\bar{c}} \overline{\pi^{-\varepsilon}(4P+Q^2)} \ge d-\varepsilon$.
\item Suppose $\chara(k)=2$.
  \begin{enumerate}
  \item If $\bar{Q}\ne 0$, then $\ord_{\bar{c}} \bar{Q}\ge d/2$.
  \item If $\bar{Q}=0$ and $P\notin k[x^2]$, then
    $\ord_{\bar{c}} \bar{P}' \ge d-1$.
  \item If $\varepsilon=1$, then 
    $$\ord_{\bar{c}} \overline{\pi^{-1}Q}\ge (d-2)/2,\quad
    \ord_{\bar{c}} \overline{\pi^{-1}P}\ge d-1. $$ 
  \end{enumerate}
\end{enumerate}
\end{lemma}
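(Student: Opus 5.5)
Translate $x$ so that $\bar c=0$ (replace $x$ by $x-c$; the normalization hypotheses are preserved by translation) and compare $\lambda(p_0)$, computed through the Gauss valuation $\mu_0$ of the variable $x/\pi$, with the reduction mod $\pi$. The one tool is the elementary observation already used in the proof of Proposition~\ref{prop:stm}, read backwards: if $F=\sum_j c_jx^j\in\cO_K[x]$ satisfies $\mu_0(F)\ge m$, then $\nu(c_j)\ge m-j$ for every $j$, hence $\bar c_j=0$ for $j<m$, i.e. $\ord_0\bar F\ge m$. Similarly, if $\mu_0(F)\ge m$ and $\pi\mid F$, then $\ord_0\overline{\pi^{-1}F}\ge m-1$, because $\nu(c_j)\ge 2$ for $j\le m-2$. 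The whole lemma then comes down to choosing the right equation realizing $\lambda(p_0)=\lambda_0(y)\ge d$ and controlling how $Q,P$ change under the substitution $y\mapsto y+F(x)$ needed to reach it.

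Case $\chara(k)\ne2$. Here $4P+Q^2$ is invariant under $y\mapsto uy+F$ up to the unit $u^2$, since it is the discriminant-type quantity $(2y+Q)^2$. Take $y$ with $\lambda_0(y)\ge d$. Then $\mu_0(4P+Q^2)\ge\min(\mu_0(P),2\mu_0(Q))\ge d$, so $\ord_0\overline{4P+Q^2}\ge d$. If $\varepsilon=1$, then $\pi\mid 4P+Q^2$ (one may take $Q=0$ and $\pi\mid P$), and the refined observation gives $\ord_0\overline{\pi^{-1}(4P+Q^2)}\ge d-1$. The expression is independent of the chosen equation, which proves (1).

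Case $\chara(k)=2$. Now $\bar Q$ and $\bar P'$ are the relevant invariants: the conditions of \cite{LRN}, Lemma 1.3 fix the equation up to changes $y\mapsto y+F$ that keep $Q\bmod\pi$ well defined ($\bar R=\bar Q$ since $2=0$ in $k$), and they change $\bar P$ by $\bar F^2+\bar Q\bar F$. Take an optimizing $z=y+F$ with equation $z^2+Rz=S$ and $\lambda_0(z)\ge d$. For (a), $\bar R=\bar Q$ and $2\mu_0(R)\ge d$, so $\ord_0\bar Q\ge d/2$. For (b), $\bar Q=0$ gives $\bar S=\bar P+\bar F^2$, hence $\bar S'=\bar P'$ (derivatives of squares vanish). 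From $\ord_0\bar S\ge d$ we get $\ord_0\bar P'=\ord_0\bar S'\ge d-1$; the hypothesis $P\notin k[x^2]$ ensures $\bar P'\ne0$, which makes the statement meaningful. For (c), where $\varepsilon=1$, the normalized equations have $\pi\mid Q,P$ and the admissible changes preserve this; we want the refined observation applied to $R$ and $S$, namely $\ord_0\overline{\pi^{-1}R}\ge d/2-1$ and $\ord_0\overline{\pi^{-1}S}\ge d-1$, to be transferred back to $Q$ and $P$.

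The main obstacle is exactly this transfer in (c). One needs that the \cite{LRN}, Lemma 1.3 normalization pins down $\overline{\pi^{-1}Q}$ and $\overline{\pi^{-1}P}$ up to modifications that cannot lower these orders. I would argue as follows. The allowed changes are $F\in\pi\cO_K[x]$, by normality, since the non-reduced normalized form forces $\pi\mid F$. Write $F=\pi G$; then
\[ \pi^{-1}R=\pi^{-1}Q \bmod \pi, \qquad \pi^{-1}S=\pi^{-1}P+QG+\pi G^2 \equiv \pi^{-1}P \pmod{\pi}, \]
because $\pi\mid Q$. Hence both reductions are invariant and the orders transfer directly. If the normalization in fact allows only $\pi\mid F$ in some form other than this one, I would instead fall back on the argument of Lemma~\ref{lem:change_y}(2).
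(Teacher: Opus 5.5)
Your treatment of (1), (2a) and (2b) is correct. Under $y\mapsto uy+F(x)$ the quantity $4P+Q^2$ is multiplied by $u^2$, while $\bar Q$ and $\bar P'$ change only by units. So it suffices to take an equation with $\lambda_0\ge d$ and compare $\mu_0$ with $\ord_0$.

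The gap is in (2c). There you need an equation $z^2+Rz=S$ with $\lambda_0(z)\ge d$ \emph{and} $\pi\mid R,S$. You get it by asserting that the admissible changes are $F\in\pi\cO_K[x]$ ``by normality''. What you actually show is that two \emph{normalized} equations differ by some $F\in\pi\cO_K[x]$. But $\lambda(p_0)$, as recalled before Lemma~\ref{lem:change_y}, is a maximum over all changes $y\mapsto uy+F(x)$. Nothing you wrote guarantees that $\lambda_0\ge d$ is attained by a normalized equation.

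Non-normalized equations with $\lambda_0\ge d$ do exist. The condition $\mu_0(S)\ge d$ only gives $\ord_0\bar S\ge d$, and $\bar S=-\bar F^2$ (take $u=1$). For instance, suppose a normalized $y$ has $\lambda_0(y)\ge d$ and $e:=\lceil d/2\rceil\le g+1$. Then $z=y+x^{e}$ still has $\lambda_0(z)\ge d$: check $\mu_0(2x^e)$, $\mu_0(Qx^e)$ and $\mu_0(x^{2e})$. Yet $\bar S=-x^{2e}\neq 0$, so $\overline{\pi^{-1}S}$ is not even defined, and your transfer cannot be applied to an arbitrary optimizing equation.

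You can close this in either of two ways.

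(i) Use Lemma~\ref{lem:change_y}. If $\pi\mid P$, every $a_{2j}$ of even valuation has $\nu(a_{2j})\ge 2$, so $e_j\in\pi\cO_K$ and $H\in\pi\cO_K[x]$. The new equation again has $\pi\mid R,S$. By Lemma~\ref{lem:change_y}(1), finitely many such steps reach an equation with $\lambda_0=\lambda(p_0)\ge d$, obtained from the original one by some $F\in\pi\cO_K[x]$. Your identities $\overline{\pi^{-1}R}=\overline{\pi^{-1}Q}$ and $\overline{\pi^{-1}S}=\overline{\pi^{-1}P}$ then finish the proof.

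(ii) Argue directly with an arbitrary $F$.
\begin{itemize}
\item From $\ord_0\bar F^2=\ord_0\bar S\ge d$ you get $\nu(f_j)\ge 1$ for $j<d/2$.
\item Then $q_j=r_j+2f_j$ has valuation $\ge 2$ for $j<(d-2)/2$.
\item In $P=S-QF+F^2$, every coefficient of degree $j\le d-2$ has valuation $\ge 2$. In a product $q_af_b$, either $b<d/2$ or $a<(d-2)/2$. In $F^2$, the terms with $a<d/2\le b$ occur as $2f_af_b$. No pair with $a,b\ge d/2$ has $a+b\le d-2$.
\end{itemize}

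Your closing reference to Lemma~\ref{lem:change_y}(2) points toward (i). However, the two facts that make it work are not supplied: that $H\in\pi\cO_K[x]$, and that iterating these particular substitutions attains $\lambda(p_0)$.
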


\begin{proof} Same as that of \cite{LRN}, Lemma 3.5. 
\end{proof}

\end{section}

\begin{section}{Applications to the regular models and the stable reduction}
  \label{MRM} 

  It turns out that having more than one minimal Weierstrass
  model implies some interesting consequences on $C$. Suppose further that $g$
  is even. Let $W_0, W_n$ be the extremal Weierstrass models
  (Definition~\ref{extremal-MWM}). Then the canonical model
  (hence the minimal regular model) of $C$
  dominates $W_0$ and $W_n$ (Proposition~\ref{prop:positive}). The
  model $W_0\vee W_n$ (Definition~\ref{dfn:ccW}(3)) 
  is semi-stable at the point lying $p_{0,n}$ over $W_0\wedge W_n$,
  and it is solved by a chain of $(n-2)/2$ projective 
  lines $\PP^1_k$ (Theorem~\ref{regular-even}). The resolution of
  the other singular points of $W_0\vee W_n$ have the same shape as
  for singular points arising from curves of genus $g/2$ (Corollary~\ref{cor:gl}). 

  Over the completion $\widehat{K}$ of $K$, $C_{\widehat{K}}$ can be decomposed, 
  as rigid analytic space, as the union of two affinoid curves (plus an
  open annulus), each of
  them can be embedded in a smooth projective curve of genus $g/2$ over
  $\widehat{K}$. 
  This implies that the (potential) stable reduction of $C$ is union of two
  stable curves of genus $g/2$ (Corollary~\ref{cor:stable}). 

\subsection{Blowing-up  Weierstrass models and regularity}

Let $W$ be a Weierstrass model of $C$. In this subsection we do not
assume $g$ is even, neither $W$ is minimal.   Let $p_0\in W_k(k)$ be a singular
point of $W$. The first step in Lipman's desingularization sequence
\cite{Lip} consists in blowing-up $W$ along $p_0$ and then take the
normalization. Let
 \[ \wW\to W\]
 be the resulting morphism of models of $C$.
 Then $\wW=W\vee W(p_0)$ (see \cite{LTR}, \S 6.3).  Its closed fiber is the union of the strict transforms of $W_k$ and of $W(p_0)_k$, intersecting at some 
 points lying over $p_0$. In this subsection we give conditions for  
 $\wW$ to be regular at these points (Proposition~\ref{prop:reg}). 
 The results will be used for the extended Tate's algorithm
 Proposition~\ref{prop:exT} and Theorem~\ref{regular-even}. 
 
Consider an equation 
\begin{equation}
  \label{eq:Eq2}
  y^2+Q(x)y=P(x) 
\end{equation} 
of $W$  with $x(p_0)=0$ and satisfying the conditions of
 Proposition~\ref{prop:same-y} 
 with $n=0$ (with $Q(x)=0$ if $\chara(k)\ne 2$).  
 We will first give a local equation of $\wW$ around the above intersection
 points. Let $r=[\lambda(p_0)/2]$.
 Write $Q(x)=\sum_{j\ge 0} b_jx^j$ and $P(x)=\sum_{j\ge 0} a_jx^j $ and  
 \begin{equation}
   \label{eq:ajr}
    b_{j}=b_{j,r-j}\pi^{r-j} \ (j\le r-1); \quad
  a_j=a_{j, 2r-j}\pi^{2r-j} \ (j\le 2r-1).
 \end{equation}
  We have $b_{j,r-j}, \ a_{j,2r-j} \in {\cO_K}$.
  Put
  \[ v=\pi/x, \quad z=y/x^r \] 
  and divide Equation~\eqref{eq:Eq2} above by $x^{2r}$. Then we get the equation 
  $F(x,v,z)=0$ where
  \begin{multline}  \label{eq:zxv} 
F(x,v,z):= z^2+\left(\sum_{j\le r-1} b_{j,r-j} v^{r-j} + \sum_{j\ge r} b_j
x^{j-r}\right)z  \\
-\left(\sum_{j\le 2r-1} a_{j,2r-j}v^{2r-j}+\sum_{j\ge 2r} a_j x^{j-2r}\right).      
  \end{multline}
  
 \begin{lemma} Let $A=\cO_K[x,v]$ with the relation $xv=\pi$, and let $B=A[z]$ with
$F(x,v,z)=0$. Then $B$ is the integral closure of $A$ in $K(C)$. 
\end{lemma}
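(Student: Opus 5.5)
The plan is to show first that $B$ is integral over $A$ and lies in $K(C)$, and then that $B$ is normal. Integrality and the inclusion $B\subset K(C)$ are immediate. The elements $b_{j,r-j}$ and $a_{j,2r-j}$ lie in $\cO_K$, and $z=y/x^r$ satisfies the monic equation $F(x,v,z)=0$ with coefficients in $A$; this equation is Equation~\eqref{eq:Eq2} divided by $x^{2r}$. Hence $B\subseteq$ the integral closure $A'$ of $A$ in $K(C)$. Since $F$ is monic of degree $2$ in $z$ and $[K(C):K(x)]=2$, we have $B\simeq A[Z]/(F)$. This ring is free of rank $2$ over $A$, and it is flat over $\cO_K$ because $\pi$ is a nonzerodivisor in $A$.

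Note that $A$ is regular: it is the local model of the blow-up of $\PP^1_{\cO_K}$ at $q_0$, i.e. $Z\vee Z(q_0)$ near the crossing point. So $B$ is a hypersurface in a regular scheme, hence Cohen--Macaulay and satisfies $(S_2)$. By Serre's criterion it suffices to check $(R_1)$. The generic fiber $B\otimes K$ is an open piece of the smooth curve $C$, so it is regular. The remaining codimension-one points are the generic points of the components of $\Spec B/\pi B$. Here $\pi B=xvB$, so these lie over the generic points of the two lines $\{x=0\}$ and $\{v=0\}$ of $\Spec A\otimes k$. It is therefore enough to show that $B$ is regular, equivalently reduced in the fiber, at the generic points of $B/xB$ and $B/vB$. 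By Lemma~\ref{lem:d0}-type reasoning it suffices to check that $B/xB$ and $B/vB$ are generically reduced.

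For the $v=0$ side, invert $x$. Then $B[1/x]=\cO_K[x,1/x][y]/(\text{Eq.~}\eqref{eq:Eq2})$ is a localization of the normal ring of $W$, so it is normal. For the $x=0$ side, invert $v$ and set $x_1=x/\pi=1/v$, $y_1=y/\pi^r$. Then $z=y_1 v^r$, and $B[1/v]$ is the ring $\cO_K[v,1/v][y_1]$ with the equation of $W(p_0)$ in the variable $x_1$ from \cite{LTR}, Lemme 7(d). This equation is $y_1^2+\pi^{-r}Q(\pi x_1)y_1=\pi^{-2r}P(\pi x_1)$, and it has integral coefficients by \eqref{eq:ajr}.

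The main obstacle is that this equation defines the normalization $W(p_0)$ of $Z(q_0)$, and not some non-normal model. This is exactly where the choice of equation matters. Because the equation satisfies Proposition~\ref{prop:same-y} with $n=0$, we have $\lambda_0(y)=\lambda(p_0)$, and in characteristic $2$ the conditions of Remark~\ref{rmk:lambda'}(1) also hold.

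For this step we split according to the parity of $\lambda(p_0)$. If $\lambda(p_0)=2r+1$, the reduced equation is $y_1^2=\overline{\pi^{-2r}P(\pi x_1)}$, and $\bar Q_1=0$ with $\bar P_1=0$. Dividing by $\pi$, the odd-valuation coefficient $\mu_0(P)=2r+1$ shows that we are in the non-reduced normal situation of \cite{LRN}, Lemma 2.3(1). If $\lambda(p_0)=2r$, then $\lambda_0(y)=2r$ forces $\min(2\mu_0(Q),\mu_0(P))=2r$ with the good form. The reduction mod $\pi$ of the transformed equation then has $\bar Q_1\ne0$, or $\bar P_1$ is not a square, so the closed fiber is reduced. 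Hence it is normal by \cite{LTR}, Lemme 2(a).

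In both cases $B[1/v]$ is normal, so $B$ satisfies $(R_1)$. Therefore $B=A'$.
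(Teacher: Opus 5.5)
Your proof is correct and is essentially the paper's argument. You get $(S_2)$ because $B$ is a hypersurface over the regular ring $A$, and $(R_1)$ at the height-one primes over $\pi$ because $B[1/x]$ and $B[1/v]$ are affine charts of the normal models $W$ and $W(p_0)$; your parity case analysis only re-verifies what the paper takes from \cite{LTR}, Lemme 7(d), namely that the transformed equation is an equation of $W(p_0)$. Drop the side remark that regularity at the generic point of $V(x)$ is ``equivalent'' to $B/xB$ being generically reduced: it is false when $\lambda(p_0)$ is odd, since then $W(p_0)_k$ is non-reduced yet $B$ is regular there, though your argument never uses it because you conclude from normality of the localizations.
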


  \begin{proof} (This is a special case of \cite{Ld}, Proposition 3.3.) Clearly $B$ is finite and free over the regular ring $A$.
 To prove the lemma it is enough to check that  it is integrally closed. As $B$ is a complete intersection, it is enough to check that it is integrally closed at prime ideals  of height $1$ (Serre's criterion $(R_1)+(S_2)$, see \cite{LB}, Theorem 8.2.23).

 As $B\otimes_{\cO_K} K=K[x, 1/x, y]\subset K(C)$ is integrally closed, we only have to consider prime ideals $\p\subset B$ of height $1$ containing $\pi$.
 So $x\in \p$ or $v\in \p$. Suppose for instance $x\in \p$.  The
 proof is the same when $v\in \p$. 
 We have $v\notin \p$ because $B/(x, v)$ is Artinian, hence $(x, v)$ has
 height $0$ in $B$. 
 Therefore $B_\p$ is a  localization of $B_v$. The latter defines an
 affine open subscheme of $W(p_0)$, so $B_\p$ is integrally closed. 
  \end{proof}

  Let $\wZ=\wW/\qi=Z\vee Z(q_0)$, where $q_0\in Z=W/\qi$ is the image of $p_0$. 
  This is a semi-stable regular model of $C/\qi$.
  Denote by $\tilde{q}_0\in \wZ(k)$ the intersection point of the two irreducible
  components of $\wZ_k$. See Figure~\ref{inter-ZZ'} with $Z'=Z(q_0)$.  
  An affine open neighborhood of $\tilde{q}_0$
 in $\wZ$ is $\Spec \cO_K[x,v]/(xv-\pi)$, with $\tilde{q}_0$ of coordinates
 $x=v=\pi=0$.  

\begin{proposition} \label{prop:reg} Keep the above notation. Recall that $r:=[\lambda(p_0)/2]$. Let $p_1^*\in W(p_0)\wedge W$. 
\begin{enumerate}[\rm (1)]
    \item The morphism $\wW\to \wZ$ is \'etale over $\tilde{q}_0$ if and only if
\[\varepsilon(W)=0, \quad  \delta(p_0)=\lambda(p_0)=2r. \]  
 \item \label{item:232} Suppose that $\wW\to \wZ$ is not \'etale over $\tilde{q}_0$. 
   Then there is a unique point  $\tilde{p}_0\in \wW$ lying over $\tilde{q}_0$,
   rational over $k$. 
The scheme $\wW$ is regular at $\tilde{p}_0$ if and only if 
\[
   \varepsilon(W)=0, \quad \delta(p_0)=2r+1, 
 \]
 or, setting $r^*=[\lambda(p_1^*)/2]$,
 \[ \varepsilon(W(p_0))=0, \quad \delta(p_1^*)=2r^*+1.    \]
 If $\delta(p_0)<\lambda(p_0)$, then $\wW$ is regular at $\tp_0$. 
\item \label{reg-net} Suppose that $\wW$ is regular at $\tp_0$, but not \'etale over
  $\tilde{q}_0$. Then the strict transform $\Gamma$ of $(W_k)_{\mathrm{red}}$ in
  $\wW$ is smooth at $\tp_0$ if and only if either $\varepsilon(W)=1$ or, 
 $\varepsilon(W)=0$ and $\delta(p_0)=2r+1$. 
\item Keep the hypothesis of (\ref{reg-net}).
     Then $\wW_k$ has normal  crossings at $\tp_0$ if and only if
    $\varepsilon(W)=1$ or  $\varepsilon(W(p_0))=1$.
\end{enumerate} 
\end{proposition}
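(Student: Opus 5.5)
Everything will be computed in the local ring of $\wW$ at points over $\tilde q_0$, using the explicit presentation $B=\cO_K[x,v,z]/(xv-\pi, F)$ with $F$ as in \eqref{eq:zxv}. Reducing modulo the maximal ideal $(x,v,\pi)$ of $A$ at $\tilde q_0$, the fiber of $\wW\to\wZ$ over $\tilde q_0$ is $\Spec k[z]/(z^2+\bar b_{r,0}z-\bar a_{2r,0})$, where $\bar b_{r,0}$ is the residue of $b_r$ (the constant term of the $z$-coefficient) and $\bar a_{2r,0}$ that of $a_{2r}$. The conditions on the chosen equation (Proposition~\ref{prop:same-y} with $n=0$, Remark~\ref{rmk:lambda'}(1), Lemma~\ref{lem:change_y}(3)) translate these residues into the invariants. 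With $\mu_0$ as above, $\lambda(p_0)=\min\{2\mu_0(Q),\mu_0(P)\}$. Moreover $\varepsilon(W)=0$ with $\delta(p_0)=2r$ forces exactly $\mu_0(P)\ge 2r$ with equality attained by $a_{2r}$, or $\mu_0(Q)=r$ attained by $b_r$, and in characteristic $2$, $\bar b_r\ne 0$. For (1), étaleness over $\tilde q_0$ means the quadratic has distinct roots, i.e. $\bar b_r\ne0$ (char $2$) or $\bar b_r^2+4\bar a_{2r}\ne0$. I will check that this happens exactly when $\ord_0\bar Q=r$ or $\ord_0\bar P=2r$ is attained without square cancellation, which by Lemma~\ref{lem:change_y}(3) is $\delta(p_0)=2r$, $\varepsilon(W)=0$. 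If $\varepsilon(W)=1$, all $\bar b_j,\bar a_j$ vanish and the fiber is a double point.

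For (2), outside the étale case the quadratic is a square $(z-c)^2$ with $c$ rational since $k$ is perfect, giving a unique rational point $\tp_0$. After translating $z$, which is possible within the normal form by Lemma~\ref{lem:change_y}, assume $c=0$. Then $\tp_0=(x,v,z,\pi)$, and $B_{\tp_0}$ is a quotient of the regular $3$-dimensional local ring $\cO_K[x,v,z]_{(x,v,z)}/(xv-\pi)$ by $F$. Thus $\wW$ is regular at $\tp_0$ iff $F\notin\mathfrak m^2$, i.e. iff $F$ has a nonzero linear term in $x$, $v$ or $z$. The $z$-term is absent because $\bar b_r=0$ here. The $x$-term is $-a_{2r+1}x$ (plus $b_{r+1}xz$, which is quadratic), and the $v$-term is $-a_{2r-1,1}v$. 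A unit coefficient of $x$ means $\nu(a_{2r+1})=0$, which I will identify with $\varepsilon(W)=0$ and $\delta(p_0)=2r+1$, since then $\ord_0\bar P=2r+1$ is odd and Lemma~\ref{lem:change_y}(3) applies. The $v$-coefficient is handled by symmetry: swapping the roles of $W$ and $W(p_0)$ exchanges $x$ and $v$, and the equation \eqref{eq:at_p1*} shows that the $v$-linear coefficient corresponds to $\delta(p_1^*)=2r^*+1$ with $\varepsilon(W(p_0))=0$. Here one uses \eqref{eq:lambdap1*} to see $r^*=g+1-r+\varepsilon(W)$ matches the index $2r-1$. The main obstacle is this bookkeeping: when $\varepsilon(W)=1$ the $v$-coefficient $a_{2r-1,1}$ is a unit precisely when $\nu(a_{2r-1})=1$, and matching that with $\delta(p_1^*)$ requires a computation like that in Lemma~\ref{lem:delta_p0_p1*}. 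The final claim follows because $\delta(p_0)<\lambda(p_0)$ forces $\varepsilon(W)=1$ by Lemma~\ref{lem:d0}(2) and $\delta(p_0)=2r-1$, which gives a unit $v$-coefficient.

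For (3) and (4), at the regular point $\tp_0$ the closed fiber is cut out by $\pi=xv$. The strict transform of $W_k$ is locally $\{v=0\}$ (taken reduced), with multiplicity $1+\varepsilon(W)$, and similarly $\{x=0\}$ for $W(p_0)_k$. The curve $\Gamma=V(v)$ in the regular surface is smooth iff $F|_{v=0}$ has a linear term in $x$ or $z$, i.e. iff $a_{2r+1}$ is a unit. If $\varepsilon(W)=1$, $F$ becomes a local coordinate together with $v$, and I will show $v$ is a uniformizer-type parameter so that $\Gamma$ is smooth. This gives (3). For (4), normal crossings at $\tp_0$ means both branches are smooth and transversal. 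By (3) and its symmetric version, I will verify that the two branch conditions plus transversality hold exactly when one of $x$, $v$ can be taken, together with another function, as a regular system of parameters with the fiber of the form $x^a v^b$ ($\{a,b\}=\{1,2\}$). This happens precisely when one of the components is non-reduced, i.e. $\varepsilon(W)=1$ or $\varepsilon(W(p_0))=1$. In the case where both are reduced, both $x$- and $v$-linear terms cannot be the only parameters, so the two branches are tangent at $\tp_0$ and normal crossings fails.
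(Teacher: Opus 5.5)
Your plan follows the paper's proof in outline. It uses the fiber $\Spec k[z]/(z^2+\bar b_r z-\bar a_{2r})$ over $\tilde q_0$, and regularity at $\tp_0$ iff $a_{2r+1}$ or $a_{2r-1,1}$ is a unit. It matches the $x$-coefficient with $\varepsilon(W)=0$, $\delta(p_0)=2r+1$ via Lemma~\ref{lem:change_y}(3), and treats the $v$-coefficient through $W=W(p_0)(p_1^*)$. No translation of $z$ is needed: in the non-\'etale case the normal form of Proposition~\ref{prop:same-y} already gives $a_{2r}\in\pi\cO_K$. Two steps are wrong as written, however, and (3)--(4) need a corrected local picture.

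\textbf{The value of $r^*$.} By \eqref{eq:lambdap1*}, $\lambda(p_1^*)=2g+2-2r+\varepsilon(W)$, so $r^*=[\lambda(p_1^*)/2]=g+1-r$, not $g+1-r+\varepsilon(W)$. With your value, $2r^*+1$ differs from $2g+3-2r$ (the exponent of $v$ carrying $a_{2r-1,1}$ in $P_\infty$) exactly when $\varepsilon(W)=1$. That is the whole ``bookkeeping obstacle'', and nothing like Lemma~\ref{lem:delta_p0_p1*} is needed. With the correct $r^*$ the paper's argument is short:
\begin{itemize}
\item if $a_{2r+1}\in\pi\cO_K$ and $a_{2r-1,1}\in\cO_K^*$, then $\mu_0(P)\le 2r$ forces $\lambda(p_0)=2r$, hence $\varepsilon(W(p_0))=0$;
\item since $b_r,a_{2r}\in\pi\cO_K$, one gets $\ord_0\bar P_\infty=2g+3-2r<2\ord_0\bar Q_\infty$;
\item Lemma~\ref{lem:change_y}(3) then gives $\delta(p_1^*)=2r^*+1$, whatever $\varepsilon(W)$ is.
\end{itemize}
For the converse you cannot read $\delta(p_1^*)$ off \eqref{eq:at_p1*}, since that equation is not normalized at $p_1^*$. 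Either apply the $x$-coefficient criterion to $(W(p_0),p_1^*)$, as the paper does, or note that $a_{2r-1,1}\in\pi\cO_K$ would give $\delta(p_1^*)\ge\delta_0(y_\infty)\ge 2g+4-2r$.

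\textbf{The last sentence of (2).} Your argument uses $\delta(p_0)=2r-1$, the same inference as the paper's step (2.4), and this holds only when $\lambda(p_0)$ is even. Lemma~\ref{lem:d0}(2) gives $\varepsilon(W)=1$ and $\delta(p_0)=\lambda(p_0)-1$. If $\lambda(p_0)=2r+1$, then $\delta(p_0)=2r$ and $\nu(a_{2r-1})\ge 2$, so neither linear coefficient is a unit and $\wW$ is singular at $\tp_0$. This agrees with the equivalence in (2), since then $\varepsilon(W)=\varepsilon(W(p_0))=1$. Concrete example: take residue characteristic $\ne 2$, $g=2$, $y^2=\pi(x^2+\pi^2)(x^4-1)$ and $p_0=\{x=0\}$. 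Then $\lambda(p_0)=3$, $\delta(p_0)=2$, and $\wW$ is locally $z^2=xv(1+v^2)(x^4-1)$, an $A_1$-singularity. So that sentence needs $\lambda(p_0)$ even. This holds where it is used later: $\lambda(p_i)=g+2$ in Theorem~\ref{regular-even}, and $\lambda(p_1)=4$ in Proposition~\ref{prop:exT}(5.c.iv).

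\textbf{Parts (3) and (4).}
\begin{itemize}
\item When $\varepsilon(W)=1$, regularity forces $a_{2r-1,1}\in\cO_K^*$. Then $v$ is a unit times $z^2$ and is not a parameter: $(x,z)$ is a regular system of parameters and $\Gamma=V(z)$ is smooth. Alternatively, $\Gamma\to(W_k)_{\mathrm{red}}\simeq\PP^1_k$ is finite birational, hence an isomorphism.
\item In (4) the fiber is always $\pi=xv$, so ``$x^av^b$ with $\{a,b\}=\{1,2\}$'' is not a meaningful criterion. What decides is the local intersection number of the two reduced branches. It equals $1$ when one of $W_k$, $W(p_0)_k$ is non-reduced, that branch then being $V(x,z)$ or $V(v,z)$. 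It equals $\dim_k k[x,v,z]/(x,v,z^2)=2$ when both are reduced.
\item This computation also covers the subcase where only one linear coefficient is a unit. There the other branch is singular, not tangent, so your ``tangent branches'' argument misses it.
\end{itemize}
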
 

\begin{proof} We restrict ourselves to  the (more difficult) case $\chara(k)=2$.
  Then by Proposition~\ref{prop:same-y}(2) we have
  $\boxed{a_{2r}\in \pi\cO_K}$.  

  (1) The fiber of $\wW\to \wZ$
  above $\tilde{q}_0$ is $\Spec k[z]/(z^2+\bar{b}_rz-\bar{a}_{2r})$. So being
  \'etale is equivalent to $\bar{b}_r\in k^*$. 
  This implies that $\varepsilon(W)=0$ and $\lambda(p_0)=\delta(p_0)=2r$. 

  Conversely, assuming that  the above equalities hold, we have to show that
  $b_r\in \cO_K^*$.
  Suppose $b_r\in \pi\cO_K$. Then $\ord_0(\bar{Q}(x))\ge r+1$. As
  $\varepsilon(W)=0$, this implies that $\ord_0(\bar{P}(x))=2r$, hence
  $a_{2r}\in \cO_K^*$, absurd.
  \smallskip
  
  (2) The computations in (1) show that the pre-image of
  $\tilde{q}_0$ in $\wW$ is then a single rational $\tp_0$ and
  that  $\boxed{b_r\in \pi\cO_K}$. 
  Locally at $\tp_0$, $\wW$ is the closed subscheme of
the affine $3$-space $\Spec \cO_K[x,v,z]$ defined by 
\[ xv-\pi=0, \quad F(x,v,z)=0\] 
where (see Equality~\eqref{eq:zxv}) 
\begin{equation} \label{eq:zxv2}
   F(x,v,z)=z^2+a_{2r+1}x+a_{2r-1,1}v+ \epsilon_2(x,v,z),
\end{equation} 
for some $\epsilon_2(x,v,z)\in (x^2, xv, v^2, xz, vz)\cO_K[x,v,z]$. 
The point $\tp_0$ is $x=v=z=\pi=0$. It is regular in $\wW$  
    if and only if
    \begin{equation}
      \label{eq:ar_reg}
    a_{2r+1}\in \cO_K^* \ \text{ or } \ a_{2r-1,1}\in \cO_K^*.   
    \end{equation}

    (2.1) It is easy to see that $a_{2r+1}\in \cO_K^*$ if and only if
    $\varepsilon(W)=0$  
    and $\delta(p_0)=2r+1$ (note that $\ord_0 (\bar{Q}(x))\ge r+1$).

    (2.2) Suppose that $a_{2r+1}\in \pi\cO_K$. 

    If $\varepsilon(W(p_0))=0$  and $\delta(p_1^*)=2r^*+1$, then
    by symmetry ($W=W(p_0)(p_1^*)$), $\tp_0$ is regular.

    (2.3) Suppose $a_{2r+1}\in \pi \cO_K$ and 
    $a_{2r-1,1}\in  \cO_K^*$. Then $\lambda(p_0)=2r$ and  
    $\varepsilon(W(p_0))=0$. Let us show $\delta(p_1^*)=2r^*+1$.
    By Equality~\eqref{eq:lambdap1*}, 
   $\lambda(p_1^*)=2g+2-2r+\varepsilon(W)$,  
    hence $r^*=g+1-r$. An equation of $W(p_0)$ is \eqref{eq:at_p1*}. 
    With the notation of \eqref{eq:Qinf_Pinf}, the conditions
    $\lambda(p_0)=2r$, $a_{2r-1,1}\in \cO_K^*$ and
    $a_{2r}, b_r\in \pi \cO_K$ imply that 
\[ \ord_0\bar{P}_\infty(v)= 2g+3-2r< 2\ord_0\bar{Q}_\infty(v). 
\] 
Therefore $\delta(p_1^*)=2g+3-2r=2r^*+1$ by 
Lemma~\ref{lem:change_y}(\ref{crit_delta}).

(2.4) If $\delta(p_0)<\lambda(p_0)$, then $\varepsilon(W)=1$ and
$\delta(p_0)=2r-1$, hence $a_{2r-1,1}\in \cO_K^*$ and $\wW$ is regular at $\tp_0$. 
    \smallskip

    (3) Locally at $\tp_0$, $\wW_k$ is the subscheme of $\Spec k[x,v,z]$ defined by
the equations
 \begin{equation}\label{eq:ww_k}   
   xv=0, \quad z^2+\bar{a}_{2r+1}x+\bar{a}_{2r-1,1}v+\bar{\epsilon}_2(x,v,z)=0
 \end{equation} 
 (see Part (2)).  

 If $\varepsilon(W)=1$, then $\Gamma \to (W_k)_{\mathrm{red}}\simeq \PP^1_k$
is finite birational, hence an isomorphism.  Suppose now $W_k$ reduced. 
Then $\Gamma$ is  defined by the equations 
\[ 
  v=0, \quad z^2+\bar{a}_{2r+1}x+\bar{\epsilon}_2(x,0,z)=0 
\]
with $\bar{\epsilon}_2(x,0,z)\in (x^2, xz)k[x,z]$.
Now $\Gamma$ is smooth at $\tp_0$ if and only if
$\bar{a}_{2r+1}\ne 0$. We saw in (\ref{item:232}.1) that this is equivalent
to $\delta(p_0)=2r+1$. 
\smallskip

(4) First suppose that
$\varepsilon(W(p_0))=1$. By Part (2) we have $\varepsilon(W)=0$. In (2.1)
we saw that $\bar{a}_{2r+1}\ne 0$. Let $\Gamma$ 
be the strict transform of $(W(p_0)_k)_{\mathrm{red}}$. Using
Equation~\eqref{eq:ww_k}, we see that $\Gamma'=V(x,v)$. Hence 
the intersection number $\Gamma.\Gamma'$ at $\tp_0$ is 
\[\dim_k (k[x,v, z]/(v, z^2+\bar{a}_{2r+1}x+\bar{\epsilon}_2(x,0, z), x, z))=1.\]   
By symmetry the same result holds if $\varepsilon(W)=1$.

Suppose now that $\varepsilon(W)=\varepsilon(W(p_0))=0$. Then
the intersection number is 
\[\dim_k (k[x,v, z]/(v, z^2+\bar{a}_{2r+1}x+\bar{\epsilon}_2(x,0, z),
  x, z^2+\bar{a}_{2r-1,1}v+ \bar{\epsilon}_2(0,v,z)))=2 
\]
because the ideal we quotient by is $(x,v,z^2)$. Hence the intersection is not
transverse. 
 \end{proof}

\subsection{Extended Tate's algorithm} \label{subsect:eTa} Let $\cE$ be a
pointed Weierstrass model of an elliptic curve $E$ (``pointed'' means that
the neutral element of
$E$ lies in the smooth locus of $\cE$). Then any singular point $p_0\in \cE$
is rational over $k$ and $\delta(p_0)\le 3$. Tate's algorithm \cite{Tate}
determines the minimal desingularization of $\cE$ at $p_0$. As we will see
in Proposition~\ref{prop:exT}, this algorithm relies only on the condition 
$\delta(p_0)\le 3$ and not on the genus of the generic fiber. 
See also Remark~\ref{rmk:e1d3} for the case when $W_k$ is non-reduced.

More precisely, let $W$ be a Weierstrass model of $C$ such that $W_k$
is reduced and let $p_0\in W_k(k)$ with $\delta(p_0)\le 3$.  Then
the minimal desingularization of $W$ at $p_0$ follows  the same steps
as in Tate's algorithm for elliptic curves \cite{Tate}. 
The final result has the same shape as for elliptic curves
(Corollary~\ref{cor:exT}). This is not surprising at least when
$\chara(k)\ne 2$ because $\Spf \widehat{\cO}_{W, p_0}$ can then be
embedded into the formal completion of a Weierstrass model of
some elliptic curve. I do not know whether this holds when 
$\chara(k)=2$. At any rate, it is useful to have an explicit desingularization
algorithm.  The ours, as that of Tate,  consists just in computing repeatedly
multiplicities $ \lambda$ and $\delta$.

\begin{lemma}\label{lem:l2d1} Let $W$ be a Weierstrass model of $C$ with
  non-reduced $W_k$, let $p_0\in W_k$ with $\delta(p_0)=1$. Then
  $\lambda(p_0)=2$ and the blowing-up $\wW\to W$ along $p_0$ solves the
  singularity. The exceptional locus is $\PP^1_{k(p_0)}$ of multiplicity
  $1$ in $\wW_k$, and it intersects transversely the strict transform of
  $W_k$.
\end{lemma}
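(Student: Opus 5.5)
We reduce to the case where $p_0$ is rational, then read off the multiplicities at $p_0$ and at the point $p_1^*$, and finally apply Proposition~\ref{prop:reg} on both sides of the blown-up point. As in \S\ref{subsect:eTa}, by the blowing-up $\wW\to W$ we mean Lipman's first step: blow up $p_0$ and normalize, so that $\wW=W\vee W(p_0)$.

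\emph{Reduction and $\lambda(p_0)$.} First, $\lambda(p_0)=2$ follows directly from Lemma~\ref{lem:d0}(2). Since $\varepsilon(W)=1$, it gives $1=\min\{1,\delta(p_0)\}\le \lambda(p_0)-1\le \delta(p_0)=1$. Next, to make $p_0$ rational we pass to an unramified extension $K'/K$ whose residue field contains $k(p_0)$. By Lemma~\ref{lem:wbc}, $W_{\cO_{K'}}$ is still a Weierstrass model, since $e_{K'/K}=1$. Blowing-up and normalization commute with this étale base change. Regularity, the shape of the exceptional locus and transversality can all be checked after it, and $\lambda,\delta$ are preserved by Remark~\ref{rmk:lambda'}(\ref{lambda_1}). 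So we may assume $p_0\in W_k(k)$; the exceptional locus will then descend to a conic over $k(p_0)$ with a rational point, that is, $\PP^1_{k(p_0)}$. Its image under $W\to Z$ is then rational, and Proposition~\ref{prop:reg} applies with $r=[\lambda(p_0)/2]=1$.

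\emph{The intersection point $\tp_0$.} Since $\varepsilon(W)=1$, part (1) of Proposition~\ref{prop:reg} shows that $\wW\to\wZ$ is not étale over $\tilde q_0$. So there is a unique rational point $\tp_0$ above $\tilde q_0$. As $\delta(p_0)=1<2=\lambda(p_0)$, part (2) gives that $\wW$ is regular at $\tp_0$. Part (4) then gives normal crossings at $\tp_0$, because $\varepsilon(W)=1$. The intersection computation in the proof of (4) shows the intersection number there is $1$, so the crossing is transverse.

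\emph{The exceptional curve.} The exceptional locus is the strict transform $\Gamma'$ of $W(p_0)_k$. By \eqref{eq:disc-p0}, $\varepsilon(W(p_0))=\lambda(p_0)-2=0$, so $W(p_0)_k$ is reduced and $\Gamma'$ has multiplicity $1$. Let $p_1^*\in W(p_0)\wedge W$.
\begin{itemize}
\item By \eqref{eq:lambdap1*}, $\lambda(p_1^*)=2g+2-2+1=2g+1$.
\item Since $\delta(p_0)=\lambda(p_0)-\varepsilon(W)=1$ is odd, Lemma~\ref{lem:delta_p0_p1*} gives $\delta(p_1^*)=2g+1$.
\item The bound $\sum\delta(p)[k(p):k]\le 2g+2$ of Lemma~\ref{lem:d0}(4), in the separable case, forces $\delta(p)\le 1$ at every other point of $W(p_0)_k$. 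In the inseparable case the total is $2g$ plus the degree, which bounds $\delta(p_1^*)$ by $2g+1$ and again leaves the other points with $\delta\le 1$.
\item Points with $\delta\le 1$ are smooth points of the fiber by Lemma~\ref{lem:d0}(1), and $\lambda\le\delta$ there, so $\wW$ is regular away from $\tp_0$ along $\Gamma'$.
\end{itemize}
Since $\delta(p_1^*)=2r^*+1$ with $r^*=g$, Proposition~\ref{prop:reg}(3), applied symmetrically to $W(p_0)$ (using $W=W(p_0)(p_1^*)$), shows that $\Gamma'$ is smooth at $\tp_0$. Thus $\Gamma'$ is the normalization of $W(p_0)_k$.

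\emph{The main obstacle} is identifying $\Gamma'\simeq\PP^1_k$. We use the genus formula as in Proposition~\ref{prop:W0}: $p_1^*$ is a cusp (\cite{LTR}, Lemme 6(b)), and
\[ g=p_a(\Gamma')+[\delta(p_1^*)/2]=p_a(\Gamma')+g. \]
So $p_a(\Gamma')=0$, and $\Gamma'$ is a smooth conic with the rational point $\tp_0$, hence $\Gamma'\simeq\PP^1_k$. We also need that $W(p_0)_k$ is irreducible. This holds because a reduced fiber with two components is a union of two $\PP^1$'s, which has only double points, whereas $W(p_0)_k$ has the cusp $p_1^*$.
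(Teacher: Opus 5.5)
Your proof is correct, but it takes a different route from the paper.

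\textbf{The paper's route.} The paper argues locally. It takes an equation adapted to the chain $W, W(p_0)$ (Proposition~\ref{prop:same-y}). Non-reducedness and $\delta(p_0)=1$ then give $\pi\mid b_j$, $\pi^2\mid a_0$ and $a_{1,1}\in\cO_K^*$. So near $p_0$ the model is the hypersurface
\[ y^2+\pi(a_{1,1}x+b_{0,1}y+\pi a_{0,2})+\epsilon_3=0 \]
in a regular threefold, and its quadratic tangent cone is nondegenerate. Hence $p_0$ is an ordinary double point, and everything follows from the classical resolution of such a point:
\begin{itemize}
\item a single blow-up suffices;
\item the exceptional curve is the projectivized tangent cone, a smooth conic with the rational point $\pi=y=0$;
\item it has multiplicity $1$ because $\pi$ has order $1$ at $p_0$;
\item it meets the strict transform of $W_k$ transversely.
\end{itemize}

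\textbf{Your route.} You avoid local equations and stay inside the paper's multiplicity calculus on $\wW=W\vee W(p_0)$:
\begin{itemize}
\item regularity and transversality at $\tp_0$ come from Proposition~\ref{prop:reg}(2)--(4), in the case $\delta(p_0)<\lambda(p_0)$;
\item the exceptional curve is analysed through $W(p_0)$, using \eqref{eq:lambdap1*}, Lemma~\ref{lem:delta_p0_p1*}, Lemma~\ref{lem:d0}(4) and the genus formula.
\end{itemize}

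\textbf{Trade-offs.} The paper's argument is much shorter. It also shows directly that the plain blow-up is regular, not only its normalization, which is how you read $\wW$; that reading is consistent with \S\ref{subsect:eTa}, so nothing is lost. Your argument uses heavier inputs. Their proofs contain essentially the same local equation, for instance step (2.4) in the proof of Proposition~\ref{prop:reg}, where $a_{2r-1,1}\in\cO_K^*$ is exactly $a_{1,1}\in\cO_K^*$ here. In return it gives extra information: the exceptional curve is the strict transform of $W(p_0)_k$, and $W(p_0)_k$ is a geometrically integral rational curve whose only singularity is a cusp at $p_1^*$, with $\delta(p_1^*)=\lambda(p_1^*)=2g+1$.

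\textbf{Two details to make explicit.}
\begin{itemize}
\item To pass regularity of $W(p_0)$ at points other than $p_1^*$ to $\wW$, you need $\wW\to W(p_0)$ to be an isomorphism away from $p_1^*$. This holds because $p_1^*$ is the only point over $q_1^*$, since $\delta(p_1^*)\ge 1$ (Lemma~\ref{lem:Wp0}(1)).
\item In the inseparable case of Lemma~\ref{lem:d0}(4), the relevant identity is $\sum_p(\delta(p)-1)[k(p):k]=2g$. With $\delta(p_1^*)=2g+1$, this forces $\delta(p)=1$ at every other point.
\end{itemize}
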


\begin{proof} The equality $\lambda(p_0)=2$ comes from
Lemma~\ref{lem:d0}(2). To prove the second part of the lemma we can extend $K$ by an unramified
extension and suppose that $p_0\in W_k(k)$. Let
\[ y^2+(\sum_j b_j x^j)y=\sum_j a_j x^j \]
be an equation of $W$ satisfying the condition of Proposition~\ref{prop:same-y}
for the chain $W, W_1=W(p_0)$. As $W_k$ is non-reduced and $\delta(p_0)=1$,
we have $\pi \mid b_j$, $\pi^2\mid a_0$. We can write the above equation as
\[
y^2+\pi(a_{1,1}x+ b_{0,1}y+\pi a_{0,2})+ \epsilon_3=0 
\]
with $\epsilon_3\in (\pi, x, y)^3\cO_K[x,y]$ and $a_{1,1}\in \cO_K^*$
(see Notation~\eqref{eq:ajr}). So the
tangent cone is nondegenerate. The desingularization of $W$ at 
$p_0$ is well known.
\end{proof}

Let  $f: \mathcal X\to W$ be the minimal desingularization at $p_0$.
Let $\cK$ be a Kodaira symbol for elliptic curves
(see \cite{LB}, \S 10.2.1).   If $W$ is singular at $p_0$, we will say that  
\emph{$(W, p_0)$  has type $\cK$} if the exceptional locus of $f$ 
has the same configuration 
as the exceptional locus of the minimal desingularization
of the minimal Weierstrass model of some elliptic curve over $K$, having
reduction of type $\cK$. See Figure~\ref{figure:KN} for an example.
By same configuration, we mean that there is
an isomorphism between the two exceptional loci as curves over $k$
endowed with the reduced structure, and the isomorphism preserves
the multiplicities of the irreducible components. With some extra efforts
(especially for types III$^*$ and II$^*$), one could prove that they are
isomorphic as $k$-curves, but we do not need such a statement here.
Moreover,   except the types III and IV$_2$, the irreducible components of 
the exceptional locus of $f$ have intersection number $0$ or $1$ 
with the strict transform of $W_k$. 

In the statement of the next proposition we will only indicate the
type over an algebraic closure $\bar{k}$ of $k$. The precise
type over $k$ (with the notation of \cite{LB}, \S 10.2.1) is given in
the course of the proof. 

\begin{figure}    
\centering 
\begin{tikzpicture}[scale=1] 
\draw (-1.4, 0.2) node {$\cX_{\bar{k}}:$}; 
  \coordinate (P3) at  (-0.2, 1.3);
  \coordinate (Q3) at  (1.25, 0.8);
  \draw  (P3)  [below left]  to [bend left] (Q3);
  \draw (-0.2, 1.6) node {$\Gamma_0$};
  \draw[dashed] (0.4, 1.5) to [bend left] (-0.5, 0.8);
  \draw[dashed] (0.6, 1.4) to [bend left] (-0.5, 0.6); 
  \draw (0.5, 0) -- (3.7, 0);
  \draw (1.5, -0.2) node {3};  
  \draw (1, -0.5) -- ( 1, 1.6);
  \draw (0.8, 0.5) node {2};  
  \draw (2, -0.5) -- ( 2, 1.6);
  \draw (1.8, 0.5) node {2};  
\draw (1.6, 1) -- (2.4, 1);  
  \draw (3, -0.5) -- ( 3, 1.6); 
  \draw (2.8, 0.5) node {2};  
  \draw (2.6, 1) -- (3.4, 1);
\end{tikzpicture} 
\caption{Type IV$^*$ for $(W, p_0)$.}
\label{figure:KN}
\end{figure} 

\begin{proposition}\label{prop:exT} Let $W$ be a Weierstrass model with
  reduced $W_k$, and let  $p_0\in W_k(k)$ with $\delta(p_0)\le 3$. 
  \begin{enumerate}[\rm (1)]
  \item If $\delta(p_0)\le 1$, then $W_k$ is smooth at $p_0$.
  \item Suppose $\delta(p_0)=2$. Then $p_0$ is an ordinary double point in
    $W_k$, and $(W, p_0)$ has type {\rm I}$_\nu$  over $\bar{k}$
    for some   $\nu\ge 2$ if $\lambda(p_0)\ge 2$. 
  \item If $\delta(p_0)=3$ and $\lambda(p_0)=1$, then $p_0$ is a cusp
    (similar to the type {\rm II}). 
  \item If $\delta(p_0)=3$ and $\lambda(p_0)=2$, then $(W, p_0)$ has type
    {\rm III} or {\rm IV} over $\bar{k}$. 
  \item Suppose $\delta(p_0)=\lambda(p_0)=3$. 
    Let $W_1=W(p_0)$ and let $p_1^*$ be the point of $W_1\wedge W$.
    \begin{enumerate}[\rm (a)] 
    \item If $\delta(p)\le 1$ for all $p\in (W_1)_k\setminus \{ p_1^*\}$,
      then $(W, p_0)$ has type ${\rm I}_0^*$ over $\bar{k}$.
    \item If there exists $p_1 \in (W_1)_k\setminus \{ p_1^*\}$ with
      $\delta(p_1)=2$, then $(W, p_0)$ has type {\rm I}$^*_\nu$ over $\bar{k}$
      for some $\nu\ge 1$. 
    \item Suppose that there exists $p_1 \in (W_1)_k\setminus \{ p_1^*\}$ with
      $\delta(p_1)=3$.  Then $p_1$ is rational over $k$. Let $W_2=W_1(p_1)$. 
      \begin{enumerate}[\rm (i)] 
      \item If $\lambda(p_1)=2$, then $(W, p_0)$ has type {\rm IV}$^*$
        over $\bar{k}$. 
      \item If $\lambda(p_1)=3$ and there exists $p_2\in W_2 \setminus
        \{ p_2^* \}$ with $\delta(p_2)=1$, then $(W, p_0)$ has type
        {\rm III}$^*$.
      \item If $\lambda(p_1)=3$ and $\delta(p)=0$ for all $p\in W_2 \setminus
        \{ p_2^* \}$, then $(W, p_0)$ has type {\rm II}$^*$.
      \item If none of the above conditions is satisfied, then $\lambda(p_1)=4$, 
    \[
\nu(\Delta_{W_2})=\nu(\Delta_{W})+4(2g+1)(g-2), 
\]
and $W\vee W_2$ is semi-stable and regular at $p_{0,2}$, the point lying over
$p_0$ and $p_2^*$.   Moreover, there is at worst one point
$p_2\in (W_2)_k\setminus \{ p_2^* \}$ with $\delta(p_2)\ge 2$, and we then
have $\delta(p_2)=2, 3$ and $p_2$ rational over $k$.  
\end{enumerate}
\end{enumerate}
\end{enumerate}  
\end{proposition}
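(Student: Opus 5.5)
The plan is to run Tate's algorithm step by step on a local equation adapted to $p_0$, and to read each step in terms of the multiplicities $\lambda,\delta$ and the models $W(p_0)$, $W_1(p_1)$ introduced above. Throughout, take an equation $y^2+Q(x)y=P(x)$ of $W$ with $x(p_0)=0$, normalized as in Proposition~\ref{prop:same-y}. For cases (5)(c) we normalize with respect to the chain $W,W_1,W_2$, so that $\lambda_0(y_i)=\lambda(p_i)$ and $\delta_0(y_i)=\delta(p_i)$ at every step. With $a_j,b_j$ the coefficients of $P,Q$, the conditions on $\delta(p_0)$ and $\lambda(p_0)$ become valuation conditions of the form $\nu(a_j)+j\ge\cdot$ and $\nu(b_j)+j\ge\cdot$. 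These are exactly the divisibility conditions on $a_1,\dots,a_6$ used in Tate's algorithm. The only coefficients that matter are those of low degree in $x$, so the genus never enters.

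Parts (1)--(3) are local statements on $W_k$. Part (1) is Lemma~\ref{lem:d0}(1). For (2) and (3), $\delta_0(y)=2$ (resp.\ $3$) gives a local equation of $W_k$ of the form $z^2+\alpha xz-\beta x^2+\dots$ (resp.\ $z^2+\dots-\gamma x^3$). By \cite{LTR}, Lemme 6(b), this is an ordinary double point (resp.\ a cusp); the case $\lambda=1$ means $W$ is regular at $p_0$. When $\delta(p_0)=2$ and $\lambda(p_0)\ge2$, we reduce to the well-known analysis of a regular-fibre node. The completed local ring is $\cO_K[[x,z]]/(z^2+\alpha xz-\beta x^2-\pi^\nu u)$ with $\alpha^2+4\beta\ne0$ mod $\pi$, which is an $A_{\nu-1}$ singularity and is resolved by a chain as in type I$_\nu$. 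For (4), $\lambda(p_0)=2$ gives $\wW=W\vee W(p_0)$, and Proposition~\ref{prop:reg}(2) with $\delta(p_0)=3=2r+1$ shows $\wW$ is regular at $\tp_0$. The exceptional curve $W(p_0)_k$ near $p_1^*$ is then computed from \eqref{eq:at_p1*}. By Proposition~\ref{prop:reg}(1),(4) the transform of $W_k$ is either tangent to it (type III) or meets a second exceptional component (type IV). These two cases are distinguished by whether $z^2+\bar b z-\bar a$ splits, exactly as in Tate's step.

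For (5), $\lambda(p_0)=3$ is odd, so $W_1=W(p_0)$ has non-reduced fibre $2\Gamma_1$ by \eqref{eq:disc-p0}. By Lemma~\ref{lem:delta_p0_p1*}, $\delta(p_1^*)=2g+2-3$. The relevant data on $(W_1)_k$ is the cubic $\overline{\pi^{-1}P_1}$ restricted to low degree near $x_1=0$, which plays the role of Tate's auxiliary cubic. Lemma~\ref{lem:d0}(3) distributes $\delta$ over the points of $(W_1)_k$. The case that all other points have $\delta\le1$ (three simple roots) gives, by Lemma~\ref{lem:l2d1}, three multiplicity-one $\PP^1$'s on $2\Gamma_1$, hence I$_0^*$. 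A point with $\delta(p_1)=2$ (double root) leads to the I$_\nu^*$ chain by iterating the double-point analysis on the non-reduced fibre. A point with $\delta(p_1)=3$ (triple root) forces $p_1$ to be rational, since it is unique. In that case we pass to $W_2=W_1(p_1)$ and split by $\lambda(p_1)\in\{2,3,4\}$, following Tate's steps for IV$^*$, III$^*$, II$^*$. We use Proposition~\ref{prop:reg} at each blow-up to check regularity and crossing behaviour, and Lemma~\ref{lem:l2d1} for the remaining $\delta=1$ points.

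In case (iv), $\lambda(p_1)=4$, and \eqref{eq:disc-p0} applied twice gives the discriminant formula. Regularity and semi-stability at $p_{0,2}$ follow from Proposition~\ref{prop:reg}(1)/(2) applied to the chain, because $\delta(p_2^*)$ is forced by Lemma~\ref{lem:delta_p0_p1*}. The final bound $\delta(p_2)\le3$ at one rational point comes from Lemma~\ref{lem:d0}(4) on $W_2$. I expect the main obstacle to be the characteristic $2$ bookkeeping in (4) and (5)(c). There one must check that the normalized equation from Proposition~\ref{prop:same-y} makes the local equations at each $\tp_i$ match Tate's normal forms, so that each blow-up configuration and its multiplicities agree with the Kodaira type. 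I would handle this by writing each local equation as in \eqref{eq:zxv2} and comparing its terms directly with Tate's.
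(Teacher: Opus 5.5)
The overall plan matches the paper's. You read each stage of Tate's algorithm as a condition on $\lambda,\delta$, pass to $W(p_0)$ and $W_1(p_1)$, and control each blow-up with Proposition~\ref{prop:reg} and Lemmas~\ref{lem:d0}, \ref{lem:l2d1}, \ref{lem:delta_p0_p1*}. Your shortcut in (2), via the thickness of an ordinary double point, is fine over $\bar k$; the paper instead iterates $W\to W(p_0)\to\cdots$, which also gives the type over $k$.

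\textbf{The argument for (4) is wrong.} With $r=1$ and the normalized equation, $\delta(p_0)=3$ forces $\bar b_1=\bar a_2=0\neq\bar a_3$. So $(W_1)_k\setminus\{p_1^*\}$ is $\bar y_1^2+\bar b_{0,1}\bar y_1=\bar a_{0,2}+\bar a_{1,1}\bar x_1$, and $\wW$ is given near $\tp_0$ by \eqref{eq:step45}. Type III versus IV is decided by $\bar a_{1,1}\neq 0$ (a smooth conic) versus $\bar a_{1,1}=0$ (two lines through $\tp_0$); this is Tate's Step~4, not a splitting test. The splitting of $T^2+\bar b_{0,1}T-\bar a_{0,2}$ only separates IV from IV$_2$ over $k$. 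Over $\bar k$ every such polynomial splits, and in type III it can even be $T^2$. For example, take $\chara(k)\neq 2$, $Q=0$, $P=\pi x+x^3+x^4R(x)$: then $(W_1)_k$ is $\bar y_1^2=\bar x_1$. Proposition~\ref{prop:reg}(1),(4) cannot give the dichotomy either. They only say that $\wW_k$ fails to have normal crossings at $\tp_0$, which is true in both types. What you need is the explicit conic above. It also shows $\wW$ is regular along the exceptional curve away from $\tp_0$, which you did not address.

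\textbf{Part (5)(b) is underspecified.} ``Iterating the double-point analysis'' is not the mechanism of case (2). Since $\varepsilon(W_1)=1$, the model $W_1\vee W_2$ is \emph{not} regular at $\tp_1$. Its local equation $z^2+b_{0,2}vz-a_{0,4}v^2-a_{2,1}x_1v+\epsilon_3=0$, with $x_1v=\pi$, lies in $\mathfrak m^2$. So each stage needs one further blow-up, whose exceptional curve is a $2\PP^1_k$. These curves alternate with the $2\Gamma_i$ to form the double chain of $\mathrm{I}^*_\nu$. You also need a termination argument; the paper uses Corollary~\ref{cor:dl}.

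The remaining parts agree with the paper. In (5)(c)(iv), you obtain $\delta(p_2^*)=2g-1=2[\lambda(p_2^*)/2]+1$ from Lemma~\ref{lem:delta_p0_p1*} and apply the second criterion of Proposition~\ref{prop:reg}(2). That is a valid substitute for the paper's use of $\delta(p_1)<\lambda(p_1)$. You should add Proposition~\ref{prop:reg}(4) at $\tp_0$ and $\tp_1$. This shows that $\Gamma_1$ is a $(-1)$-curve of multiplicity $2$, which contracts to the regular semi-stable point $p_{0,2}$.
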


\begin{proof} We indicate in boldface the corresponding stage in Tate's algorithm \cite{Tate}. See also \cite{Silv}, \S IV.9 for more details.
Let $W_1=W(p_0)$ and let  
\[ y^2+(\sum_j b_j x^j)y=\sum_j a_j x^j\]
be an equation of $W$ satisfying the conditions of
Proposition~\ref{prop:same-y} for the chain $W, W_1$.
As in \cite{Tate} and in \eqref{eq:ajr}, for $i\ge 1$,
we set $b_{j,i}=\pi^{-i}b_j$ and $a_{j,i}=\pi^{-i}a_j$.  

(1) ({\bf Step 1}) follows from Lemma~\ref{lem:d0}(1). 
\smallskip 

(2) ({\bf Step 2}) We can suppose $\lambda(p_0)\ge 2$ (hence $=2$).
Then $W\vee W_1$ is semi-stable and regular at $\tp_0$ 
(Proposition~\ref{prop:reg}(1)).
An equation of $(W_1)_k\setminus \{ p_1^* \}$ is 
\[ 
\bar{y}_1^2+(\bar{b}_{0,1}+\bar{b}_1\bar{x}_1) \bar{y}_1 =
\bar{a}_{0,2}+\bar{a}_{1,1}\bar{x}_1+\bar{a}_2\bar{x}_1^2 .
\]

(2.1) If the above affine conic is smooth, then we have type I$_2$. Otherwise
it has a unique singular point $p_1$,  and the latter is rational over $k$ with
$\delta(p_1)=2$.  We have $(W_1)_k$ reduced because
$\lambda(p_0)=2$ is even.  
We repeat the same operation with $(W_1, p_1)$. After finitely many steps
(Corollary~\ref{cor:dl}), we get a $W_{\ell}$, $\ell \ge 1$,  with no singular
point away from $p_\ell^*$.

We now determine the precise type over $k$. It depends on whether the
polynomial 
\[ f_0(T)=T^2+\bar{b}_1T-\bar{a}_2\in k[T]\]
is split or not. One can check that this polynomial is separable
because $\delta(p_0)=2$.  
Modify our initial equation of
$W$ to satisfy the conditions of Proposition~\ref{prop:same-y}(1)
for the chain $W, W_1, \dots, W_\ell$. This does not change the values of
$\bar{b}_1, \bar{a}_2$. Let $1\le i\le \ell$. The equation
of $(W_i)_{k} \setminus \{ p_i^*\}$ is 
\[
\bar{y}^2_i+(\bar{b}_{0,i}+\bar{b}_1\bar{x}_i) \bar{y}_i =
\bar{a}_{0,2i}+\bar{a}_{1,i}\bar{x}_i+\bar{a}_2\bar{x}^2_i.
\] 

(2.2) If $f_0(T)$ is split, then 
the above affine conic has two rational points at infinity. So it is 
union of two affine lines over $k$ intersecting at $p_{i}$ if $i<\ell$. The 
type is I$_{2\ell}$ if $W_\ell$ is smooth away from $p^*_\ell$, and
is I$_{2\ell+1}$ otherwise.  

(2.3) If $f_0(T)$ is irreducible, it 
defines a quadratic extension $k'/k$. The above equation defines a
projective conic with only one (quadratic) point at $\bar{x}_i=\infty$. Its
normalization is isomorphic 
to $\PP^1_{k'}$ if $i<\ell$. The type is I$_{2\ell, 2}$ if $W_\ell$ is
smooth away $p^*_\ell$ and is I$_{2\ell+1, 2}$ otherwise.  
\smallskip

(3) ({\bf Step 3}) The cumulative conditions
$\boxed{\pi \mid {\bf a}_3, {\bf a}_4, {\bf a}_6, {\bf b}_2}$ in \cite{Tate} 
(coefficients in boldface are that in \cite{Tate}) 
   means $\delta(p_0) \ge 3$. Under our hypothesis,
  $\boxed{\delta(p_0)=3}$. If $\lambda(p_0)=1$, then $W$ is regular   at $p_0$.
  The point $p_0$ is a cusp because $\delta(p_0)=3$ is odd. So the type is II. 

  (4) ({\bf Steps 4, 5}) The further condition $\boxed{\pi^2 \mid {\bf a}_6}$
  in \cite{Tate} means that $\lambda(p_0)\ge 2$. So
$\lambda(p_0)=2$ or $3$ as it can't exceed $\delta(p_0)$. 
By Proposition~\ref{prop:reg}(2), $W\vee W_1$ is regular at the point $\tp_{0}$
lying over $p_0$ and $p_1^*$, and the strict transform 
$\Gamma_0$ of $W_k$ is smooth at point lying over $\tp_{0}$.
Its local equation (see Equation~\eqref{eq:zxv}) is  
\begin{equation} \label{eq:step45}
  z^2+b_{0,1}vz=a_{0,2}v^2+a_{1,1}v+a_2+a_3x+ {\epsilon}_2(x,v,z)
\end{equation} 
with ${\epsilon}_2(x,v,z)\in (xz, x^2)$. 
An equation of $(W_1)_k\setminus \{ p_1^* \}$ is 
\[ \bar{y}_1^2+\bar{b}_{0,1}\bar{y}_1=
  \bar{a}_{0,2}+\bar{a}_{1,1}\bar{x}_1\]
because $\delta(p_0)=3$ implies that $\boxed{\bar{b}_1=\bar{a}_2=0}$. 

Suppose that $\boxed{\lambda(p_0)=2}$.
If $\bar{a}_{1,1}\ne 0$, the above equation defines a smooth 
conic, transverse to $\Gamma_0$ by looking at Equation~\eqref{eq:step45}.
The type is III.  

If $\bar{a}_{1,1}=0$, then the type is IV if the polynomial
$T^2+\bar{b}_{0,1}T-a_{0,2}\in k[T]$ is split, and is IV$_{2}$ otherwise
(we use Equation~\eqref{eq:step45} to study the intersection multiplicities
at infinity).  
Note that this polynomial is separable because  $(W_1)_k$ is reduced.
\smallskip

(5) ({\bf \S 8. - Algorithm continued}) Suppose $\boxed{\lambda(p_0)=3}$.
This is the further condition $\boxed{\pi^3 \mid {\bf b}_6}$ in \cite{Tate}.  
By Proposition~\ref{prop:reg}(4), $W\vee W_1$ is regular 
at $\tp_0$ and $(W\vee W_1)_k$ has normal crossings at $\tp_0$. 
Recall that $\delta(p_0)=3$. An equation of $W_1$ is
\begin{equation} \label{eq:step5}
  y_1^2+\pi( b_{0,2}+x_1R(x_1)) y_1 =
  \pi (a_{0,3}+a_{1,2}x_1+a_{2,1}x_1^2+ a_3x_1^3+
  \pi x_1^4 S(x_1)) 
\end{equation} 
with $R(x_1), S(x_1)\in \cO_K[x_1]$ and we have
$a_3\in \cO_K^*$ because $\delta(p_0)=3$. 
So the possible singular points of 
$W_1 \setminus \{ p^*_1\}$ correspond to the zeros of the degree $3$ polynomial
\[
p(\bar{x}_1):=\bar{a}_{0,3}+\bar{a}_{1,2}\bar{x}_1+\bar{a}_{2,1}\bar{x}_1^2+
\bar{a}_3\bar{x}_1^3\in k[\bar{x}_1]. 
\] 

(5.a) ({\bf First branch: 6)}) Suppose $p(\bar{x}_1)$ is separable.
Then $\delta(p)\le 1$ for all  
$p\in (W_1)_k \setminus \{ p_1^* \}$. Any point with $\delta(p)=1$
is singular and is solved as in Lemma~\ref{lem:l2d1}. The type for
$(W, p_0)$ is I$_0^*$ if $p(x)$ is split, is I$^*_{0,2}$ if
$p(x)$ has a unique root in $k$ (so the other one is quadratic),
and is I$^*_{0,3}$ if $p(x)$ is irreducible. 
\smallskip

(5.b) ({\bf Second branch: 7)}) Suppose that $p(\bar{x}_1)$
has a simple root and 
a double roots (then they are both in $k$). The simple root gives rise
to a singular point of $W_1$ with $\delta(p)=1$ and rational over $k$, it is 
solved as in Lemma~\ref{lem:l2d1}. The double root gives rise to
a rational point $p_1$ with  $\boxed{\delta(p_1)=2}$. 
By Lemma~\ref{lem:d0}(2), we have $\lambda(p_1)=2$ or $3$, hence
$[\lambda(p_1)/2]=1$. 
Let $W_2=W_1(p_1)$. Modify our initial equation of $W$ to make it
satisfying the conditions of Proposition~\ref{prop:same-y}(1) for the
chain $W, W_1, W_2$.  Then $\boxed{\pi \mid a_{0,3}, a_{1,2}}$,  and
$\boxed{a_{2,1}\in \cO_K^*}$.

Similarly to Equality~\eqref{eq:zxv}, 
a local equation of $W_1\vee W_2$ at the point
$\tp_1$ lying over $p_1$ and $p_2^*$ is
\[
z^2+b_{0,2}vz-a_{0,4}v^2-a_{2,1}x_1v + \epsilon_3=0, \quad \epsilon_3 \in
(x_1,v,z)^3 
\] 
with $x_1v=\pi$. So $W_1\vee W_2$ is singular at $\tp_1$. It is easy to
check that it is solved by a single blowing-up along $\tp_1$, and the
exceptional divisor is $2\PP^1_k$, intersecting transversely the strict
transform of $(W_1)_{k}$ at a rational point, and that of $(W_2)_k$ at another
rational point. It remains to solve the singularities 
of $W_2\setminus \{ p_2^*\}$. An equation of
$W_2$ is 
\[
y_2^2+(b_{0,2}+\pi x_2R_2(x_2)) y_2=a_{0,4}+\pi a_{1,3}x_2+\pi a_{2,1}x_2^2+
\pi^2 x_2^3S_2(x_2) 
\]
with $R_2(x_2), S_2(x_2)\in \cO_K[x_2]$.

(5.b.1) Suppose that $\boxed{\lambda(p_1)=2}$. Then $(W_2)_k\setminus \{ p_2^*\}$
is the (reduced) affine conic of equation  
\[ \bar{y}_2^2+\bar{b}_{0,2}\bar{y}_2= \bar{a}_{0,4}. \] 
The type of $(W, p_0)$ is I$^*_1$  or  I$^*_{1,2}$ depending on
whether $T^2+\bar{b}_{0,1}T-\bar{a}_{0,2}\in k[T]$ has split or not. 

(5.b.2) Suppose now that $\boxed{\lambda(p_1)=3}$. Then
$\nu(b_j)+2j\ge 3$ and $\nu(a_j)+2j\ge 5$ for all $j\ge 0$.
Similarly to the beginning of (5), the singular points of $W_2\setminus \{ p^*_2 \}$ depends on the polynomial 
\[ \bar{a}_{0,5}+\bar{a}_{1,3}\bar{x}_2+\bar{a}_{2,1}\bar{x}_2^2\in
k[\bar{x}_2].\]  
The type of $(W, p_0)$ is I$^*_2$ or I$^*_{2,2}$ if this polynomial has
two distinct roots in $k$ or is irreducible.  

(5.b.3) Suppose that the above polynomial has a double root. Then
we have a point $p_2\in (W_2)_k$ with $\delta(p_2)=2$.
As $\varepsilon(W_2)=1$, the pair $(W_2, p_2)$ satisfies the
conditions of (5.b) 
and we can start again with the above process. This process eventually
terminates by Corollary~\ref{cor:dl}. At the end we get a type
I$^*_{\nu}$ or I$^*_{\nu,2}$ for some $\nu\ge 1$. 
\smallskip 

(5.c) ({\bf Branch 3 begins: 8)}). Suppose that $p(\bar{x}_1)$ has a
triple root. It induces a rational point $p_1$ such that
$\boxed{\delta(p_1)=3}$. Modify the equation of 
$W$ to satisfy the conditions of Proposition~\ref{prop:reg}(1) for the
chain $W, W_1, W_2$.
Then $\boxed{\bar{a}_{0,3}=\bar{a}_{1,2}=\bar{a}_{2,1}=0}$. 

(5.c.i) An equation of $(W_2)_k \setminus \{ p^*_2 \} $ is then 
\[
\bar{y}_2^2 + \bar{b}_{0,2} \bar{y}_2 = \bar{a}_{0,4}. 
\] 
Suppose $\lambda(p_1)=2$.
The polynomial $T^2+\bar{b}_{0,2}T-\bar{a}_{0,4}\in k[T]$ is then separable.
Hence $(W_2)_k \setminus \{ p_2^* \}$ is smooth. 
One can check that after blowing-up twice the singular point belonging to the
strict transform of $W_k$ there is no more singular point. The first blowing-up
introduces two irreducible components of multiplicity $2$, and the last one an
irreducible component of multiplicity $3$. The type of $(W, p_0)$ is IV$^*$
if $T^2+\bar{b}_{0,2}T-\bar{a}_{0,4}$ is split and is IV$^{*}_{2}$ otherwise. 

(5.c.ii)-(5.c.iii) ({\bf Branch 3 continues: 9) and 10)}). 
When $\lambda(p_1)=3$, one can check 
(and I did) that the successive blowing-ups of the singular points
produce the same result as the output of Tate's algorithm.

(5.c.iv) The cumulative hypotheses are $\delta(p_0)=\lambda(p_0)=3$ and
$\delta(p_1)=3$. As $\lambda(p_1)\le \delta(p_1)+1$, we have
$\lambda(p_1)=4$. The formula on $\Delta_{W_2}$ is obtained by 
applying twice Formula~\eqref{eq:disc-p0}.

Let us show that the strict transform of $(W_1)_k$ in 
$W\vee W_1 \vee W_2$ is $2\PP^1_k$ 
intersecting transversely each of the strict transforms of $W_k$ and 
$(W_2)_k$ at a single rational point.
This will imply that $W\vee W_2$ is semi-stable and regular 
at $p_{0,2}$. 

We saw at (5) that $W\vee W_1$ is regular and semi-stable at $\tp_0$
(which is rational because $\sigma(\tp_0)=\tp_0$).
Looking at the equation~\eqref{eq:step5} of $(W_1)_k$, we see that 
$\delta(p)=0$ for all
$p\in (W_1)_k \setminus \{ p_1^*, p_1\}$. Therefore $W_1$ is
regular away from $\{ p_1^*, p_1\}$. As $\delta(p_1)<\lambda(p_1)$, 
Proposition~\ref{prop:reg}(2) implies that 
$W_1\vee W_2$ is regular and semi-stable at $\tp_1$.
As $((W_1)_k)_{\mathrm{red}}\simeq \PP^1_k$, the statement on
$W\vee W_2$ is proved. 

Finally the statement on the $\delta$'s for 
$p\in (W_2)_k \setminus \{ p_2^* \}$  follows from  
Lemma~\ref{lem:d0}(4) because $\delta(p_2^*)=2g-1$
by Lemma~\ref{lem:delta_p0_p1*}.
\end{proof}

\begin{corollary} \label{cor:exT} Let $W$ and $p_0$ be as in Proposition~\ref{prop:exT}.
  Then the minimal desingularization of $W$ at $p_0$ is same as
  that of a (not necessarily minimal) Weierstrass model $\cE$
  of an elliptic curve   at a singular point of $\cE_k$: the
  exceptional locus consists 
  in a chain of $\PP^1_k$'s of multiplicity $1$ connected to a curve
  corresponding to some Kodaira symbol $\cK$.
\end{corollary}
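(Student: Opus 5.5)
The plan is to go through the cases of Proposition~\ref{prop:exT} in order, and in each case compare the exceptional locus of the minimal desingularization of $W$ at $p_0$ with the output of Tate's algorithm. The only case not already covered there is (5.c.iv), where the process does not stop and has to be iterated.

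In cases (2)--(5.c.iii) the proof of Proposition~\ref{prop:exT} already identifies the exceptional locus, with multiplicities and fields of definition, as that of a Kodaira type $\cK$ over $k$. These types are I$_\nu$, II, III, IV, IV$_2$, I$^*_{\nu}$, IV$^*$, III$^*$, II$^*$, and their variants over $k$. Each such configuration occurs for a minimal Weierstrass model $\cE$ of some elliptic curve. For every $\cK$ in the list one can write down an explicit Weierstrass equation of an elliptic curve over $\cO_K$ whose coefficients have the same valuation pattern as $(b_j,a_j)$ near $x=0$. This is possible because Tate's conditions on $\mathbf{a}_i,\mathbf{b}_i$ correspond to the conditions on $\delta,\lambda$ used at each step. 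So in these cases the chain of multiplicity-one $\PP^1_k$'s is empty, and the corollary holds with $\cE$ minimal.

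The main work is case (5.c.iv), where $\lambda(p_1)=4$. There $W\vee W_2$ is semi-stable and regular at $p_{0,2}$. The strict transform of $(W_1)_k$ is $2\PP^1_k$, and it meets the strict transforms of $W_k$ and $(W_2)_k$ transversally. Moreover $(W_2)_k$ is reduced and has at most one singular point $p_2\ne p_2^*$, which is rational with $\delta(p_2)\in\{2,3\}$. I would then apply Proposition~\ref{prop:exT} again to $(W_2,p_2)$, which I can do because $\delta(p_2)\le 3$. This gives an iteration $W=U_0,U_2,U_4,\dots$. Corollary~\ref{cor:dl} forces it to terminate, since every step uses points with $2[\lambda/2]\ge 2$, so the number of steps is bounded by $\nu(\Delta_W)/2$. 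It ends at some $U_{2s}$ that is either smooth away from $p^*_{2s}$ or whose remaining singular point falls under one of the terminal cases (2)--(5.c.iii).

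The genuine obstacle is the multiplicities. The strict transform of $(W_1)_k$ appears with multiplicity $2$, while the corollary asks for a chain of multiplicity-one lines. The step I expect to be hardest is therefore to show that after contracting, or more precisely when computing the minimal desingularization of $W$ at $p_0$ alone, the components coming from the $W_{2i}$ have multiplicity $1$, and the exceptional locus is a chain followed by a terminal Kodaira configuration. To settle this I would first use Corollary~\ref{cor:equa_term}-style normalized equations along the chain $W,W_1,\dots,W_{2s}$, via Proposition~\ref{prop:same-y}. Then I would compute the multiplicities of the components from $\operatorname{div}(\pi)$ together with the intersection numbers: each component $E$ of the exceptional locus satisfies $E\cdot \cX_k=0$. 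Finally I would check against the elliptic model $\cE: y^2+Q_{\le 3}(x)y=P_{\le 3}(x)$, obtained by truncating the equation to the relevant low-degree terms, that a possibly non-minimal $\cE$ produces the same chain-plus-$\cK$ picture.

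If the multiplicity check fails, the fallback is to state the corollary as ``same configuration as for a non-minimal Weierstrass model of an elliptic curve''. Non-minimal Weierstrass models also produce chains with higher multiplicities, so this version would still hold, with the comparison made directly with Tate's algorithm run on the non-minimal model $\cE$.
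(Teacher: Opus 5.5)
Your route is the paper's. Cases (2)--(5.c.iii) of Proposition~\ref{prop:exT} already give a Kodaira configuration. In case (5.c.iv) one restarts with $(W_2,p_2)$, and Corollary~\ref{cor:dl} makes the iteration stop. That is the entire proof in the paper.

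The multiplicity problem you call the main obstacle is not real, and nothing beyond what (5.c.iv) already states is needed to settle it.
\begin{itemize}
\item Since $\lambda(p_0)=3$ is odd and $\lambda(p_1)=4$ is even, \eqref{eq:disc-p0} gives $\varepsilon(W_1)=1$ and $\varepsilon(W_2)=0$.
\item By the proof of (5.c.iv), $W\vee W_1\vee W_2$ is regular near the component $2\Gamma_1$ (the strict transform of $(W_1)_k$). This component meets the reduced strict transforms of $W_k$ and $(W_2)_k$ transversally, at one rational point each.
\item Hence $\Gamma_1\cdot (W\vee W_1\vee W_2)_k=0$ reads $2\Gamma_1^2+2=0$. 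So $\Gamma_1$ is a $(-1)$-curve, and contracting it is exactly the assertion in (5.c.iv) that $W\vee W_2$ is regular and semi-stable at $p_{0,2}$.
\item The multiplicity-two line therefore never enters the minimal desingularization of $W$ at $p_0$.
\end{itemize}

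The chain consists of the strict transforms of the $(W_{2i})_k$ for $0<i<s$. These have multiplicity $1$ simply because each $\lambda(p_{2i-1})=4$ is even, so $(W_{2i})_k$ is reduced. No computation with $\operatorname{div}(\pi)$ and intersection numbers is required. They are isomorphic to $\PP^1_k$: $p_{2i}^*$ and $p_{2i}$ are cusps, with $\delta=2g-1$ (Lemma~\ref{lem:delta_p0_p1*}) and $\delta=3$ respectively. So the normalization has arithmetic genus $g-(g-1)-1=0$ and a rational point above each cusp. The last component $\Gamma_{2s}$, together with the exceptional locus of $(W_{2s},p_{2s})$, is the Kodaira configuration. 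If $(W_{2s})_k$ is smooth away from $p_{2s}^*$, then $\Gamma_{2s}$ is smooth of genus $g-(g-1)=1$, which is type $\mathrm{I}_0$.

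Drop the fallback. It would weaken the statement, and its premise is false. For $g=1$, step (5.c.iv) is exactly Tate's non-minimality step, since then $\nu(\Delta_{W_2})=\nu(\Delta_W)-12$. The same computation shows that non-minimal Weierstrass models of elliptic curves also give chains of multiplicity one, which is why the statement compares with a not necessarily minimal $\cE$. For the same reason the ``truncated'' elliptic equation is unnecessary: the comparison is only with the shape of the configuration Tate's algorithm produces.
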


\begin{proof} If we are in the case (5.c.iv), we start again with
  $(W_2, p_2)$. By Corollary~\ref{cor:dl}, the process terminates
  after finitely many iterations.  
\end{proof}

\begin{corollary} \label{cor:g2desing} Keep the hypothesis of
  Proposition~\ref{prop:exT}. Suppose further that $g=2$ and
  $W$ is the unique minimal Weierstrass model, or $W, W'$ are the 
  extremal minimal Weierstrass models of $C$ (Definition~\ref{extremal-MWM})
  and $p_0\notin W \wedge W'$. Then $(W, p_0)$ has type some Kodaira symbol
  $\cK$. 
\end{corollary}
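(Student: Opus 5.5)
By Corollary~\ref{cor:exT}, the exceptional locus of the minimal desingularization of $W$ at $p_0$ is a chain of $\PP^1_k$'s of multiplicity $1$ attached to a configuration given by a Kodaira symbol. So the plan is to show that, under the extra hypotheses, this chain is empty. The chain comes only from case (5.c.iv) of Proposition~\ref{prop:exT}: there one passes from $W$ to $W_2=W(p_0)(p_1)$ with $\lambda(p_0)=3$, $\lambda(p_1)=4$, and restarts at a point $p_2$. Types ${\rm I}_\nu$ (case (2)) and ${\rm I}^*_\nu$ (case (5.b)) are already Kodaira types, so no extra chain appears in those cases. I will therefore argue by contradiction and assume that case (5.c.iv) occurs.

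When $g=2$, the formula in (5.c.iv) gives $\nu(\Delta_{W_2})=\nu(\Delta_W)+4\cdot 5\cdot 0=\nu(\Delta_W)$. Since $W$ is minimal, $W_2$ is then also a minimal Weierstrass model. It is distinct from $W$, because $d(W,W_2)=2$ by the chain structure: $p_1\neq p_1^*$, so $W_2\ne W$. Note also that $\lambda(p_0)=3=g+1$ and $\lambda(p_1)=4=g+2$, which matches the pattern of Theorem~\ref{chain-mwm}.

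If $W$ is the unique minimal Weierstrass model, having $W_2\ne W$ minimal is already a contradiction. In the extremal case, $W$ and $W'$ are the extremal models $W_0$ and $W_n$, say $W=W_0$. By Theorem~\ref{chain-mwm}, every minimal model other than $W_0$ lies in the chain $W_0,W_1,\dots,W_n$ at an even index $2j\geq 2$. The chain from $W_0$ to $W_{2j}$ starts with $W_0(p_0^{\rm chain})$, where $p_0^{\rm chain}$ is the unique point of $W_0\wedge W_n$. The chain from $W$ to $W_2$ is $W, W(p_0), W_2$, so $W\wedge W_2=\{p_0\}$. Uniqueness of chains (Lemma~\ref{lem:csm}(3)) then forces $p_0\in W\wedge W_{2j}=W\wedge W'$, contradicting the hypothesis $p_0\notin W\wedge W'$. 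Hence case (5.c.iv) cannot occur, and $(W,p_0)$ has a Kodaira type $\cK$.

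The delicate point I expect is the use of Lemma~\ref{lem:csm}(3) to identify $W\wedge W_2$ with $W\wedge W_{2j}$. Since $W_2$ is itself some $W_{2j}$, one has $W_0\wedge W_{2j}=W_0\wedge W_n$ for $0<2j\le n$ by Definition~\ref{dfn:ccW}(2). One must also check that $p_0$ is exactly the point at which the chain leaves $W$, which follows from $W(p_0)$ being the first step of the chain.
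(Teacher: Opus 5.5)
Your proposal is correct and follows the paper's proof: the extra chain in Corollary~\ref{cor:exT} comes only from case (5.c.iv) of Proposition~\ref{prop:exT}, and for $g=2$ the discriminant formula there makes $W(p_0)(p_1)$ a second minimal Weierstrass model, which the hypotheses exclude. The paper's proof states this only for the uniqueness case; your chain-uniqueness argument (this second model would be some $W_{2j}$, forcing $p_0\in W\wedge W'$) correctly fills in the extremal case, which the paper leaves implicit.
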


\begin{proof} Indeed, the case (5.c.iv) can not happen by the uniqueness
 of $W$.   
\end{proof}

\begin{remark} \label{rmk:e1d3} For completeness, let us consider the case of  a
  Weierstrass model $W_1$ with non-reduced closed fiber $(W_1)_k$ 
  and a point $p_1\in (W_1)_k(k)$ singular in $W_1$ such that $\delta(p_1)\le 3$.
    If $\delta(p_1)=1$, the singularity is solved by Lemma~\ref{lem:l2d1}. 
  If $\delta(p_1)=2$ (resp. $\delta(p_1)=3$), then the pair $(W_1, p_1)$
  is exactly same as in Proposition~\ref{prop:exT}(5.b) (resp. (5.c)),
  thus the desingularization of $p_1$ follows from Proposition~\ref{prop:exT}. 
  \end{remark} 

 \subsection{Minimal regular models}\label{subsect:mrm}

Suppose $g$ is even and $C$ has more than one minimal Weierstrass model.
Let $(W_i)_{0\le i\le n}$ the minimality chain of Weierstrass models
of $C$ (Definition~\ref{extremal-MWM}).  
Consider the morphism $W_0\vee W_n\to Z_0\vee Z_n$ where $Z_i=W_i/\qi$.
As $p_0\in (W_0)_k$ is a cusp (Proposition~\ref{prop:W0}), there is only one point $p_{0,n}\in W_0\vee W_n$
lying over the intersection point of $(Z_0\vee Z_n)_k$ and it is rational over
$k$. This is also the intersection point of the strict transforms of
$(W_0)_k, (W_n)_k$.

Recall that a model $Y$ of $C$ is \emph{semi-stable} at a point $p\in Y_k$
if $p$ is an ordinary double point of $Y_k$. The \emph{thickness} $e_p$ of $Y$
at $p$ is then defined as \cite{LB}, Definition~10.3.23. The minimal
desingularization of $Y$ at $p$ is given by a chain of $e_p-1$ projective
lines over $k(p)$ ({\it op. cit.}, Corollary~10.3.25). 

\begin{theorem}\label{regular-even} Let $C$ be a hyperelliptic curve over $K$ of even genus $g\ge 2$. Let $(W_i)_{0\le i\le n}$ be the minimality chain of 
  Weierstrass models. 
  Let $\cC$ be the minimal regular model of $C$ over $\cO_K$.
  \begin{enumerate}[\rm (1)] 
  \item We have a morphism of models $f : \cC\to W_0\vee W_n$.
  \item The model $W_0\vee W_n$ is semi-stable at $p_{0,n}$ of thickness $n/2$. 
  \item The divisor $\cC_k$ has normal crossings in an open neighborhood of
    $f^{-1}(p_{0, n})$. More precisely, if $\Theta_{i}$ is the strict transform of
    $(W_{i})_k$ in $\cC$, then the $\Theta_{i}$, for the even $i$'s,
    form a chain as in Figure~\ref{figure:regu},
    with $\Theta_i \simeq \PP^1_k$ when $i\ne 0, n$.
  \end{enumerate} 
  \end{theorem}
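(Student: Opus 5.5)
We plan to work with the model $Y:=W_0\vee W_1\vee\cdots\vee W_n$, the normalization of the minimal desingularization of $Z_0\vee Z_n$ in $K(C)$, whose closed fiber is as in Figure~\ref{figure:mwm}. The strategy is to show that $Y$ is regular with normal crossings along $\Gamma_1\cup\cdots\cup\Gamma_{n-1}$. We then contract the $(-1)$- and $(-2)$-behaviour appropriately, and deduce that the minimal regular model factors through $W_0\vee W_n$.

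\textbf{Step 1: local regularity at the intersection points.} For each $0\le i\le n-1$, let $\tp_i\in Y$ be the point over $p_i\in W_i$ and $p_{i+1}^*\in W_{i+1}$. We apply Proposition~\ref{prop:reg} to the pair $(W_i,p_i)$, with $W_{i+1}=W_i(p_i)$.
\begin{enumerate}[\rm (a)]
\item For even $i$, Theorem~\ref{chain-mwm} gives $\varepsilon(W_i)=0$ and $\delta(p_i)=\lambda(p_i)=g+1=2r+1$, where $r=g/2$. By Proposition~\ref{prop:reg}(1), the map is not \'etale over $\tilde q_i$. By Proposition~\ref{prop:reg}(2), $\tp_i$ is regular, and $\varepsilon(W_{i+1})=1$ gives normal crossings by Proposition~\ref{prop:reg}(4).
\item For odd $i$, we have $\varepsilon(W_i)=1$ and $\delta(p_i)=g+1<\lambda(p_i)=g+2$. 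So $\tp_i$ is regular, again with normal crossings.
\end{enumerate}
Away from the $p_i,p_i^*$, the components $(W_i)_k$ with $0<i<n$ have $\delta\le 0$ by Theorem~\ref{chain-mwm}(2.b) and (3.b), hence are regular there. We conclude that $Y$ is regular in a neighbourhood of $\Gamma_1\cup\cdots\cup\Gamma_{n-1}$, and $Y_k$ has normal crossings there. The components $\Gamma_i$ with $0<i<n$ are $\PP^1_k$: for odd $i$ by (2.c), and for even $i$ because the normalization is $\PP^1_k$ and the two cusps are unibranch and resolved in $Y$. Let $\cX\to Y$ be the minimal desingularization of the remaining singular points of $Y$, which lie on $\Gamma_0\cup\Gamma_n$ away from $\tp_0,\tp_{n-1}$.

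\textbf{Step 2: self-intersections and contraction.} Next we compute the intersection numbers in $\cX$. Each odd $\Gamma_i$ has multiplicity $2$ and meets $\Gamma_{i\pm1}$ (multiplicity $1$) transversally once. From $\Gamma_i\cdot \cX_k=0$ we get $\Gamma_i^2=-1$. Each even $\Gamma_i$ with $0<i<n$ has multiplicity $1$ and meets two multiplicity-$2$ curves, giving $\Gamma_i^2=-4$. Contracting each odd $\Gamma_i$ (an exceptional curve of the first kind, since $\Gamma_i\simeq\PP^1_k$ with self-intersection $-1$ and degree $1$ over $k$) raises the self-intersection of each even neighbour by $1$. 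This produces a chain of $\PP^1_k$'s $\Theta_2,\dots,\Theta_{n-2}$ with self-intersection $-2$, meeting transversally, and linked to $\Theta_0,\Theta_n$ at rational points. These have no $(-1)$-curves, and no further contraction is possible: $\Theta_0,\Theta_n$ have positive genus $g/2$ by Proposition~\ref{prop:W0}, and the other components come from a minimal desingularization. Hence the result is $\cC$. This gives (3).

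\textbf{Step 3: the morphism and the thickness.} Contracting the chain $\Theta_2,\dots,\Theta_{n-2}$ of $(n/2)-1$ $(-2)$-curves yields an ordinary double point of thickness $n/2$. The model obtained, together with the rest of $\cX$, maps to $W_0\vee W_n$: $Y$ dominates $W_0\vee W_n$, and the contracted locus maps to $p_{0,n}$. Its image is normal and coincides with $W_0\vee W_n$ near $p_{0,n}$. This gives (1) and (2), with thickness $e=(n/2)-1+1$ by \cite{LB}, Corollary~10.3.25.

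The main obstacle is Step 2: verifying that contracting the odd $\Gamma_i$ commutes with the minimal desingularization away from $\tp_0,\tp_{n-1}$, and that no $(-1)$-curves appear among the exceptional curves over $\Gamma_0,\Gamma_n$. For the latter we would use minimality of the desingularization together with the fact that these curves do not meet the chain.
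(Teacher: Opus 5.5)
Your overall strategy matches the paper's proof. You work on $Y=W_0\vee\cdots\vee W_n$, apply Proposition~\ref{prop:reg}(2),(4) at the points $\tp_i$, obtain $\Gamma_i^2=-1$ for odd $i$, contract these curves, and are left with a chain of $(-2)$-curves. The difference is the order of the parts.

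The paper proves (1) first. Since $(W_0)_k$ and $(W_n)_k$ are reduced, Remark~\ref{rmk:mini_mini} (\cite{LTR}, Propositions 5 and 6) shows that $\cC$ dominates $W_0$ and $W_n$, hence $W_0\vee W_n$. It then uses (1) to conclude that the strict transforms of $(W_0)_k$ and $(W_n)_k$ are not exceptional curves, so the minimal desingularization of $Y'$ is already $\cC$. You want (1) to come out of the construction instead, so you need an independent reason why $\Theta_0$ and $\Theta_n$ are not $(-1)$-curves.

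\textbf{The gap.} The reason you give, that $\Theta_0,\Theta_n$ have genus $g/2$ by Proposition~\ref{prop:W0}, is false in general.
\begin{itemize}
\item Proposition~\ref{prop:W0} concerns $\Gamma_0$, the normalization of $(W_0)_k$ at $p_0$. But $\Theta_0$ is the strict transform in $\cC$.
\item Resolving the singular points of $W_0\setminus\{p_0\}$ can partially normalize $\Gamma_0$; see Remark~\ref{rmk:thick}(2). For example, with $g=2$ and $(W_0,w_0)$ of type $\tI_\nu$, $\nu\ge 2$, one gets $\Theta_0\simeq\PP^1_k$.
\item Citing (1) at this point would be circular in your ordering.
\end{itemize}

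\textbf{A fix that keeps your route.} Argue by intersection numbers. $\Theta_0$ has multiplicity $1$ and, after contracting $\Gamma_1$, meets $\Theta_2$ (or $\Theta_n$ if $n=2$), also of multiplicity $1$, transversally at a $k$-rational point. Suppose $\Theta_0$ were exceptional. Then it would be $\PP^1_k$ (it has a rational point) with $\Theta_0^2=-1$. The relation $\Theta_0\cdot\cC_k=0$ then forces $\Theta_0$ to meet no other component. Over each singular point of $Y$ on $\Gamma_0\setminus\{\tp_0\}$, the fibre of the desingularization is a positive-dimensional connected curve. Since $\Theta_0\to\Gamma_0$ is surjective, $\Theta_0$ would meet that curve. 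So there is no such singular point, and $\Theta_0\simeq\Gamma_0$ has arithmetic genus $g/2\ge 1$, a contradiction. The same argument applies to $\Theta_n$. With this, your route yields (1) without appeal to \cite{LTR}: factor $\cX\to Y\to W_0\vee W_n$ through the contraction of the odd $\Gamma_i$.

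\textbf{Minor points.}
\begin{itemize}
\item For even $0<i<n$, points of $(W_i)_k$ other than $p_i^*,p_i$ have $\delta\le 1$, not $\delta\le 0$. Regularity still holds there, since they are smooth points of a reduced fibre.
\item The obstacle you flag, that contracting the odd $\Gamma_i$ must commute with the desingularization, does not arise: the two loci are disjoint.
\end{itemize}
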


\begin{proof} (1) As $(W_0)_k, (W_n)_k$ are reduced (Lemma~\ref{lem:WW'}(2)), 
this follows from Proposition~\ref{prop:mini}(2.a)-(3).  

(2) We have to desingularize $p_{0,n}\in W_0\vee W_n$.
Consider the model
\[ Y:=W_0 \vee W_1 \vee ... \vee W_n\]
of $C$      (see the discussions after Definition~\ref{extremal-MWM}). 
    Denote by $\Gamma_i\subset Y$ the strict transform of $((W_i)_k)_{\mathrm{red}}$ for
    all $0\le i\le n$. Let $\psi : Y\to W_0 \vee W_n$ be the canonical morphism. 

    Let us study the structure of $Y$ around $\psi^{-1}(p_{0,n})$.
    Let $0\le i\le n-1$.  By Theorem~\ref{chain-mwm},
    if $i$ is even, then $\varepsilon(W_i)=0$,
    $\lambda(p_i)=\delta(p_i)=2g+1$, and
    $\varepsilon(W_i(p_i))=\varepsilon(W_{i+1})=1$. It follows from
    Proposition~\ref{prop:reg}(2) and (4) that $\Gamma_i\cap \Gamma_{i+1}=
    \{ p_{i, i+1} \} $ consists in a rational point, regular in $Y$,
    and that the intersection is transverse.   
    For similar reasons the result also holds when $i$ is odd. 

    It remains to see what happen to the points of $Y$ away from the
    $p_{i, i+1}$'s. For all $0\le i \le n$, the canonical morphism
    $Z_0 \vee Z_1 \vee ... \vee Z_n\to Z_i$ is an isomorphism above
    $Z_i\setminus ((Z_i\wedge Z_{i-1}) \cap (Z_i\wedge Z_{i+1}))$
    (see also Figure~\ref{tree}) if $i\ne 0, n$, and above
    $Z_0\setminus (Z_0\wedge Z_1)$ (resp. $Z_n\setminus (Z_n\wedge Z_{n-1})$)
    if $i=0$ (resp. $n$). 
    So the similar statement holds for the canonical morphisms 
    $Y\to W_i$. By Theorem~\ref{chain-mwm}, $W_i \setminus \{ p_i^*, p_i \}$ 
    is regular for all $0<i<n$ and $\Gamma_i$ is rational, hence
    isomorphic to $\PP^1_k$ because it is smooth.  Therefore
    in an open neighborhood of $\psi^{-1}(p_{0,n})$, 
    $Y$ is regular with normal crossings divisor $Y_k$. 
    We have $\Gamma_i^2=-1$ for odd $i$'s.
    The contraction $Y\to Y'$ of these exceptional divisors gives 
    a model $Y'$ dominating $W_0\vee W_n$, regular in an open neighborhood
    of the pre-image of $p_{0,n}$. The strict transforms in $Y'$ 
    of the $(W_i)_k$'s, for even $0<  i < n$, form a chain of $\PP^1_k$
    with self-intersection $-2$. This is the minimal desingularization
    of $W_0\vee W_n$ at $p_{0,n}$. This proves (2).

(3)  Keep the notation of (2).   The possible singular points of
    $Y'$ are those of $W_0\setminus \{ p_0 \}$ and $W_n \setminus \{ p_n^*\}$.
    In the minimal desingularization $Y''$ of $Y'$, $\psi^{-1}(p_{0,n})$ is
    not affected. None of the strict transforms of $(W_0)_k, (W_n)_k$ are
    exceptional divisors by (1). So $Y''=\cC$ and the theorem is proved. 
  \end{proof}

\begin{figure}    
 \centering 
\begin{tikzpicture}[scale=1] 
  \coordinate (P3) at   (0, 2);
  \coordinate (Q3) at  (1.25, 0);
  \draw  (P3) node [below left] {$\Theta_0$} to [bend left] (Q3); 
  \draw[dashed] (0.8, 1.5) to [bend left] (-0.5, 1);
  \draw[dashed] (1, 1.2) to [bend left] (-0.5, 0.8); 
  \draw (1, 0) -- (2.5, 1.5);
  \draw (2.8, 1.5) node {$\Theta_2$} ; 
  \draw (1.5, 1.5) -- (3,0);
  \draw (2.6, 0) node {$\Theta_4$} ; 
  \draw[dashed]  (2,0.5) -- (5, 0.5);
  \draw (4,0) -- (5.5,1.5); 
  \draw (4.5,1.5) -- (6,0);
  \coordinate (P4) at  (5.75, 0);
  \coordinate (Q4) at  (7, 2); 
  \draw (Q4) node [below right] {$\Theta_n$} to [bend right] (P4);
  \draw[dashed] (6,1.2) to [bend right] (7.5, 0.75);
  \draw[dashed] (6.2, 1.5) to [bend right] (7.6, 1);
  \draw (5.3, 1.8) node {$\Theta_{n-4}$};
  \draw (6.5, 0) node {$\Theta_{n-2}$}; 
\end{tikzpicture} 
\caption{The closed fiber of the minimal regular model $\cC$ of $C$.}
\label{figure:regu}
\end{figure}

\begin{remark}\label{rmk:thick}  Keep the hypothesis of Theorem~\ref{regular-even}.
  \begin{enumerate}[\rm (1)] 
  \item If $0\le i<j \le n$ are even, then $W_i\vee W_j$ is semi-stable with
thickness $(j-i)/2$ at the point $p_{i,j}$ defined similarly to $p_{0,n}$.
Indeed, the proof of Part (2) of the theorem does not need $W_0, W_n$ to be extremal.
\item To compare Figures~\ref{figure:mwm} and \ref{figure:regu}:
  with the above notation, the canonical morphism $\Theta_i\to \Gamma_i$ 
  is an isomorphism for even $i\ne 0, n$. The canonical morphism
  $\Gamma_0\to (W_0)_k$ is the normalization at $p_0$, but
  $\Theta_0\to\Gamma_0$ needs not to be an isomorphism.
  Similarly for $\Theta_n\to \Gamma_n\to (W_n)_k$.     
  \end{enumerate}
 
\end{remark}

\begin{corollary} \label{cor:bc} Keep the hypothesis of
  Theorem~\ref{regular-even}. Let $\cO_{K'}$ be a discrete valuation ring
  dominating $\cO_K$, with perfect residue field $k'$ and field of fractions
  $K'$. Let $e_{K'/K}=\nu_{K'}(\pi)$ be the ramification index. 
  Then $C_{K'}$ has at least $1+ (ne_{K'/K})/2$ minimal Weierstrass models
  over $\cO_{K'}$.
\end{corollary}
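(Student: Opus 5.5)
The plan is to work directly with the explicit equations from Corollary~\ref{cor:equa_term} and to use Remark~\ref{rmk:more_than_one} over $\cO_{K'}$. Let $y^2+Q(x)y=P(x)$ be the equation of $W_0$ given by Corollary~\ref{cor:equa_term}, with $P=\sum_j a_jx^j$ and $Q=\sum_j b_jx^j$. Over $\cO_{K'}$ take a uniformizer $t$ and write $e=e_{K'/K}$, so that $\pi=ut^{e}$ with $u\in\cO_{K'}^*$. The same polynomials are then coefficient-wise integral over $\cO_{K'}$. The inequalities of Corollary~\ref{cor:equa_term}(2) become
\[
\nu_{K'}(a_j)=e\,\nu(a_j)\ge ne(g+1-j), \qquad \nu_{K'}(b_j)\ge ne(g+1-2j)/2,
\]
and $\nu_{K'}(a_{g+1})=0$ still holds. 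Since $ne$ is even (because $n$ is even), the equation of $C_{K'}$ over $\cO_{K'}$ satisfies the inequalities of Corollary~\ref{cor:equa_term}(2) with $n$ replaced by $n':=ne$ and $\pi$ replaced by $t$.

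Next, following Remark~\ref{rmk:more_than_one}, for every even $0\le i\le n'$ I would set $x_i=x/t^i$ and $y_i=y/t^{(g+1)i/2}$. The transformed equation $y_i^2+t^{-(g+1)i/2}Q(t^ix_i)y_i=t^{-(g+1)i}P(t^ix_i)$ has integral coefficients, by the valuation bounds above. Its reduction is reduced: the coefficient of $x_i^{g+1}$ in the right-hand side is $a_{g+1}$, which is a unit, and the right-hand side has odd degree $g+1$ in the reduction, so it is not a square. The points $x_i=0$ and $x_i=\infty$ both have $\lambda\ge g+1$; this is read off from the Gauss valuation $\mu_0$ with respect to $x_i/t$, exactly as in the example following Corollary~\ref{cor:equa_term}. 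Lemma~\ref{lem:d0}(4) then forces $\lambda\le 1$ at all other points. Hence each such model is a minimal Weierstrass model of $C_{K'}$ by \cite{LRN}, Proposition 4.3(1). Normality, which is needed to call them Weierstrass models, follows from reducedness of the closed fiber (\cite{LTR}, Lemme 2(a)).

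Finally, these models are pairwise non-isomorphic because their quotients are the smooth models of $\PP^1_{K'}$ with coordinates $x/t^i$, which are pairwise distinct (their distance is $|i-i'|$ by Lemma~\ref{lem:csm}). The even $i$ with $0\le i\le ne$ give $1+ne/2$ models, which proves the corollary.

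The main obstacle is verifying $\lambda\ge g+1$ at the two points in the mixed characteristic~$2$ case, together with reducedness when $Q$ is nonzero. I expect this to go through because the bounds on $b_j$ are exactly those used in Remark~\ref{rmk:more_than_one}, and the criterion of Remark~\ref{rmk:lambda'}(1) is only needed as a lower bound. Here $\lambda_0(y_i)\ge g+1$ follows directly from $\mu_0$ applied to both coefficient families. If some $W_i$ with $i$ odd relative to $e$ had an issue with the half-integer exponent $(g+1)i/2$, that is avoided because $i$ is even.
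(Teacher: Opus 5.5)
Your proof is correct in substance, but it takes a different route from the paper. The paper argues structurally. Proposition~\ref{prop:more_than_one}(1) keeps $(W_0)_{\cO_{K'}}$ and $(W_n)_{\cO_{K'}}$ minimal. The thickness $n/2$ of $W_0\vee W_n$ at $p_{0,n}$ (Theorem~\ref{regular-even}(2)) becomes $e_{K'/K}n/2$ after base change. Remark~\ref{rmk:thick}(1), i.e.\ Theorem~\ref{chain-mwm} applied to $C_{K'}$, then turns this into $1+e_{K'/K}n/2$ minimal models along the chain joining the two base-changed models.

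You instead redo the explicit criterion of Remark~\ref{rmk:more_than_one} over $\cO_{K'}$, with uniformizer $t$ and $n'=ne_{K'/K}$. You then check each of the $1+n'/2$ equations against \cite{LRN}, Proposition 4.3(1). Your route is elementary, avoids Theorem~\ref{regular-even} and the chain structure over $K'$, and produces explicit equations. The paper's route is shorter given the earlier machinery, and it explains the count as a thickness scaling by $e_{K'/K}$.

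One step needs repair. You assert that both $x_i=0$ and $x_i=\infty$ have $\lambda\ge g+1$, but this holds only for $0<i<n'$. For $i=0$, the point $x=\infty$ of $(W_0)_{k'}$ is just some point away from $p_0$; in Example~\ref{ex:Wn} it has $\delta\le g$. Symmetrically, the point $x_{n'}=0$ fails for $i=n'$. With only one special point, Lemma~\ref{lem:d0}(4) no longer bounds $\lambda$ at that special point itself. There are two ways to fix this:
\begin{enumerate}
\item Note that the endpoint models are $(W_0)_{\cO_{K'}}$ and $(W_n)_{\cO_{K'}}$, and invoke Proposition~\ref{prop:more_than_one}(1).
\item At the special point, $\ord_0\bar P=g+1$ is odd and $2\ord_0\bar Q\ge g+2$, so $\delta=g+1$ (by Lemma~\ref{lem:change_y}(3) in residue characteristic $2$, by a direct check otherwise). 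Hence $\lambda\le g+1$ there by Lemma~\ref{lem:d0}(2), and Lemma~\ref{lem:d0}(4) then bounds the remaining points by $g+1$.
\end{enumerate}
Remark~\ref{rmk:more_than_one} itself is loose in the same way. Likewise, for $i=0$ your reducedness argument should use the odd order $g+1$ of $\bar P$ at $x=0$ rather than its degree, which may exceed $g+1$.
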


\begin{proof} By Proposition~\ref{prop:more_than_one}, $(W_0)_{\cO_{K'}}$
  and $(W_n)_{\cO_{K'}}$ are minimal Weierstrass models. As
  $W_0\vee W_n$ is semi-stable at $p_{0,n}$ of thickness $n/2$, 
  \[ (W_0)_{\cO_{K'}}\vee  (W_n)_{\cO_{K'}}=(W_0\vee W_n)_{\cO_{K'}}\]
  is semi-stable of thickness $e_{K'/K}n/2$.
  So by Remark~\ref{rmk:thick}(1), there are $1+(e_{K'/K}n/2)$ minimal
  Weierstrass models in the chain connecting  
$(W_0)_{\cO_{K'}}$ to $(W_n)_{\cO_{K'}}$. 
\end{proof}

\noindent {\bf Dualizing sheaves.} Keep the hypothesis of Theorem~\ref{regular-even}.
Let $Y:=W_0\vee W_n$. As $W_0, W_n$ are local complete intersection and
$Y$ is semi-stable at $p_{0,n}$ (Theorem~\ref{regular-even}),
$Y$ is also a local complete intersection.
Recall that if $W_0$ is given by a Weierstrass equation $y^2+Q(x)y=P(x)$, 
then the global sections of its dualizing sheaf $\omega_{W_0/\cO_K}$ has a
basis : 
 \[
   H^0(W_0, \omega_{W_0/\cO_K}) = \bigoplus_{0\le d\le g-1} \frac{x^ddx}{2y+Q(x)} \cO_K. 
 \] 

 \begin{proposition} \label{prop:omega} Keep the above notation. Then
   we have the following relations of submodules of
$H^0(C, \Omega^{1}_{C/K})$ and of $\det H^0(C, \Omega^{1}_{C/K})$: 
\begin{equation}
  \label{eq:dualizing_inc}
  H^0(\cC, \omega_{\cC/\cO_K})\subseteq  H^0(Y, \omega_{Y/\cO_K})
 \subseteq H^0(W_0, \omega_{W_0/\cO_K}),   
\end{equation}
  \[
\det H^0(Y, \omega_{Y/\cO_K})=\pi^{\frac{ng^2}{8}} \det H^0(W_0, \omega_{W_0/\cO_K}).   
\]
\end{proposition}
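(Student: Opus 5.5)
The first inclusion in \eqref{eq:dualizing_inc} I would get from general facts, and the determinant formula by explicitly describing $H^0(Y,\omega_{Y/\cO_K})$ inside $H^0(C,\Omega^1_{C/K})$. By Theorem~\ref{regular-even}(1) there is a morphism $f:\cC\to Y$ which is birational and proper. Since $Y$ is normal and a local complete intersection, and $\cC$ is regular, we have $f_*\omega_{\cC/\cO_K}\subseteq \omega_{Y/\cO_K}$, with equality iff $Y$ has rational singularities. Taking global sections gives the first inclusion. For the second inclusion I would use the open subsets of $Y$ on which $Y\to W_0$ is an isomorphism, namely the complement of the strict transform of $(W_n)_k$. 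A section of $\omega_Y$ is a rational differential that is regular in codimension one on the normal lci scheme $Y$. Its restriction to $Y\setminus \Gamma_n$ therefore gives a section of $\omega_{W_0}$ on $W_0\setminus\{p_0\}$. This extends to all of $W_0$ by normality and the $S_2$ property of the invertible sheaf $\omega_{W_0/\cO_K}$, since $p_0$ has codimension $2$.

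The main content is the determinant. A differential $\omega$ in $H^0(C,\Omega^1)$ lies in $H^0(Y,\omega_Y)$ iff it lies in $H^0(W_0,\omega_{W_0})$ and also in $H^0(W_n,\omega_{W_n})$. The reason is that $Y$ is covered by the loci $Y\setminus\Gamma_n\simeq W_0\setminus\{p_0\}$ and $Y\setminus\Gamma_0\simeq W_n\setminus\{p_n^*\}$, apart from the single point $p_{0,n}$, which has codimension $2$; there again $S_2$ applies. So
\[H^0(Y,\omega_Y)=H^0(W_0,\omega_{W_0})\cap H^0(W_n,\omega_{W_n}).\]
I would use the equation from Corollary~\ref{cor:equa_term}, with $x_n=x/\pi^n$ and $y_n=y/\pi^{(g+1)n/2}$. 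Then
\[\frac{x_n^d\,dx_n}{2y_n+Q_n(x_n)}=\pi^{-n(d+1)+(g+1)n/2}\,\frac{x^d\,dx}{2y+Q(x)}.\]
Hence $H^0(W_n,\omega_{W_n})=\bigoplus_d \pi^{n(g-1-2d)/2}\,\omega_d\,\cO_K$, where $\omega_d=x^d\,dx/(2y+Q)$ for $0\le d\le g-1$. Both lattices are diagonal in the same basis $(\omega_d)$, so their intersection is $\bigoplus_d \pi^{\max(0,\,n(g-1-2d)/2)}\,\omega_d\,\cO_K$.

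It remains to sum the exponents. They are $n(g-1-2d)/2$ for $d< (g-1)/2$; since $g$ is even, this means $d\le g/2-1$. The sum is $\frac n2\sum_{d=0}^{g/2-1}(g-1-2d)=\frac n2\cdot\frac{g^2}{4}=\frac{ng^2}{8}$, which is the claimed factor $\pi^{ng^2/8}$. The only delicate step is the codimension-two extension argument. It needs that $\omega_{Y/\cO_K}$ is invertible, which holds because $Y$ is lci, and that $Y$ is normal, which holds because it is the normalization of $Z_0\vee Z_n$.
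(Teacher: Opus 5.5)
Your proof is correct and follows the paper's argument. Like the paper, you identify $H^0(Y,\omega_{Y/\cO_K})$ with $H^0(W_0,\omega_{W_0/\cO_K})\cap H^0(W_n,\omega_{W_n/\cO_K})$ by extending across the codimension-two point $p_{0,n}$, and then compare the two diagonal lattices in the basis $x^d\,dx/(2y+Q(x))$ using the equation of Corollary~\ref{cor:equa_term}. The only difference is cosmetic: for the first inclusion the paper reuses the same codimension-two extension (since $\cC\to Y$ is an isomorphism away from finitely many closed points), whereas you cite the general trace inclusion $f_*\omega_{\cC/\cO_K}\subseteq\omega_{Y/\cO_K}$.
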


 \begin{proof} As $Y$ is normal and $\omega_{Y/\cO_K}$ is locally free, we have
   \[ H^0(Y,  \omega_{Y/\cO_K}) =
   H^0(Y \setminus \{ p_{0,n}\},  \omega_{Y/\cO_K}).\]
   As $Y_k \setminus \{ p_{0,n} \}$ is the disjoint union of  
$(W_0)_k\setminus \{ p_0 \}$ and $(W_n)_k\setminus \{ p_n^* \}$, we have 
   \[ 
\begin{matrix} 
  H^0(Y,  \omega_{Y/\cO_K}) & =& 
  H^0(W_0\setminus \{ p_0\}, \omega_{W_0/\cO_K})  
\cap H^0(W_n\setminus \{ p_n^*\}, \omega_{W_n/\cO_K})  \\ 
 &=&  H^0(W_0, \omega_{W_0/\cO_K}) \cap H^0(W_n, \omega_{W_n/\cO_K}).\hfill 
\end{matrix} 
 \] 
 This implies the second inclusion of \eqref{eq:dualizing_inc}.
 The first one is proved in the same way.
 
Now let $y^2+Q(x)y=P(x)$ be an equation of $W_0$ as given by
  Corollary~\ref{cor:equa_term}.  So  an equation of
  $W_n$ is
  \[
y_n^2 +Q_n(x_n)y_n=P_n(x_n), \quad  x_n=x/\pi^n, y_n=y/\pi^{(g+1)n/2}
  \] 
and $Q_n(x_n)=\pi^{-(g+1)n/2}Q(x)$. A basis
of $H^0(W_n, \omega_{W_n/\cO_K})$ is made of the 
\[ \eta_d:=\pi^{(g-2d-1)n/2} \frac{x^ddx}{2y+Q(x)}, \quad 0\le d\le g-1.\]   
So a basis of $H^0(W_0, \omega_{W_0/\cO_K}) \cap H^0(W_n, \omega_{W_n/\cO_K})$ is
\[ \{ \eta_d \ | \ 0\le d\le -1+ g/2\} \cup 
  \{ x^ddx/(2y+Q(x)) \ | \ g/2 \le d \le g-1\}. \]
Then the desired equality follows. 
\end{proof}

\subsection{Canonical models} \label{subsect:cm}
  First we recall some facts about the canonical models. 
  Let $C$ be a smooth projective geometrically connected curve of genus $g\ge 2$
  over $K$. Let $\cC$ be the minimal regular model of $C$ over $\cO_K$. 
  
\begin{definition} \label{def:can}  
The \emph{canonical model} $\cC^{\can}$ of $C$ over $\cO_K$ is the
normal model of $C$ obtained by contracting all the $(-2)$-curves in $\cC_k$
({\it i.e.},  the irreducible components $\Gamma$ of $\cC_k$ such that
$\deg(\omega_{\cC/\cO_K}|_{\Gamma})=0$).  See \cite{LB}, Definition 9.4.21. The canonical model is unique up to unique isomorphism. 
\end{definition}

The canonical model is locally complete intersection,
hence has an invertible dualizing sheaf $\omega_{\cC^{\can}/\cO_K}$. 
The adjunction formula 
\begin{equation} \label{eq:adj_can} 
  2g(C)-2= \sum_{\Gamma} d_\Gamma \deg(\omega_{\cC^{\can}/\cO_K}|_{\Gamma})
\end{equation}
where the sum runs through the irreducible components of $\cC^{\can}_k$, 
and $d_\Gamma$ is the multiplicity of $\Gamma$ in $\cC^{\can}_k$, see the
proof of \cite{LB}, Proposition 9.4.24. As by construction 
$\deg(\omega_{\cC^{\can}/\cO_K}|_{\Gamma})>0$, we see
in particular that $\cC^{\can}_k$ has at most $2g(C)-2$ irreducible 
components. 

The model $\cC^{\can}$ is in general singular. Let
$f : \cC\to \cC^{\can}$ is the contraction map. Then for any $q\in \Z$ we have
\begin{equation} \label{eq:can_o}
 f_*(\omega_{\cC/\cO_K}^{\otimes q})=\omega_{\cC^{\can}/\cO_K}^{\otimes q},
\quad
f^*(\omega_{\cC^{\can}/\cO_K}^{\otimes q})=\omega_{\cC/\cO_K}^{\otimes q}
\end{equation} 
(see \cite{LB}, Corollary 9.4.18).  In particular, 
$f_*\omega_{\cC/\cO_K}=\omega_{\cC^{\can}/\cO_K}$. This implies that the singular
points of $\cC^{\can}$ are \emph{rational singularities} ({\it i.e.}, $R^1f_*\cO_{\cC}=0$)
by \cite{Lip}, Theorem 27.1. See also \cite{Art}, Corollary 3.4(1). 
In {\it op. cit.}, the schemes are assumed to be excellent. 
Here we can either suppose $\cO_K$ is excellent, {\it e. g.} $\chara(K)=0$, or
use the existence of resolutions of singularities (because $C$ is smooth).

\begin{proposition} \label{prop:volume_can} Let $C$ be a smooth
projective geometrically connected curve of positive genus over $K$.
Let $\cA$ be the N\'eron model of the Jacobian $\Jac(C)$ of $C$ and
let $\omega_{\cA/\cO_K}=e^*\Omega^1_{\cA/\cO_K}$ where $e\in \cA(\cO_K)$ is the
neutral section.  Suppose that the gcd of the
multiplicities in $\cC_k$ of the irreducible components of $\cC_k$
is $1$ ({\it e.g.}, $g=2$). Then we have a canonical isomorphism 
\[
  \det \omega_{\cA/\cO_K} \simeq \det H^0(\cC^{\can}, \omega_{\cC^{\can}/\cO_K}).
\]  
\end{proposition}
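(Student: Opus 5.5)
The plan is to combine the classical description of the Néron model via the relative Picard functor (Raynaud) with the identification $f_*\omega_{\cC/\cO_K}=\omega_{\cC^{\can}/\cO_K}$ recorded in \eqref{eq:can_o}.

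\emph{Step 1: from the Néron model to $\cC$.} Since the gcd of the multiplicities of the components of $\cC_k$ is $1$, Raynaud's theorem (\cite{LB}, \S 9.5, or Bosch--L\"utkebohmert--Raynaud, Theorem 9.5.4) applies. It says that $\Pic^0_{\cC/\cO_K}$ is a smooth separated group scheme and is isomorphic to the identity component $\cA^0$ of the Néron model. Passing to identity components does not change the tangent space along the zero section, so
\[ \omega_{\cA/\cO_K}\simeq \omega_{\cA^0/\cO_K}\simeq \omega_{\Pic^0_{\cC/\cO_K}}. \]
The Lie algebra of $\Pic_{\cC/\cO_K}$ is $H^1(\cC,\cO_{\cC})$. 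Its $\cO_K$-dual is therefore $\omega_{\Pic^0}$. By Grothendieck duality for the projective local complete intersection morphism $\cC\to\Spec\cO_K$ of relative dimension $1$, this dual is canonically $H^0(\cC,\omega_{\cC/\cO_K})$. One point to check here is that $H^1(\cC,\cO_\cC)$ is torsion free. This holds because $H^0(\cC_k,\cO_{\cC_k})=k$ under the gcd-one hypothesis (\cite{LB}, 9.1.24 type argument), so the base change map on $H^0$ is surjective and $H^1$ commutes with base change. Taking determinants gives
\[ \det\omega_{\cA/\cO_K}\simeq \det H^0(\cC,\omega_{\cC/\cO_K}). \]

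\emph{Step 2: from $\cC$ to $\cC^{\can}$.} Let $f:\cC\to\cC^{\can}$ be the contraction. By \eqref{eq:can_o}, $f_*\omega_{\cC/\cO_K}=\omega_{\cC^{\can}/\cO_K}$. Hence
\[ H^0(\cC,\omega_{\cC/\cO_K})=H^0(\cC^{\can},f_*\omega_{\cC/\cO_K})=H^0(\cC^{\can},\omega_{\cC^{\can}/\cO_K}). \]
This is an equality of submodules of $H^0(C,\Omega^1_{C/K})$, so it is canonical. Combining it with Step 1 gives the proposition.

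\emph{Main obstacle.} The delicate point is Step 1, specifically the canonicity and compatibility of the duality isomorphism with the identification $\cA^0\simeq\Pic^0_{\cC/\cO_K}$. One must also make sure that Raynaud's theorem applies when $k$ is only perfect rather than algebraically closed. If there is difficulty, I would reduce to the strict henselization: Néron models, $\cC$ and $\cC^{\can}$ all commute with étale base change, and determinants descend. I would then cite \cite{LB}, Theorem 9.5.4 and the duality statement directly in that setting.
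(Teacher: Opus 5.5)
Your proposal is correct and follows the paper's proof essentially step for step: the paper also invokes \cite{BLR}, Theorem 9.5/4 to identify $\cA^0$ with $\Pic^0_{\cC/\cO_K}$, then uses the canonical isomorphisms $\omega_{\cA/\cO_K}\simeq H^1(\cC,\cO_{\cC})^{\vee}\simeq H^0(\cC,\omega_{\cC/\cO_K})$ (citing \cite{LLR}, Propositions 1.1(b) and 1.3(b) for exactly your Lie-algebra and duality steps), and finally $f_*\omega_{\cC/\cO_K}=\omega_{\cC^{\can}/\cO_K}$ before taking determinants. Your extra checks are consistent with the hypotheses of those cited results and need no further work: the torsion-freeness of $H^1(\cC,\cO_\cC)$ via cohomological flatness, and the observation that a perfect $k$ suffices.
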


\begin{proof} By \cite{BLR}, Theorem 9.5/4, the canonical morphism  
\begin{equation}
  \label{eq:Jac_Pic}
   \Pic^0_{\cC/\cO_K}\to \mathcal A^0.   
\end{equation} 
is an isomorphism. This induces canonical isomorphisms 
\[ \omega_{\cA/\cO_K} \simeq H^1(\cC, \cO_{\cC/\cO_K})^{\vee} 
\simeq H^0(\cC, \omega_{\cC/\cO_K}) = H^0(\cC^{\can}, \omega_{\cC^{\can}/\cO_K})\]  
(\cite{LLR}, Proposition 1.1.b, Proposition 1.3.b and Equality~\eqref{eq:can_o}
with $q=1$). 
They extend the canonical isomorphisms
$H^0(A, \Omega^1_{A/K}) \simeq H^1(C, \cO_C)^{\vee}
\simeq H^0(C, \Omega^1_{C/K})$. 
Taking $\det$ gives the desired isomorphism.

Note that in general, if $d$ is the gcd of the multiplicities as defined
in the proposition, then $d \mid g-1$ (apply \cite{LB}, Exercise 9.1.11 to
$D=\cC_k/d$). In particular, $d=1$ if $g=2$.  
\end{proof} 

Now we go back to the hypothesis and notation of \S~\ref{subsect:mrm}. We
want to relate the minimal Weierstrass models of $C$ to $\cC^{\can}$. 
We know that the minimal Weierstrass models of $C$ 
are dominated by $\cC$ (Remark~\ref{rmk:mini_mini}). 
The question is then whether they are also dominated by $\cC^{\can}$.  

\begin{proposition} \label{prop:positive} Suppose that $g$ is even and that 
$C$  has more than one minimal Weierstrass model over $\cO_K$.
Then the extremal minimal Weierstrass models $W_0$, $W_n$
are dominated by $\cC^{\can}$,  while the other $W_i$'s (for even $2\le i\le n-2$)
are contracted to a single point in $\cC^{\can}$.    
\end{proposition}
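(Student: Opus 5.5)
Let $f:\cC\to \cC^{\can}$ be the contraction of all $(-2)$-curves. By Theorem~\ref{regular-even}, $\cC$ dominates $W_0\vee W_n$, and the strict transforms $\Theta_i$ of $(W_i)_k$, for even $0\le i\le n$, form the chain of Figure~\ref{figure:regu}. For even $2\le i\le n-2$ we have $\Theta_i\simeq\PP^1_k$. Each such $\Theta_i$ meets exactly two other components, namely $\Theta_{i-2}$ and $\Theta_{i+2}$, transversally at rational points, and has multiplicity $1$ (it is the strict transform of the reduced fiber $(W_i)_k$). Hence $\Theta_i\cdot\cC_k=0$ gives $\Theta_i^2=-2$, and adjunction gives $\deg\omega_{\cC/\cO_K}|_{\Theta_i}=-2-\Theta_i^2=0$. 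So every inner $\Theta_i$ is a $(-2)$-curve and is contracted by $f$. Since the inner $\Theta_i$ form a connected chain, they all go to a single point of $\cC^{\can}$. Moreover each inner $W_i$ is dominated by $\cC$, and the image of $\Theta_i$ in $W_i$ is all of $(W_i)_k$, so $W_i$ cannot be dominated by $\cC^{\can}$. This settles the second assertion.

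For the extremal models, we must show that $\Theta_0$ and $\Theta_n$ are not contracted, and that every component contracted by $f$ is also contracted by $\cC\to W_0$, and likewise for $W_n$. I will use the criterion that if $\cC\to W_0$ contracts every $(-2)$-curve of $\cC_k$, then it factors through $\cC^{\can}$. Since $\cC\to W_0$ is proper birational between normal models, Zariski's connectedness together with the universal property of contractions (\cite{LB}, the discussion around Definition 9.4.21 and the factorization of contractions) gives the factorization. Every component of $\cC_k$ other than $\Theta_0$ is contracted by $\cC\to W_0$, so it suffices to show that $\Theta_0$ is not a $(-2)$-curve, i.e.\ $\deg\omega_{\cC/\cO_K}|_{\Theta_0}>0$. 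The same argument applies to $\Theta_n$.

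To prove the positivity, I use that $\Theta_0$ has multiplicity $1$ and that its normalization has genus $p_a(\Gamma_0)=g/2\ge1$ by Proposition~\ref{prop:W0}. Adjunction gives $\deg\omega|_{\Theta_0}=2p_a(\Theta_0)-2-\Theta_0^2$. Since the components of $\cC_k$ form a connected fiber, $\Theta_0^2=-\Theta_0\cdot(\cC_k-\Theta_0)<0$, because $\Theta_0$ meets $\Theta_2$ (or $\Theta_n$ when $n=2$). Combined with $p_a(\Theta_0)\ge g/2\ge1$, this gives $\deg\omega|_{\Theta_0}\ge -\Theta_0^2\ge1>0$.

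The step I expect to be the main obstacle is the factorization claim. One must justify that a morphism from $\cC$ contracting all $(-2)$-curves, and possibly more, factors through $\cC^{\can}$. The cleanest route is through \eqref{eq:can_o}: $\cC^{\can}=\mathrm{Proj}\bigoplus_q H^0(\cC,\omega^{\otimes q})$, and $\omega$ is trivial on the fibers of $f$. Alternatively, one can argue locally: each connected component of the $(-2)$-locus lies in the exceptional locus of $\cC\to W_0$, and $W_0$ is normal, so the rigidity lemma gives the factorization.
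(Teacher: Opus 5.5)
Your treatment of the inner $W_i$ is fine, and so is the reduction to showing that $\Theta_0$ and $\Theta_n$ are not $(-2)$-curves; the paper uses the same reduction implicitly. The gap is in the positivity step: the inequality $p_a(\Theta_0)\ge g/2$ is false in general.

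Proposition~\ref{prop:W0} only says that the normalization $\Gamma_0$ of $(W_0)_k$ \emph{at $p_0$} has arithmetic genus $g/2$. $\Gamma_0$ can still be singular elsewhere, e.g.\ at a point $w_0\ne p_0$ with $2\le\delta(w_0)\le g$. Moreover $\Theta_0\to\Gamma_0$ is a finite birational morphism, so $p_a(\Theta_0)\le p_a(\Gamma_0)$. The inequality goes the wrong way, and it is strict as soon as the desingularization separates branches of $\Gamma_0$ (cf.\ Remark~\ref{rmk:thick}(2)).

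Concretely, take $g=2$ and $\cC_k$ of type $[\cK_1-\cK_2-m]$ with $\cK_1$ reducible, say $\tI_2$, $\tIV$ or $\tI_0^*$. Then the principal component $\Theta_0$ is $\PP^1_k$; the proof of Corollary~\ref{cor:gl} has to confront exactly this case. Here $\deg\omega_{\cC/\cO_K}|_{\Theta_0}=-2-\Theta_0^2$, and you must rule out $\Theta_0^2=-2$, which your argument does not address.

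A symptom of the problem is that you never use that $W_0$ and $W_n$ are \emph{extremal}, and your reasoning proves too much. The normalization of $(W_2)_k$ at $p_2$ alone also has arithmetic genus $g/2$, since it keeps the cusp $p_2^*$. Applied to the pair $(W_2,W_n)$, your argument would show $\Theta_2$ is not a $(-2)$-curve, contradicting your own first paragraph. Your positivity argument is valid only in the subcase $p_a(\Theta_0)\ge 1$.

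The missing idea is where extremality enters, and the paper argues by contradiction.
\begin{enumerate}
\item Suppose $\Theta_n$ is a $(-2)$-curve. Then $\Theta_n\simeq\PP^1_k$ has multiplicity $1$ and $\Theta_n\cdot\cC_k=0$. So besides $\Theta_{n-2}$, it meets exactly one more multiplicity-one component $\Theta_{n+2}$, transversally at a rational $\sigma$-fixed point $\tilde p_n$.
\item Blowing up $\tilde p_n$ and passing to the quotient by $\qi$ extends the chain of smooth models. This gives Weierstrass models $W_{n+1}=W_n(p_n)\ne W_{n-1}$ and $W_{n+2}=W_{n+1}(p_{n+1})$.
\item A multiplicity computation gives $\lambda(p_n)=g+1$ and $\lambda(p_{n+1})=g+2$. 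It uses the genus formula on $\Theta_n\to(W_n)_k$, which forces $\delta(p_n)=g+1$, together with Proposition~\ref{prop:reg}(2), Lemma~\ref{lem:delta_p0_p1*} and \eqref{eq:lambdap1*}.
\item By \eqref{eq:disc-p0}, $\nu(\Delta_{W_{n+2}})=\nu(\Delta_{W_n})$, so $W_{n+2}$ is a minimal Weierstrass model beyond $W_n$. This contradicts the extremality of $W_n$.
\end{enumerate}
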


\begin{proof}   
  The statement on the $W_i$, $2\le i\le n-2$, follows from 
Theorem~\ref{regular-even}. 

Let $\cC$ be the minimal regular model of $C$ over $\cO_K$.
Denote by $\Theta_i$ the strict transform of $(W_i)_k$ for even $i$'s
as in Theorem~\ref{regular-even}.
Suppose that $W_n$ is not dominated by $\cC^{\can}$. This 
  means that $\Theta_n$ is a $(-2)$-curve: it is isomorphic to $\PP^1_k$, and
  has self-intersection number equal to $-2$. We will construct  
  Weierstrass models $W_{n+1}, W_{n+2}$ such that
  $W_{n-2}, W_{n-1}, W_n, W_{n+1}, W_{n+2}$ is a chain
  and $W_{n+2}$ is minimal,  which will contradict   the hypothesis $W_n$ extremal. 
  \smallskip
  
\noindent {\it Construction $W_{n+1}, W_{n+2}$.}  By Theorem~\ref{regular-even}, $\Theta_n$ intersects transversely   $\Theta_{n-2}$ at a rational point $p_{n-2, n}$.
  Thus  $\Theta_n$ must intersect
  transversely, at a rational point $\tp_n\ne p_{n-2, n}$, an irreducible 
  component $\Theta_{n+2}\ne \Theta_{n-2}$ of $\cC_k$, of multiplicity $1$ in $\cC$.
  It is clear that $\tp_n$ is a fixed point of $\sigma$.
  Let $Z_i=W_i/\qi$.  
  Let $f : \cC_1\to \cC$ be the blowing-up along $\tp_n$. 
  As $\cC$ is regular
  and semi-stable at $\tp_n$,  the image of $\tp_n$ in  $\cC/\qi$ is 
  a double point of  thickness $2$ (\cite{LB}, Proposition 10.3.48(c)).  
  An easy local study shows that $\cC_1/\qi \to \cC/\qi$ is the
  blowing-up of $\cC/\qi$ along the image of $\tp_n$. The exceptional divisor
  for this blowing-up induces a smooth model $Z_{n+1}$ of $C/\qi$ and 
  $Z_n, Z_{n+1}, Z_{n+2}$ is a chain. Therefore, if $W_{n+1}$
  denotes the normalization of $Z_{n+1}$ in $K(C)$, the strict transform of 
  $((W_{n+1})_k)_{\mathrm{red}}$ in $\cC_1$ is the exceptional divisor $\Theta_{n+1}$ of
  $\cC_1\to \cC$. Let $p_n\in (W_n)_k(k)$ the image of $\tp_n$. Then
  $p_n\ne p_n^*$ and we have a chain $W_n, W_{n+1}, W_{n+2}$ of Weierstrass models
  with $W_{n+1}=W_n(p_n) \ne W_{n-1}$ and $W_{n+2}=W_{n+1}(p_{n+1})$ for
  some $p_{n+1}\in (W_{n+1})_k(k) \setminus \{ p_{n+1}^* \}$. In a neighborhood
  of $f^{-1}(\tp_n)$, $\cC_1$ coincides with $W_n\vee W_{n+1} \vee W_{n+2}$. 
\smallskip

\noindent{\it The Weierstrass model $W_{n+2}$ is minimal}.  
To this end, we will show that $\lambda(p_n)=g+1$ and $\lambda(p_{n+1})=g+2$.
By the above description, the morphism $\cC\to W_n$ is an isomorphism away
from $p_n^*, p_n$ and the latter are cusps because $\Theta_n\to (W_n)_k$
is the normalization map and is bijective.  
As $\Theta_n\simeq \PP^1_k$, the genus formula gives  
  \[
g=p_a(W_k)=[\delta(p_n)/2]+[\delta(p_n^*)/2]=[\delta(p_n)/2] + g/2 
  \] 
and $\delta(p_n)$ is odd (\cite{LTR}, Lemme 6(b)), thus $\delta(p_n)=g+1$. 
As $\varepsilon(W_n(p_n))=\varepsilon(W_{n+1})=1$, $\lambda(p_n)$ is odd,
and by Proposition~\ref{prop:reg}(2), we have $[\lambda(p_n)/2]=g$,
hence $\lambda(p_n)=g+1$. 

Next we show that $\lambda(p_{n+1})=g+2$.
We have $\delta(p_{n+1}^*)=g+1$ by Lemma~\ref{lem:delta_p0_p1*}.
As $W_{n+1}\setminus \{ p_{n+1}^*, p_{n+1} \}$ is smooth,
Lemma~\ref{lem:d0}(2)-(3) imply that 
$\delta(p_{n+1})=g+1$.
Let $r^*=[\lambda(p_{n+2}^*)/2]$. As $\varepsilon(W_{n+1})=1$, we have
$\lambda(p_{n+2}^*)=2r^*+1$. Proposition~\ref{prop:reg}(2) implies that
$\delta(p_{n+2}^*)=2r^*+1$. By Lemma~\ref{lem:delta_p0_p1*} applies to
$p_{n+2}^* \in W_{n+2}$, we get $g+1=\delta(p_{n+1})=2g+2-(2r^*+1)$,
hence $r^*=g/2$. Finally Equality~\eqref{eq:lambdap1*} applied
to $p_{n+2}^*\in W_{n+2}$ implies that
$\lambda(p_{n+1})=2g+2-2[r^*/2]=g+2$. 
By Formula~\eqref{eq:disc-p0}, 
\[ \nu(\Delta_{W_{n+2}})=\nu(\Delta_{W_{n+1}})- 2(2g+1)= \nu(\Delta_{W_n}).  
\] 
Therefore $W_{n+2}$ is minimal.  
\end{proof}

\begin{remark} When $C$ has a unique minimal Weierstrass model $W$, the latter
  is not dominated by $\cC^{\can}$ in general, even when $g=2$. Indeed, 
  it can happen that $\cC^{\can}_k=2\Gamma$ with $p_a(\Gamma)=1$
  (see  {\it e.g.}, \cite{NU}, page 155, type [II]). Then
  $\cC^{\can}$ can not dominate $W$
  because otherwise $\cC^{\can}$ would be  isomorphic 
  to $W$. But this would imply that $\varepsilon(W)=1$ and $(W_k)_{\mathrm{red}}\simeq
  \PP^1_k$. 
\end{remark}

\subsection{Rigid analytic structure}  \label{subsect:rig}
We suppose $K$ complete in this subsection. Then 
any separated algebraic variety over $K$ can be
endowed with the structure of a rigid analytic 
space. We also use Raynaud's formal schemes point of view. 

  Let $X$ be a reduced connected affine curve over (a perfect field) $k$. The
  \emph{genus of $X$} is the arithmetic genus of the compactification of
  $X$ with smooth boundary ({\it i.e.} the points at infinity are smooth).
  Let us recall: 

  \begin{theorem}[\cite{FM}, \cite{vdP}]\label{thm:FM} Let $R$ be a smooth affinoid 
    curve over $K$ with canonical reduction $X$ of genus $g$. Let
    $\Gamma$ be the compactification of $X$ with smooth boundary. 
    Then $R$ is an open subspace of a smooth projective curve $D$
    of genus $g$, and $X$ is an affine open subset of some reduction of $D$. 
  \end{theorem}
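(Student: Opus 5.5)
The plan is to realize the reduction $\Gamma$ as the special fiber of a normal formal model of a curve built by gluing. Let $\mathfrak R=\operatorname{Spf} A^\circ$ be the canonical formal model of $R$, where $A^\circ$ is the ring of power-bounded functions. Its reduction $X=\operatorname{Spec}(A^\circ\otimes k)$ is reduced and affine, and $\Gamma\setminus X$ is a finite set of smooth points $x_1,\dots,x_s$. The idea is to fill in each puncture $x_j$ by gluing to $\mathfrak R$ a formal open disc, i.e.\ the formal completion of a smooth model of $\PP^1$ at a rational point. We work after a finite unramified extension if the $x_j$ are not rational, and descend at the end, since everything we construct is canonical.

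\textbf{Step 1 (local structure at the boundary).} Because $x_j$ is a smooth point of $\Gamma$, there is a local parameter $t_j$ of $\Gamma$ at $x_j$, and $1/t_j$ is regular on a punctured neighborhood inside $X$. The residue class $U_j\subset R$ of the corresponding annular end (the tube of $\Gamma$ near $x_j$ minus $x_j$) is an open annulus $\{|\pi|^{\epsilon}<|T_j|<1\}$, or more precisely a semi-open ``boundary annulus''. To see this, lift $1/t_j$ to a function on $R$ and use Hensel/Weierstrass preparation on the completed local ring $\widehat{\cO}_{\Gamma,x_j}\simeq k[[t_j]]$: the end of $R$ near $x_j$ is isomorphic to $\{|T_j|<1,\ |T_j|>r\}$ for some $r<1$.

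\textbf{Step 2 (gluing).} Glue $R$ to a closed disc $\{|T_j|\le r'\}$, with $r<r'<1$, along the annulus $\{r<|T_j|<1\}$ by the identity in the coordinate $T_j$. The result is a proper smooth rigid curve $D^{an}$, with a formal model whose special fiber is $\Gamma$: the disc contributes the smooth points $x_j$. By formal GAGA (Grothendieck existence), this proper formal scheme with projective special fiber is algebraizable to a projective curve $D$ over $\cO_K$. Its generic fiber $D$ is smooth and contains $R$ as an open subspace, and $X\subset\Gamma=D_k$ is an affine open.

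\textbf{Step 3 (genus).} Since $D$ is flat and proper with geometrically connected reduced special fiber $\Gamma$, we get $g(D_K)=p_a(\Gamma)=g$ by constancy of the Euler characteristic. Normality of the model holds because $\Gamma$ is reduced and $A^\circ$ is normal.

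The main obstacle is Step 1: showing that near each smooth boundary point the affinoid $R$ really looks like an annular end, so that the gluing in Step 2 is legitimate. This is where one uses that $X$ is the \emph{canonical} reduction: $A^\circ$ is integrally closed, and the reduction map is surjective with tubes of smooth points being open discs. I would try the approach of Fresnel--Matignon, proving that $\widehat{A^\circ}$ localized at the ``missing point'' becomes a ring of Laurent series $\cO_K\{T_j,T_j^{-1}\}$-type after completion. Ampleness for algebraization (Step 2) should follow by choosing a divisor supported on smooth points of $X$.
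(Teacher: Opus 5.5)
The paper gives no argument of its own here; it only cites Fresnel--Matignon (\S 2.6, Th\'eor\`eme 6) and van der Put (Theorem 1.1). Your overall architecture is a sensible way to reprove it: fill in the missing points $x_j$ of a formal model, algebraize, then read off the genus from constancy of $\chi$. However, Step 1 is false, and it is the step that carries all the content.

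\textbf{Why Step 1 fails.} Choose an affine open $V_j\ni x_j$ in the smooth locus of $\Gamma$, with $V_j\setminus\{x_j\}\subset X$ and with $t_j\in\cO(V_j)$ invertible on $V_j\setminus\{x_j\}$. The part of $R$ ``near'' $x_j$ is the affinoid $R_j=\mathrm{red}^{-1}(V_j\setminus\{x_j\})$. Any lift $T_j$ of $t_j$ satisfies $|T_j|=1$ identically on $R_j$, because $t_j$ has no zero on $V_j\setminus\{x_j\}$. So $R$ contains no annulus $\{r<|T_j|<1\}$, nor one with $T_j$ replaced by a lift of $1/t_j$. Such annuli lie in the residue class of $x_j$ in the curve $D$ you are trying to build, i.e.\ outside $R$. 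You are treating $R$ like a wide open space with annular ends, whereas an affinoid with reduction $X$ ``ends'' along affinoids of good reduction $V_j\setminus\{x_j\}$.

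Moreover, $A^\circ$ sees nothing of $\widehat{\cO}_{\Gamma,x_j}\simeq k[[t_j]]$, since $x_j\notin X$. There is nothing to apply Hensel or Weierstrass preparation to; producing a lift of that local ring compatible with $A^\circ$ is precisely the problem. Consequently:
\begin{itemize}
\item Step 2 has nothing to glue along: $R$ and $\{|T_j|\le r'\}$ with $r'<1$ are disjoint in $D$.
\item The ring $\cO_K\{T_j,T_j^{-1}\}$ in your last paragraph is the ring of the circle $|T_j|=1$, not a punctured formal neighbourhood of $x_j$. In a patching argument that role is played by $\cO_K[[T_j]]\{T_j^{-1}\}$, as in the proof of Proposition~\ref{prop:extend_h}.
\end{itemize}

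\textbf{A repair within your framework.} Glue formal schemes rather than rigid annuli.
\begin{enumerate}
\item Over $V_j\setminus\{x_j\}$, $\mathfrak R=\Spf A^\circ$ is flat with smooth special fiber, hence smooth.
\item Lift the smooth affine curve $V_j$ to a smooth formal scheme $\mathfrak V_j$ over $\cO_K$; smooth affine schemes always lift.
\item Two smooth formal lifts of a smooth \emph{affine} $k$-scheme are isomorphic over the identity of the special fiber. To see this, lift the identity modulo $\pi^{m}$ step by step, using formal smoothness and affineness. Hence $\mathfrak V_j|_{V_j\setminus\{x_j\}}\simeq\mathfrak R|_{V_j\setminus\{x_j\}}$.
\item Gluing gives a proper flat formal scheme with special fiber $\Gamma$, containing $\mathfrak R$ as an open.
\item Line bundles lift since $H^2(\Gamma,\cO_\Gamma)=0$, so Grothendieck's existence theorem algebraizes it.
\item The generic fiber is smooth, because its analytification is covered by $R$ and the smooth affinoids $\mathfrak V_j^{\mathrm{rig}}$. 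Your Step 3 then finishes.
\end{enumerate}
This route needs no unramified base change. That matters, because your descent claim that ``everything we construct is canonical'' is not true of your choices of $T_j$ and radii.

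Finally, writing $X=\Spec(A^\circ\otimes k)$ presupposes that $A^\circ/\pi A^\circ$ is reduced. That holds in the situation of Remark~\ref{rmk:lg}, where the theorem is used. It fails for an arbitrary affinoid over a discretely valued field, whose canonical reduction is $A^\circ/A^{\circ\circ}$.
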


  \begin{proof} 
    See \cite{FM} \S 2.6, Th\'eor\`eme 6. The statement was proved 
    in \cite{vdP}, Theorem 1.1 for complete algebraically closed ultrametric
    valued fields. 
  \end{proof}

  \begin{remark} \label{rmk:lg}
    Let $C$ be a smooth projective curve over $K$. Let $W$ be a
  model of $C$ with reduced $W_k$ and let $p_0\in W_k(k)$
  be a singular point of $W_k$. 
  Let $\Gamma$ be the normalization of $W_k$ at $p_0$. This is
  also the compactification of $W_k\setminus \{ p_0 \}$ with smooth boundary. 
Let $\whW$ be the formal completion of $W$ along its closed fiber. 
Then the generic fiber $(\whW\setminus \{ p_0 \})_K$ of
$\whW\setminus \{ p_0 \}$ is a smooth affinoid curve over $K$, with
canonical reduction $W_k\setminus \{ p_0 \}$. The above theorem
says more precisely that
$\whW \setminus \{ p_0 \}$ can be embedded into the formal completion
$\widehat{\mathcal D}$ of a model $\mathcal D$ of a smooth  
projective curve $D$ over $K$ and such that 
$W_k\setminus \{ p_0 \} \to \mathcal D_k$ extends to an isomorphism
$\Gamma\simeq \caD_k$, as represented in the following commutative
diagram: 
\[  \label{eq:FM} 
\begin{tikzcd}
\whW \setminus \{ p_0 \} \arrow[r, hook]  & \widehat{\mathcal D} \\ 
W_k \setminus \{ p_0 \} \ar[u, hook] \ar[r, hook] & \Gamma\simeq \mathcal D_k. \ar[u, hook] 
\end{tikzcd} 
\] 
 (The horizontal arrows are open immersions and the vertical ones are
 closed immersions). 
In particular $g(D)=p_a(\Gamma)<p_a(W_k)=g(C)$.  
For any $p\in W_k \setminus \{ p_0 \}\subset \Gamma$,
we have $\widehat{\cO}_{W, p} =\widehat{\cO}_{\mathcal D, p}$. 
This implies that the resolution of the singularity of $W$ at $p$ is the same
as that of $\mathcal D$. The advantage is that the genus drops with $D$.
\end{remark}

Now come back to the situation with $C$ hyperelliptic of genus $g\ge 1$. 
Let $W$ be a Weierstrass model with integral closed fiber and fix a
singular point $p_0\in W_k(k)$. Then $\sigma(p_0)=p_0$. 
We have $p_a(\Gamma)=g-[\delta(p_0)/2]$. 
Let $Z=W/\qi$ and let $q_0\in Z_k(k)$ be the image of $p_0$,
 we would like to extend the $2$-cyclic cover
 $\whW \setminus \{ p_0 \} \to \whZ \setminus \{ q_0 \}$ into a
 $2$-cyclic cover of $\whZ$: 
 \begin{equation}
 \label{eq:2c} 
\begin{tikzcd}
\whW \setminus \{ p_0 \} \ar[r, hook]\ar[d] & \widehat{\mathcal D} \ar[d]\\ 
\whZ \setminus \{ q_0 \} \ar[r, hook]          & \whZ
\end{tikzcd} 
 \end{equation} 
 for a suitable Weierstrass model $\mathcal D$ of a suitable hyperelliptic
 curve $D$ over $K$.   
 Note that by GAGA for formal schemes (\cite{EGA_3}, Corollaire 5.1.6),
 any finite cover of $\whZ$ 
 is the formal completion of a projective curve over $\cO_K$
 along the closed fiber.
 
 \begin{definition} 
 In the above diagram, we call $\mathcal D$ a
 \emph{hyperelliptic compactification of $\whW \setminus \{ p_0\}$}
 if $W_k\setminus \{ p_0 \} \to \caD_k$ induces an isomorphism 
$\Gamma\simeq \caD_k$ (equivalently $g(D)=p_a(\Gamma)$). 
  \end{definition}

 \begin{proposition}\label{prop:tame} Suppose that $\chara(k)\ne 2$.
   Then $\whW \setminus \{ p_0\}$ admits a hyperelliptic 
   compactification.  
 \end{proposition}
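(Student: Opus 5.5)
The plan is to build the compactification directly by patching. In characteristic $\ne 2$ a double cover is controlled by a square class, so we only need to modify the branch data near $q_0$. Take an equation $y^2=P(x)$ of $W$ with $x(p_0)=0$; here $Q=0$ because $\chara(k)\ne 2$. Since $W_k$ is integral and $p_0$ is singular, $\delta(p_0)=\ord_0\bar P(x)=d\ge 2$. Put $r=[d/2]$.

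Step 1: Hensel factorization. Pass to $\widehat{\cO}_K=\cO_K$, which is fine since $K$ is complete. By Hensel's lemma, as in the proof of Lemma~\ref{lem:discF}, write $P=P_0P_1$ with $P_1$ monic, $\bar P_1=x^d$ and $P_0(0)\in\cO_K^*$.

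Step 2: remove the even part of $P_1$. On the formal scheme $\whZ\setminus\{q_0\}$ the function $x$ is a unit along the closed fiber. Moreover $P_1/x^d\equiv 1$ modulo topologically nilpotent elements, because all roots of $P_1$ reduce to $0$. Hence $P_1/x^{d}$ admits a square root $u$ in $\cO(\whZ\setminus\{q_0\})$, using $\chara(k)\ne2$ and the binomial series; the same holds on every affine piece. Set $y'=y/(x^{r}u)$. Then $\whW\setminus\{p_0\}$ is the normalization of $\whZ\setminus\{q_0\}$ given by $y'^2=P_0(x)\,x^{d-2r}$.

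Step 3: define $\caD$. Let $\caD$ be the Weierstrass model (of the hyperelliptic curve $D$) given by $y'^2=P_0(x)x^{d-2r}$, with the degree adjusted at infinity so that it defines a normal double cover of $Z$. This is algebraic, so GAGA is not even needed. There is a natural identification over $\whZ\setminus\{q_0\}$, which gives the diagram~\eqref{eq:2c}. At $q_0$, the reduced equation is $y'^2=\bar P_0(x)x^{d-2r}$ with $\bar P_0(0)\ne0$, so $\caD_k$ is smooth over $q_0$: it is étale if $d$ is even and ramified with $\delta=1$ if $d$ is odd.

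Step 4: identify the closed fibre and the genus. The map $W_k\setminus\{p_0\}\to\caD_k$ is an open immersion whose complement is a single smooth point, or two smooth points if $d$ is even. Hence $\caD_k$ is the compactification of $W_k\setminus\{p_0\}$ with smooth boundary, i.e.\ $\caD_k\simeq\Gamma$. As a consistency check, $g(D)=p_a(\caD_k)=g-r=p_a(\Gamma)$. If $g(D)=0$ or is small the statement is still fine, with $D$ a conic or elliptic double cover.

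Main obstacle. The main difficulty is Step 2. One must verify that $P_1/x^d$ is a square on the whole formal complement, not just on the generic fibre, and that the rescaled equation still gives the normalization. Normality should follow from $\caD_k$ being reduced, since then $\caD$ is normal, and normality is local. A second point needing care is that $d-2r\in\{0,1\}$ keeps the total degree parity compatible at infinity. This is handled by the standard homogenization $\deg\le 2g(D)+2$.
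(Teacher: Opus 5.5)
Your proposal is correct and is essentially the paper's argument: Hensel-factor the polynomial, extract a square root of the factor congruent to $1$ modulo $\pi$ via the binomial series in the ring of restricted power series on $\whZ\setminus\{q_0\}$ (this uses $2\in\cO_K^*$), and let the remaining polynomial equation define $\caD$. The only difference is that the paper places $p_0$ at $x=\infty$, so the relevant ring is $\cO_K\{x\}$ and the new equation is simply $z^2=F_1(x)$ with $\deg F_1=\deg\bar F$; this absorbs the parity and homogenization bookkeeping you handle with the factor $x^{d-2r}$. One small correction to your Step 2: in your coordinates $x$ is not regular on $\whZ\setminus\{q_0\}$; the ring there is $\cO_K\{1/x\}$, and it is $x^{-d}P_1\in 1+\pi\cO_K[1/x]$ that has the square root.
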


 \begin{proof} 
   Write an equation $y^2=F(x)$ of $W$ with $p_0$ at infinity. 
  Then 
  \[ [(\deg \bar{F}-1)/2]=g-[\delta(p_0)/2]=p_a(\Gamma).\]  
  By Hensel's lemma, $F(x)=F_1(x)(1+\pi F_2(x))$
  with $\deg F_1(x)=\deg \bar{F}(x)$ and $F_2(x)\in \cO_K[x]$. There exists
  $u(x)\in \cO_K\{ x \}$ (see the notation \eqref{eq:rest})  such that $u(x)^2=1+\pi F_2(x)$. Therefore
  $\whW\setminus \{ p_0 \}$ is isomorphic to the formal completion of the
  affine scheme $z^2=F_1(x)$. This equation defines $D$ and the
  desired Weierstrass model $\mathcal D$. 
\end{proof}

Proposition~\ref{prop:tame} does not hold in general 
when the residue characteristic is equal to $2$ (see Example~\ref{ex:no_comp}
below). In the rest of this 
subsection we try to address the problem of compactification in residue
characteristic $2$. Let $K^{\text{alg}}$ be an algebraic closure of $K$.
We denote by $| \ |$ an absolute value on $K^{\text{alg}}$ extending 
an absolute value on $K$ associated to $\nu_K$. 

\begin{example} \label{ex:no_comp} Suppose that there exists $c\in K$ with
  $1>|c|>|2|>0$. Consider the curve $C$ over $K$ having a Weierstrass model
  $W$ given by the equation
  \[ y^2 +c y = x^{2g+1}. \]
  Let $p_0$ be the zero of $x$ in $W_k$. Then
  $W_k\setminus \{ p_0\} \simeq \mathbb A^1_k$ and $\Gamma\simeq \PP^1_k$.
  The ramification points of $C\to \PP^1_K$ are above the pole of $x$ and
  the zeros $4x^{2g+1}+c^2$ in $K^{\text{alg}}$.  
They all have $| \ |>1$. Hence for any 
$\caD$ as in Diagram~\eqref{eq:2c}, $D\to Z_K$ is 
ramified in at least $2g+2$ points and 
$g(D)\ge g$. So $\caD_k$ is not isomorphic to $\Gamma$. 
\end{example}
 
In the above example, there are too many ramification points of $C\to Z_K$
specializing to $W_k\setminus \{ p_0\}$.
This is in fact a general obstruction to the  
existence of a hyperelliptic compactification (see Lemma~\ref{lem:degx}
below). Let $W$ be defined by an equation 
\[ 
  y^2+Q(x)y=P(x), \quad \text{with } \  x(p_0)=0
\] 
and such that
$\delta(p_0)=\min \{ 2\ord_0\bar{Q}(x), \ord_0\bar{P}(x) \} \ge 2$.
We have 
\[ (\whZ \setminus \{ q_0 \})_K = \{ q \in Z_K  \ | \  |x(q)|\ge 1 \},\]
and the formal fiber 
(\cite{LB}, Definition 10.1.39) of $\whZ$ at $q_0$ is 
\[ \whZ_+(q_0)=\{ q \in Z_K \ | \ |x(q)|<1 \}.\] 
For any polynomial $F(x)\in K[x]$, denote by $\cont(F)$ the \emph{content of
$F(x)$} (defined up to multiplication by $\cO_K^*$). By convention $\cont(0)=0$. 

\begin{lemma} \label{lem:degx} Keep the above notation and assume that
  $\delta(p_0)$ is odd. 
  Let $R_{C/Z_K}$ be the ramification divisor of  $C\to Z_K$.  
  \begin{enumerate}[\rm (1)]  
  \item Let $\caD$ be as in Diagram~\eqref{eq:2c}. 
We have 
  \[
2g(D) + 1 \ge  \deg\left(R_{C/Z_K}|_{|x|\ge 1}\right):=
  \sum_{p\in C, \, |x(p)|\ge 1}  [k(p):k]\deg_p R_{C/Z_K}.  
  \]
  In particular, if there exists a hyperelliptic compactification $\caD$, then 
  \begin{equation}
    \label{eq:obstr}
2p_a(\Gamma)+1 \ge  \deg\left(R_{C/Z_K}|_{|x|\ge 1}\right).   
  \end{equation} 
\item Write $Q(x)=cQ_0(x)$ where $c=\cont(Q)$. 
Then Inequality~\eqref{eq:obstr} holds if and only if  
either $0\le |c|<|2|$ or,  $|c| \ge |2|$ and $2\ord_0\bar{Q}_0(x)\ge \delta(p_0)$.
  \end{enumerate}
\end{lemma}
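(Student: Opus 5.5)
For (1), I compare ramification on the generic fibres. The cover $\widehat{\mathcal D}\to\whZ$ in Diagram~\eqref{eq:2c} has generic fibre $D\to Z_K$, a double cover of $\PP^1_K$. Its restriction to the affinoid $\{|x|\ge 1\}=(\whZ\setminus\{q_0\})_K$ is isomorphic to the restriction of $C\to Z_K$. The ramification divisor of a degree-two cover is intrinsic to the cover over each affinoid, so $R_{D/Z_K}$ and $R_{C/Z_K}$ agree on $|x|\ge 1$.

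Riemann--Hurwitz for $D\to\PP^1_K$ gives $\deg R_{D/Z_K}=2g(D)+2$ in the tame case. In characteristic $2$ I would use the well-known fact that a hyperelliptic curve of genus $g(D)$ has $g(D)+1$ ramification points, each with different exponent; this adds up to $2g(D)+2$ geometrically as well. Hence $\deg(R_{C/Z_K}|_{|x|\ge1})\le 2g(D)+2$.

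To get the sharper bound $2g(D)+1$, I use that $\delta(p_0)$ is odd. Then $\ord_0\bar P$ is odd and smaller than $2\ord_0\bar Q$, so the residual cover over $q_0$ is purely inseparable. I expect to conclude that $D$ must have ramification inside the open disc $|x|<1$, hence at least degree $1$ there. This is the step I expect to be delicate. I would try to argue via the parity of the degree of the branch divisor restricted to the disc, computed from the valuation of the discriminant of $y^2+Qy=P$ restricted to the disc; equivalently, via the fact that an unramified double cover of an open disc with such reduction cannot exist. In odd residue characteristic this is just parity: the degree of $F$ on the disc equals $\ord_0\bar F$, which is odd. Finally, for a hyperelliptic compactification $g(D)=p_a(\Gamma)$, which gives \eqref{eq:obstr}.

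For (2), I compute $\deg(R_{C/Z_K}|_{|x|\ge1})$ explicitly. If $\chara(k)\ne2$ then $|2|=1$, so the first condition never holds and $|c|\ge|2|$ always does; the criterion then reduces to $2\ord_0\bar Q_0\ge\delta(p_0)$. Writing $F=4P+Q^2$, the branch points in $|x|\ge1$ are the roots with $|\alpha|\ge1$ together with possibly $\infty$, i.e.\ $\deg F-\ord_0\bar F$ by Hensel. One has $\ord_0\bar F=\delta(p_0)$ and $2p_a(\Gamma)+1=2g+1-(\delta(p_0)-1)$, so equality holds. In residue characteristic $2$, the ramification of $C\to Z_K$ is concentrated at the zeros of $Q$, with local contributions measured by the different, which is governed by $|4P+Q^2|$ relative to $|Q|^2$.

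I would split into cases according to $|c|$. If $|c|<|2|$, then after scaling $y$ the equation is like the tame case, since $4P$ dominates; the count gives equality as above. If $|c|\ge|2|$, the branch data in $|x|\ge1$ come from the roots of $Q_0$ with $|\alpha|\ge1$, counted with their different contributions. Using Newton polygons of $Q_0$ and $P$ and Lemma~\ref{lem:change_y}, this total is at most $2p_a(\Gamma)+1$ exactly when $\bar Q_0$ vanishes at $0$ to order at least $\delta(p_0)/2$; otherwise too many zeros of $Q$, and hence too many ramification points, lie in $|x|\ge1$, as in Example~\ref{ex:no_comp}. The careful different computation in characteristic $2$ is the main work.
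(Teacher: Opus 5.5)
\textbf{Part (1): the ``$+1$'' is not proved.} Your skeleton matches the paper's. Over $|x|\ge 1$, $R_{D/Z_K}$ and $R_{C/Z_K}$ agree because ramification depends only on the completed local rings. Riemann--Hurwitz then gives $\deg R_{D/Z_K}=2g(D)+2$. In characteristic $2$ this comes from the different; it is not true that there are always $g(D)+1$ ramification points, e.g.\ $y^2+y=x^5$ has only one.

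The decisive step is a branch point of $D$ with $|x|<1$, and your proposal does not establish it. Put $F:=(4P+Q^2)/\cont(4P+Q^2)$. Then $\deg(R_{D/Z_K}|_{|x|<1})\equiv \ord_0\bar F \pmod 2$, so parity helps only when $\ord_0\bar F$ is odd. In residue characteristic $2$ with $|c|=1$ one has $\ord_0\bar F=2\ord_0\bar Q$, and parity gives nothing.

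The idea you are missing is the one the paper uses: purity. Since $p_0$ is a cusp, $\caD_k$ has a single $k$-rational point above $q_0$, so $\caD\to Z$ is ramified there. By Zariski--Nagata purity over the regular scheme $Z$, the branch locus is a divisor through $q_0$. If it does not contain $Z_k$, it has a horizontal component through $q_0$, which is a branch point of $D$ in the disc $|x|<1$.

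The proviso ``does not contain $Z_k$'' is essential, and it makes your fallback (``an unramified double cover of an open disc with such reduction cannot exist'') false. When $\bar Q=0$, the whole special fibre lies in the branch locus. For instance, $z^2+cz=t$ with $|2|<|c|<1$ is \'etale over $|t|<1$ and has purely inseparable reduction.

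In fact (1) itself fails in this regime. Take $\caD=W$ in Example~\ref{ex:no_comp}: then $2g(D)+1=2g+1<2g+2=\deg(R_{C/Z_K}|_{|x|\ge 1})$. So (1) can only be established when $\bar Q\neq 0$ (by purity) or when $\ord_0\bar F$ is odd (by parity). The paper's purity argument tacitly needs the former.

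\textbf{Part (2): a direct count suffices.} No differents or Newton polygons are needed.
\begin{itemize}
\item By Hensel, $\deg(R_{C/Z_K}|_{|x|<1})=\ord_0\bar F$.
\item Since $p_a(\Gamma)=g-[\delta(p_0)/2]$ and the total degree is $2g+2$, \eqref{eq:obstr} is equivalent to $\ord_0\bar F\ge\delta(p_0)$.
\item Comparing $|c|^2$ with $|4|$ gives three cases:
  \begin{itemize}
  \item if $|c|<|2|$, then $\ord_0\bar F=\delta(p_0)$;
  \item if $|c|=|2|$, then $\ord_0\bar F=\min\{2\ord_0\bar Q_0,\delta(p_0)\}$, with no cancellation because one order is odd and the other even;
  \item if $|c|>|2|$, then $\ord_0\bar F=2\ord_0\bar Q_0$.
  \end{itemize}
  This is exactly the stated criterion.
\end{itemize}

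Your residue-characteristic-$2$ paragraph stops short of this computation. It also misplaces the ramification: unless $\chara K=2$, the branch points are the simple zeros of $4P+Q^2$, not the zeros of $Q$.

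In odd residue characteristic it is not true that $|c|\ge|2|$ always holds; take $Q=0$, or any $Q$ with $|c|<1$. So the criterion does not reduce to $2\ord_0\bar Q_0\ge\delta(p_0)$. Both sides of the equivalence are simply always true there. Your direct count confirms the left side once $\infty$ is included: the count in $|x|\ge 1$ is $2g+2-\ord_0\bar F$, not $\deg F-\ord_0\bar F$.
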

 
\begin{proof} (1) As $p_0$ is a cusp, there is only one point
  in $\caD_k$ lying over $q_0$. So $\caD\to Z$ is ramified
  above $q_0$. By the purity of the ramification locus (\cite{LB},
  Exercise 8.2.15(c)), $Z$ being 
  regular, the support of $R_{D/Z_K}$ contains a point lying over
  $\whZ_+(q_0)$. Thus 
  \[ 
  \begin{matrix} 
    \deg R_{D/Z_K}  &=  &\sum_{p\in D, |x(p)|<1} [k(p): k]\deg_p R_{D/Z_K}+ 
    \deg (R_{D/Z_K}|_{|x|\ge 1}) \\
            {}    &  \ge&   1+ \deg (R_{D/Z_K}|_{|x|\ge 1}). \hfill  
   \end{matrix}
 \] 
The condition $|x(p)|\ge 1$ means that $p$ lies over a point of 
$(\whZ \setminus \{ q_0 \})_K$, hence
$p \in (\whW \setminus \{ p_0 \})_K\subset \widehat{\mathcal D}_K$.
As the ramification divisor depends only on the extension of the
formal completions $\widehat{\cO}_{Z, q}\to \widehat{\cO}_{W, p}=
\widehat{\cO}_{\mathcal D, p}$, we have
$\deg_p R_{C/Z_K}=\deg_p R_{D/Z_K}$ if $|x(p)|\ge 1$. Then Part (1)
follows from Riemann-Hurwitz formula $2g(D)+2=\deg R_{D/Z_K}$. 

(2) As $p_a(\Gamma)=g-[\delta(p_0)/2]$,  and 
    \[
    \deg \left(R_{C/Z_K}|_{|x(p)|\ge 1}\right)=2g+2-
    \deg\left(R_{C/Z_K}|_{|x(p)|< 1}\right), 
  \]
  we have 
  \[ \eqref{eq:obstr} \iff \deg\left(R_{C/Z_K}|_{|x(p)|< 1}\right) \ge 2[\delta(p_0)/2]+1=\delta(p_0).\]         
    The divisor $R_{C/Z_K}$ is the pull-back by $C\to Z_K$ of the zero divisor
    of $4P(x)+Q^2(x)$. So
    \[
\deg\left(R_{C/Z_K}|_{|x|<1}\right) 
  = \sum_{q\in Z_K, \ |x(q)|<1} [k(q):k]\ord_{x(q)}(4P(x)+Q^2(x)). 
\]
For any $F(x)\in \cO_K[x]\setminus \pi \cO_K[x]$, Hensel's lemma gives
a decomposition
  \begin{equation}
    \label{eq:Hensel}
F(x)=F_0(x)F_1(x)    
  \end{equation}
  in $\cO_K[x]$ with $F_0(x)$ monic, $\bar{F}_0(x)=x^\delta$
  and $F_1(0)\in \cO_K^*$.  
  This implies that $\sum_{|t|<1} \ord_{x=t} F(x)=\ord_0 \bar{F}(x)$. 

  Let $F=(4P+Q^2)/\cont(4P+Q^2)$. Then Inequality~\eqref{eq:obstr} is equivalent to 
  $\ord_0 \bar{F}(x)\ge \delta(p_0)$. Noting that when $|c|<1$, 
then  $\ord_0\bar{P}(x)=\delta(p_0)$ and the latter is odd, 
  we find that
\[ 
  \ord_0 \bar{F}(x)=
  \begin{cases}
    \delta(p_0) &  \text{ if }  |c|<|2| \\
    \min\{ 2\ord_0 \bar{Q}_0(x), \delta(p_0) \} & \text{ if } |c|=|2| \\
    2\ord_0 \bar{Q}_0(x) & \text{ if } |2|< |c|\le 1. \\
  \end{cases}
  \]
This proves Part (2).   
\end{proof} 

\begin{proposition}\label{prop:extend_h} Let $C$ be a hyperelliptic curve
  over a complete discrete valuation field $K$. Let $W$ be a Weierstrass
  model of $C$ with integral closed fiber and let $p_0\in W_k(k)$ be a
  singular point with odd $\delta(p_0)$. Then 
$\whW\setminus \{ p_0 \}$ has a hyperelliptic compactification 
if and only if Inequality~\eqref{eq:obstr} is satisfied. 
\end{proposition}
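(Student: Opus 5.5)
Necessity is Lemma~\ref{lem:degx}(1): a hyperelliptic compactification $\caD$ has $g(D)=p_a(\Gamma)$, which gives Inequality~\eqref{eq:obstr}. So the work is the converse. My plan is to build $\caD$ explicitly, by splitting $4P+Q^2$ over the two regions $|x|<1$ and $|x|\ge 1$ and keeping only the factor that lives on $|x|\ge 1$.

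By Lemma~\ref{lem:degx}(2), there are two cases: either $|c|<|2|$, or $|c|\ge|2|$ and $2\ord_0\bar Q_0(x)\ge\delta(p_0)$, where $c=\cont(Q)$.

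\textbf{Case $|c|<|2|$.} Here $\ord_0\bar P=\delta(p_0)=:\delta$ is odd.
\begin{enumerate}[\rm (1)]
\item By Hensel's lemma (as in \eqref{eq:Hensel}), write $P=P_0P_1$ with $P_0$ monic, $\bar P_0=x^\delta$ and $P_1(0)\in\cO_K^*$.
\item On $\whZ\setminus\{q_0\}$, where $x$ is a unit in the restricted power series ring, write $P_0=x^\delta(1+h)$ with $h$ topologically nilpotent. Since $\delta$ is odd, set $y=x^{(\delta-1)/2}w$; this gives $w^2+x^{-(\delta-1)/2}Qw=x(1+h)P_1$.
\item Try to absorb the factor $1+h$ by rescaling $w$ by a unit $u$ with $u^2\equiv 1+h$. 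In characteristic $2$ this square root need not exist, so instead I would use Lemma~\ref{lem:change_y}-type substitutions $w\mapsto w+H$ to push the discrepancy into the $w$-linear term. The hypothesis $|c|<|2|$ should make these corrections converge and keep the resulting quadratic equation ``integral and separable'' over $\whZ$.
\end{enumerate}

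\textbf{Case $|c|\ge|2|$, $2\ord_0\bar Q_0\ge\delta$.}
\begin{enumerate}[\rm (1)]
\item Factor $Q_0=Q_{00}Q_{01}$ by Hensel, with $\bar Q_{00}=x^{e}$, $2e\ge\delta$.
\item Similarly factor the $x$-adically small part of $P$, and substitute $y=x^{(\delta-1)/2}w$ so that the powers of $x$ disappear away from $q_0$. The inequality $2e\ge\delta$ is exactly what makes $x^{-(\delta-1)/2}Q$ integral there.
\item Pass to the $\cO_K$-algebra of $\whZ\setminus\{q_0\}$ and find an isomorphic equation $w^2+\tilde Q(x)w=\tilde P(x)$ with $\tilde Q,\tilde P$ polynomials in $x$ (not $1/x$) of degrees at most $p_a(\Gamma)+1$ and $2p_a(\Gamma)+2$. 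This equation, read in the coordinate $1/x$ near $q_0$, defines a Weierstrass model $\caD$ whose closed fiber compactifies $W_k\setminus\{p_0\}$ by a single smooth point above $q_0$. Hence $\caD_k\simeq\Gamma$.
\end{enumerate}

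\textbf{Main obstacle.} The hard part is this last approximation step: replacing convergent power series in $1/x$ by polynomials while preserving the isomorphism class of the double cover over $\whZ\setminus\{q_0\}$. For this I would use that a $2$-cyclic cover of the formal scheme is determined, via Artin--Schreier--Witt/Kummer theory on $\cO_K\{1/x\}$, up to perturbation by sufficiently small elements. I would also use the ramification count of Lemma~\ref{lem:degx}: the branch divisor of the cover, restricted to $|x|\ge1$, has degree at most $2p_a(\Gamma)+1$ by \eqref{eq:obstr}, plus one branch point forced near $q_0$. A Riemann--Hurwitz check then shows the constructed $D$ has genus $p_a(\Gamma)$. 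Finally, GAGA for formal schemes algebraizes the cover to $\caD$.
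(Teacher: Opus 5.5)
Your necessity direction is fine, but the sufficiency direction has a genuine gap: the step that carries the content of the proposition is never carried out.

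\textbf{What is missing.} In the case $|c|<|2|$, step (3) is only a hope (``should make these corrections converge''). In the other case, step (3) simply asserts that there are polynomials $\tilde Q,\tilde P$ of the right degrees giving a cover isomorphic to $\whW\setminus\{p_0\}$ over $|x|\ge 1$. That isomorphism is necessarily non-algebraic. A substitution defined over $K(x)$ leaves the curve $C$ unchanged, while $g(D)=p_a(\Gamma)<g$. So it must use analytic units of $\cO_K\{1/x\}$ such as your $1+h$, and producing it is exactly what you leave open.

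\textbf{The proposed rescue does not close it.} The ``perturbation'' principle in your last paragraph is unproved and says nothing about what happens over the disc $|x|<1$. The Riemann--Hurwitz count is also wrong as stated. When $|c|>|2|$ (always the case in equal characteristic $2$), the computation in the proof of Lemma~\ref{lem:degx}(2) gives $\deg(R_{C/Z_K}|_{|x|\ge 1})=2g+2-2\ord_0\bar Q_0$. Hence any compactification of genus $p_a(\Gamma)$ must have ramification of degree $2\ord_0\bar Q_0-\delta(p_0)+1\ge 2$ over $|x|<1$, not a single branch point. More fundamentally, Riemann--Hurwitz can only confirm $g(D)=p_a(\Gamma)$ once the cover over the disc has been built; it cannot replace that construction.

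\textbf{The missing idea.} No global polynomial equation is needed. The paper only analyses the cover on the boundary of the formal fiber at $q_0$, that is, over the complete discrete valuation ring $A_3=\cO_K[[x]]\{1/x\}$ with residue field $k((\bar x))$, and then glues.
\begin{enumerate}[\rm (1)]
\item On $A_3$, both $x$ and $Q_0$ are units. Hensel factorizations give $P/Q_0^2=u\,x^{\delta-2\ord_0\bar Q_0}f$, where $f$ is a $\pi$-adic one-unit in $1/x$ times an $x$-adic one-unit.
\item When $|c|\ge|2|$, write $2\ord_0\bar Q_0-\delta=2\ell-1$; Inequality~\eqref{eq:obstr} is then precisely $\ell\ge 1$. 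The exponent is odd, so $f$ has an odd root $h$. This replaces the square root that you correctly noticed is unavailable. Setting $t=xh$ and $z=Q_0^{-1}t^{\ell}y$ gives $A_3=\cO_K[[t]]\{1/t\}$ and the equation $z^2+ct^{\ell}z=ut$.
\item When $|c|<|2|$, first complete the square using $c/2\in\pi\cO_K$. Your substitution $y=x^{(\delta-1)/2}w$ is then the right move, but the leftover unit should be absorbed into a new parameter $t=xf$ on $A_3$ rather than removed. This gives $z^2=ut$.
\item Both normal forms extend to smooth $\qi$-covers of $\Spf\cO_K[[t]]$ with a single point over $q_0$.
\item Formal patching of $G$-covers ($\cO_K\{x\}$ against $\cO_K\{x,1/x\}$ and $\cO_K[[x]]$ over $A_3$) glues this local cover to $\whW\setminus\{p_0\}$. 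Formal GAGA then algebraizes the result into $\caD$.
\end{enumerate}
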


The proof will be done in several steps. First, Lemma~\ref{lem:degx}(1)  
implies the ``only if'' part.
To prove the ``if'' part, we will use the formal 
patching technique. Let 
 \begin{equation}
   \label{eq:rest}
   \cO_K\{ x \}= \left\{ \sum_{i\ge 0} a_ix^i \ | \ a_i\in \cO_K, \
     \lim_{i\to +\infty} |a_i| = 0 \right\}\subset \cO_K[[x]] 
     \end{equation}
 the ring of restricted power series. Then
 $\whZ \setminus \{ x = \infty \}=\Spf \cO_K\{ x \}$. 
Consider the $\cO_K$-algebras
$A_0=\cO_K\{ x \}$, $A_1=\cO_K\{ x, 1/x \}$, $A_2=\cO_K[[x]]$, 
and
\[
A_3=\cO_K[[x]]\{ 1/x \}:= \left\{  \sum_{i\in \Z } a_ix^i \ \bigg| \ a_i \in \cO_K, \
  \lim_{i\to -\infty} |a_i| =0 \right\}. 
\] 
Geometrically the first three algebras correspond respectively to the closed
unit disc $|x|\le 1$, the annulus $|x|=1$ and the open unit disc $|x|<1$.
One could think the first space $\Spf A_0$ as the glueing of the last two spaces
``along'' $\Spf A_3$. The formal patching technique says that there is an
equivalence of categories 
\[
  \underline{M}(A_0) \simeq \underline{M}(A_1) \times_{\underline{M}(A_3)}
  \underline{M}(A_2) 
\] 
where $\underline{M}(A)$ denotes the category of coherent $A$-modules. 
Moreover, if $G$ is a finite group, then the same result holds for
the categories of $G$-covers (see references in \cite{Pr}, Theorem 3.4 and
Notation 1.2 for the definition of $G$-covers).  

\begin{remark}\label{rmk:uniform_A3} The algebra $A_3$ is a complete discrete valuation ring dominating
  $\cO_K$, with uniformizer $\pi$ and residue field $k((\bar{x}))$. 
An element $t\in A_3$ satisfies $A_3=
  \cO_K[[t]]\{ 1/t \}$ if and only if $\bar{t}$ in the residue field of $A_3$
  is a uniformizing element of $k[[\bar{x}]]$.  
\end{remark}

\begin{lemma} \label{lem:extend_f}
  Keep the notation of Lemma~\ref{lem:degx} and suppose $\delta(p_0)$ is odd.
  Let  
  \[ B_3=A_3[y]/(y^2+Q(x)y-P(x)). \]
Then there exists $u\in \cO_K^*$, $t\in A_3$ such that $A_3=\cO_K[[t]]\{ 1/t \}$ 
  and
  \begin{enumerate}[\rm (1)] 
  \item   
    \[ B_3 \simeq A_3[z]/(z^2+ct^\ell z-ut) \]
  if $1\ge |c|\ge |2|$ and if $2\ord_0\bar{Q}_0(x)-\delta(p_0)=2\ell-1$ 
  with $\ell \ge 1$;  
\item 
  \[ B_3 \simeq A_3[z]/(z^2-ut) \]
if $|c|<|2|$. 
  \end{enumerate}
\end{lemma}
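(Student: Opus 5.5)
The plan is to work entirely inside the complete discrete valuation ring $A_3$, whose residue field is $k((\bar x))$ with $\bar x$ a uniformizer (Remark~\ref{rmk:uniform_A3}). We first normalize the equation $y^2+Q(x)y=P(x)$ and then find a new uniformizer $t$ of $k[[\bar x]]$, lifted to $A_3$.

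\textbf{Setup.} Write $\delta=\delta(p_0)$, which is odd. Since $\delta$ is odd, the normalization in Lemma~\ref{lem:change_y}(3) gives $\ord_0\bar P(x)=\delta$ and $2\ord_0\bar Q(x)\ge \delta$. Where needed we first apply the modifications of Lemma~\ref{lem:change_y}, i.e.\ $y\mapsto y+H(x)$, which do not change $B_3$. In $A_3$ the element $x$ is a unit, so we can factor
\[
P(x)=x^{\delta}P_1(x), \qquad P_1\in A_3^*.
\]
Indeed, the reduction of $P$ in $k((\bar x))$ has leading term of $\bar x$-adic order $\delta$. More precisely, we use the Hensel decomposition \eqref{eq:Hensel}, $P=F_0F_1$ with $\bar F_0=x^\delta$. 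Both $F_0/x^\delta$ and $F_1$ are units in $A_3$: for $F_1$ this is because $F_1(0)\in\cO_K^*$, so its residue is a unit in $k[[\bar x]]$.

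Put $m=(\delta-1)/2$ and $z_0=y/x^{m}$. Then
\[
z_0^2+\bigl(Q(x)x^{-m}\bigr)z_0=x\,P_1(x).
\]
The residue of $xP_1$ is a uniformizer of $k[[\bar x]]$, so $t_0:=xP_1(x)$ satisfies $A_3=\cO_K[[t_0]]\{1/t_0\}$ by Remark~\ref{rmk:uniform_A3}. We now re-expand every element of $A_3$ in $t_0$.

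\textbf{Case (2), $|c|<|2|$.} Here $c=\cont(Q)$. Complete the square:
\[
\bigl(z_0+Qx^{-m}/2\bigr)^2=t_0+Q^2x^{-2m}/4 = t_0\bigl(1+w\bigr),
\qquad w=\frac{Q^2}{4x^{2m}t_0}.
\]
Since $|c^2/4|<1$ and $t_0$ is a unit of $A_3$ (it is not divisible by $\pi$), we have $w\in\pi A_3$. Set $t=t_0(1+w)$. Its residue equals that of $t_0$, so it is still a valid uniformizer. Taking $z=z_0+Qx^{-m}/2$ and $u=1$ gives $B_3\simeq A_3[z]/(z^2-t)$. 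The only point to verify is that $z$ generates the same $A_3$-algebra as $y$, i.e.\ that $Q x^{-m}/2$ lies in $A_3$. This holds because $|c/2|<1$.

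\textbf{Case (1), $|c|\ge|2|$.} This is the main obstacle. Now $Qx^{-m}=c\,Q_0x^{-m}$, and $Q_0x^{-m}$ has residue of order $\ord_0\bar Q_0-m=\ell$ in $\bar x$, since $2\ord_0\bar Q_0=\delta+2\ell-1=2m+2\ell$. Write $Q_0x^{-m}=t_0^{\ell}v$ with $v\in A_3^*$, so the equation becomes
\[
z_0^2+c\,t_0^\ell v z_0=t_0.
\]
We then seek $t=t_0\,\alpha$ and $z=\beta z_0+\gamma$, with $\alpha,\beta$ units of $A_3$ congruent to units of $k[[\bar x]]$ and $\gamma\in A_3$, that turn this into $z^2+ct^\ell z=ut$ with $u\in\cO_K^*$. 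The degrees of freedom are these:
\begin{itemize}
\item scaling $z_0\mapsto \beta z_0$ multiplies the right side by $\beta^2$;
\item the term $t^\ell$ absorbs $v$ through the choice of $t$.
\end{itemize}
Concretely, we impose $t^\ell\beta^{-1}=t_0^\ell v$ and $t\beta^{-2}\cdot(\text{unit})=t_0$. Eliminating $\beta$ gives $\alpha^{2\ell-1}=v^{2}\cdot(\text{unit})$. We solve this in $A_3$ by extracting a $(2\ell-1)$-th root, using Hensel's lemma in the complete ring $A_3$. When $\chara(k)=2$, $2\ell-1$ is odd, so the root exists after first arranging that the residue of $v$ lies in $k^*(1+\bar x k[[\bar x]])$; in general we also extract a root in $k[[\bar x]]$. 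The leftover constant becomes $u\in\cO_K^*$, namely the leading coefficient.

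If exact root extraction fails on the residue level, the fallback is successive approximation. At each step we modify $t$ modulo higher powers of $\pi$ or of $\bar x$, and handle error terms through the $\gamma$-translation (Artin–Schreier style in characteristic $2$). Convergence follows from the $\pi$-adic completeness of $A_3$ together with the completeness of $k[[\bar x]]$.
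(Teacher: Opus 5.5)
Your proof is correct and follows essentially the paper's argument. In case (2) you complete the square with $c/2\in\pi\cO_K$, exactly as the paper does with $c=2c_1$. In case (1), your choice $t=t_0\alpha$ with $\alpha^{2\ell-1}=uv^2$ gives $t^{2\ell-1}=uQ_0^2/P$ and $z=\beta z_0=Q_0^{-1}t^{\ell}y$, which is precisely the paper's substitution.

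The only difference is how you get the odd root. You use Hensel's lemma in the complete discrete valuation ring $A_3$, after putting the constant term of $\bar v$ into $u$; this always works because $2\ell-1$ is prime to $\chara(k)=2$. The paper instead factors $P/Q_0^2$ explicitly as $u x^{\delta-2q}$ times a principal unit. Consequently your final successive-approximation fallback is never needed.
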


\begin{proof} We follow the constructions of \cite{Sa}, Proposition 3.3.1
(which does not apply directly here), note that we do not assume $\chara(K)=0$. 

  (1)   
  Let $\delta=\delta(p_0)$ and $q=\ord_{0} \bar{Q}_0(x)$.
  We have a decomposition $P(x)=P_0(x)P_1(x)$ analogue to \eqref{eq:Hensel}. 
  Then
  \[ P_0(x)=u_1x^{\delta}f_1, \quad 
  u_1\in \cO_K^*, f_1\in (1 +\pi x^{-1} \cO_K[1/x])
  \]
and
\[
P_1(x)=u_2f_2, \quad u_2 \in \cO_K^*, f_2\in 1+x\cO_K[x].  
\]
Using a similar decomposition for $Q_0(x)$ we get
\[
  \frac{P(x)}{Q_0(x)^2} = u x^{\delta-2q}f
\]
with
\[ 
  u\in \cO_K^*, \ f \in (1+\pi x^{-1}\cO_K\{ 1/x\})(1+x\cO_K[[x]]).   
\]
As $\delta-2q$ is odd, there exists
$h\in (1+\pi x^{-1}\cO_K\{ 1/x \})(1+x\cO_K[[x]])$ such that
$h^{\delta-2q}=f$.
Let $t=xh$. Then $A_3=\cO_K[[t]]\{ 1/t\}$ and we have the relation
\[
y^2+cQ_0(x)y=uQ_0(x)^2t^{-2\ell+1}. 
 \]  
 As $t, Q_0(x)\in A_3^*$, we have
 \[ B_3=A_3[z]/(z^2+ct^{\ell} z-ut), \quad z=Q_0(x)^{-1}t^{\ell}y. \] 

 (2) Write $c=2c_1$ with $c_1\in \pi\cO_K$. Then similarly to the above
 we have 
 \[ (y+c_1Q_0(x))^2=P(x)(1+c_1^2Q_0(x)^2P(x)^{-1})=ux^{\delta}f
\]
with $u\in \cO_K^*$ and $f\in 1 + x\cO_K[[x]]+\pi \cO_K\{ 1/x \} 1/x$.
Taking $z=(y+c_1Q_0(x))x^{-[\delta/2]}$ and $t=xf$, we get $z^2=ut$. 
\end{proof}

\noindent {\bf End of the proof of Proposition~\ref{prop:extend_h}} 
  Now we are ready to prove the ``if'' part of
Proposition~\ref{prop:extend_h}. Let $G=\qi$. Let 
$B_1=\cO_K\{x, 1/x \}[y]/(y^2+Q(x)y-P(x))$. Let 
\[ B_3=B_1\otimes_{A_1} A_3=A_3[y]/(y^2+Q(x)y-P(x)). 
\]
Let $B_2=A_2[z]/(z^2+ct^\ell z-ut)$ or $A_2[z]/(z^2-ut)$
with the notation of Lemma~\ref{lem:extend_f}. Then $B_2$ is a
$G$-cover of $A_2$ extending the $G$-cover $B_3$ of $A_3$. Moreover
$B_2$ is smooth over $\cO_K$. The formal patching 
theorem we recalled before Remark~\ref{rmk:uniform_A3}
then provides us with a $G$-cover $B$ of $A$. Glueing $\Spf B$ with
$\whW \setminus \{ p_0 \}$ gives the desired hyperelliptic
compactification $\widehat{\mathcal D}$.  
\qed
\medskip

Now we go back to our general settings with $K$ not necessarily complete.
Let $C$ be hyperelliptic of even genus and having more than one minimal
Weierstrass model.  Let $\cC$ be the minimal regular model of $C$ over
$\cO_K$. By Theorem~\ref{regular-even}, $\cC_k$ is the union of two curves
$X_0, X_n$ attached together by a chain of $(n/2)-1$ copies
of $\PP^1_k$, with transverse intersections.

\begin{corollary} \label{cor:gl} Let $\hat{K}$ be the completion of $K$. There
  exist  projective smooth geometrically 
connected curves $C_0,  C_n$ of genus $g/2$ over $\hat{K}$, such that 
the curves $X_0, X_n$ over $k$ are isomorphic respectively to the closed
fiber of the minimal regular model of $C_0$ and that of $C_n$ over $\cO_{\hat{K}}$. 
\end{corollary}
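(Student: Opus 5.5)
By symmetry it is enough to treat $X_0$, and we may first pass to the completion. By Proposition~\ref{prop:more_than_one}(3) with $e_{\hat K/K}=1$, the base change $((W_i)_{\cO_{\hat K}})_i$ is the minimality chain of $C_{\hat K}$. Formation of the minimal regular model commutes with the étale-like base change $\cO_K\to\cO_{\hat K}$, since regularity and minimality only depend on the completed local rings and the closed fiber is unchanged. So we may assume $K$ complete.

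Set $W=W_0$ and $p_0$ as in Theorem~\ref{chain-mwm}. By Proposition~\ref{prop:W0}, $(W_0)_k$ is integral with a cusp at $p_0$, and its normalization $\Gamma$ at $p_0$ has arithmetic genus $g/2$. Remark~\ref{rmk:lg}, which rests on Theorem~\ref{thm:FM}, then gives a smooth projective curve $C_0$ over $K$ and a model $\caD$ of $C_0$ with the following properties: $\widehat W_0\setminus\{p_0\}$ embeds as an open formal subscheme of $\widehat{\caD}$, $\caD_k\simeq\Gamma$, and $g(C_0)=p_a(\Gamma)=g/2$. Because $\Gamma$ is geometrically integral ($(W_0)_k$ is geometrically integral and $p_0$ has a single rational branch), $C_0$ is geometrically connected. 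The unique point $\tilde p\in\caD_k$ over $p_0$ is a smooth rational point of $\caD_k$, so $\caD$ is regular there.

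Next compare the minimal regular models. On one side, by Theorem~\ref{regular-even} and its proof, $\cC$ is obtained from $W_0\vee W_n$ by resolving $p_{0,n}$ with a chain of $(-2)$-lines and by minimally desingularizing the singular points of $W_0\setminus\{p_0\}$ and $W_n\setminus\{p_n^*\}$. Thus $X_0$ is $\Theta_0$ together with the exceptional loci over the singular points of $W_0\setminus\{p_0\}$, and $\Theta_0$ is the strict transform of $(W_0)_k$, which is normalized at $p_0$. On the other side, let $\caD'$ be the minimal desingularization of $\caD$. It is an isomorphism near $\tilde p$, and over each point $p\in W_0\setminus\{p_0\}$ it coincides with the desingularization of $W_0$ at $p$, because $\widehat\cO_{W_0,p}=\widehat\cO_{\caD,p}$ and the minimal desingularization is compatible with completion. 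The strict transform of $\caD_k\simeq\Gamma$ is therefore glued to the same exceptional configurations, which gives $\caD'_k\simeq X_0$ as $k$-curves with multiplicities.

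It remains to show that $\caD'$ is the minimal regular model of $C_0$, i.e. that $\caD'_k$ contains no $(-1)$-curve. Exceptional components lie over points of $W_0\setminus\{p_0\}$, and their self-intersections are computed in the completed local rings, so they agree with those in $\cC$. None of them is a $(-1)$-curve, since $\cC$ is minimal. The component $\Gamma'$, the strict transform of $\caD_k$, has $p_a(\Gamma')=g/2\ge 1$, so it is not an exceptional curve either. Hence $\caD'$ is minimal and $X_0$ is its closed fiber.

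\textbf{Main obstacle.} The delicate step is the comparison of desingularizations: one has to justify carefully that minimal desingularization is local for the formal completion, so that $X_0$ and $\caD'_k$ agree as schemes rather than only as dual graphs. I would handle this via Lipman's algorithm, which only involves completed local rings at closed points of the closed fiber. For the hyperelliptic refinement, one replaces $\caD$ by the hyperelliptic compactification from Proposition~\ref{prop:tame} or Proposition~\ref{prop:extend_h}, and the same argument applies.
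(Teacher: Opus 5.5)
Your reduction to complete $K$ matches the paper, and so do the use of Remark~\ref{rmk:lg} (via Theorem~\ref{thm:FM}) to produce $\caD$ with $\caD_k\simeq\Gamma$ and the identification $\caD'_k\simeq X_0$ through completed local rings. The gap is in the final minimality step. There you exclude the strict transform $\Gamma'$ of $\caD_k$ as an exceptional curve by claiming $p_a(\Gamma')=g/2\ge 1$, and this claim is false in general. $\Gamma'$ is a strict transform, not $\caD_k$ itself, and desingularizing the singular points of $W_0\setminus\{p_0\}$ can partially normalize it; compare Remark~\ref{rmk:thick}(2), where the paper notes that $\Theta_0\to\Gamma_0$ need not be an isomorphism. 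For example, take $g=2$ and $(W_0,w_0)$ of type $\tI_\nu$ ($\nu\ge 2$), $\tIII$, $\tIV$ or $\tI_0^*$ at some $w_0\ne p_0$. The node or cusp of $\Gamma$ is then resolved, and $\Gamma'\simeq\PP^1_k$. So your genus argument does not rule out $\Gamma'^2=-1$.

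Minimality of $\cC$ alone cannot rule it out either. In $\cC$, $\Theta_0$ has the same neighbouring components as $\Gamma'$ has in $\caD'$, plus one more: $\Theta_2$ (or $\Theta_n$ if $n=2$), of multiplicity $1$, meeting $\Theta_0$ transversally at one rational point. Hence $\Theta_0^2=\Gamma'^2-1$. So $\Gamma'$ is a $(-1)$-curve exactly when $\Theta_0$ is a $(-2)$-curve in $\cC$, and that is fully compatible with $\cC$ being minimal. The missing input is Proposition~\ref{prop:positive}: because $W_0$ is extremal, $\cC^{\can}$ dominates $W_0$, so $\Theta_0$ is not contracted and hence is not a $(-2)$-curve. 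This is precisely how the paper concludes that the desingularization of $\caD$ is minimal. If you replace your genus argument by this one, and do the same for $X_n$ using $W_n$, your proof goes through.
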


\begin{proof} The minimal regular model and the minimal
  Weierstrass models are compatible with the completion of $K$
  (\cite{LB}, Proposition 9.3.28 and Proposition~\ref{prop:more_than_one}),
  and their closed fibers  
  do not change by passing to the completion. So we can suppose
  $K$ is complete. 

  Consider the projective curve $\mathcal D_0$ over $\cO_K$
  given by Theorem~\ref{thm:FM} as
  compactification of $\whW_0 \setminus \{ p_0 \}$ over $\cO_{K}$.
  Its generic fiber $C_0$ has genus $g/2$  by Proposition~\ref{prop:W0}(1). 
  The model $\mathcal D_0$ and $W_0\setminus \{ p_0\}$ share the same
  singular points (Remark~\ref{rmk:lg}). 
  Let $\cC_0$ be the minimal desingularization of $\mathcal D_0$. Then
  $(\cC_0)_k\simeq X_0$. The only possible exceptional divisor in $\cC_0$
  is $\Gamma_0$. But this would imply that the latter is $\PP^1_k$ of
  self-intersection 
  number $-1$ in $\cC_0$, therefore of self-intersection number $-2$ in
  $\cC$. This is impossible by Proposition~\ref{prop:positive}.
  So $\cC_0$ is minimal. The proof is the same for $X_n$. 
\end{proof} 

\begin{remark}
  Again, in residue characteristic different from $2$,
  the above corollary holds with $C_0, C_n$ hyperelliptic.   
\end{remark}

\begin{example} Keep the hypothesis of Corollary~\ref{cor:gl} above
  and suppose $\chara(k)=2$.
Then using Lemma~\ref{lem:degx}, in all cases below, we can take $C_0, C_n$ hyperelliptic: 
  \begin{enumerate}[\rm (1)] 
  \item when $(W_0)_k \to (W_0/\qi)_k$ and $(W_n)_k \to (W_n/\qi)_k$ are
    separable (because $|c|=1$ and $\delta(p_0), \delta(p^*_n)$ are odd); 
  \item when $C$ is any of the curves constructed in Example~\ref{ex:Wn}
    (proof omitted); 
  \item when $n>2\nu(2)$, because if $\ord_0 \bar{Q}_0(x)\le g/2$,
    then $\nu(c) \ge n/2 > \nu(2)$, hence $|c|<|2|$.  
  \end{enumerate}
\end{example}

\begin{example} The curve in the counterexample \ref{ex:no_comp}
  has a unique minimal Weierstrass model. Let us construct a
  counterexample with $C$ having more than one minimal Weierstrass model.
  Let $g$ be even and $\chara(k)=2$. 
  Let $\ell\ge 3g/2$, $s\ge 3g+2$ be two integers. Consider the hyperelliptic
  curve $C$ defined by the equation
    \[
      y^2+\pi(\pi^\ell + x^{g/2})y=\pi^{s}u+ x^{g+1}+\pi x^{2g+2}, \quad u\in \cO_K^* 
    \]
    (assumed to having discriminant $\ne 0$). Let $W$ be the Weierstrass
    model of $C$ defined by the same equation and let $p_0, p_\infty\in W_k$ be
    respectively the zero and the pole of $x$. We will show that
    $W$ is minimal, $C$ has two minimal Weierstrass models and
    $\whW\setminus \{ p_0 \}$ does not have hyperelliptic compactification. 
    
First check easily that $\delta(p_0)=\lambda(p_0)=g+1$, $\lambda(p_\infty)=1$, 
and $\lambda(p)\le \delta(p)=1$ for the other points. So $W$  is
minimal by \cite{LRN}, Proposition 4.3(1). 

    Put $x=\pi^2x_2$ and $y=\pi^{(g+1)}y_2$. Then
    \[
y_2^2+(\pi^{\ell-g}+ x_2^{g/2})y_2 = \pi^{s-(2g+2)}u + x_2^{g+1} + \pi^{2g+3}x_2^{2g+2}  
\]
defines a new Weierstrass model $W'$. Let $p_2, p_2^*\in W'_k$ be respectively
the zero and the pole of $x_2$. Then  $\delta(p_2)=\lambda(p_2)=g$, 
$\delta(p_2^*)=\lambda(p_2^*)=g+1$ and $\delta(p)=0$ for the other points of
$W'_k$. Thus $W'$ is also minimal.  Using Theorem~\ref{chain-mwm} we
see that $W_0=W, W_1=W_0(p_0), W_2=W'$ is the minimality chain of the curve.
Let $Q(x)=\pi(\pi^{\ell}+x^{g/2})=\pi Q_0(x)$. Then $\cont(Q)=\pi$ and
$2\ord_0(\bar{Q}_0(x))=g < \delta(p_0)$. So $\whW\setminus \{ p_0 \}$ does not have hyperelliptic compactification by Lemma~\ref{lem:degx}(2). 
\end{example}

\subsection{Stable reduction} \label{st-red} 
 
We refer to \cite{LB}, \S 10.3 for the definition and general properties
of stable curves and stable models. 
When $\chara(k)\ne 2$, the stable reduction
of hyperelliptic curves can be determined in terms of the Weierstrass points.
See {\it e.g.} \cite{DDMM}, Theorem 10.3.   

Recall that for any smooth projective geometrically connected curve $C$ of
  genus $\ge 2$ over $K$, there exists  
  a finite separable extension $L/K$ and an extension of discrete
  valuation rings $\cO_L/\cO_K$ such that $C_L$ admits a stable
  model over $\cO_L$ (\cite{DM}, Corollary 2.7). The geometric
  closed fiber of the latter is called the
  \emph{potential stable reduction of $C$}.  It is independent on the choice
  of $L$. 
 
\begin{proposition} \label{prop:stab}
  Let $C$ be a hyperelliptic curve of genus $g\ge 2$ over $K$.
  Then $C$ has geometrically irreducible stable reduction ({\it e.g.}
  good reduction) if and only if $C$ has 
  a unique minimal Weierstrass model and if the latter has geometrically
  integral and stable closed fiber. 
\end{proposition}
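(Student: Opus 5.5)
The argument splits into the two implications. For the ``if'' direction, suppose $W$ is the unique minimal Weierstrass model of $C$ and $W_k$ is geometrically integral and stable. Then $W_k$ is reduced, so $W$ is normal with reduced, semi-stable closed fiber. We will show that $W$ is itself the stable model of $C$, which gives geometrically irreducible stable reduction. Only ordinary double points can occur on $W_k$. At such a point $W$ is semi-stable, so after minimal desingularization we obtain chains of $\PP^1$'s with self-intersection $-2$. Contracting these chains recovers $W$, so $W$ is the canonical model. Since $W_k$ is stable, $\cC^{\can}=W$ is stable, and by the remark above it is the stable model.

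For the ``only if'' direction, assume $C$ has geometrically irreducible stable reduction over some finite extension. First we treat the case where $C$ already has a stable model $\mathcal S$ over $\cO_K$ with geometrically irreducible closed fiber. The hyperelliptic involution extends to $\mathcal S$ by uniqueness of the stable model. The quotient $\mathcal S/\qi$ is a normal model of $\PP^1_K$ whose closed fiber is irreducible, being the image of an irreducible curve. Since that fiber has arithmetic genus $0$ and is geometrically integral (as the image of an integral curve), the quotient should be smooth, i.e. a smooth model $Z$. Then $\mathcal S$ is the normalization of $Z$ in $K(C)$, hence a Weierstrass model $W$ with stable, geometrically integral fiber. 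Consequently $\delta(p)\le 2$ for all $p$, and Lemma~\ref{lem:d0}(4) gives $\lambda(p)\le 2\le g+1$ for every rational point. Minimality then follows from \cite{LRN}, Proposition 4.3. Uniqueness follows from \cite{LRN}, Proposition 4.6, because no rational point has $\lambda=g+1\ge 3$. For $g$ odd the same criteria apply with the bound $\lambda\le 2<g+1$.

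It remains to descend from $K'$ to $K$, and I expect this to be the main obstacle. The existence of a stable model over $K$ is not automatic. For the ``only if'' statement as phrased (stable reduction over $K$ itself), I would interpret the hypothesis as the existence of a stable model over $\cO_K$, which is the convention used in Theorem~\ref{thm:stableg2}. In that case the previous paragraph already suffices. If instead only potential stable reduction is assumed, the argument would run as follows. Apply the previous paragraph over $K'$, so that $C_{K'}$ has a unique minimal Weierstrass model $W'$. Using Lemma~\ref{lem:mini_down}(3), after passing to the completion and an unramified extension, $W'$ comes from $K$. Then Proposition~\ref{prop:more_than_one}(1) rules out more than one minimal Weierstrass model over $K$, but only for even $g$; this part is delicate, and I would restrict to the ``stable model over $\cO_K$'' reading.

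The key technical check in the ``only if'' direction is that $\mathcal S/\qi$ is smooth. If its closed fiber were non-reduced, the fiber of $\mathcal S$ would be non-reduced, so it is reduced. An irreducible reduced curve of arithmetic genus $0$ is $\PP^1_k$, and a normal model with closed fiber $\PP^1_k$ is regular there, hence smooth. For the Euler characteristic, the arithmetic genus of the quotient fiber equals $\chi(\cO_{\PP^1_K})$ by flatness, which forces $p_a=0$.
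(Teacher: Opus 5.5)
Your ``only if'' argument is the paper's: you extend $\sigma$ to the stable model, show the quotient is a smooth model of $\PP^1_K$, deduce that the stable model is a Weierstrass model with $\lambda(p)\le\delta(p)\le 2$, and conclude with \cite{LRN}, Proposition~4.6(1). Two small corrections: identifying $\cZ_k$ with $\PP^1_k$ rather than an arbitrary smooth conic needs a rational point on $\cZ_k$, which the paper gets by specializing $K$-points via properness; and the pointwise bound is Lemma~\ref{lem:d0}(2), not (4). For the ``if'' direction the paper just observes that $W$, being a flat projective model with stable closed fiber, is itself the stable model, so your detour through $\cC^{\can}$ is correct but unnecessary; the descent worry is moot, since the statement (as in the paper's proof) concerns a stable model over $\cO_K$.
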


\begin{proof} Suppose that $C$ has a stable model $\cX$ over $\cO_K$. Then the
  hyperelliptic involution $\sigma$ of $C$ acts on $\cX$. Let
  $\cZ=\cX/\qi$. As $\cX_k\to \cZ_k$ is dominant, $\cZ_k$ is also
  geometrically integral. As $\cZ_K=\PP^1_K$ and
  $p_a(\cZ_k)=p_a(\cZ_K)=0$, $\cZ_k$ is a 
  geometrically integral conic with rational points, hence $\cZ_k\simeq \PP^1_k$. 
  This also implies that $\cZ\simeq \PP^1_{\cO_K}$. Therefore 
  $\cX$  is a Weierstrass model of $C$. The closed fiber $\cX_k$ is reduced
  (because stable), and all $p\in \cX_k$ are smooth or 
  ordinary double points.
  By \cite{LTR}, Lemme 6(b) and Lemme 7(e), $\lambda(p)\le \delta(p)\le 2$.
  It follows from \cite{LRN}, Proposition 4.6(1) that 
  $\cX$ is the unique minimal Weierstrass model of $C$. 
  
  The converse is immediate.    
\end{proof}

\begin{corollary}[of Theorem~\ref{thm:FM}] \label{cor:stable} Let $C$ be a
hyperelliptic curve of even genus $g\ge 2$ over $K$ (not necessarily complete).
Suppose that $C$ has more than one minimal 
Weierstrass model. Then the potential stable reduction of $C$ is the
union of two stable curves of genus $g/2$ (replace ``stable'' with
``semi-stable and irreducible'' if $g=2$) intersecting transversely at one point.

Moreover, if $\chara(k)\ne 2$, these 
stable curves can be taken as stable reductions of hyperelliptic curves of genus
$g/2$.
\end{corollary}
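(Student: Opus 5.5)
We first reduce to the case where $K$ is complete and $C$ has stable reduction over $K$ itself. The potential stable reduction is unchanged under finite separable extensions and completion. By Proposition~\ref{prop:more_than_one}(1), $C_{K'}$ still has more than one minimal Weierstrass model over any extension $K'/K$ of discrete valuation fields with perfect residue field. So we may replace $K$ by the completion of a finite separable extension $L$ over which $C$ acquires a stable model $\cX$, and then by its maximal unramified extension so that $k$ is algebraically closed. The hypothesis of Theorem~\ref{chain-mwm} still holds there. It then suffices to describe $\cX_k$.

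Next we relate $\cX$ to the minimal regular model. Since $C$ has semi-stable reduction, the minimal regular model $\cC$ is semi-stable, and $\cC^{\can}$ is the stable model $\cX$ (contract the $(-2)$-curves). By Theorem~\ref{regular-even}, $\cC_k=X_0\cup(\text{chain of }\PP^1_k)\cup X_n$. By Proposition~\ref{prop:positive}, the middle $\Theta_i$, $2\le i\le n-2$, are contracted in $\cC^{\can}$, while $\Theta_0$ and $\Theta_n$ survive. Hence $\cX_k$ is the union of the images $Y_0,Y_n$ of $X_0,X_n$, meeting at the single point obtained by contracting the chain. That point is an ordinary double point, since the contracted chain in a semi-stable regular model yields a node. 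Also $Y_0\cap Y_n$ is a single point because the dual graph of the chain is a tree.

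It remains to compute arithmetic genera. By Corollary~\ref{cor:gl}, $X_0$ is the closed fiber of the minimal regular model $\cC_0$ of a smooth curve $C_0$ of genus $g/2$. Thus $p_a(X_0)=g/2$, and likewise $p_a(X_n)=g/2$. Contracting $(-2)$-curves inside $X_0$ does not change the arithmetic genus of the image, so $p_a(Y_0)=p_a(Y_n)=g/2$. Consistently, $p_a(\cX_k)=g/2+g/2+0=g$, because the union is a tree-like gluing at one node. Stability of each $Y_i$ as a curve with one marked node follows from the stability of $\cX_k$. When $g/2\ge 2$, each $Y_i$ is the stable reduction of $C_i$, since stable reduction commutes with semi-stable models: $C_0$ has semi-stable reduction with minimal regular fiber $X_0$, and $\cC_0^{\can}$ has fiber $Y_0$. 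When $g=2$, each $Y_i$ has genus $1$ and is semi-stable and irreducible, since otherwise $\cX_k$ would have a component violating stability.

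For the last assertion, when $\chara(k)\ne 2$, Proposition~\ref{prop:tame} lets us choose $C_0$ and $C_n$ hyperelliptic in Corollary~\ref{cor:gl}. The main obstacle is keeping track of the hypotheses after base change. One must check that the minimality chain over $L$ still satisfies Theorem~\ref{regular-even} and Proposition~\ref{prop:positive}, which holds because the statement only requires more than one minimal Weierstrass model. One must also check that no component of $X_0$ other than $(-2)$-curves is contracted in $\cX$, which is exactly the definition of $\cC^{\can}$.
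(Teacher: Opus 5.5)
Your route is different from the paper's. The paper takes the Fresnel--Matignon compactifications $D_0,D_n$ of $\whW_0\setminus\{p_0\}$ and $\whW_n\setminus\{p_n^*\}$, which have genus $g/2$ by Proposition~\ref{prop:W0}. It then simply asserts that the stable reduction of $C$ is the union of the stable reductions of $D_0$ and $D_n$, meeting transversally at a rational point. You instead identify the stable model with $\cC^{\can}$ and read off its fiber from Theorem~\ref{regular-even}, Proposition~\ref{prop:positive} and Corollary~\ref{cor:gl}. That part is sound, and more explicit than the paper. It shows that $\cX_k=Y_0\cup Y_n$ with a single separating node $y$, that $p_a(Y_0)=p_a(Y_n)=g/2$, and that each $(Y_i,y)$ is a stable $1$-pointed curve. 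For $g=2$, your component count also gives irreducibility.

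\textbf{The gap.} The step that fails is ``$\cC_0^{\can}$ has fiber $Y_0$'', and with it ``$Y_i$ is the stable reduction of $C_i$''.
\begin{itemize}
\item The self-intersection of $\Theta_0$ in $\cC$ is one less than in $\cC_0$, because in $\cC$ it also meets $\Theta_2$ (or $\Theta_n$ if $n=2$). This is the same shift used in the proof of Corollary~\ref{cor:gl}.
\item So if $\Theta_0\simeq\PP^1_k$ meets the rest of $X_0$ in exactly two points, it is a $(-2)$-curve in $\cC_0$ but a $(-3)$-curve in $\cC$. Hence $\cC_0^{\can}$ contracts it while $\cC^{\can}$ keeps it.
\end{itemize}

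This does occur. Take $g=4$, $\chara(k)\ne 2$, and $C\colon y^2=P(x)$ such that five roots of $P$ satisfy $|x|\le|\pi|^n$, and the other five reduce to $1,1,1,1,\bar c$ with $\bar c\ne 0,1$. Then $(W_0)_k\colon y^2=x^5(x-1)^4(x-\bar c)$, and $W_0$ is an extremal minimal Weierstrass model.
\begin{itemize}
\item After base change, $\Theta_0$ is the normalization of $(W_0)_k$ at $x=0$ and $x=1$, which is a $\PP^1$.
\item $\Theta_0$ has one special point over $0$, namely the node to the $W_n$ side.
\item It has two special points over $1$, where the genus-$1$ curve coming from the four clustered roots is attached.
\end{itemize}
Consequently:
\begin{itemize}
\item $Y_0$ is a $\PP^1$ meeting an elliptic curve in two points, which is not stable as an unpointed curve.
\item The stable reduction of $C_0$ is the irreducible one-nodal curve obtained by contracting $\Theta_0$.
\item The stable reduction of $C_0$ glued to the $W_n$ side at a smooth point is not $\cX_k$.
\end{itemize}

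\textbf{What your argument does prove.} After a small repair, it gives the pointed version. Choose $C_0$ hyperelliptic as in Proposition~\ref{prop:tame}. Its rational Weierstrass point $P_0$ at infinity specializes to the point of $\Theta_0$ lying over $p_0$. So in the pointed setting $\Theta_0$ carries three special points, and $(Y_0,y)$ is the stable reduction of the pointed curve $(C_0,P_0)$. The paper's one-line proof makes the same unpointed identification. For $g\ge 4$, the statement and its ``moreover'' part therefore hold when ``stable curves'' is read as stable $1$-pointed curves, which is what you should state and prove.
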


\begin{proof} First we extend the ground field  to a finite 
extension of  its completion to get the stable reduction of $C$. 
Then $C$ still have more than more one minimal Weierstrass
model (Proposition~\ref{prop:more_than_one}). So we can suppose $K$ complete
$K$ and that  $C$ has stable reduction over $\cO_K$. 
Let $D_0$ (resp. $D_n$) be a compactification of
$\whW_0 \setminus \{ p_0 \}$ (resp. $\whW_n\setminus \{ p_n^* \}$)
as given by Theorem~\ref{thm:FM}. Then $g(D_0)=g/2=g(D_n)$
(Proposition~\ref{prop:W0}(1)), and the stable reduction of $C$ is
the union of the stable reductions of $D_0$ 
and $D_n$ intersecting transversely at a rational point.

If the residue characteristic is different from $2$, then by
Proposition~\ref{prop:tame}, the curves $D_0$ and $D_n$ can be
chosen to be hyperelliptic. 
\end{proof} 

\end{section}

\begin{section}{The case of genus 2 curves} \label{sect:g2} 

In this section we consider the particular case of genus $2$ curves $C$.
They are automatically hyperelliptic.  We will see that the knowledge of
the minimal Weierstrass models of $C$ determine whether $C$ has
reduction type $[\cK_1-\cK_2-m]$ , and whether it has stable reduction
(Theorem~\ref{thm:stableg2}). In both cases, 
we can determine a volume form for the N\'eron model of the Jacobian
of $C$ (Proposition~\ref{prop:volume}), the Tamagawa number and the
conductor of $\Jac(C)$ (\S~\ref{subsect:Tn}-\ref{subsect:cond}) and, if 
moreover $K$ is a local field, the Euler factor of the Jacobian of $C$
(Theorem~\ref{thm:g2-Euler} and Remark~\ref{rmk:Euler_f}). We end the
section with the computation of some arithmetic local invariants of
the modular curve $X_0(22)$ (\S~\ref{exp:22}).

\subsection{Characterization of the reduction types
\texorpdfstring{$[\cK_1-\cK_2-m]$}{[K1-K2-m]}} \label{double-ell}

Let $\cC$ be the minimal regular model of $C$ over $\cO_K$. 
When $k$ is algebraically closed, the configuration of the closed fiber $\cC_k$
is classified by \cite{Ogg} (incomplete) and by
Namikawa  and Ueno \cite{NU}. Among them there are the families of 
types $[\cK_1-\cK_2-m]$, where $\cK_1, \cK_2$ are Kodaira symbols for elliptic curves and  $m\ge 0$, and the types $[\cK_1-\cK_2-\alpha]$ (that we will denote
by $[\cK_1-\cK_2-(-1)]$).  When $\chara(k)\ne 2$, the type of $\cC_{\bar{k}}$ 
 is given by an algorithm described in \cite{LC},  and implemented in
 PARI/gp ({\tt genus2red}) for genus $2$ curves over $\mathbb Q$. 

\subsubsection{The case $m\ge 1$}  Let $k$ be perfect. 
We will  say that $\cC_k$  has \emph{split type $[\cK_1-\cK_2-m]$} if
it is obtained by taking two curves 
$Y_1, Y_2$ over $k$ corresponding respectively 
to the symbols $\cK_1$, $\cK_2$ over $k$ (\cite{Tate} or \cite{LB}, \S 10.2.1),  attach them by a chain of $m-1$ copies of $\PP^1_k$'s, with transverse  intersections at rational points,  and the chain
intersects $Y_i$ transversely at a rational point belonging to a geometrically
irreducible component $\Gamma_i$ of multiplicity $1$ in $Y_i$.
When $m=1$, $\Gamma_1, \Gamma_2$ meet transversely 
at a rational point. Let us call $\Gamma_1, \Gamma_2$ 
the \emph{principal components}. They are exactly the irreducible components 
$\Gamma$ such that $\deg (\omega_{\cC/\cO_K}|_\Gamma)>0$. 

If $\cC_k$ has type $[\cK_1-\cK_2-m]$ over an algebraic closure
 $\bar{k}$ of $k$, but not over $k$, we say it has
\emph{non-split type $[\cK_1-\cK_2-m]$}.
For the purpose of this work, the knowledge of the precise type over
 $k$, rather than just over $\bar{k}$, is only useful for the
 computation of the Tamagawa number (see \S~\ref{subsect:Tn}). 

Let us have a look at the canonical model $\cC^{\can}$ of $C$ when $g=2$.
 By the uniqueness of the canonical model, the action of $\sigma$
on $C$ extends to $\cC^{\can}$.
By the adjunction formula~\eqref{eq:adj_can}, $\cC^{\can}_k$ has at most
two irreducible components. When this is the case, the two
components are geometrically integral and intersect at a single rational point.
This is because $\cC^{\can}_{\bar{k}}$ has still only two irreducible
components.

As usual, when $C$ has more than one minimal Weierstrass model,
we denote by $(W_i)_{0\le i\le n}$ the minimality chain
(Definition~\ref{extremal-MWM}). 

\begin{lemma} \label{lem:can-mini} The closed fiber
$\cC^{\can}_k$ is union of two irreducible components  
of arithmetic genus $1$ if and only if $C$ has more than one minimal Weierstrass model. When this is the case, $\cC^{\can}\simeq W_0 \vee W_n$, 
\end{lemma}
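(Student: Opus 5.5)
I will prove the two implications separately. Throughout, $g=2$, and I use the adjunction formula~\eqref{eq:adj_can} in the form $2=\sum_\Gamma d_\Gamma\deg(\omega_{\cC^{\can}/\cO_K}|_\Gamma)$, where every degree is positive.

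\emph{The implication ``more than one minimal Weierstrass model $\Rightarrow$ two components of genus $1$''.} Let $(W_i)_{0\le i\le n}$ be the minimality chain. Theorem~\ref{regular-even} describes $\cC_k$: the strict transforms $\Theta_0,\Theta_n$ of $(W_0)_k,(W_n)_k$ are joined by the chain $\Theta_2,\dots,\Theta_{n-2}$ of $(-2)$-curves isomorphic to $\PP^1_k$. There may also be further components attached to $\Theta_0$ and $\Theta_n$, coming from the singularities of $W_0\setminus\{p_0\}$ and $W_n\setminus\{p_n^*\}$.

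By Proposition~\ref{prop:positive}, $\cC^{\can}$ dominates $W_0$ and $W_n$. Hence $\Theta_0$ and $\Theta_n$ survive in $\cC^{\can}$ as two distinct components. The adjunction formula then forces exactly these two components, each of multiplicity $1$ and each of degree $1$. Every other component of $\cC_k$, including the middle chain, is contracted.

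It follows that $\cC^{\can}$ dominates both $W_0$ and $W_n$, hence dominates $W_0\vee W_n$, and the morphism $\cC^{\can}\to W_0\vee W_n$ is finite and birational. Both models are normal, so it is an isomorphism. This gives $\cC^{\can}\simeq W_0\vee W_n$. Its components are the strict transforms $\Gamma_0,\Gamma_n$, which are the normalizations of $(W_0)_k,(W_n)_k$ at $p_0$ and $p_n^*$. By Proposition~\ref{prop:W0} they have arithmetic genus $g/2=1$.

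\emph{The converse.} Assume $\cC^{\can}_k=\Gamma_1\cup\Gamma_2$ with $p_a(\Gamma_j)=1$. By the discussion preceding the lemma, the $\Gamma_j$ are geometrically integral and meet at one rational point $s$. Adjunction gives $d_{\Gamma_j}=1$ and $\deg\omega|_{\Gamma_j}=1$.

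The involution $\sigma$ acts on $\cC^{\can}$. Step one is to show that $\sigma$ preserves each $\Gamma_j$. If it swapped them, the quotient would have irreducible closed fiber and be a model of $\PP^1_K$; I would rule this out by a genus count. Alternatively, I would use the fact that $\sigma$ acts on $H^0(\omega)$ by $-1$: a swapping involution would produce a nonzero invariant differential, restricted componentwise.

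Step two: $\Gamma_j/\sigma$ is a curve of genus $0$ with the rational point image of $s$, so $\Gamma_j/\sigma\simeq\PP^1_k$. The quotient $\cC^{\can}/\qi$ is then a normal model of $\PP^1_K$ whose closed fiber consists of two reduced rational components. Contracting either component yields smooth models $Z,Z'$ of $\PP^1_K$. Let $U,U'$ be their normalizations in $K(C)$; these are Weierstrass models. Their closed fibers are images of $\Gamma_1$ and $\Gamma_2$ with everything else collapsed to one point, so $U_k,U'_k$ are reduced and integral of arithmetic genus $2$.

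Step three: show that $U$ and $U'$ are both minimal. The contracted point is a cusp with $\delta=3$, coming from $p_a(\Gamma_j)=1$ and the genus formula $g=p_a(\Gamma_j)+[\delta/2]$. I expect this to be the main obstacle. To prove minimality I would invoke the criterion \cite{LRN}, Proposition 4.3(1): bound $\lambda\le g+1=3$ at every rational point using Lemma~\ref{lem:d0}(4), which gives $\sum\delta\le 2g+2$, once one knows $\delta\le 3$ at the cusp. Equality of discriminants could also be checked via Remark~\ref{rmk:more_than_one}.

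If minimality of both proves delicate, I would fall back on a contradiction argument. Suppose $C$ had a unique minimal Weierstrass model $W$. By Remark~\ref{rmk:mini_mini}, $\cC$ dominates $W$. Then I would show that $W$ must coincide with $U$ or $U'$, and that the other, obtained by the chain construction in the proof of Proposition~\ref{prop:positive} (this time starting from a $(-1)$/$(-2)$ configuration at the node), is again minimal. That would contradict uniqueness. Since $U\ne U'$, $C$ has more than one minimal Weierstrass model.
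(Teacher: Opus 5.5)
Your proposal is correct and follows essentially the paper's proof. The forward direction goes through Proposition~\ref{prop:positive} and the finite birational morphism $\cC^{\can}\to W_0\vee W_n$. The converse rules out a swap of the two components by the genus count on the quotient, identifies $\cC^{\can}/\qi$ with $Z\vee Z'$, gets $\delta(p_0)=3$ from $1=p_a(\Gamma)=2-[\delta(p_0)/2]$ with $p_0$ a cusp, and concludes minimality via Lemma~\ref{lem:d0}(4) and \cite{LRN}, Proposition 4.3(1). That main line already closes the argument, so your fallback via Remark~\ref{rmk:mini_mini} and the $H^0(\omega)$ alternative in step one are not needed.
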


\begin{proof} When $C$ has more than one minimal Weierstrass model,
  we have a morphism of models $\cC^{\can} \to W_0\vee W_n$ by
  Proposition~\ref{prop:positive}. As $\cC^{\can}_k$ has two irreducible
  components, this morphism is finite and birational, hence is an isomorphism.

Conversely suppose that $\cC^{\can}_k$ is union of two irreducible components 
$\Gamma, \Gamma'$ of arithmetic genus $1$. As $\cC^{\can}/\qi$ is a
model of $\PP^1_K$, $\sigma$ can not permute $\Gamma$ and $\Gamma'$
because otherwise $\Gamma\to (\cC^{\can}/\qi)_k$ would be birational,
hence $0=p_a((\cC^{\can}/\qi)_k)\ge p_a(\Gamma)=1$, absurd. 
This implies that $(\cC^{\can}/\qi)_k$ has two irreducible components which
are reduced. Therefore $\cC^{\can}/\qi=Z\vee Z'$ for two smooth models
$Z, Z'$ of $\PP^1_K$. Let $W, W'$ be respectively the normalization in
$K(C)$ of $Z$ and that of $Z'$. Then similarly to the above
$\cC^{\can}\simeq W\vee W'$ and $\Gamma, \Gamma'$ are respectively
the strict transform of $W_k$ and of $W'_k$ in $\cC^{\can}$.   

Let us show that $W, W'$ are minimal. 
Let $p_0\in W\wedge W'$. As $\Gamma\to W_k$ is an isomorphism
 away from $p_0$ and $\Gamma\cap \Gamma'$ is a single point $x_0$,  
 $\Gamma\to W_k$ is the normalization of $W_k$ at $p_0$, and $p_0$ is a cusp,
 so $\delta(p_0)$ is odd. 
 On the other hands
 \[
1=p_a(\Gamma)=p_a(W_k)-[\delta(p_0)/2]=2-(\delta(p_0)-1)/2.
 \]
 So $\delta(p_0)=3$. By Lemma~\ref{lem:d0}(4), $\delta(p)\le 3$ for
 all $p\in W_k\setminus \{ p_0 \}$. Hence $\lambda(p)\le 3=g+1$ for all
 $p\in W_k$. Therefore $W$ is minimal by \cite{LTR}, Proposition 4.3(1).
 For the same reasons $W'$ is minimal. So $C$ has more than one minimal
 Weierstrass model. It follows from the first part of the proof that
 $W, W'$ are the extremal Weierstrass models. 
\end{proof}

\begin{theorem} \label{K1-K2} Suppose $g(C)=2$. Let
  $\cC$ and $\cC^{\can}$ be respectively the minimal regular model and
  the canonical model of $C$ over $\cO_K$. The following properties 
  are equivalent:
\begin{enumerate}[\rm (i)] 
\item $\cC_k$ has split type $[\cK_1-\cK_2-m]$ with $m\ge 1$; 
\item $\cC^{\can}_k$ has two irreducible components, both of 
  arithmetic genus $1$;  
\item $C$ has more than one minimal Weierstrass model
  (see Remark~\ref{rmk:crit_2}).  
   \end{enumerate}
   When these conditions are satisfied,  we have
   (with the notation of Theorem~\ref{chain-mwm}) 
$m=n/2$ and $\cC^{\can}\simeq W_0\vee W_n$. 
\end{theorem}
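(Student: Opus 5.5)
The plan is to prove the cycle (iii)$\Rightarrow$(i)$\Rightarrow$(ii)$\Rightarrow$(iii). Note that (ii)$\Leftrightarrow$(iii), together with $\cC^{\can}\simeq W_0\vee W_n$, is already Lemma~\ref{lem:can-mini}. So the real work is to connect (i) with these two conditions and to identify $m=n/2$.

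\textbf{(iii)$\Rightarrow$(i).} Let $(W_i)_{0\le i\le n}$ be the minimality chain. By Theorem~\ref{regular-even}, $\cC$ dominates $W_0\vee W_n$, and the point $p_{0,n}$ is resolved by the chain $\Theta_2,\dots,\Theta_{n-2}$ of $(n/2)-1$ copies of $\PP^1_k$. These meet transversally at rational points, and the chain attaches to $\Theta_0$ and to $\Theta_n$ at rational points. The remaining singularities of $W_0\vee W_n$ lie in $W_0\setminus\{p_0\}$ and $W_n\setminus\{p_n^*\}$. By Proposition~\ref{prop:W0}, $(W_0)_k$ is geometrically integral, and its normalization at $p_0$ has arithmetic genus $1$. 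Lemma~\ref{lem:d0}(4), with $\delta(p_0)=3$, then gives $\delta(p)\le 3$ at the other points, so there is at most one further singular point $w_0$, and it is rational. At $w_0$ I apply Proposition~\ref{prop:exT} and Corollary~\ref{cor:g2desing}. This is legitimate because $W_0$ is extremal and $w_0\notin W_0\wedge W_n$: case (5.c.iv) would produce a further minimal model beyond $W_0$, contradicting extremality. Hence the resolution at $w_0$ has some Kodaira type $\cK_1$, and likewise $W_n$ gives $\cK_2$.

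It remains to see that $\Theta_0$ together with the exceptional locus above $w_0$ forms exactly the configuration $Y_1$ of the Kodaira symbol $\cK_1$. Here $\Theta_0$ is the multiplicity-one component of arithmetic genus $1$ (when $w_0$ is absent or a node or cusp), or a rational component of multiplicity one. For this I would use Corollary~\ref{cor:gl}: $X_0$ is the closed fiber of the minimal regular model of a genus-one curve $C_0$ over $\hat K$. After a base change making it elliptic, if needed, this is a Kodaira fiber, and $\Theta_0$ corresponds to a multiplicity-one component through which the chain attaches at a rational point. No component gets contracted, by Proposition~\ref{prop:positive}. With the chain of $(n/2)-1$ lines this gives split type $[\cK_1-\cK_2-m]$ with $m=n/2$. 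The case $n=2$ gives $m=1$, with $\Theta_0$ and $\Theta_2$ meeting directly.

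\textbf{(i)$\Rightarrow$(ii).} Assume split type $[\cK_1-\cK_2-m]$ with $m\ge1$. The only components with $\deg\omega_{\cC}|_\Gamma>0$ are the principal ones $\Gamma_1,\Gamma_2$, by the definition of the split type. All other components are therefore $(-2)$-curves and get contracted. Contracting them yields $\cC^{\can}_k$ with exactly two components, the images of $\Gamma_1$ and $\Gamma_2$. Each has degree $1$ for $\omega$ by the adjunction formula~\eqref{eq:adj_can}, since the total is $2$ and the multiplicities are $1$. Contracting the Kodaira configuration $Y_i$ minus $\Gamma_i$ produces a curve of arithmetic genus $1$, because $Y_i$ itself has $p_a=1$ in the elliptic fiber sense. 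Equivalently, each image has arithmetic genus $1$ by adjunction together with the local intersection with the other component being $1$.

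\textbf{Main obstacle.} The delicate step is the identification in (iii)$\Rightarrow$(i) that the exceptional locus above $w_0$, together with $\Theta_0$, is exactly a Kodaira fiber over $k$ with $\Theta_0$ of multiplicity one, and not merely "of type $\cK$" in the weak sense of Proposition~\ref{prop:exT}. I would handle this through the genus-one compactification $C_0$, and also check the degenerate case where $\Theta_0$ itself has genus $1$ (types $\mathrm{I}_0$, $\mathrm{I}_1$, $\mathrm{II}$).
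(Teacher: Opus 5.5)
Your argument is essentially the paper's. It runs the same cycle of implications: (ii)$\Leftrightarrow$(iii) and $\cC^{\can}\simeq W_0\vee W_n$ come from Lemma~\ref{lem:can-mini}; (iii)$\Rightarrow$(i) and $m=n/2$ come from Theorem~\ref{regular-even} together with Corollary~\ref{cor:gl}; and (i)$\Rightarrow$(ii) follows by contracting the non-principal $(-2)$-curves and applying the adjunction formula.

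The one soft spot is your hedge ``after a base change, if needed'', and it should be removed rather than kept. No base change is needed, and none would be allowed, since split type is a statement over $k$. The point of $\Gamma_0$ over $p_0$ is rational (Proposition~\ref{prop:W0}) and lies in the smooth locus of the reduced closed fiber $(\mathcal{D}_0)_k\simeq\Gamma_0$ of the model from Corollary~\ref{cor:gl}. Hensel's lemma therefore gives $C_0(\hat K)\neq\emptyset$, so $X_0$ is already a Kodaira fiber over $k$ with $\Theta_0$ a multiplicity-one component.
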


\begin{proof} (i) $\implies$ (ii).  The model $\cC^{\can}$ is obtained
    by contracting all but the two  
   principal components of $\cC_k$. The images of the latter intersect each
   other transversely at a rational point. Their arithmetic genera are
   equal to $1$ by the adjunction formula.

(ii) $\implies$ (iii) is Lemma~\ref{lem:can-mini}, and 
(iii) $\implies$ (i) by Theorem~\ref{regular-even} and
Corollary~\ref{cor:gl}.

When Condition (iii) is satisfied, $\cC^{\can}\simeq W_0\vee W_n$ by
Lemma~\ref{lem:can-mini}. As $\cC$ is the minimal desingularization of
$\cC^{\can}$, $m=n/2$ by Theorem~\ref{regular-even}(2).  
  \end{proof}

  \begin{remark}
    \begin{enumerate}[\rm (1)] 
    \item   Note that it can happen that $\cC^{\can}_k$ has two irreducible
    components isomorphic to $\PP^1_k$ and intersecting each other
    at a single rational point, with intersection multiplicity $3$
    ({\it e.g.} \cite{NU}, type [V]). In this case, $\cC_k$ is not
    of type $[\cK_1-\cK_2-m]$. 
\item 
    Suppose that the conditions of Theorem~\ref{K1-K2} are satisfied,
then  $(W_0)_k$ is reduced, there is at most one singular point
  $w_0\in W_0 \setminus \{ p_0 \}$, 
  and we have $w_0$ rational over $k$ and $\delta(w_0)\le 3$ because $\delta(p_0)=3$ (Lemma~\ref{lem:d0}(4)). The singularity $w_0$ can be solved with the extended Tate's algorithm~\ref{prop:exT}. 
    \end{enumerate}
  \end{remark}

Next we study the case when $C$ has a unique minimal Weierstrass model.

\begin{proposition} \label{prop:non_split}
Suppose that $C$ has a unique minimal Weierstrass model $W$.
Then the following conditions are equivalent:
\begin{enumerate}[\rm (i)] 
\item $\cC_k$ has non-split type $[\cK_1-\cK_2-m]$ for some $m\ge 1$;
\item there exists an unramified extension $K'/K$ such that $C_{K'}$ has
  more than one minimal Weierstrass model;
\item there exists a quadratic $p_0\in W_k$ such that
  $\lambda(p_0)=3+\varepsilon(W)$ and for some (then for any)
  unramified extension $K_0/K$ with residue field containing
  $k(p_0)$, $C_{K_0}$ has more than one minimal Weierstrass model. 
\end{enumerate}

Furthermore, assuming these conditions satisfied, then 
\begin{enumerate}[\rm (1)]
\item $\cC_{k(p_0)}$ has split type $[\cK_1-\cK_2-m]$ over $k(p_0)$; 
\item if $Y_i$ is the curve over $k(p_0)$ corresponding to $\cK_i$, 
then $Y_2\simeq\tau(Y_1)$ of $k$-curves,
where $\tau$ is the generator of $\Gal(k(p_0)/k)$.  In particular,
$\cK_1=\cK_2$ as Kodaira symbols over $k(p_0)$.
\item The canonical model $\cC^{\can}$ is not a Weierstrass model. 
\end{enumerate}
\end{proposition}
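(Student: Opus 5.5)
The plan is to reduce everything to Theorem~\ref{K1-K2} after an unramified base change. Two facts are used throughout. First, the minimal regular model and the canonical model commute with unramified (étale) base change $\cO_{K'}/\cO_K$, by the standard compatibility results (\cite{LB}, Proposition 9.3.28 and its analogue for the canonical model). Second, the type of $\cC_k$ over $\bar{k}$ only depends on $\cC_{\bar{k}}$.

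\emph{(i)$\Rightarrow$(ii).} Suppose $\cC_k$ has non-split type $[\cK_1-\cK_2-m]$. Then over some finite extension $k'/k$ all components, intersection points and the principal components $\Gamma_1,\Gamma_2$ become geometrically irreducible and rational. Let $K'/K$ be the unramified extension with residue field $k'$; then $\cC_{\cO_{K'}}$ is the minimal regular model of $C_{K'}$ and has split type. Theorem~\ref{K1-K2} then gives that $C_{K'}$ has more than one minimal Weierstrass model.

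\emph{(ii)$\Rightarrow$(iii).} Split into cases according to $\varepsilon(W)$. If $W_k$ is reduced, Proposition~\ref{prop:only_one}(1) yields a quadratic $p_0$ with $\lambda(p_0)=g+1=3$ and $k(p_0)\subseteq k'$, and Proposition~\ref{prop:only_one}(2) gives the assertion for any $K_0$ with residue field containing $k(p_0)$. If $W_k$ is non-reduced, Corollary~\ref{cor:mini_nu} shows we cannot be in case (1) of that corollary. So there is a quadratic $p_0$ with $\lambda(p_0)=g+2=4=3+\varepsilon(W)$, and Proposition~\ref{prop:mini}(3.b) gives more than one minimal model over any $K_0$ with residue field containing $k(p_0)$.

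\emph{(iii)$\Rightarrow$(i).} By Theorem~\ref{K1-K2}, $\cC_{k(p_0)}=\cC_{\cO_{K_0}}\otimes k(p_0)$ has split type over $k(p_0)$, so $\cC_k$ has type $[\cK_1-\cK_2-m]$ over $\bar{k}$. It must be non-split over $k$: otherwise Theorem~\ref{K1-K2} over $K$ would give more than one minimal Weierstrass model of $C$, contradicting uniqueness. This also proves (1).

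For (2), $\Gal(k(p_0)/k)$ acts on $\cC_{k(p_0)}$ through $\tau$, and it preserves the set $\{\Gamma_1,\Gamma_2\}$ of principal components, characterized by $\deg\omega|_\Gamma>0$. If $\tau$ fixed $\Gamma_1$, it would fix $\Gamma_2$, the chain between them, and hence each $Y_i$ setwise. Then the principal components, the chain and all intersection points would be defined over $k$, so the split-type description would descend to $k$, contradicting non-splitness. Hence $\tau$ swaps $Y_1$ and $Y_2$, so $Y_2\simeq\tau(Y_1)$ as $k$-curves and $\cK_1=\cK_2$.

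For (3), $\cC^{\can}_{\cO_{K_0}}\simeq W'_0\vee W'_n$ has two components permuted by $\tau$, so $\cC^{\can}_k$ is irreducible, with geometrically two components. A Weierstrass model over $K$ with $\cC^{\can}$ its closed fiber would have the quotient $(\cC^{\can}/\qi)_k\simeq\PP^1_k$. But the quotient of $\cC^{\can}_{k(p_0)}$ has two components, namely $(Z_0\vee Z_n)_{k(p_0)}$, which is a contradiction.

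The step I expect to be delicate is the descent argument in (2), that is, showing $\tau$ cannot fix $\Gamma_1$ without making the type split over $k$. The first thing I would try is to check that the rational points and components involved are $\tau$-stable and therefore descend.
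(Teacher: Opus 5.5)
Your cycle (i)$\Rightarrow$(ii)$\Rightarrow$(iii)$\Rightarrow$(i), and your proofs of (1) and (3), follow the paper's route. You use Theorem~\ref{K1-K2} after an unramified base change. You use Proposition~\ref{prop:only_one} when $W_k$ is reduced, and Corollary~\ref{cor:mini_nu} with Proposition~\ref{prop:mini}(3.b) when it is not. Non-splitness comes from uniqueness of $W$. For (3), you observe that quotients by $\qi$ commute with the flat base change $\cO_K\to\cO_{K_0}$, and that $(\cC^{\can}_{\cO_{K_0}}/\qi)_{k(p_0)}$ has two components, so $\cC^{\can}/\qi\not\simeq\PP^1_{\cO_K}$. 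This is a slightly more direct version of the paper's argument, and it does not even need the swapping from (2).

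The weak point is the step you flagged in (2). From ``$\tau$ fixes $\Gamma_1$'' you conclude that the split-type description descends to $k$. But split type over $k$ requires the descended $Y_i$ to be curves corresponding to Kodaira symbols \emph{over $k$}. After descent, $\Gal(\bar k/k)$ can still act nontrivially on the non-principal components of $Y_i$, both through $\tau$ and through $\Gal(\bar k/k(p_0))$ when $\cK_i$ is already non-split over $k(p_0)$. You would therefore have to check, against the list of $k$-forms in \cite{LB}, \S 10.2.1, that every such twist fixing $\Gamma_i$ is again on the list. As written, this is not proved.

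The paper avoids this bookkeeping by working on the canonical model. The closed fiber of $\cC^{\can}\otimes_{\cO_K}\cO_{K_0}\simeq W'_0\vee W'_n$ has exactly two geometrically integral components of arithmetic genus $1$, namely the images of $\Gamma_1$ and $\Gamma_2$. If $\tau$ fixed them, they would descend to two components of $\cC^{\can}_k$ of arithmetic genus $1$. Lemma~\ref{lem:can-mini} (equivalently Theorem~\ref{K1-K2}, (ii)$\Rightarrow$(iii)) over $K$ would then give $C$ a second minimal Weierstrass model, contradicting the hypothesis. Hence $\cC^{\can}_k$ is irreducible and $\tau$ swaps the two components, so it swaps $\Gamma_1,\Gamma_2$ and therefore $Y_1,Y_2$. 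This is the same device you already used in (iii)$\Rightarrow$(i); substituting it for the descent argument closes the gap.
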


\begin{proof} Note that the formation of the minimal regular model
  commutes with unramified extensions (\cite{LB}, Proposition 9.3.28).
The equivalence of (i) and (ii) is a direct consequence 
of Theorem~\ref{K1-K2};  that of (ii) and (iii) follows 
from Proposition~\ref{prop:only_one} if $W_k$ is reduced.   
When $W_k$ is non-reduced, this follows from
Proposition~\ref{prop:mini}(3) and Corollary~\ref{cor:mini_nu}. 
Note that  the property on $C_{K_0}$ is then automatic when $p_0$ exists. 
When (iii) is satisfied, (1) follows from Theorem~\ref{K1-K2} by
taking an unramified $K_0$ with residue field $k(p_0)$. 

When $\cC_k$ has non-split type $[\cK_1-\cK_2-m]$, 
by Theorem~\ref{K1-K2}, 
  $(\cC^{\can})_{k(p_0)}$ has two irreducible components while $\cC^{\can}_k$
  is irreducible. So the Galois group of $k(p_0)/k$ permutes the
  irreducible components of $(\cC^{\can})_{k(p_0)}$, hence permutes the
  curves $Y_1$, $Y_2$. This proves (2).

  (3) Let $K_0/K$ be a quadratic unramified extension with residue field
  $k(p_0)$. Then $\cC^{\can}=\cC_0^{\Gal(K_0/K)}$, where $\cC_0$ is the
  canonical model of $C_{K_0}$. We saw above that
  $\Gal(K_0/K)=\Gal(k(p_0)/k)$ acts on $\cC_0/\qi$ by permuting the two
  irreducible components of $(\cC_0/\qi)_{k(p_0)}$. Therefore 
the  closed fiber of $\cC^{\can}/\qi$ is integral with function field
containing $k(p_0)$. Its normalization is isomorphic to $\PP^1_{k(p_0)}$.
In particular $\cC^{\can}/\qi\not\simeq\PP^1_{\cO_K}$ and $\cC^{\can}$ is
not a Weierstrass model. 
\end{proof}

\begin{example} \label{ex:Vns} Fix a quadratic extension $k'/k$
and a Kodaira symbol $\cK$ over $k'$. 
We construct examples of $C$ such that
$\cC_k$ has non-split type $[\cK-\cK-m]$.   

  Let $K'/K$ be  an unramified quadratic extension
  with residue field $k'$, let $E'$ be an elliptic curve over $K'$
  whose Kodaira symbol is $\cK$. If $\cK=I_0$, we will suppose
  that the good reduction of $E'$ has a non-trivial $2$-torsion point,
  which is always possible. 
  Let 
\[ 
  u^2+(\alpha_1 v+ \alpha_3)u=v^3+\alpha_2 v^2 + \alpha_4 v +
  \alpha_6, 
\]
be an equation of the minimal Weierstrass model $\mathcal E'$ of $E'$. 
By a change of variables one can suppose that $\pi \mid \alpha_3$. 

Let $\cO_{K'}=\cO_K[\theta]$ and let $f(x)\in \cO_K[x]$ the minimal 
polynomial of $\theta$. 
Let $a_i(x)\in \cO_K[x]$ of degree $\le 1$ with $a_i(\theta)=\alpha_i$.
Fix an integer $\ell\ge 1$. Consider the equation  
  \begin{multline} \label{ex-Vb2} 
    y^2+\pi^{\ell+1} (a_1(x)f(x)+ \pi^{2\ell+1}a_3(x))y= \\
    \pi (f(x)^3+\pi^{2\ell+1}a_2(x)f(x)^2+\pi^{4\ell+2}a_4(x)f(x)+\pi^{6\ell+3}a_6(x)) 
  \end{multline}
  over $\cO_K$ and suppose that its discriminant is non-zero. Then it
  defines a genus $2$ curve $C$ and a Weierstrass model $W$ of $C$.
  There is only one singular point $p_0=\{ \pi=f(x)=y=0 \}$ and we have
  $k(p_0)=k'$, $\lambda(p_0)=4$ and $\delta(p_0)=3$. So $W$ is the
  unique minimal Weierstrass model of $C$ by \cite{LRN}, Proposition 4.6(1). 
  Let us show that the closed fiber of the minimal regular model of
  $C_{K'}$  has split type $[\cK-\cK-(2\ell+1)]$ if $\ell$ is big   enough. 
  
  Let $\theta'\in \cO_{K'}$ be the conjugate of $\theta$. Then
    $\theta-\theta' \in \cO_{K'}^*$.  
  In $\cO_{K'}[x]$, we can write
  \[ f(x)=(\theta-\theta')(x-\theta)(1+ (\theta-\theta')^{-1}(x-\theta)).\]
  Let 
  \[ x_0=(\theta-\theta')(x-\theta)/\pi^{2\ell+1}, \quad z=y/\pi^{3\ell+2}.\]
  Then $f(x)=\pi^{2\ell+1}(x_0 + \pi^{2\ell+1}o_2(x_0))$
  and $a_i(x)=\alpha_i + \pi^{2\ell+1}o_1(x_0)$, where the symbol $o_d(x_0)$
  means an element of $\cO_{K'}[x]$ of degree at most $d$. 
Equation~\eqref{ex-Vb2} becomes 
\[
  z^2+(\alpha_1 x_0 + \alpha_3 + \pi^{2\ell+1}o_3(x_0))z  =     x_0^3+\alpha_2 x_0^2+
  \alpha_4 x_0 +\alpha_6 + \pi^{2\ell+1}o_6(x_0).  
\] 
This defines a Weierstrass model $W'_0$ of $C_{K'}$. The point at infinity
$p'_0$ is rational over $k'$ and has multiplicities $\delta=\lambda=3$
(use the condition $\pi \mid \alpha_3$). This implies that
$W'_0$ is minimal. We have $(W'_0)_{k'} \setminus \{ p'_0 \})\simeq
(\mathcal E')_{k'} \setminus \{ \infty \}$.  So $(\mathcal E')_{k'}$
is the normalization of $(W'_0)_{k'}$ at $p'_0$. 
It is clear that when $\ell$ is big enough,
$W'_0\setminus \{ p'_0\}$ has the same minimal desingularization as
$\mathcal E' \setminus \{ \infty \}$. This implies in particular that
$W'_0$ is not an inner minimal Weierstrass model. 

Similarly, working with
$(\theta'-\theta)(x-\theta')/\pi^{2\ell+1}$, we get another extremal
minimal Weierstrass model $W'_n$, with $n=4\ell+2$. So over $K'$
we have the split type $[\cK-\cK-(2\ell+1)]$, and the non-split
type $[\cK-\cK-(2\ell+1)]$ over $K$. 
\end{example}

\subsubsection{The case $m=0$} The type $[\cK_1-\cK_2-0]$ (for Kodaira
symbols $\cK_i$ over $k$)  is only defined 
when at least one of the $\cK_i$'s is reducible (that is, any curve $Y_i$ over
$k$ corresponding to $\cK_i$ is reducible), and at least one of the
$Y_i$'s is not semi-stable.

Suppose that $Y_1$ is reducible. Then $\cC_k$ is
obtained by replacing a component $\simeq \PP^1_k$ of multiplicity
one by a geometrically irreducible component of $Y_2$ of multiplicity $1$,
and attach the new curve to the remaining part of $Y_2$.
We say that $\cC_k$ has split type. 
See Figure~\ref{K1K20} for two examples. 
If this only holds after some extension of $k$ we say that $\cC_k$ has
non-split type.

\begin{figure}   
\centering 
\begin{tikzpicture}[scale=1] 
  \coordinate (P3) at  (-0.2, 1.3);
  \coordinate (Q3) at  (1.25, 0.8);
  \draw  (P3)  [below left]  to [bend left] (Q3);
  \draw (-0.2, 1.6) node {$g=1$};
  \draw (0.5, 0) -- (3.6, 0);
  \draw (2, -0.2) node {3};  
  \draw (1, -0.5) -- ( 1, 1.6);
  \draw (0.8, 0.5) node {2};  
  \draw[thick] (2.8, -0.5) -- (2.8, 1.6);
  \draw (2.6, 0.5) node {2};  
\draw[thick] (2.4, 1) -- (3.4, 1);  
\draw (3.9, -0.5) node {,}; 
 
\draw (5, 0.25) -- (8.2, 0.25);  
\draw (5, -0.5) -- (6,1);
\draw (5,  1) -- (6, -0.5);
\draw (7, -0.5) -- (8,1);
\draw (7,  1) -- (8, -0.5); 

\end{tikzpicture} 
\caption{Split types $[\tI_0-\tIV^*_{2}-0]$ and  $[\tIV-\tIV-0]$.}
\label{K1K20}
\end{figure} 

Keep the hypothesis $C$ having a unique minimal Weierstrass model $W$.
Proposition~\ref{prop:non_split} rules out the 
case when $C$ has more than one minimal Weierstrass model
over some unramified extension $K'/K$.

\begin{proposition} \label{prop:K1K20} Suppose that $C$ has a unique
minimal Weierstrass model $W$, and that this holds over any
unramified extension $K'/K$.
\begin{enumerate}[\rm (1)] 
\item The curve $\cC_k$ has type $[\cK_1-\cK_2-0]$
over $\bar{k}$ if and  only if $W_k$ is reduced 
and there is a point $p_0\in W_k$ with $\delta(p_0)=3$. We then
have  $\cC^{\can}=W$.
\item Parts (1) and (2) of Proposition~\ref{prop:non_split} hold with
  $m=0$. 
\end{enumerate} 
\end{proposition}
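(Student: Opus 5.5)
The plan is to reduce to the case $k=\bar{k}$ (or at least to an unramified extension over which everything is split). The hypothesis that $W$ stays the unique minimal model over every unramified $K'/K$ guarantees that $W_{\cO_{K'}}$ is the unique minimal Weierstrass model of $C_{K'}$. By Lemma~\ref{lem:mini_down}(2), $W_{\cO_{K'}}$ is at least minimal, and uniqueness is then part of the hypothesis. Moreover, by Proposition~\ref{prop:mini}, $W_k$ non-reduced together with a quadratic point of multiplicity $\lambda=g+2=4$ is excluded.

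\textbf{The ``if'' direction of (1).} Suppose $W_k$ is reduced and $\delta(p_0)=3$. Then $p_0$ is a cusp. By Lemma~\ref{lem:d0}(4), every other point $p$ of $W_k$ has $\delta(p)\le 1$, so $W_k$ is smooth away from $p_0$ and $p_0$ is rational. Its normalization $\Gamma$ at $p_0$ has $p_a(\Gamma)=2-1=1$, as in Proposition~\ref{prop:W0}. The point $p_0$ has $\lambda(p_0)\le 3$. If $\lambda(p_0)=3$, the extended Tate algorithm (Proposition~\ref{prop:exT}) applies. Case (5.c.iv) cannot occur, because it would give $\lambda(p_1)=4$ and hence, by \eqref{eq:disc-p0}, a second minimal model $W(p_0)(p_1)$; this is the argument of Corollary~\ref{cor:g2desing}. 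So $(W,p_0)$ has a Kodaira type $\cK_2$. The minimal desingularization $\cC\to W$ then replaces $p_0$ by the configuration $Y_2$ of $\cK_2$, glued to the strict transform of $W_k$, which is the genus $1$ curve $\Gamma$. That strict transform is a curve of type $\cK_1$ ($\tI_0$, or a cuspidal/nodal type if we include further singularities; here there are none, so $\cK_1=\tI_0$ in the split sense). This is exactly the $m=0$ gluing pattern.

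I also have to check two things. First, the component of $Y_2$ replaced by $\Gamma$ has multiplicity $1$; this holds because the strict transform of $W_k$ has multiplicity $1$, since $W_k$ is reduced. Second, minimality: no $(-1)$-curve appears. Since $\Gamma$ has genus $1$ and the exceptional locus is minimal, there is none.

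\textbf{The claim $\cC^{\can}=W$.} Since $W_k$ is irreducible of genus $2$ with a rational singularity at $p_0$, contracting all exceptional components gives $W$. These components are exactly the $(-2)$-curves, because the principal component $\Gamma$ is the only component with $\deg\omega>0$ and $\omega$ has degree $2$ on it by adjunction. Hence $\cC^{\can}\to W$ is finite birational and therefore an isomorphism.

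\textbf{The ``only if'' direction of (1).} Suppose $\cC_{\bar k}$ has type $[\cK_1-\cK_2-0]$. By adjunction, $\cC^{\can}_{\bar k}$ consists of a single principal component of arithmetic genus $2$ carrying a unibranch singularity coming from the $m=0$ gluing; equivalently, there is one component with $\deg\omega=2$, of multiplicity $1$. Take $\cC^{\can}$. Its closed fiber is irreducible and reduced, so the quotient $\cC^{\can}/\qi$ has integral reduced genus $0$ fiber with a rational point, and is therefore $\PP^1_{\cO_K}$, as in Proposition~\ref{prop:stab}. Thus $\cC^{\can}$ is a Weierstrass model. Its fiber is reduced, and the singular point $p_0$ has normalization of genus $1$, so $\delta(p_0)=3$ by the genus formula. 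Since $\lambda\le 3$ everywhere, it is minimal by \cite{LRN}, Proposition 4.3, and therefore equals $W$ by uniqueness.

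\textbf{Part (2).} Over an unramified $K_0$ with residue field splitting the type, apply part (1) to $C_{K_0}$ to get the split type over $k(p_0)$. The Galois action argument is then verbatim the one in Proposition~\ref{prop:non_split}(2).

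\textbf{Main obstacle.} The delicate step is identifying rigorously the minimal desingularization at $p_0$ with the $m=0$ gluing of $\Gamma$ into the Kodaira configuration. Concretely, one must show that the strict transform $\Gamma$ of $W_k$ meets the exceptional locus exactly like the multiplicity-one identity component. I would extract this from the local equation \eqref{eq:step45} in the proof of Proposition~\ref{prop:exT}.
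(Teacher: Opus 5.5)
Your proof has a genuine gap, and it comes from misreading Lemma~\ref{lem:d0}(4).

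\textbf{The ``if'' direction.} Lemma~\ref{lem:d0}(4) only bounds $\sum_p \delta(p)[k(p):k]$ by $6$; in the inseparable case it gives $\sum_p(\delta(p)-1)[k(p):k]=4$. So once $\delta(p_0)=3$, there is still room for a second singular point $w$ with $\delta(w)\in\{2,3\}$. Together with the genus formula, this is exactly the paper's ``at most two singular points''.

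These cases are not marginal:
\begin{enumerate}
\item the type $[\tIV-\tIV-0]$ of Figure~\ref{K1K20} has two cusps on $W_k$;
\item $[\tI_\nu-\tII-0]$ with $\nu\ge 2$ has a node and a cusp;
\item after your base change to $\bar k$, every non-split $[\cK-\cK-0]$ becomes a case with two cusps.
\end{enumerate}
In these cases the strict transform $\Gamma$ of $W_k$ is a $\PP^1$ or a singular cubic, not an elliptic curve. So ``$\cK_1=\tI_0$'' fails, and so does your adjunction argument built on $p_a(\Gamma)=1$. You need to desingularize each singular point separately via Corollary~\ref{cor:g2desing}, each giving a Kodaira configuration glued along $\Gamma$. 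Proposition~\ref{prop:exT} also covers $\lambda(p_0)\le 2$, which you leave unaddressed.

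\textbf{The ``only if'' direction.} Your argument fails at the claim that the singularity of $\cC^{\can}_k$ created by the gluing is unibranch. If the reducible symbol at the gluing is $\tI_\nu$, contracting the rest of the cycle produces a node, with $\delta=2$. The genus formula by itself only gives $[\delta/2]=1$, i.e.\ $\delta\in\{2,3\}$, so ``normalization of genus $1$'' does not force $\delta=3$.

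The missing ingredient is the clause in the definition of $[\cK_1-\cK_2-0]$ that at least one $Y_i$ is not semi-stable. You never use it, and without it the statement is false, since a semi-stable fiber gives $\delta\le 2$ everywhere on $\cC^{\can}_k$. The paper argues as follows. If the image $p_2$ of the gluing point of the reducible $\cK_2$ has $\delta(p_2)=2$, then $\cK_2=\tI_\nu$, so $\cK_1$ is not semi-stable. Then $p_0$ is either the cusp lying on $\Gamma$ itself (when $\cK_1=\tII$ is irreducible) or the image of the gluing point of $\cK_1$. Only after that does Lemma~\ref{lem:d0}(4) give $\lambda\le\delta\le 3$ everywhere, which is what your unjustified ``$\lambda\le 3$ everywhere'' needs.

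\textbf{Part (2).} Likewise, you should take $p_0$ to be the point with $\delta(p_0)=3$ and deduce $[k(p_0):k]\le 2$ from Lemma~\ref{lem:d0}(4). An unspecified ``residue field splitting the type'' does not do this.
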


\begin{proof} (1) For any unramified extension $K'/K$,
  by Lemma~\ref{lem:mini_down}(2) and  Corollary~\ref{cor:mini_nu}, 
  $W_{\cO_{K'}}$ is minimal, hence is the
  unique minimal Weierstrass model of $C_{K'}$. So we can suppose
  $k$ algebraically closed. If $\cC_k$  
has type $[\cK_1-\cK_2-0]$, then $\cC^{\can}_k$ is integral. The quotient 
$\cC^{\can}/\qi$ is a model of $\PP^1_K$ with integral closed fiber,
so it is smooth and $\cC^{\can}$ is a Weierstrass model of $C$. 
Let us show that $\cC^{\can}=W$ and has a point with $\delta(p_0)=3$. 

Let $\Gamma$ be the irreducible component of $\cC_k$ shared by
$\cK_1$ and $\cK_2$. Then $\cC^{\can}_k$ is the image of $\Gamma$
by $\cC\to \cC^{\can}$.  Let us start by showing the existence of 
$p_0\in \cC^{\can}_k$ as above. We can suppose that $\cK_2$ is reducible. 
Let $p_2\in \cC^{\can}_k$ be the image of the intersection point of
$\Gamma$ with the other irreducible components corresponding to $\cK_2$.
As
\[ 2=p_a(\cC^{\can}_k)\ge p_a(\Gamma)+[\delta(p_2)/2]=1+[\delta(p_2)/2] ,\]
  we have $2\le \delta(p_2)\le 3$.
  If $\delta(p_2)=3$, we take $p_0=p_2$. 
  Otherwise $\cK_2=\tI_\nu$ for some $\nu \ge 2$. 
Then by the definition of the types $[\cK_1-\cK_2-0]$, $\cK_1$ is not
semi-stable. Thus, if $\cK_1$ is irreducible, then $\Gamma$ contains
a cusp $p_0\ne p_2$ and $\delta(p_0)=3$. If $\cK_1$ is reducible, the image of
intersection point of $\Gamma$ with the other irreducible components
corresponding to $\cK_1$ has $\delta =3$. 
By Lemma~\ref{lem:d0}(4),   we have
$\lambda(p)\le \delta(p)\le 3$ for all $p\in \cC^{\can}_k$, so
$\cC^{\can}$ is minimal (\cite{LTR}, Proposition 4.3(1)) hence is equal to $W$. 

Conversely, suppose that $W_k$ is reduced and
$\delta(p_0)=3$ for some $p_0\in W_k$. Then by Lemma~\ref{lem:d0}(4), there
are at most two singular points (including $p_0$) on $W_k$ and they all have 
$\delta\le 3$. The proposition then follows from Corollary~\ref{cor:g2desing}.

(2) If $p_0\in W_k(k)$, the proof above says that $\cC_k$ has type
$[\cK_1-\cK_2-0]$ over $k$. Otherwise, by Lemma~\ref{lem:d0}(4),
$[k(p_0):k]=2$. The Galois group of $k(p_0)/k$ permutes the two singular
points of $\cC_{k(p_0)}$ lying over $p_0$, each of which gives a $\cK$, so
it permutes the curves $Y_1, Y_2$. 
\end{proof} 

\begin{remark} When  $C$ has a stably minimal model $W$ 
  with reduced $W_k$ (see \S~\ref{subsect:stm}),
  the minimal regular model of $C$ is either semi-stable
  (if $\delta(p)\le 2$ for all $p\in W_k$) or is of type 
  $[\cK_1-\cK_2-m]$ over $\bar{k}$, with 
  $m\ge 0$ (when $\delta(p_0)=3$ for some $p_0\in W_k$).
  This has already been observed by M\"uller and Stoll  
  in \cite{MuSt}, \S 10.  The converse is almost true, we just need to
  add the case when $C$ has a Weierstrass model $W$ with
  a quadratic point $p_0$ of $\lambda(p_0)=4$
  (see Proposition~\ref{prop:mini}(3.b)).  
\end{remark}

\subsubsection{The case $m=-1$} \label{subsect:K1K2a} 
Let $\cK_1, \cK_2$ be Kodaira symbols over $k$ which over $\bar{k}$
belong to $\{ {\tI}_\nu^*, \tII^*, \tIII^*, \tIV^*\}$. So in a curve $Y_i$ over
$k$ represented by $\cK_i$, there is an irreducible
component $\Theta_i\simeq \PP^1_k$ of multiplicity $1$, 
intersecting another irreducible component $\Gamma_{i}\simeq \PP^1_k$
of multiplicity $2$. Then roughly speaking, a curve $\cC_k$ of type
$[\cK_1-\cK_2-(-1)]$ ($[\cK_1-\cK_2-\alpha]$ in Namikawa-Ueno's original
notation) is obtained by removing $\Theta_i$ from $Y_i$ and then by
glueing the two remaining parts along  $\Gamma_{1} \simeq \Gamma_{2}$.  
See Figure~\eqref{K1K2-1} for an example, the vertical component
at left is $\Gamma_{1}=\Gamma_{2}$. The dashed components
are the removed components $\Theta_1, \Theta_2$. 
 We also write 
\[ [\tI^*_0-\tI^*_0-(-1)]:=[\tI^*_{0-0-0}], \quad 
   [\tI^*_\nu-\tI^*_0-(-1)]:=[\tI^*_{\nu-0-0}], \quad
   [\tI^*_\nu-\tI^*_\ell-(-1)]:=[\tI^*_{\nu-\ell-0}],
   \]
   (\cite{NU}, pages 155, 171, 180).   The terminology of split and non-split
   types is similar to the case $m\ge 0$. 

\begin{figure}   
\centering 
\begin{tikzpicture}[scale=1] 
  \draw (1, 1) -- (2, 1);
  \draw (1, 0.5) -- (2, 0.5);
  \draw[dashed] (1, 0.2) -- (2, 0.2);
  \draw (1, 1.5) -- (2, 1.5); 
  \draw (0.5, 0) -- (5.7, 0);
  \draw (3, -0.2) node {3};  
  \draw (1.4, -0.5) -- (1.4, 1.6);
 \draw (1.6, -0.4) node {2};  
  \draw (3.3, 0.5) node {2};  
  \draw (3.5, -0.5) -- (3.5, 1.6);
  \draw (3, 1) -- (4, 1);  
\draw (4.6, 1) -- (5.6, 1);
 \draw (4.8, 0.5) node {2};  
 \draw (5, -0.5) -- (5, 1.6);
 \draw[dashed] (5.5, -0.5) -- (5.5, 0.5);
\end{tikzpicture} 
\caption{A closed fiber $\cC_k$ of type $[\tI_0^*-\tIV^*-(-1)]$.}
\label{K1K2-1}
\end{figure}

So far, when $C$ has a unique minimal Weierstrass model $W$,
we were mostly focused on the case where $W_k$ is reduced.  

\begin{proposition} \label{prop:e1d3} Let $W$ be
 a minimal Weierstrass model of $C$. Suppose that $W_k$ is non-reduced
    and $\delta(p)\le 3$ for all $p\in W_k$. 
  \begin{enumerate}[\rm (1)] 
  \item If $\max_{p\in W_k} \{ \delta(p) \}=1$, then $\cC_{\bar{k}}$ has type
    $[\tI^*_0-\tI^*_0-(-1)]$. 
  \item Suppose that $\max_{p\in W_k} \{ \delta(p) \}=2$. Let
    $\theta=\sum_{\delta(p)=2} 2[k(p):k]$. 
    \begin{enumerate}[\rm (a)]
\item If $\theta=2$,  then $\cC_{\bar{k}}$ has type $[\tI^*_\nu-\tI^*_0-(-1)]$ 
with $\nu\ge 1$. 
\item If $\theta=4$, then  $\cC_{\bar{k}}$ has type
  $[\tI^*_{\nu}-\tI^*_{\ell}-(-1)]$, with $\nu, \ell\ge 1$. 
\item If $\theta=6$, then 
  $\cC_{\bar{k}}$ has type $[\tI^*_{\nu-\ell-s}]$ (\cite{NU}, page 183) for some $\nu, \ell, s\ge 1$. 
    \end{enumerate}
  \item Suppose that there exists $p_1\in W_k$ with $\delta(p_1)=3$ and
    $\lambda(p_1)\le 3$. 
    Then $\cC_{\bar{k}}$ has type $[\cK_1-\cK_2-(-1)]$ with 
    $\cK_1\in \{ \tII^*, \tIII^*, \tIV^*\}$ and:   
    \begin{enumerate}[\rm (a)]
    \item $\cK_2=\tI^*_0$ if $\delta(p)\le 1$ for all
      $p\in W_k \setminus \{ p_1 \}$; 
    \item $\cK_2=\tI^*_\nu$, $\nu \ge 1$, 
      if there exists $p_2\in W_k $ with $\delta(p_2)=2$; 
    \item   $\cK_2 \in  \{ \tII^*, \tIII^*, \tIV^*\}$ if there exists
      $p_2\in W_k\setminus \{ p_1\}$ with $\delta(p_2)=3$.    
    \end{enumerate}
  \item In Case (2.a), $\cC_k$ is split. 
    In Cases (2.b) and (3), if $\cC_k$ is non-split, then there exists
    a quadratic point $p_0\in W_k$ such that $\cC_{k(p_0)}$ is split
    of type $[\cK-\cK-(-1)]$ over $k(p_0)$. 
\item We have $\cC^{\can}=W$. 
 \end{enumerate}
\end{proposition}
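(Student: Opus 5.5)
The plan is to desingularize $W$ directly, one singular point at a time. When $W_k$ is non-reduced and $g=2$, $(W_k)_{\mathrm{red}}\simeq \PP^1_k$ is the image of the double component. By Lemma~\ref{lem:d0}(3) we have $\sum_p \delta(p)[k(p):k]=2g+2=6$, and by Lemma~\ref{lem:d0}(2) the regular points of $W$ are exactly those with $\delta(p)=0$.

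I will resolve each singular point by the local recipes already established:
\begin{itemize}
\item points with $\delta(p)=1$ by Lemma~\ref{lem:l2d1}, which contributes a reduced $\PP^1_{k(p)}$ meeting the double line transversally;
\item points with $\delta(p)=2$ or $3$ rational over $k$ by Remark~\ref{rmk:e1d3}, i.e.\ Proposition~\ref{prop:exT}(5.b) or (5.c).
\end{itemize}
In case (5.b) the chain of $2\PP^1$'s grows, and the resulting tail is the $\tI^*_\nu$-tail. In case (5.c), with $\lambda(p_1)\le 3$, we obtain the $\tIV^*,\tIII^*,\tII^*$ tails. The sub-case (5.c.iv) is excluded because $\lambda(p_1)\le 3$. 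The case $\lambda=4$ with $\delta=3$ is excluded for a quadratic point by minimality (Proposition~\ref{prop:mini}) and for a rational point by \cite{LRN}, Proposition 4.3.

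Next I assemble the pieces. The strict transform $\Gamma$ of the double line is a $2\PP^1_k$. To read off the type, I partition the total $\delta$-count $6$ among the singular points and compare with Namikawa--Ueno:
\begin{itemize}
\item six $\delta=1$ points attached to $\Gamma$ give $[\tI^*_0-\tI^*_0-(-1)]$, with $\Gamma$ playing the role of $\Gamma_1=\Gamma_2$;
\item a $\delta=2$ point plus four $\delta=1$ points gives $[\tI^*_\nu-\tI^*_0-(-1)]$, and similarly $\theta=4,6$ give $[\tI^*_\nu-\tI^*_\ell-(-1)]$ and $[\tI^*_{\nu-\ell-s}]$;
\item a $\delta=3$ point absorbs half, and the remaining three units of $\delta$ form $\tI^*_0$ (three $\delta=1$ points), $\tI^*_\nu$ (one $\delta=2$ plus one $\delta=1$), or another starred exceptional type (one $\delta=3$).
\end{itemize}
The key check is that in each local picture the $\delta$-units at a point correspond to the branches that would meet the removed $\Theta_i$. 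I would verify this by comparing with the elliptic case, where a $\tI^*_0$ has four reduced branches on the central double line. In our setting only three such branches are present, the missing one being the removed $\Theta$.

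For part (4), the Galois group of $\bar{k}/k$ permutes the singular points. In case (2.a) the unique $\delta=2$ point is rational, and all parts are defined over $k$. In cases (2.b) and (3), non-splitness can only come from two conjugate points of equal $\delta\ge 2$, which forms a quadratic point $p_0$. The $\delta=1$ points only affect the components inside the $\tI^*$ tails, and these are accounted for by the split/non-split conventions of the Kodaira types. I expect this bookkeeping, in particular matching the precise Namikawa--Ueno configurations, to be the main obstacle.

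For part (5), I would show that every component of $\cC_k$ except $\Gamma$ is a $(-2)$-curve and that $\Gamma$ is not. Since $\deg\omega|_\Gamma\cdot 2=2g-2=2$ by the adjunction formula~\eqref{eq:adj_can} once all the others are $(-2)$-curves, the contraction of the $(-2)$-curves maps $\cC$ onto a model with a single component dominated by $W$'s quotient structure. That model is finite and birational over $W$, hence equal to $W$ by normality.
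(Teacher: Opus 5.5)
Your overall route is the paper's. You resolve singular points with Lemma~\ref{lem:l2d1} and with Proposition~\ref{prop:exT}(5.b),(5.c) via Remark~\ref{rmk:e1d3}, match the count $\sum_p\delta(p)[k(p):k]=6$ against the Namikawa--Ueno configurations, use Galois for (4), and identify $W$ with the contraction of the $(-2)$-curves for (5). As written, however, two steps need repair.

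\textbf{Non-rational singular points.} Remark~\ref{rmk:e1d3} and Proposition~\ref{prop:exT}(5.b),(5.c) apply only to points of $W_k(k)$. You never say what happens at a non-rational point with $\delta\ge 2$. Examples are a quadratic point with $\delta=2$ (giving $\theta=4$), a cubic point with $\delta=2$ (giving $\theta=6$), or a quadratic $p_1$ with $\delta(p_1)=3$. These include exactly the configurations behind the non-split cases in (4).

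The missing step is a reduction to rational singular points. Once you know $\lambda(p)\le 3$ for \emph{all} $p\in W_k$, Corollary~\ref{cor:mini_nu}(1) gives that $W_{\cO_{K'}}$ is the unique minimal Weierstrass model of $C_{K'}$ for every unramified $K'/K$. Under such a base change $\lambda$ and $\delta$ do not change (Remark~\ref{rmk:lambda'}(\ref{lambda_1})), and the minimal regular model commutes with unramified base change (\cite{LB}, Proposition 9.3.28). Since (1)--(3) concern $\cC_{\bar{k}}$, you may therefore assume all singular points are rational before applying the local recipes.

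\textbf{Excluding $\lambda=4$.} Your justification for excluding $\lambda=4$ at a $\delta=3$ point is wrong for quadratic points: minimality does not rule this out. Proposition~\ref{prop:mini}(1),(2.a) says precisely that a minimal $W$ with non-reduced fiber can carry a quadratic point with $\lambda=g+2=4$. Example~\ref{ex:Vns} realizes this with $\delta=3$. After base change such a point lands in case (5.c.iv), and $\cC_k$ has non-split type $[\cK-\cK-m]$ with $m\ge 1$ (Remark~\ref{rmk:nsp3}), not a $(-1)$-type. That is exactly why the hypothesis $\lambda(p_1)\le 3$ appears in the statement.

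The correct argument for $\lambda\le 3$ everywhere is the paper's Claim, and it is also the input Corollary~\ref{cor:mini_nu} needs above:
\begin{enumerate}
\item $\lambda(p_1)\le 3$ by hypothesis.
\item By the $\delta$-count, any other point with $\delta=3$ is rational, so it has $\lambda\le g+1=3$ by \cite{LRN}, Proposition 4.3.
\item Every remaining point has $\delta\le 2$, hence $\lambda\le\delta+1\le 3$ by Lemma~\ref{lem:d0}(2).
\end{enumerate}

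With these two repairs, the rest of your outline goes through as in the paper. That includes the branch bookkeeping against the elliptic configurations and the adjunction argument that every component except $\Gamma$ is a $(-2)$-curve, so $\cC\to W$ and $\cC\to\cC^{\can}$ contract the same curves.
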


\begin{proof} The singular points of $W$ are those $p$ with
  $\delta(p)\ge 1$. Recall that 
\[  \sum_{p\in W_k} \delta(p)[k(p):k]=6\]  
by Lemma~\ref{lem:d0}(3).
\smallskip

\emph{Claim: we have $\lambda(p)\le 3$ for all $p\in W_k$.}   
In Cases (1)-(2), $\lambda(p)\le \delta(p)+1 \le 3$
for all $p\in W_k$. In Case (3), as $\delta(p_1)=3$,
$\lambda(p)\le \delta(p)+1\le 2$ for all non-rational points
by the above equality;  rational points have $\lambda\le 3$ because $W$ is
minimal.

By Corollary~\ref{cor:mini_nu},  $W_{\cO_{K'}}$ is the unique minimal
  Weierstrass model of $C_{K'}$ over any unramified extension $K'$
of $K$.  So we can replace $K$ by an
unramified extension and assume that all singular points of $W$
are rational   over $k$.
  
(1)  Apply Lemma~\ref{lem:l2d1} (even if $p$ is not rational over $k$).

(2) If $\delta(p)=2$,  the desingularization is the same as for the $p_1\in W_1$ in 
 Proposition~\ref{prop:exT}(5.b).
 
(3) The desingularization of $p$ with $\delta(p)=3$ is the same as 
for $p_1\in W_1$ in  Proposition~\ref{prop:exT}(5.c). 

(4) In Case (2.a), the point $p$ with $\delta(p)=2$ is rational.
In (non-split) Cases (2.b) and (3), the point with $\delta(p)\ge 2$ is
quadratic. As for the cases $m\ge 0$, $\Gal(k(p)/k)$ permutes
the curves corresponding to $\cK_1, \cK_2$ over $k(p)$. 

(5) By (1)-(3), the desingularization of $W$ adds parts of curves
corresponding to Kodaira symbols, so the latter are contracted
to points in $\cC^{\can}$ and $W=\cC^{\can}$. 
\end{proof} 

\begin{remark} \label{rmk:nsp3} If in Proposition~\ref{prop:e1d3}(3)
we have $\lambda(p_1)\ge 4$, then $p_1$ is quadratic over $k$ and
$\lambda(p_1)=4$. This implies that $\cC_k$ has non-split type $[\cK-\cK-m]$
for some $m\ge 1$ (Propositions~\ref{prop:mini} and \ref{prop:non_split}).
\end{remark}

\begin{remark} Conversely, if $\cC_{\bar{k}}$ has one of the types given in
  Proposition~\ref{prop:e1d3}, then $\cC^{\can}/\qi$ is a smooth
  model of $\PP^1_K$ (use \cite{Lcd}, Proposition 7(a)). So
  $\cC^{\can}$ is a Weierstrass model $W$. Then it is not hard to show,
  by looking at the singular points of $W$,  that 
  $W$ is minimal and $\delta(p)\le 3$ for all $p\in W_k$.  In summary,
  in this section we covered the case when $C$, of genus $2$,
  has a minimal Weierstrass model $W$ such that $\delta(p)\le 3$
  for all $p\in W_k$.  
\end{remark}

\subsection{Stable reduction of curves of genus 2}\label{stable-g2} 

We characterize in this subsection
the existence (and give the description) of the stable 
reduction of $C$ in terms of the minimal Weierstrass models of $C$
(see Theorem~\ref{thm:stableg2} and Algorithm~\ref{algo:stab}). 
We do not assume that $C$ has more than one minimal Weierstrass model. 

The stable curves over an algebraic closure $\bar{k}$ of $k$ can be divided into
seven types:
\begin{enumerate}[\rm (I)] 
\item smooth;
\item irreducible with one double point (with normalization of genus $1$);
\item irreducible (rational) with two double points;
\item union of two projective lines intersecting each other at three points;
\item union of two smooth curves of genus $1$, intersecting each other at one point;
\item union of a smooth curve of genus $1$ with a rational curve with one double
  point, intersecting each other at one point;
\item union of two rational curves, each with one double point, and
  intersecting each other at one point. 
\end{enumerate}
In the cases (I)--(IV), the quotient of the curve by the hyperelliptic involution 
is isomorphic to $\PP^1_{\bar{k}}$, while in the other cases the quotient is union of
two projective lines intersecting transversely at one point.

A stable curve $X$ over $k$ can be irreducible with $X_{\bar{k}}$ reducible
of type (IV), (V) or (VII) (not for the type (VI) because the two
irreducible components are not isomorphic). 

\begin{definition} Let $X$ be a stable curve of genus $2$ over a field $k$
  such that $X_{\bar{k}}$ is of type (V) or (VII). We will say that $X$ is of
\begin{enumerate}[\rm (1)] 
  \item \emph{split type} (V) if it is union of two smooth connected curves 
of genus $1$ over $k$ intersecting transversely at a rational point; 
\item \emph{non-split} type (V) if it is irreducible, and if $X_{\bar{k}}$ has type (V).
\item \emph{split type} (VII) if it is union of two rational connected curves 
of arithmetic genus $1$ over $k$ intersecting transversely at a rational point; 
\item \emph{non-split type} (VII) if it is irreducible, and if $X_{\bar{k}}$
  has type (VII).
\end{enumerate}
The type (VI) is always considered as split. In \cite{DDMM}, \S 18, a finer
classification of (semi-) stable curves of genus $2$ is given. For example, split (resp. non-split) type (V)
corresponds to $1\times_r 1$ (resp. $1\tilde{\times}_r 1$). 

The \emph{stable reduction of $C$} is, by definition, the closed fiber of the
stable model of $C$ (if it exists). 
\end{definition}

For a Weierstrass model $W$ to be stable
it is enough to be \emph{semi-stable} (that is, $W_k$ is reduced and
$\delta(p)\le 2$ for all $p\in W_k$). Indeed, $W_{\bar{k}}$ is either 
irreducible, or is union of two copies of $\PP^1_{\bar{k}}$ 
intersecting each other at $3$ points. In both cases it is stable.  

\begin{theorem} \label{thm:stableg2} Let $g(C)=2$. 
  \begin{enumerate}[{\rm (1)}] 
  \item The curve $C$ has stable reduction of type {\rm (I), (II, (III)} or
    {\rm (IV)} over $\bar{k}$
    if and only if $C$ has a unique minimal Weierstrass model and if the latter
    is semi-stable. 
  \item The curve $C$ has stable reduction of split type {\rm (V), (VI)}, or
    {\rm (VII)} if 
    and only if $C$ has more than one minimal Weierstrass model, and if
    the extremal minimal Weierstrass models $W_0, W_n$ 
    are semi-stable away from $p_0$ and $p_n^*$ respectively
    (Theorem~\ref{chain-mwm}).  
\item The curve $C$ has stable reduction of non-split type {\rm (V)} 
    (resp. {\rm (VII)}) over $K$ if and only if the following
    conditions are satisfied:  
\begin{enumerate}
\item $C$ has a unique minimal Weierstrass model $W$ with a quadratic point
   $p_0\in W_k$ of $\lambda(p_0)=3$ or $4$; 
\item taking an unramified extension $K_0/K$ with residue field $k(p_0)$,
$C_{K_0}$ has stable reduction of split type {\rm (V)} (resp. {\rm (VII)}).
    \end{enumerate}
\end{enumerate}
\end{theorem}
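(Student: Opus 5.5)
The plan is to reduce everything to the stable model and its quotient by $\sigma$, using that a stable model, when it exists, coincides with $\cC^{\can}$ (the canonical model is stable exactly when $C$ has stable reduction over $K$). I would treat the three parts separately: part (1) through Proposition~\ref{prop:stab} and the quotient argument, parts (2) and (3) through Lemma~\ref{lem:can-mini} and Theorem~\ref{K1-K2}.

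For (1), first suppose $C$ has a stable model $\cX=\cC^{\can}$ of type (I)--(IV). Then $(\cX/\qi)_{\bar k}\simeq \PP^1_{\bar k}$, so $(\cX/\qi)_k$ is a geometrically integral conic with a rational point (the image of a fixed point, or argue as in Proposition~\ref{prop:stab}). Hence $\cX/\qi\simeq\PP^1_{\cO_K}$ and $\cX$ is a Weierstrass model. Stability gives $W_k$ reduced with only ordinary double points, so $\lambda(p)\le\delta(p)\le 2$. By \cite{LRN}, Proposition 4.6(1), $\cX$ is then the unique minimal Weierstrass model, and it is semi-stable. Conversely, if the unique minimal $W$ is semi-stable, the remark preceding the theorem shows that $W$ is stable. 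It is therefore the stable model, and since $W/\qi$ is smooth the type is (I)--(IV).

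For (2), suppose the stable reduction is of split type (V)--(VII). Then $\cC^{\can}_k$ has two geometrically irreducible components of arithmetic genus $1$, so Lemma~\ref{lem:can-mini} gives more than one minimal Weierstrass model and $\cC^{\can}\simeq W_0\vee W_n$. Away from $p_{0,n}$, the model $W_0\vee W_n$ is isomorphic to $W_0\setminus\{p_0\}\sqcup W_n\setminus\{p_n^*\}$, so stability there is exactly semi-stability of $W_0$ off $p_0$ and of $W_n$ off $p_n^*$. Conversely, by Theorem~\ref{K1-K2} and Lemma~\ref{lem:can-mini} we have $\cC^{\can}\simeq W_0\vee W_n$. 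Under the hypothesis its closed fiber is reduced, it is semi-stable away from $p_{0,n}$, and it is semi-stable at $p_{0,n}$ by Theorem~\ref{regular-even}(2). Each component has arithmetic genus $1$ (Proposition~\ref{prop:W0}) and meets the other at one rational point, so the model is stable of split type. The main care needed here is to check that a semi-stable $(W_0)_k\setminus\{p_0\}$ yields a genus-$1$ component that is smooth or nodal rational, which then gives type (V), (VI) or (VII).

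For (3), I would use that the stable model commutes with unramified base change. Non-split type means $\cC^{\can}_k$ is irreducible but splits over some unramified quadratic $K_0$. By (2) applied over $K_0$, $C_{K_0}$ then has more than one minimal Weierstrass model. Proposition~\ref{prop:non_split} (for reduced $W_k$, via Proposition~\ref{prop:only_one}) and Proposition~\ref{prop:mini}(3) together with Corollary~\ref{cor:mini_nu} (for non-reduced $W_k$) produce a quadratic $p_0$ with $\lambda(p_0)=3$ or $4$ and residue field $k(p_0)$. Conversely, (a) and (b) give split stable reduction over $K_0$. The stable model descends by Galois descent, and since $C$ has a unique minimal Weierstrass model, Lemma~\ref{lem:can-mini} shows that $\cC^{\can}_k$ cannot have two components; hence the reduction is non-split. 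The main obstacle is matching the uniqueness hypothesis in (a) with the non-split structure, which I would handle via Proposition~\ref{prop:non_split}.
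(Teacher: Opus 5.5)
Your proposal is correct and follows essentially the paper's route. You use Proposition~\ref{prop:stab}, with the same quotient argument for type (IV), for (1); the identification $\cC^{\can}\simeq W_0\vee W_n$ from Lemma~\ref{lem:can-mini} and Theorem~\ref{K1-K2} for (2); and Proposition~\ref{prop:non_split} for (3), spelling out steps the paper leaves implicit (stable model $=\cC^{\can}$, semi-stability at $p_{0,n}$, compatibility with unramified base change). The only point to tighten is the forward direction of (3): uniqueness of the minimal Weierstrass model over $K$ is a hypothesis of Proposition~\ref{prop:non_split}, not a conclusion, so you should derive it from Lemma~\ref{lem:can-mini}, which applies because $\cC^{\can}_k$ is irreducible.
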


\begin{proof} (1) For the types (I), (II) and (III) (corresponding to geometrically
  integral stable curves of genus $2$), our statement is a just
  a special case of Proposition~\ref{prop:stab}. The type (IV) is proved in the
  same way, because the quotient $X/\qi=\PP^1_k$  in this case.
  
  (2) is a special case of Theorem~\ref{K1-K2} because
  stable reductions of split types (V), (VI) and (VII) are
  respectively equivalent to the split types
  $[I_0-I_0-m]$, $[I_0-I_r-m]$ with $r>0$ and
  $[I_q-I_r-m]$ with $q,r >0$ for $\cC_k$. Similarly,  
  (3) is a special case of Proposition~\ref{prop:non_split}. 
\end{proof}

\begin{algo} \label{algo:stab} Let $g(C)=2$. We summarize the previous
  results allowing us to detect whether $C$ has
  stable reduction over $K$, and then determines it when the answer is
  positive. 
  \begin{enumerate}[\rm (1)]
  \item Use Algorithm~\ref{algo:mwm} to find the minimal Weierstrass models of
    $C$. 
  \item Suppose that $C$ has more than one minimal Weierstrass model. 
   See Theorem~\ref{chain-mwm}. 
    \begin{enumerate}
    \item If $W_0$ and $W_n$ are semi-stable away from respectively $p_0$
      and $p_n^*$ ({\it i.e.}, $\delta(p)\le 2$ for all $p\in W_0, W_n$ different
      from $p_0$ and $p_n^*$), 
then $C$ has stable reduction of split type (V), (VI) or (VII). The irreducible
    components of the stable reduction are the normalizations of
    $(W_0)_k, (W_n)_k$ at respectively $p_0$ and $p_n^*$.
    \item Otherwise $C$ does not have stable reduction over $K$. 
        \end{enumerate}
\item Suppose that $C$ has a unique minimal Weierstrass model $W$.
  \begin{enumerate}
  \item If $W$ is semi-stable, the stable reduction of $C$ is $W_k$ and its 
type is (I), (II), (III) or (IV) over $\bar{k}$. 
\item Suppose that $W_k$ has a 
  quadratic point $p_0\in W_k$ with $\lambda(p_0)=3$ or $4$.
  Let $K_0/K$ be a quadratic unramified extension with
  residue field $k(p_0)$. 
If $C_{K_0}$ has split reduction type (V) or (VII), then $C$ has stable
reduction over $K$, of non-split type (V) or (VII). 
\item If none of the above condition is satisfied,
  $C$ does not have stable reduction over $K$.  
  \end{enumerate}
\end{enumerate}  
\end{algo}

\subsection{Volume form} \label{subsect:vol} 
Let $A=\Jac(C)$ be the Jacobian of $C$ and
let $\cA$ be its N\'eron model over $\cO_K$ with zero section
$e: \Spec \cO_K\to \cA$. A \emph{volume form} on
$\cA$ is a basis of $\det \omega_{\cA/\cO_K}$ 
where $\omega_{\cA/\cO_K}:=e^*\Omega^1_{\cA/\cO_K}$. 

Let $g=2$.  Let $y^2+Q(x)y=P(x)$ be an equation of $C$ over $K$. 
Let $\eta = dx/(2y+Q(x)) \wedge xdx/(2y+Q(x))\in \det H^0(C, \omega_{C/K})$
and let $\Delta$ be the discriminant of this equation. 
It is well known that the section
$\Delta^2 \eta^{\otimes 20}$ of $(\det H^0(C, \omega_{C/K}))^{\otimes 20}$ is
independent of the equation. This implies that if $W$ is a minimal Weierstrass
model of $C$, and
$\nu(\Delta_{C})=\nu(\Delta_W)$ is the minimal discriminant of $C$, then
\begin{equation}
  \label{eq:vol_w}
\det H^0(W, \omega_{W/\cO_K})=
\pi^{\frac{\nu(\Delta) - \nu(\Delta_{C})}{10}} 
\left(\frac{dx}{2y+G(x)} \wedge  \frac{xdx}{2y+G(x)}\right) \cO_K
\end{equation}
Recall for the next proposition that
$\varepsilon(W)=0$ or $1$ depending on whether $W_k$ is reduced or not. 

\begin{proposition} \label{prop:volume} Let $g=2$. Let $m_C$ be the number
  of minimal Weierstrass models of $C$ and let $W$ be one of them, 
defined by an equation $y^2+Q(x)y=P(x)$. 
\begin{enumerate}[\rm (1)] 
\item If $m_C\ge 2$, then  
  \begin{equation}
    \label{eq:vol}
    \det \omega_{\cA/\cO_K} =\pi^{m_C-1} \frac{dx}{2y+Q(x)}\wedge \frac{xdx}{2y+Q(x)}\cO_K 
  \end{equation}
as submodules of $\det H^0(C, \Omega^{1}_{C/K})$. 
\item Suppose that $m_C=1$, $\delta(p)\le 3$ 
  for all $p\in W_k$, and that there is no quadratic point $p_0\in W_k$ with
  $\lambda(p_0)=3+\varepsilon(W)$. Then Equality~\eqref{eq:vol} holds. 
\item Suppose that $m_C=1$, $\delta(p)\le 3$ for all $p\in W_k$, and
  there exists $p_0\in W_k$ with $\lambda(p_0)=3+\varepsilon(W)$. Let
  $m'\ge 1$ be the number of minimal Weierstrass models of $C_{K'}$ where
  $K'$ is an unramified extension of $K$ with residue field $k(p_0)$, then
Then 
  \[ \det \omega_{\cA/\cO_K} =\pi^{m'-1+\varepsilon(W)} \frac{dx}{2y+Q(x)}\wedge \frac{xdx}{2y+Q(x)}\cO_K 
  \]
as submodules of $\det H^0(C, \Omega^{1}_{C/K})$.
\end{enumerate}
\end{proposition}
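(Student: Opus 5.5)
The common strategy for all three parts is to combine Proposition~\ref{prop:volume_can} with an explicit description of $\cC^{\can}$. Since $g=2$, the gcd hypothesis there holds, so we have $\det\omega_{\cA/\cO_K}\simeq\det H^0(\cC^{\can},\omega_{\cC^{\can}/\cO_K})$. By \eqref{eq:vol_w}, the right-hand side of \eqref{eq:vol} does not depend on which minimal Weierstrass model $W$ we pick. So in each case it suffices to compute $\det H^0(\cC^{\can},\omega)$ inside $\det H^0(C,\Omega^1_{C/K})$ relative to $\det H^0(W,\omega_{W/\cO_K})$.

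For (1), take $m_C\ge 2$. Theorem~\ref{K1-K2} gives $\cC^{\can}\simeq W_0\vee W_n$ with $n=2m_C-2$. Proposition~\ref{prop:omega} with $g=2$ then gives $\det H^0(Y,\omega_{Y/\cO_K})=\pi^{n/2}\det H^0(W_0,\omega_{W_0/\cO_K})$. Here $n/2=m_C-1$. Since $W_0$ is minimal, \eqref{eq:vol_w} lets us replace $W_0$ by $W$, and this proves \eqref{eq:vol}.

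For (2) and (3), $\cC^{\can}$ is identified by the case analysis of \S\ref{double-ell}. If $W_k$ is non-reduced, Proposition~\ref{prop:e1d3}(5) gives $\cC^{\can}=W$. This requires $\lambda\le 3$, which holds when there is no quadratic point with $\lambda=4$; the quadratic case is exactly Remark~\ref{rmk:nsp3}. If $W_k$ is reduced and has a point with $\delta=3$, and the uniqueness persists over unramified extensions (which is what excluding the quadratic point with $\lambda=3$ buys us, by Proposition~\ref{prop:only_one}), then Proposition~\ref{prop:K1K20} gives $\cC^{\can}=W$. If $W_k$ is reduced with all $\delta\le 2$, then $W$ is semi-stable, hence stable and normal with rational singularities. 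In that case the minimal regular model has only $(-2)$-chains above $W$, so again $\cC^{\can}=W$. In every case of (2), $m_C-1=0$ and \eqref{eq:vol} follows.

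For (3), use that the formation of $\cA$, of $\omega_{\cA}$, and of minimal Weierstrass models is compatible with the unramified extension $K'/K$ (Lemma~\ref{lem:mini_down}, and \cite{BLR} for Néron models). This reduces to computing over $K'$ and then descending, since an equality of $\cO_K$-lattices can be checked after the faithfully flat base change $\cO_K\to\cO_{K'}$. Over $K'$, part (1) gives $\det\omega_{\cA}\otimes\cO_{K'}=\pi^{m'-1}\eta'\cO_{K'}$, where $\eta'$ is the form attached to a minimal Weierstrass model of $C_{K'}$.

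It remains to compare $\eta'$ with the form of $W_{\cO_{K'}}$.
\begin{itemize}
\item If $\varepsilon(W)=0$, then $W_{\cO_{K'}}$ is minimal by Lemma~\ref{lem:mini_down}(2), so no correction is needed.
\item If $\varepsilon(W)=1$, then $\lambda(p_0)=4$. Proposition~\ref{prop:mini}(3.b) shows that the minimal discriminant drops by $2(2g+1)=10$. By \eqref{eq:vol_w}, this produces the extra factor $\pi^{10/10}=\pi^{\varepsilon(W)}$.
\end{itemize}

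The main obstacle is the bookkeeping in (2)–(3). One must check that the case split ($W_k$ reduced or not, with or without a point of $\delta=3$) really covers all configurations with $\delta\le 3$. One must also check that in the reduced, all-$\delta\le 2$ case no component of $W_k$ is contracted in $\cC^{\can}$, so that $\cC^{\can}=W$ rather than something strictly smaller. I would verify the latter with the adjunction formula \eqref{eq:adj_can}: $\omega_W$ has positive degree on each component of $W_k$.
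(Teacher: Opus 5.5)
Your route is the paper's own: Proposition~\ref{prop:volume_can} together with $\cC^{\can}\simeq W_0\vee W_n$ and Proposition~\ref{prop:omega} (where $ng^2/8=n/2=m_C-1$) for (1). For (2) you identify $\cC^{\can}=W$ via stability, Proposition~\ref{prop:K1K20} or Proposition~\ref{prop:e1d3}(5). For (3) you base change to $K'$ and use the drop of the minimal discriminant by $10$ from Proposition~\ref{prop:mini}(3.b).

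There is, however, one case missing in (3). You write that part (1) over $K'$ gives $\pi^{m'-1}\eta'\cO_{K'}$. But part (1) needs $m'\ge 2$, and the statement explicitly allows $m'=1$. This really occurs when $\varepsilon(W)=0$. In Example~\ref{ex:mini_nu} with $g=2$ and $3\le \ell\le 5$, the point $p_0$ is quadratic with $\lambda(p_0)=3$ and all $\delta\le 3$. Yet $W_{\cO_{K'}}$ is the unique minimal Weierstrass model of $C_{K'}$.

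In that case you must apply part (2), not part (1), to $W_{\cO_{K'}}$ over $K'$. This is what the paper does (``apply (1)--(2) to $W_{\cO_{K'}}$''). The hypotheses of (2) hold over $K'$:
\begin{itemize}
\item $\delta$ does not change under the extension;
\item over $k'=k(p_0)$, the two points above $p_0$ are rational with $\delta=3$;
\item so by Lemma~\ref{lem:d0}(4) every other point of $W_{k'}$ has $\delta\le 1$, hence $\lambda\le 1$;
\item in particular, $W_{k'}$ has no quadratic point with $\lambda=3$.
\end{itemize}
Part (2) then gives $\det\omega_{\cA/\cO_K}\otimes_{\cO_K}\cO_{K'}=\eta\,\cO_{K'}$, with $\eta=\frac{dx}{2y+Q(x)}\wedge\frac{x\,dx}{2y+Q(x)}$ taken from the equation of $W$. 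This is the claimed formula, since $m'-1+\varepsilon(W)=0$, and it descends to $\cO_K$ by faithful flatness as in your argument. With this case added, your proof is complete.
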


\begin{proof} (1) Let $W_0, W_n$ be the extremal minimal Weierstrass models of
  $C$. Then $n=2m_C-2$.  
  By Lemma~\ref{lem:can-mini}, $\cC^{\can}\simeq W_0\vee W_n$. 
We have by Proposition~\ref{prop:volume_can} a canonical isomorphism 
\[
  \det \omega_{\cA/\cO_K} \simeq \det H^0(\cC^{\can}, \omega_{\cC^{\can}/\cO_K}).
\]  
We can now apply Proposition~\ref{prop:omega}.
This also follows from
results of \cite{Lcd} but in a more indirect way.

(2) First suppose $W_k$ is reduced. If $\delta(p)\le 2$ for all $p\in W_k$, then  
$W$ is stable (Theorem~\ref{thm:stableg2}(1)), hence $\cC^{\can}=W$. 
Suppose that $W_k$ has a point with $\delta=3$. 
By Proposition~\ref{prop:only_one},
$W$ is the unique minimal Weierstrass model of $C$ over any unramified
extension of $K$. Proposition~\ref{prop:K1K20} says that $\cC^{\can}=W$
and we conclude by Proposition~\ref{prop:volume_can}.

Suppose now $W_k$ non-reduced. According to Proposition~\ref{prop:mini},
$W$ is minimal over any unramified extension of $K$. As the desired equality 
is compatible with unramified extensions of $K$, and such extensions
do not change the multiplicities $\delta$, we can suppose that $k$ is
algebraically closed. Then $\cC^{\can}=W$ by Proposition~\ref{prop:e1d3}(4) 
and we conclude as in the previous case.

(3) If $W_k$ is reduced, we just apply (1)-(2) to $W_{\cO_{K'}}$. Suppose that 
$W_k$ is non-reduced. Let $W'$ be a minimal Weierstrass model
of $C_{K'}$ over $\cO_{K'}$. Then $\nu(\Delta_W)=\nu(\Delta_{W'})+10$
(Proposition~\ref{prop:mini}(3.b)). Now apply  Equality \eqref{eq:vol_w} 
and (1) to $W'$ which is not unique by Proposition~\ref{prop:mini}(3.a).  
\end{proof}
   
\subsection{Euler factors} \label{subsect:EF} In this subsection,
$k=\FF_q$ is a finite field.  Let $A$ be an abelian variety over $K$ with
N\'eron model $\cA$ over $\cO_K$. Let $\ell$ be a prime number different
from $\chara(k)$ and let $V_\ell(\cA_k^0)$
be the Tate module of $\cA_k^0$ (the identity component of $\cA_k$). 
The Frobenius $x\mapsto x^q$ of $\bar{k}$
on $\cA_k^0$ induces a $k$-linear 
automorphism  $\mathrm{Frob}$ on $V_\ell(\cA_k^0)$. Denote by
\[
  P({\cA_k^0}, T)=\det (1- T. \mathrm{Frob}^{-1}|_{V_\ell(\cA_k^0)^{\vee}})\in
  \mathbb Q_\ell[T].   
\] 
Then the \emph{Euler factor} of $A$ is the function
$P({\cA_k^0}, q^{-s})^{-1}$ over $\mathbb C$. See also \cite{BW}, \S 2.  
We have
\[ \deg P({\cA_k^0}, T)=\dim_{{\mathbb Q}_\ell} V_\ell(\cA_k^0)=
2a_C+t_C\]
where $a_C$ and $t_C$ are respectively the
abelian and toric ranks of $\cA_k^0$. We have $a_C+t_C\le g$.
In particular, when $g=2$, then $ \deg P({\cA_k^0}, T) \ge 3$ only when
$C$ has stable reduction over $K$. 

\begin{theorem}\label{thm:g2-Euler} Suppose that $g(C)=2$ and that $C$
  has more than one  minimal Weierstrass model. Let $W_0, W_n$ be the
  extremal ones (Definition~\ref{extremal-MWM}). Let $\Gamma_0, \Gamma_n$
  be respectively the normalizations of 
  $(W_0)_k$ and $(W_n)_k$ at $p_0$ and $p_n^*$. Then we have
\[
P({\cA^0_k}, T)=P({\Pic^0_{\Gamma_0/k}}, T)P({\Pic^0_{\Gamma_n/k}}, T). 
\]
\end{theorem}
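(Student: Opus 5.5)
Since $g=2$ and the gcd of the multiplicities of $\cC_k$ is $1$, the morphism $\Pic^0_{\cC/\cO_K}\to \cA^0$ is an isomorphism (\cite{BLR}, Theorem 9.5/4, as in the proof of Proposition~\ref{prop:volume_can}). Hence $\cA^0_k\simeq \Pic^0_{\cC_k/k}$, and $P(\cA^0_k,T)$ is the characteristic polynomial of Frobenius on $V_\ell(\Pic^0_{\cC_k/k})$. So it suffices to decompose $\Pic^0_{\cC_k/k}$, up to unipotent and toric parts that do not contribute, in terms of $\Pic^0_{\Gamma_0/k}$ and $\Pic^0_{\Gamma_n/k}$.

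By Theorem~\ref{regular-even} and Corollary~\ref{cor:gl}, $\cC_k$ is $X_0\cup X_n$ joined by a chain of $\PP^1_k$'s with transverse rational intersections. The dual graph of this configuration is a tree, since the chain is a tree and each $X_i$, being the fibre of a regular model of a type $\cK_i$ curve, has a tree-like configuration as well. I will first show that
\[
\Pic^0_{\cC_k/k}\simeq \Pic^0_{X_0/k}\times \Pic^0_{X_n/k},
\]
modulo unipotent groups. The chain of $\PP^1$'s contributes nothing because it is a tree with rational components. To prove this, I will use the standard exact sequence $0\to L\to \Pic^0_{\cC_k}\to \prod_j \Pic^0_{\tilde D_j}\to 0$ over the components $D_j$, whose kernel $L$ is a torus of dimension $h^1(\text{dual graph})$ times a unipotent group. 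It is easiest to pass to $\cC^{\can}_k$ instead of $\cC_k$. The abelian and toric parts, together with the Frobenius action, are birational invariants in the following sense: $\Pic^0$ of $\cC_k$ and of $\cC^{\can}_k$ differ only by unipotent groups, because contracting $(-2)$-trees yields rational singularities (\S~\ref{subsect:cm}). I expect the cleanest route is as follows. By Lemma~\ref{lem:can-mini}, $\cC^{\can}\simeq W_0\vee W_n$, and its closed fibre is the union of the strict transforms $\Gamma'_0,\Gamma'_n$ of $(W_0)_k,(W_n)_k$ meeting transversally at the single rational point $p_{0,n}$. For such a curve, $\Pic^0 = \Pic^0_{\Gamma'_0}\times\Pic^0_{\Gamma'_n}$, because a transverse intersection at one rational point adds no torus.

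Next I will identify $\Gamma'_0$ with $\Gamma_0$. The morphism $\Gamma'_0\to (W_0)_k$ is an isomorphism away from $p_0$. Over $p_0$ there is a single rational point, which is smooth on $\Gamma'_0$ by Proposition~\ref{prop:reg}(3), since $\varepsilon(W_0)=0$ and $\delta(p_0)=3=2r+1$ with $r=1$ (using $\lambda(p_0)=3$ from Theorem~\ref{chain-mwm}). So $\Gamma'_0$ is the normalization of $(W_0)_k$ at $p_0$, that is, $\Gamma'_0=\Gamma_0$. The same argument gives $\Gamma'_n=\Gamma_n$.

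The main obstacle is to justify rigorously that replacing $\cC_k$ by $\cC^{\can}_k$ leaves the $\ell$-adic Tate module unchanged as a Frobenius module. My first attempt will use that $f:\cC\to\cC^{\can}$ satisfies $R^1f_*\cO_\cC=0$ and $f_*\cO_{\cC}=\cO_{\cC^{\can}}$. This gives $f^*:\Pic^0_{\cC^{\can}_k}\to\Pic^0_{\cC_k}$ with a unipotent kernel and cokernel; the cokernel vanishes because the exceptional components are trees of $\PP^1$'s, hence contribute no torus or abelian part. If this is awkward, the fallback is to compute directly on $\cC_k$ via the dual graph exact sequence. The dual graph is a tree: it is a chain joined to the trees of the Kodaira configurations, except possibly for $\tI_\nu$ loops inside $X_0$ or $X_n$. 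Those loops, however, already appear in $\Gamma_0$ or $\Gamma_n$ as nodes, so the toric parts match on both sides. Finally, the characteristic polynomial of Frobenius is multiplicative on products and trivial on unipotent groups, which yields $P(\cA^0_k,T)=P(\Pic^0_{\Gamma_0/k},T)\,P(\Pic^0_{\Gamma_n/k},T)$.
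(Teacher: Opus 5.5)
Your proof is correct and uses the same strategy as the paper. You identify $\cA^0_k$ with $\Pic^0_{\cC^{\can}_k/k}$, use that $\cC^{\can}\simeq W_0\vee W_n$ (Lemma~\ref{lem:can-mini}) has closed fibre equal to two components meeting transversally at one rational point, and split $\Pic^0$ as a product.

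The only real difference is your detour through $\cC$. The paper applies \cite{BLR}, Theorem 9.7/1 directly to $\cC^{\can}$, whose closed fibre $(W_0\vee W_n)_k$ is reduced and whose singularities are rational (\S~\ref{subsect:cm}). So your ``main obstacle'' never arises there. Your way around it does work, and it gives more than you claim. From $R^1f_*\cO_\cC=0$ and $f_*\cO_\cC=\cO_{\cC^{\can}}$ one gets $f_*\cO_{\cC_k}=\cO_{\cC^{\can}_k}$ and $R^1f_*\cO_{\cC_k}=0$. Hence $f^*$ is injective on Picard groups and induces an isomorphism $H^1(\cC^{\can}_k,\cO)\simeq H^1(\cC_k,\cO)$ by Leray. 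So $f^*\colon \Pic^0_{\cC^{\can}_k/k}\to \Pic^0_{\cC_k/k}$ is an isomorphism, with no unipotent error terms, and the dual-graph fallback is unnecessary.

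One citation needs fixing. Proposition~\ref{prop:reg}(3) is about the strict transform of $(W_0)_k$ in $W_0\vee W_1$, not in $W_0\vee W_n$. Smoothness in $W_0\vee W_1$ (or in $W_0\vee\cdots\vee W_n$) does not by itself survive the contraction of $\Gamma_1,\dots,\Gamma_{n-1}$, since contracting can create a cusp on the image curve. Deduce the identification of the strict transform $\Gamma'_0$ with the normalization $\Gamma_0$ from Theorem~\ref{regular-even}(2) instead. There $W_0\vee W_n$ is semi-stable at $p_{0,n}$, so the two branches through $p_{0,n}$, namely the strict transforms of $(W_0)_k$ and $(W_n)_k$, are smooth there. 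Then $\Gamma'_0\to (W_0)_k$ is finite, birational, an isomorphism away from $p_0$, and smooth over $p_0$, hence the normalization at $p_0$. The same argument applies to $\Gamma_n$.
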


\begin{proof} As $\cC^{\can}_k=(W_0\vee W_n)_k$ is reduced, by
  \cite{BLR}, Theorem 9.7/1, $\Pic^0_{\cC^{\can}/\cO_K}$ is representable and 
the canonical  morphism
  \[ \Pic^0_{\cC^{\can}/\cO_K}\to \cA^0\]
is an isomorphism. As $\cC^{\can}_k$ is the union of
  $\Gamma_0, \Gamma_n$ intersecting transversely at a rational point, we have
\[\Pic^0_{\cC^{\can}_k/k}\simeq \Pic^0_{\Gamma_0/k}\times
  \Pic^0_{\Gamma_n/k},\]
therefore
  $V_\ell( \Pic^0_{\cC^{\can}_k/k}) \simeq V_\ell(\Pic^0_{\Gamma_0/k}) \oplus 
V_\ell(  \Pic^0_{\Gamma_n/k})$ and the theorem is proved. 
\end{proof}

\begin{remark} As $\Gamma_0$ is geometrically integral of arithmetic genus $1$, 
  $\Pic^0_{\Gamma_0/k}$ is isomorphic to $\Gamma_0$ if $\Gamma_0$ is smooth; 
  is a torus of dimension $1$ if there is a node on $\Gamma_0$,
  and is trivial if there is a cusp.  
\end{remark}

\begin{remark} In \cite{MS}, the case of the reduction type $[I_0-I_0-m]$
(split or not)  is solved if $\chara(k)\ne 2$, using the theory of cluster pictures. 
\end{remark}

\begin{remark}\label{rmk:Euler_f} Suppose that $C$ has a unique minimal Weierstrass
  model $W$. 
\begin{enumerate}[\rm (1)] 
\item If $W_k$ is reduced and satisfies the conditions of
Proposition~\ref{prop:volume}(2), then $\cC^{\can}=W$ and
  as in the theorem above we have $\Pic^0_{\cC^{\can}/\cO_K}\simeq \cA^0$, hence 
  \[\cA^0_k =\Pic^0_{W_k/k}  \ \text{ and } \ P({\cA^0_k}, T)=P({\Pic^0_{W_k/k}}, T).\] 
\item Suppose that $W_k$ is non-reduced and satisfies the conditions of
  Proposition~\ref{prop:volume}(2). Then it follows from
  Proposition~\ref{prop:e1d3} 
  that $\cA^0_{\bar{k}}\simeq \Pic^0_{\cC_{\bar{k}/\bar{k}}}$ is unipotent and
$P({\cA^0_k}, T)=1$. 
\item Suppose that $W$ is as Proposition~\ref{prop:volume}(3).
  Let $K'/K$ be an unramified extension with residue field $k'=k(p_0)$,
  where $p_0\in W_k$ is the quadratic point with $\lambda(p_0)=3+\varepsilon$.
  If $W_{\cO_{K'}}$ is the unique minimal Weierstrass model of $C_{K'}$, then
  as in (1), we have $P(\mathcal A^0_k, T)=P(\Pic^0_{W_k/k}, T)$. 
  
  Suppose now that $C_{K'}$ has more than one minimal Weierstrass model.
  Let $W', W''$ be the extremal   minimal Weierstrass models 
  of $C_{K'}$ and let $\Gamma'$ be the
    normalization of $W'_{k'}$ at $p'_0\in W'\wedge W''$. Then  
  \[ P(\cA^0_k,T)=P(\Pic^{0}_{\Gamma'/k'}, T^2).\] 
  Indeed, we have 
  $\cA^0_{k'}=\Pic^0_{\cC^{\can}_{k'}/k'}\simeq \Pic^{0}_{\Gamma'/k'}\times
  {}^{\sigma}(\Pic^{0}_{\Gamma'/k'})$ as seen in the proof of
  Theorem~\ref{thm:g2-Euler}.   So 
  \[\cA^{0}_k\simeq      \mathrm{Res}_{k'/k} (\Pic^0_{\Gamma'/k'}). \] 
  To conclude, notice that for any smooth commutative algebraic group $G'$ over
  a finite extension $k'/k$, we have  $P(\Res_{k'/k}G', T)=P(G, T^{[k':k]})$  
(see the proof of \cite{MS}, Proposition 3.9(2)  or
  that of \cite{Mil-AAV}, Proposition 3). 
  \end{enumerate}
\end{remark}

\subsection{Tamagawa numbers} \label{subsect:Tn} 
The \emph{Tamagawa number of $\Jac(C)$} is the order of the group of
rational points $\Phi_C(k)$ of the group of components $\Phi_C$ of $\cA$.
Suppose that $g(C)=2$.
For any Kodaira symbol $\cK$ over $k$, we
denote
\[\Phi({\cK}):= \Phi_E(k)\]
where $E$ is an elliptic  
curve over $K$ whose minimal regular model has type $\cK$. It depends only
on $\cK$ and is given in Tate's algorithm, see also \cite{LB}, Remark 10.2.24. 
Propositions~\ref{prop:tkk}, \ref{prop:tkkc} and \ref{prop:m-1} below will be proved in a latter work under more general hypothesis. 

\begin{proposition} \label{prop:tkk} Suppose that $g(C)=2$ and
  $\cC_k$ has split type $[\cK_1-\cK_2-m]$, $m\ge 0$.
Then 
      \[ \Phi_C(k)\simeq \Phi({\cK_1})\times \Phi({\cK_2}).\] 
\end{proposition}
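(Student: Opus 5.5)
The plan is to compute $\Phi_C$ from the intersection matrix of $\cC_k$ via Raynaud's theorem, and then to use the tree-like shape of a split fiber of type $[\cK_1-\cK_2-m]$ to split the computation into the two elliptic pieces. Since $g=2$, the gcd of the multiplicities of the components of $\cC_k$ is $1$ (see the end of the proof of Proposition~\ref{prop:volume_can}). Therefore $\Pic^0_{\cC/\cO_K}\simeq\cA^0$, and Raynaud's description of the component group applies (\cite{BLR}, Theorem 9.6/1). With $\Phi_C$ viewed as a Galois module, we have $\Phi_C=\ker(\alpha)/\mathrm{Im}(\beta)$, where $\beta:\Z^{I}\to\Z^{I}$ is the intersection matrix on the geometric components $I$ of $\cC_{\bar k}$, weighted by the multiplicities $d_i$, and $\alpha$ is the degree map $(a_i)\mapsto\sum d_ia_i$. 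Taking $\mathrm{Gal}(\bar k/k)$-invariants then gives $\Phi_C(k)$.

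The structure of the fiber is as follows. By hypothesis $\cC_k$ is obtained from $Y_1$ and $Y_2$, the curves attached to $\cK_1$ and $\cK_2$, either by joining them through a chain of $m-1$ rational lines meeting the principal components $\Gamma_1,\Gamma_2$ (which have multiplicity $1$) at rational points, or, when $m=0$, by identifying a multiplicity-one component. The key point is that the dual graph is a tree obtained by linking two subgraphs through a path whose end vertices and interior vertices all have multiplicity $1$, and every vertex on that path is Galois-fixed. I would use the standard lemma that for such a configuration the discriminant group $\ker\alpha/\mathrm{Im}\beta$ decomposes as the direct sum of the component groups of the two sides. One can see this by writing $\Phi$ as the group of degree-zero divisors on the vertices modulo principal ones. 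A path made of multiplicity-one $(-2)$-curves contributes nothing: one moves classes along the chain using the relations $\beta(e_j)$. This identifies $\Phi_C$ with $\Phi(Y_1)\oplus\Phi(Y_2)$, where $\Phi(Y_i)$ is the component group computed from $Y_i$ viewed as the fiber of an elliptic surface. Here $\Gamma_i$ plays the role of the identity component, and the self-intersection of $\Gamma_i$ is adjusted so that the configuration really is the Kodaira fiber $\cK_i$ ($\Gamma_i^2$ changes by the chain attachment). For $m=0$, the shared component plays the identity-component role for both sides.

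To make this decomposition honest I would construct explicit mutually inverse homomorphisms. The first restricts a degree-zero divisor supported on the components of the $Y_i$-side (the components other than $\Gamma_i$ and the chain). The second is the map $\Phi_C\to\Phi(Y_i)$ induced by sending the chain components and the other side to $\Gamma_i$. I would check that these respect the principal relations; this reduces to the fact that each $\Gamma_i$ meets the rest of the graph at a single point with multiplicity $1$. Everything is defined by Galois-equivariant formulas because the attaching points and the chain are rational in the split case. Hence the isomorphism is Galois-equivariant, and taking $k$-points gives $\Phi_C(k)\simeq\Phi_{E_1}(k)\times\Phi_{E_2}(k)=\Phi(\cK_1)\times\Phi(\cK_2)$. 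Here the Galois action on the geometric components of $Y_i$ matches that on the fiber of an elliptic curve of type $\cK_i$ over $k$, by the definition of the symbol $\cK_i$ over $k$.

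The main obstacle is the case $m=0$, together with getting the self-intersections right. In the $m=0$ case the two configurations share a component whose self-intersection in $\cC$ differs from its value in either $Y_i$, so the naive block decomposition of the intersection matrix fails. There I would argue directly with divisor classes: modulo principal divisors, every degree-zero class can be moved off the shared component, and the relation coming from the shared component splits into the two elliptic relations. Concretely, I would verify that the matrix of $\beta$ restricted to the complement of the shared vertex is block diagonal with blocks equal to the Kodaira matrices with $\Gamma_i$ removed. It is then known (e.g.\ \cite{LB}, \S 10.2) that the cokernel of each such block is $\Phi(\cK_i)$, as a Galois module.
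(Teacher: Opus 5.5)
The paper does not prove this statement. Just before it, the text says that Propositions~\ref{prop:tkk}, \ref{prop:tkkc} and \ref{prop:m-1} will be proved in a later work, so there is nothing to compare against.

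\textbf{Your argument is correct.} Since $\gcd(d_i)=1$, Raynaud's theorem over $\cO_K^{\mathrm{sh}}$ gives $\Phi_C(\bar k)=\ker\alpha/\Img\beta$ with the permutation action of $\Gal(\bar k/k)$, and $\Phi_C(k)$ is the group of invariants.

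The lemma that carries everything is the one you use for $m=0$. Suppose $d_{i_0}=1$. Then $\beta(e_{i_0})=-\sum_{j\ne i_0}d_j\beta(e_j)$. Hence the projection $\ker\alpha\to\Z^{I\setminus\{i_0\}}$ is an isomorphism taking $\Img\beta$ onto the column span of the principal submatrix $M_{I\setminus\{i_0\}}$. This isomorphism is Galois-equivariant when the vertex $i_0$ is Galois-fixed.
\begin{itemize}
\item For $m=0$: removing the shared component leaves two blocks. Each block is a Kodaira matrix with one Galois-fixed multiplicity-one vertex deleted, so its cokernel is $\Phi_{E_i}(\bar k)$ as a Galois module.
\item For $m\ge 1$: your two maps are well defined. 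One is extension by zero from $Y_i$; the other sends $e_j\mapsto d_je_{\Gamma_i}$ for $j\notin Y_i$. This uses only three facts: $d_{\Gamma_i}=1$; the self-intersection of $\Gamma_i$ in $\cC$ is one less than in the Kodaira fiber; and, writing the chain as $\Gamma_1=c_0,\dots,c_m=\Gamma_2$, $\sum_{i\in S}d_i\beta(e_i)=e_{c_{j+1}}-e_{c_j}$ when $S$ is the set of vertices on the side of the edge $c_jc_{j+1}$ containing $c_j$. The last identity gives surjectivity, and the composite identity gives injectivity.
\end{itemize}

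\textbf{Two small corrections.}
\begin{itemize}
\item The dual graph is not a tree in general. A $\tI_\nu$ piece contributes a cycle, and $\tIII$, $\tIV$ give non-transverse or triple intersections. What you actually use is only that the chain (or the shared vertex when $m=0$) disconnects the configuration.
\item For $m=0$, the vertex removed on the $Y_1$ side is the replaced component $\Theta\simeq\PP^1_k$. It need not be the identity component of an elliptic fiber of type $\cK_1$. This is harmless: $\ker\alpha/\Img\beta$ is intrinsic, and any Galois-fixed multiplicity-one component can serve as base point.
\end{itemize}

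Compared with the paper's other Tamagawa computations (Proposition~\ref{prop:tn_tc} and the $[\tI^*_{\nu-\ell-s}]$ case), which use \cite{BL}, Theorem~1.17 to work directly over $k$, taking invariants of the geometric group is the natural shortcut here. It works precisely because, in the split case, all the attaching vertices are $k$-rational.
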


If $\cC_k$ has non-split type $[\cK-\cK-m]$, $m\ge 0$,
then there is a unique quadratic point $p_0$ in the unique minimal
Weierstrass model  
of $C$ with $\delta(p_0)=3$, and $\cC_{k(p_0)}$ has split type $[\cK-\cK-m]$
over $k(p_0)$ (see Propositions~\ref{prop:non_split}, \ref{prop:K1K20}). 
 
\begin{proposition} \label{prop:tkkc} Keep the above hypothesis and
  notation. Let 
  $[\cK-\cK-m]$ be the (split) type of $\cC_{k(p_0)}$ over $k(p_0)$. Then
  $\Phi_{C}(k) \simeq \Phi({\cK})$. 
 \end{proposition}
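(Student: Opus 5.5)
We work over the unramified quadratic extension $K_0/K$ with residue field $k'=k(p_0)$; let $\tau$ generate $\Gal(k'/k)$. The formation of the minimal regular model and of the N\'eron model commutes with unramified base change (\cite{LB}, Proposition 9.3.28, and \cite{BLR}). Hence $\Phi_C$ is a finite \'etale group scheme over $k$ whose base change to $k'$ is $\Phi_{C_{K_0}}$, and $\Phi_C(k)=\Phi_{C_{K_0}}(k')^{\tau}$. By Proposition~\ref{prop:tkk} applied to $C_{K_0}$ (split type $[\cK-\cK-m]$ over $k'$), we have $\Phi_{C_{K_0}}(k')\simeq \Phi(\cK)\times\Phi(\cK)$. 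What remains is to identify the $\tau$-action on this product.

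Over $\bar k$, the group $\Phi_C(\bar k)$ is computed by Raynaud's theorem from the intersection matrix of $\cC_{\bar k}$. The plan is to use the description in the proof of Proposition~\ref{prop:tkk}, which I would redo: in type $[\cK_1-\cK_2-m]$ the chain of $\PP^1$'s and the principal components have multiplicity $1$ and link the two blocks. This gives a decomposition $\Phi_C(\bar k)\simeq \Phi_{Y_1}(\bar k)\times\Phi_{Y_2}(\bar k)$, where $\Phi_{Y_i}$ is the component group attached to the block $Y_i$. This decomposition is functorial with respect to automorphisms of $\cC_{\bar k}$ that preserve the set of blocks, and it is compatible with the full Galois action of $\Gal(\bar k/k)$.

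By Proposition~\ref{prop:non_split}(2) (resp. Proposition~\ref{prop:K1K20}(2) when $m=0$), $\tau$ exchanges $Y_1$ and $Y_2$, and $Y_2\simeq\tau(Y_1)$. It follows that, as a $\Gal(\bar k/k)$-module, $\Phi_C(\bar k)$ is the induced module $\mathrm{Ind}_{\Gal(\bar k/k')}^{\Gal(\bar k/k)}\Phi_{Y_1}(\bar k)$; equivalently, $\Phi_C\simeq \Res_{k'/k}\Phi_{Y_1}$. Taking $k$-points gives $\Phi_C(k)\simeq\Phi_{Y_1}(k')=\Phi(\cK)$, where $\cK$ is read as a symbol over $k'$. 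Concretely, $\tau$-invariant pairs have the form $(a,\tau a)$ with $a$ fixed by $\Gal(\bar k/k')$.

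The main obstacle is to justify that the product decomposition of $\Phi_C(\bar k)$ is canonical enough to be Galois-equivariant, and that $\tau$ maps the first factor isomorphically onto the second rather than acting diagonally in a twisted way. I would handle this through Raynaud's description $\Phi=\ker(\mathbb Z^{I}\to\mathbb Z)/\mathrm{Im}(\text{intersection matrix})$, which is functorial in the Galois-set of components $I$. The components split as $I=I_1\sqcup I_{\mathrm{chain}}\sqcup I_2$, and $\tau$ exchanges $I_1$ and $I_2$ and reverses the chain. One then checks directly that the isomorphism with $\Phi_{Y_1}\times\Phi_{Y_2}$ is compatible with this Galois action. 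The multiplicity-one linking components kill the cross terms, and this is the same computation as in the split case.
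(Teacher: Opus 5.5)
The paper gives no proof of this proposition. It is announced, together with Propositions~\ref{prop:tkk} and \ref{prop:m-1}, as to be proved in later work, so there is nothing of the paper's to compare with and I can only assess your argument on its own terms. It is correct, and it gives the result in the sharper form $\Phi_C\simeq\Res_{k'/k}\Phi_{Y_1}$.

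A few remarks on what it uses and what it buys:
\begin{enumerate}
\item You do not need Proposition~\ref{prop:tkk}, which is also unproved in the paper: the Galois-equivariant splitting over $\bar k$ proves it as a by-product.
\item The equivariance of Raynaud's isomorphism $\Phi_C(\bar k)\simeq\ker\alpha/\Img\beta$ for perfect $k$ is in \cite{BL}. The identification $\Phi_{Y_1}(k')=\Phi(\cK)$ is the same computation for an elliptic curve over $K_0$ of type $\cK$. You should cite both rather than assert them.
\item The alternative is to apply \cite{BL}, Theorem~1.17, directly over $k$, as the paper does in Proposition~\ref{prop:tn_tc}. Your induced-module (Shapiro) argument avoids that theorem's correction term and any finiteness assumption on $k$, which matches the statement.
\item The price is that your argument needs the two blocks to be linked through components of multiplicity one. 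This is not the case in type $[\cK_1-\cK_2-(-1)]$.
\end{enumerate}

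Two steps should be tightened.

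First, write out the ``one checks directly'' step. It is cleanest with a symmetric formula. For $m\ge 1$, let $I_i$ be the components of $Y_i$ and discard the chain coordinates:
\[
x\longmapsto\bigl(x_{I_1}-\alpha(x_{I_1})e_{\Gamma_1},\ x_{I_2}-\alpha(x_{I_2})e_{\Gamma_2}\bigr).
\]
\begin{itemize}
\item This map sends $\Img\beta$ into $\Img\beta_{Y_1}\times\Img\beta_{Y_2}$. Here $\beta_{Y_i}$ is the intersection matrix of $Y_i$ with $\Gamma_i^2$ raised by $1$, i.e.\ the Kodaira matrix.
\item Its inverse is $(y_1,y_2)\mapsto y_1+y_2$. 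The reason is that for each link $vv'$ of the path $\Gamma_1-\cdots-\Gamma_2$ one has $e_v-e_{v'}=-\beta\bigl(\sum_{u\in A}r_ue_u\bigr)\in\Img\beta$, where $A$ is the side of $v$. This uses that all path components have multiplicity $1$.
\item The map commutes with every automorphism of the dual graph that preserves or exchanges $(I_1,\Gamma_1)$ and $(I_2,\Gamma_2)$. So no choice adapted to $\tau$ (middle node versus middle component) is needed.
\end{itemize}

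Second, your decomposition $I=I_1\sqcup I_{\mathrm{chain}}\sqcup I_2$ is wrong for $m=0$. In that case the two blocks share the component $\Gamma$, the strict transform of $W_k$, which is fixed by $\tau$, and there is no bridge. The same argument works with this cut vertex of multiplicity one:
\begin{itemize}
\item Write $x=x_\Gamma e_\Gamma+x_1'+x_2'$ with $x_i'$ supported on $I_i\setminus\{\Gamma\}$.
\item The map $x\mapsto(x_1'-\alpha(x_1')e_\Gamma,\ x_2'-\alpha(x_2')e_\Gamma)$ is an isomorphism onto $\Phi_{Y_1}\times\Phi_{Y_2}$, with $\Gamma^2$ replaced in each block by its Kodaira value.
\item Its inverse is $(y_1,y_2)\mapsto y_1+y_2$, and it is equivariant for the exchange of the two arms.
\end{itemize}
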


\begin{proposition} \label{prop:m-1} Suppose $k$ finite.
  \begin{enumerate}[\rm (1)] 
\item If $\cC_k$ has split type   $[\cK_1-\cK_2-(-1)]$, then 
\[ |\Phi_C(k)|=|\Phi({\cK_1})||\Phi({\cK_2})|.\]
\item If $\cC_{k}$  has non-split type
  $\ne  [{\tI}^*_0-{\tI}^*_0-(-1)], [{\tI}^*_{\nu-\ell-s}]$ over $\bar{k}$, 
  then $\cC_{k(p_0)}$ has split type
  $[\cK-\cK-(-1)]$  (Proposition~\ref{prop:e1d3}(4)) and 
  $|\Phi_C(k)=|\Phi(\cK)|$.
  \end{enumerate} 
\end{proposition}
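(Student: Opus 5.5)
The plan is to compute $\Phi_C$ over $\bar{k}$ from the combinatorics of $\cC_{\bar{k}}$, and then recover $\Phi_C(k)$ from the action of $\mathrm{Frob}$. Since $g=2$, the gcd of the multiplicities of $\cC_k$ is $1$ (Proposition~\ref{prop:volume_can}). Raynaud's theorem therefore identifies $\Phi_C(\bar{k})$, $\Gal(\bar{k}/k)$-equivariantly, with the component group of the arithmetical graph of $\cC_{\bar{k}}$: the quotient of the degree-zero part of $\Z^{\mathrm{comp}}$ (degree weighted by multiplicities) by the image of the intersection matrix. I take the configuration of type $[\cK_1-\cK_2-(-1)]$ as given by Proposition~\ref{prop:e1d3}(1)--(3), and its field of definition as given by Proposition~\ref{prop:e1d3}(4).

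\textbf{Step 1: the order over $\bar{k}$.} The dual graph is a tree in all cases considered. For a tree, Lorenzini's formula gives
\[ |\Phi| = \prod_v r_v^{\deg(v)-2}, \]
where $r_v$ is the multiplicity of the vertex $v$. Compare this product for $Y_1$, $Y_2$ and for the glued curve.
\begin{enumerate}[\rm (a)]
\item Every vertex other than $\Theta_i$ and $\Gamma_i$ keeps its multiplicity and its degree.
\item The removed vertices $\Theta_i$ have $r=1$, so they contribute $1$.
\item If $\Gamma_i$ has degree $d_i$ in $Y_i$, the glued vertex $\Gamma$ has multiplicity $2$ and degree $(d_1-1)+(d_2-1)$. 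Hence
\[ 2^{d_1+d_2-4}=2^{d_1-2}\cdot 2^{d_2-2}. \]
\end{enumerate}
This gives $|\Phi_C(\bar{k})|=|\Phi_{\cK_1}(\bar{k})||\Phi_{\cK_2}(\bar{k})|$.

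\textbf{Step 2: an equivariant splitting.} Order is not enough over a finite $k$, so I would upgrade Step 1 to a $\Gal(\bar{k}/k)$-equivariant isomorphism
\[ \Phi_C(\bar{k}) \simeq \Phi_{\cK_1}(\bar{k})\oplus\Phi_{\cK_2}(\bar{k}). \]
Removing $\Gamma$ splits the graph into the two halves $Y_i\setminus(\Theta_i\cup\Gamma_i)$, and the $\Gamma$-row of the intersection matrix couples them. The idea is to match the elementary relations on each half with those of $Y_i$, where $\Theta_i$ plays the role that the other half plays in the glued curve. Concretely, a divisor on half $i$ extended by a suitable multiple of $\Gamma$ should behave exactly like the corresponding divisor on $Y_i$ extended by a multiple of $\Theta_i$. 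This produces a natural map $\Phi_{\cK_1}\oplus\Phi_{\cK_2}\to\Phi_C$. The map is Galois-equivariant because Galois respects the decomposition into halves, up to permuting them, and it is an isomorphism once injectivity or surjectivity is checked and Step 1 is used. This is the main obstacle. I would first try it on the explicit star-shaped Kodaira graphs $\tI_\nu^*$, $\tII^*$, $\tIII^*$ and $\tIV^*$, where $\Gamma_i$ is the multiplicity-$2$ neighbour of a multiplicity-$1$ end.

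\textbf{Step 3: the two cases over $k$.} In the split case (1), Frobenius preserves each half, so Step 2 gives $\Phi_C(k)=\Phi_{\cK_1}(\bar{k})^{\mathrm{Frob}}\oplus\Phi_{\cK_2}(\bar{k})^{\mathrm{Frob}}$. Taking orders gives the formula in (1).

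In the non-split case (2), Proposition~\ref{prop:e1d3}(4) provides a quadratic point $p_0$, and $\mathrm{Frob}$ swaps the two halves. Thus
\[ \Phi_C(\bar{k})\simeq M\oplus \mathrm{Frob}(M), \qquad M=\Phi_{\cK}(\bar{k}), \]
where $\Phi_{\cK}$ is computed over $k(p_0)$. The invariants of such an induced module are $(m,\mathrm{Frob}\,m)$ with $m\in M^{\mathrm{Frob}^2}$, so $|\Phi_C(k)|=|M^{\mathrm{Frob}^2}|=|\Phi(\cK)|$, with $\Phi(\cK)$ read over $k(p_0)$.

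The excluded types are exactly the ones where this fails. For $[\tI^*_0-\tI^*_0-(-1)]$ the glued component $\Gamma$ meets four or more ends that Frobenius may permute without a clean two-half structure. For $[\tI^*_{\nu-\ell-s}]$ there are three legs rather than two halves. In both, the induced-module argument breaks down, and I would simply note that.
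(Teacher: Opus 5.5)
The paper does not prove this proposition (it is deferred to a later work), so I judge your plan on its own. Step~1 is correct: Lorenzini's formula applies to these arithmetical trees, and your degree count at the glued vertex $\Gamma$ is right. The problems are in Steps~2 and~3.

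\textbf{Step~2 is not carried out, and the map you sketch is not well defined.} Step~2 carries the whole content of the proposition. Write $V_i'$ for the vertices of $Y_i$ other than $\Theta_i,\Gamma_i$, and $\mathbf{1}_v$ for unit vectors. A class on $Y_i$ with zero $\Gamma_i$-coefficient has $\Theta_i$-coefficient $t_i=-\sum_{v\in V_i'}r_vD_v$. To obtain a degree-zero vector on $\cC_{\bar k}$ you need a $\Gamma$-coefficient $c$ with $2c=t_1+t_2$, and the parity of $t_i$ is not a class invariant. So your map $\Phi_{\cK_1}\oplus\Phi_{\cK_2}\to\Phi_C$ does not exist as described.

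The construction works in the opposite direction:
\begin{enumerate}[\rm (a)]
\item Adding multiples of $-2\mathbf{1}_{\Theta_i}+\mathbf{1}_{\Gamma_i}$ kills the $\Gamma_i$-coefficient. This gives $\Phi(Y_i)\simeq\Z^{V_i'}/R_i$ with $R_i=M_i\Z^{V_i'}+\Z n_i$, where $M_i$ is the intersection matrix on $V_i'$ and $n_i$ is the indicator vector of the neighbours of $\Gamma_i$.
\item Using a neighbour of $\Gamma$, every class of $\Phi_C(\bar k)$ has a representative with zero $\Gamma$-coefficient. Relations between such representatives have the form $(M_1a+bn_1,\,M_2a'+bn_2)$.
\item Restriction therefore gives a well-defined map $\rho:\Phi_C(\bar k)\to\Phi(Y_1)\oplus\Phi(Y_2)$.
\item $\rho$ is surjective, because $R_1$ contains elements of degree $3$ (namely $n_1$) and of degree $-2$ (namely $M_1\mathbf{1}_w$ with $w$ adjacent to $\Gamma_1$), so the total degree can always be adjusted to $0$.
\item Step~1 then makes $\rho$ bijective.
\end{enumerate}
Since ``split'' supplies a Frobenius-stable partition $V_1'\sqcup V_2'$, this proves (1).

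\textbf{For (2), the halves are not canonical, and Frobenius need not exchange them.} The map $\rho$ depends on the partition. The partition is canonical only when $\Gamma_i$ has a single neighbour in $V_i'$, i.e.\ for halves of type $\tII^*,\tIII^*,\tIV^*$. For $\tI^*_\nu$-halves, the multiplicity-one ends meeting $\Gamma$ belong to no particular half. Your claim that Frobenius respects the halves up to permutation then fails, and so does the conclusion.

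Take Case~(2.b) of Proposition~\ref{prop:e1d3} with:
\begin{enumerate}[\rm (i)]
\item two rational points of $\delta=\lambda=2$ with split resolutions, giving multiplicity-$2$ components $C,C'$ on $\Gamma$ that carry ends $a_1,a_2$ and $b_1,b_2$;
\item one quadratic point of $\delta=1$, giving ends $e_1,e_2$ on $\Gamma$ that are exchanged by Frobenius.
\end{enumerate}
Here $\Gamma^2=-3$ and all other self-intersections are $-2$. A direct computation gives $\Phi_C(\bar k)\simeq\Z/4\oplus\Z/4$, generated by $P=[\mathbf{1}_{e_1}-\mathbf{1}_{a_1}]$ and $Q=[\mathbf{1}_{b_1}-\mathbf{1}_{a_1}]$, with $[\mathbf{1}_{e_1}-\mathbf{1}_{e_2}]=2Q$. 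Frobenius fixes $Q$ and sends $P\mapsto P-2Q$, so $|\Phi_C(k)|=8$. On the other hand, $\cC_{k(p_0)}$ is split of type $[\tI^*_1-\tI^*_1-(-1)]$ and $|\Phi(\tI^*_1)|=4$.

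So no argument can establish (2) in that generality. The non-split case has to be restricted to Frobenius exchanging the two connected components of the dual graph minus $\Gamma$, i.e.\ a conjugate pair of points with $\delta=3$. This is what remains if the excluded $[\tI^*_{\nu-\ell-s}]$ is read with $s\ge 0$. In that case your induced-module computation is correct.

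A quadratic $\delta=2$ point accompanied by two rational $\delta=1$ points causes a similar problem. There Frobenius exchanges the chains but fixes $e_1,e_2$, so no partition is exchanged and your argument does not apply; that case needs a separate computation.
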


Suppose that $\cC_{\bar{k}}$
has type  $[{\tI}^*_0-{\tI}^*_0-(-1)]$. Then
$\Phi_C(\bar{k})\simeq (\Z/2\Z)^4$. The group $\Phi_C(k)$ 
is determined by its order. 
By Proposition~\ref{prop:e1d3}(1), $\delta(p)\le 1$ for all $p\in W_k$. Such
a singularity is solved by adding a $\PP^1_{k(p)}$ of multiplicity $1$
(Lemma~\ref{lem:l2d1}), of self-intersection $-2[k(p):k]$. 
Let $T_C$ be the tuple of the $[k(p): k]$, $\delta(p)=1$,
in decrea\-sing order.  

\begin{proposition}\label{prop:tn_tc} Suppose $k$ finite and $\cC_k$ has type
  $[{\tI}^*_0-{\tI}^*_0-(-1)]$ over $\bar{k}$. Let $T_C$ be defined as above.
  Then $\Phi_C(k)$ is given in the following tables 
  \begin{enumerate}[\rm (1)]
  \item
{\renewcommand{\arraystretch}{1.5}
\[
\begin{array}{ |c|c|c|c|c|c|c| } 
 \hline
 T_C  & (3,3) & (3,2,1) &  (3,1,1,1) & (2,2,1,1) & (2,1,1,1,1) & (1,1,1,1,1,1) \\
\hline 
 \Phi_C(k) & (0) & \Z/2\Z & (\Z/2\Z)^2 & (\Z/2\Z)^2 & (\Z/2\Z)^3& (\Z/2\Z)^4\\ 
 \hline
\end{array}
\]
 }
\item 
{\renewcommand{\arraystretch}{1.5}
\[
\begin{array}{ |c|c|c|c|c|c| } 
 \hline
 T_C  & (6) & (5,1) & (4,2) & (4,1,1) & (2,2,2) \\
\hline 
 \Phi_C(k) & (0) & (0) & \Z/2\Z & \Z/2\Z & (\Z/2\Z)^2 \\ 
 \hline
\end{array}
\]
 }
  \end{enumerate}
\end{proposition}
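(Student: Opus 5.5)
The group $\Phi_C(k)$ will be computed from the intersection matrix of $\cC_{\bar{k}}$ with its Frobenius action, using Raynaud's description of the component group. Since $\Phi_C(\bar{k})\simeq(\Z/2\Z)^4$ is killed by $2$, $\Phi_C(k)$ is the Frobenius-fixed subspace of an $\FF_2$-vector space of dimension $4$, and it suffices to find its dimension.

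\emph{Step 1: structure of $\cC_k$.} By Proposition~\ref{prop:e1d3}(1) and Lemma~\ref{lem:l2d1}, $\cC$ is obtained from $W$ by blowing up each singular point $p$ (those with $\delta(p)=1$). Each such blow-up adds a component $E_p\simeq\PP^1_{k(p)}$ of multiplicity $1$. The strict transform $\Gamma$ of $(W_k)_{\mathrm{red}}\simeq\PP^1_k$ has multiplicity $2$. By Lemma~\ref{lem:d0}(3), $\sum_p\delta(p)[k(p):k]=6$, so over $\bar{k}$ there are six components $E_1,\dots,E_6$ of multiplicity $1$, each meeting $\Gamma$ transversely. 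Frobenius permutes $E_1,\dots,E_6$ with cycle type exactly $T_C$, since each closed point $p$ contributes one orbit of length $[k(p):k]$.

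\emph{Step 2: identify $\Phi_C(\bar{k})$ as a Galois module.} Raynaud gives $\Phi_C(\bar{k})=\ker(\deg)/\mathrm{Im}(\alpha)$, where $\alpha$ is the intersection map on the free module generated by the components. I will make the standard reduction for a star-shaped fiber with central component of multiplicity $2$: $\Phi$ is the quotient of the degree-zero part of $(\Z/2\Z)^{6}$ (indexed by the $E_i$) by the diagonal vector $(1,\dots,1)$. That is,
\[ \Phi_C(\bar{k})\simeq \{v\in\FF_2^6 : \textstyle\sum v_i=0\}/\langle(1,\ldots,1)\rangle. \]
This has dimension $4$, which is consistent with the known value. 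The isomorphism is Frobenius-equivariant, with Frobenius acting by permuting coordinates. I expect verifying this identification, and checking it is functorial for the Galois action, to be the main obstacle. I will do it by writing the intersection matrix explicitly: $E_i^2=-2$, $E_i\cdot\Gamma=1$, and $\Gamma^2=-3$, which follows from $\cC_k\cdot\Gamma=0$. Then I solve $\alpha$ modulo its image.

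\emph{Step 3: compute the fixed points.} Let $M=\FF_2^6$ with the permutation action, $M_0$ the sum-zero hyperplane, $D=\langle(1,\dots,1)\rangle$ and $\Phi=M_0/D$. Write $c$ for the number of orbits and $h=H^1(\langle F\rangle,D)$. Taking fixed points gives $0\to D\to M_0^F\to\Phi^F\to h\to H^1(M_0)$. Here $\dim M^F=c$. Moreover, $M_0^F$ has dimension $c-1$ if some orbit has odd length, and dimension $c$ if all orbits have even length. The connecting term is governed by whether $(1,\ldots,1)\in(F-1)M_0$, which I will test directly.

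Running this case by case on the tables should give the following values of $\dim\Phi^F$. For $(1^6)$ it is $4$, for $(2,1^4)$ it is $3$, and for $(3,1^3)$ and $(2,2,1,1)$ it is $2$. For $(3,2,1)$ it is $1$ and for $(3,3)$ it is $0$. For $(6)$ and $(5,1)$ it is $0$, for $(4,2)$ and $(4,1,1)$ it is $1$, and for $(2,2,2)$ it is $2$. The even-length cases $(6)$, $(4,2)$ and $(2,2,2)$ are where the extra contribution from $H^1$ and the failure of $M_0\to M_0^F$ appear, so these need the careful computation.

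Finally, since $k$ is finite, $H^1(k,\Phi^0)$-type obstructions vanish by Lang's theorem. Hence $\Phi_C(k)$ is the group of Frobenius invariants of $\Phi_C(\bar{k})$, which completes the argument.
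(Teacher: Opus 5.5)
Your proof is correct, and it takes a different route from the paper's.

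\textbf{How the paper argues.} The paper splits the tuples into two groups.
\begin{itemize}
\item For the tuples in table (1), it identifies $\cC_k$ as a split type $[\cK_1-\cK_2-(-1)]$ with $\cK_i\in\{\tI^*_0,\tI^*_{0,2},\tI^*_{0,3}\}$; for example, $(2,2,1,1)$ corresponds to $[\tI^*_{0,2}-\tI^*_{0,2}-(-1)]$. It then reads off $\Phi_C(k)$ from the product formula of Proposition~\ref{prop:m-1}(1), whose proof is deferred to later work.
\item For the non-splittable tuples in table (2), it invokes Bosch--Liu's Theorem 1.17 (the correcting term is trivial here) and Remark 1.13. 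These compute $\Phi_C(k)$ from the intersection data of the irreducible components of $\cC_k$ over $k$, i.e.\ of the Frobenius orbits.
\end{itemize}

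\textbf{How your route differs, and what each buys.} You compute the Galois module $\Phi_C(\bar k)\cong M_0/D$ once, from Raynaud's presentation, and then take invariants. This treats all eleven partitions of $6$ uniformly and is self-contained. In particular, it does not depend on the deferred Proposition~\ref{prop:m-1}. The paper's route, by contrast, uses general machinery that does not require $\Phi_C(\bar k)$ to be an elementary $2$-group with a permutation-module description.

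\textbf{Two points to tighten.}
\begin{itemize}
\item You leave the connecting map to be ``tested directly''; here is the parity fact that settles it. Suppose $(F-1)v=\mathbf{1}$. Then on each orbit of length $\ell_O$ the coordinates of $v$ alternate. This forces every $\ell_O$ to be even and gives $v$ weight $\frac12\sum_O\ell_O=3$, which is odd, so $v\notin M_0$. Hence $\mathbf{1}\notin(F-1)M_0$ in every case and $\Phi_C(k)\cong M_0^F/D$. Its dimension is $c-2$ if some orbit has odd length and $c-1$ otherwise, which reproduces both tables.
\item Your remark that an extra $H^1$ contribution appears for $(6)$, $(4,2)$, $(2,2,2)$ is therefore misleading. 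In those cases the only change is that $\dim M_0^F=c$ rather than $c-1$. The values you list are nonetheless all correct.
\item The identity $\Phi_C(k)=\Phi_C(\bar k)^{\Gal(\bar k/k)}$ holds simply because $\Phi_C$ is a finite étale $k$-group. Lang's theorem plays no role in that step; it is only relevant when comparing $\Phi_C(k)$ with the image of $\cA(\cO_K)$.
\end{itemize}
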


\begin{proof} (1) The curve $\cC_k$ has split type $[\cK_1-\cK_2-(-1)]$ with
  $\cK_i\in \{ \tI^*_0, \tI^*_{0,2}, \tI^*_{0, 3} \}$ (see notation in 
  \cite{LB}, \S 10.2.1.)  For example the tuple $(2,2,1,1)$
  corresponds to $[\tI_{0,2}^*-\tI_{0,2}^*-(-1)]$. 
  These types are handled by Proposition~\ref{prop:m-1}(1).

  (2) Use \cite{BL}, Theorem 1.17 (the correcting term $qd\Z/d'\Z=\{ 0\}$)
  and Remark 1.13. 
\end{proof}

Let us terminate with the types $[{\tI}^*_{\nu-\ell-s}]$, $\nu, \ell, s\ge 1$. 
  By Proposition~\ref{prop:e1d3}(2.c),
  $\sum_{p\in W_k, \delta(p)=2} 2[k(p):k]=6$. Let $T_C$ be the tuple of the
    degrees  $[k(p):k]$ in this sum. 

\begin{proposition} 
  If $k$ is finite and $\cC_{\bar{k}}$ has type
  $[{\tI}^*_{\nu-\ell-s}]$ with $\nu, \ell,s>0$ as above. Then
  $|\Phi_C(k)|$ is given by 
{\renewcommand{\arraystretch}{1.5}
\[
\begin{array}{ |c|c|c|c| } 
 \hline
 T_C        & (3) & (2,1) & (1,1,1) \\
\hline 
 |\Phi_C(k)| &  1   & 4   & 16\\ 
 \hline
\end{array}
\]
 }
\end{proposition}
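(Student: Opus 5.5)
We reduce to the split situation over $k(p)$ and then descend by Galois invariants, as in Proposition~\ref{prop:tn_tc}(2). First we identify the geometry. By Proposition~\ref{prop:e1d3}(2.c), $W_k$ is non-reduced and the points with $\delta(p)=2$ satisfy $\sum 2[k(p):k]=6$. Over $\bar k$ there are therefore three such points $p_1,p_2,p_3$. Each is resolved as in Proposition~\ref{prop:exT}(5.b), giving a chain of multiplicity-$2$ components of lengths $\nu,\ell,s$ attached to the strict transform $\Gamma$ of $(W_k)_{\mathrm{red}}\simeq\PP^1_k$, which has multiplicity $2$. The Frobenius of $k$ permutes the three branches exactly as it permutes the geometric points $p_1,p_2,p_3$. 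Hence the cycle type of this permutation is given by $T_C$: $(3)$ is a $3$-cycle, $(2,1)$ a transposition, and $(1,1,1)$ the identity.

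Next we compute $\Phi_C(\bar k)$ with its Frobenius action. We use Raynaud's description of $\Phi$ via the intersection matrix of $\cC_{\bar k}$: $\Phi$ is the torsion part of $\ker(\deg)/\mathrm{Im}$ of the multiplicity-weighted intersection map. The gcd of the multiplicities is $1$, coming from the multiplicity-one end components of the chains, so Raynaud's theorem applies. For type $[\tI^*_{\nu-\ell-s}]$, the known answer (Namikawa--Ueno type, or a direct computation) is $\Phi_C(\bar k)\simeq(\Z/2\Z)^4$, independent of $\nu,\ell,s$. This is consistent with $|\Phi|=16$ in the split case. We will verify it by noting that each branch contributes, like an $\tI^*_\nu$ fiber, a piece generated by the two multiplicity-$1$ end components modulo relations. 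Concretely we want a Frobenius-equivariant description $\Phi_C(\bar k)\simeq \ker\big((\Z/2\Z)^{\{p_1,p_2,p_3\}}\otimes V\to\cdots\big)$, where the Frobenius acts through the permutation of the $p_i$. The most natural guess is $\Phi\simeq (V_1\oplus V_2\oplus V_3)/\Delta$ or a similar quotient, with $V_i\simeq(\Z/2\Z)^2$ attached to branch $i$ and the sum of dimensions equal to $4$.

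We then take Frobenius invariants in each case. For the identity permutation, every end component is rational provided each branch is split over $k$; we must check that the resolution of a rational point with $\delta=2$ in Proposition~\ref{prop:exT}(5.b) has rational end components, or else handle the $\tI^*_{\nu,2}$ twist. If so, $\Phi_C(k)=\Phi_C(\bar k)$ has order $16$. For a transposition, the invariants of the swapped pair $V_1\oplus V_2$ form the diagonal, which with $V_3$ and the relations yields order $4$. For a $3$-cycle, the permutation module $(\Z/2\Z)^3$ has $1$-dimensional invariants, and after the quotient only the trivial group remains, giving order $1$. As in Proposition~\ref{prop:tn_tc}(2), one can alternatively invoke \cite{BL}, Theorem~1.17 to pass from the Galois module to $k$-points, since for finite $k$ one has $|\Phi(k)|=|\Phi(\bar k)^{\mathrm{Frob}}|$.

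The main obstacle is pinning down the Frobenius-module structure of $\Phi_C(\bar k)$ precisely enough to see the $\Z/2$-relation among the three branches. The lengths $\nu,\ell,s$ must drop out, and any twisting of the end components inside a branch must not change the count. I would first compute the discriminant group of the explicit lattice for small $\nu=\ell=s=1$, then argue that lengthening a multiplicity-$2$ chain does not alter the component group, as for $\tI^*_\nu$ versus $\tI^*_0$ at the level of $2$-torsion.
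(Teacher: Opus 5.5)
There is a genuine gap. You never determine the Frobenius module $\Phi_C(\bar k)$, which is the whole content of the statement, and the one structural claim you make about it is false.

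The order $16$ is correct. But $\Phi_C(\bar k)$ is not $(\Z/2\Z)^4$ independently of $\nu,\ell,s$, and lengthening a multiplicity-$2$ chain does change the group, just as $\tI^*_\nu$ alternates between $\Z/4\Z$ and $(\Z/2\Z)^2$. Take all three chains of length one. Over $\bar k$ the components are:
\begin{itemize}
\item the central $C$, of multiplicity $2$ with $C^2=-3$;
\item $A_1,A_2,A_3$, of multiplicity $2$;
\item the end components $B_i,B_i'$, of multiplicity $1$, meeting $A_i$.
\end{itemize}
Delete the multiplicity-one component $B_2$ and take the cokernel of the remaining intersection matrix. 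One finds $\Phi_C(\bar k)\simeq(\Z/4\Z)^2$, generated by $B_1,B_3$, with $B_1'=B_1+2B_3$, $B_3'=B_3+2B_1$ and $B_2'=2B_1+2B_3$. In general the group is $(\Z/2\Z)^4$ only when all three chain lengths are even. So your model $(V_1\oplus V_2\oplus V_3)/\Delta$ with $V_i\simeq(\Z/2\Z)^2$ is wrong in general. The counts $4$ and $1$ for a transposition and a $3$-cycle are asserted, not derived.

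Moreover, the twist inside a branch, which you hope to show is irrelevant, does change the count. Keep chains of length one and $T_C=(1,1,1)$, but let Frobenius exchange $B_1,B_1'$ and fix all other components. This occurs, for $q$ odd, with $W$ given by $y^2=-\pi\prod_{i=1}^{3}((x-\alpha_i)^2-\pi c_i)$, where the $\bar\alpha_i$ are distinct, $\bar c_1\notin k^{*2}$ and $c_2=c_3=1$. Since $B_2$ is then $k$-rational of multiplicity one, the presentation above is Frobenius-equivariant. Frobenius acts by $B_1\mapsto B_1+2B_3$ and $B_3\mapsto B_3$, and the invariants $\{aB_1+bB_3 : 2a=0\}$ have order $8$, not $16$. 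Similarly, for $T_C=(2,1)$ with the exchanged branches non-split over $k(p)$, one gets $2$ instead of $4$. So the table tacitly requires the end components of each branch to be defined over $k(p)$. Any proof must use this as an input rather than argue the twist away.

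The paper avoids the Galois module altogether. It applies \cite{BL}, Theorem~1.17, to write $\Phi_C(k)=\ker\alpha/\Img\beta$ directly in terms of the $k$-components of $\cC_k$. It then reads off the order from the entry $m^*_{11}$ of the adjoint matrix at $\Gamma_1$, the strict transform of $(W_k)_{\mathrm{red}}$ (\cite{BL}, Remark~1.16). If you keep your route, you must carry out the equivariant cokernel computation case by case in $T_C$ and in the fork structure, for instance via $|\Phi_C(k)|=|\Phi_C(\bar k)_{\mathrm{Frob}}|$.
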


\begin{proof} Let 
  $\Gamma_1\simeq \PP^1_k$ be the strict transform of $(W_k)_{\mathrm{red}}$
  in $\cC$.  With the notation of \cite{BL}, Theorem 1.17, we have 
  $\Phi_C(k)=\ker\alpha/\Img \beta$, and we apply {\it op. cit.,} Remark 1.16
  with the entry $m^*_{11}$ of the adjoint matrix $M^*$. 
\end{proof} 

\subsection{Conductor of  \texorpdfstring{$\mathrm{Jac}(C)$}{Jac(C)}}
\label{subsect:cond} 
We compute $f_C$, the conductor of $\Jac(C)$. Recall that the number
$N$ of geometric components of $\cC_k$ in the following
proposition can be computed using
extended Tate's algorithm (Proposition~\ref{prop:exT} and Remark~\ref{rmk:e1d3}).

\begin{proposition} \label{prop:cond} Let $g=2$.
Let $\nu(\Delta_C)$ be the minimal discriminant of $C$ and let 
$N$ be the number of irreducible components of $\cC_{\bar{k}}$. 
   \begin{enumerate}[\rm (1)] 
   \item Suppose that $\cC_{\bar{k}}$ is of type
   $[\cK_1-\cK_2-m]$, $m\ge 0$. Then 
 \[
\nu(\Delta_C)=f_C+N-1+ 11m.  
 \]
\item Suppose that $C$ satisfies the conditions of Proposition~\ref{prop:e1d3}.
  Then 
 \[
\nu(\Delta_C)=f_C+N-1.  
 \]
\end{enumerate}
\end{proposition}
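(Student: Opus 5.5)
The natural tool is Saito's formula relating the minimal discriminant to the Artin conductor. For genus 2, Saito proved $\nu(\Delta_{\min}) = -\mathrm{Art}(\cC/\cO_K)$, where $\Delta_{\min}$ is the minimal discriminant in the sense of Saito (a modified discriminant of the canonical differential form) and $\mathrm{Art}(\cC) = \chi(\cC_{\bar k}) - \chi(C_{\bar K}) - \delta$ (Swan). Since $\cC$ is regular, $-\mathrm{Art}(\cC) = f_C + N - 1$ by the standard computation: Euler characteristic of the special fiber minus generic, with the tame/wild parts assembling into the conductor of $H^1$.

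The plan has three steps.

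\textbf{Step 1.} Compare Saito's discriminant with our $\nu(\Delta_C)$. Saito's invariant uses the N\'eron-model volume form, namely the section $\Delta^2\eta^{\otimes 20}$ of $(\det H^0(C,\Omega^1))^{\otimes 20}$. Its valuation is taken relative to $\det\omega_{\cA/\cO_K}$, which via Proposition~\ref{prop:volume_can} equals $\det H^0(\cC^{\can},\omega)$. Then Equation~\eqref{eq:vol_w} and Proposition~\ref{prop:volume} give
\[
\nu(\Delta_C) = \nu_{\mathrm{Saito}} + 10\,e,
\]
where $\pi^{e}$ is the index of $\det\omega_{\cA}$ in $\det H^0(W,\omega_W)$. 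More precisely, the discriminant of a $W$-equation for Saito's basis of $\det\omega_{\cA}$ changes by $\pi^{-10e}$ up to sign convention. This is the step I expect to be the main obstacle: one must check the normalization, since Saito's discriminant for genus 2 is $2^{-12}\disc$ (matching Liu's normalization), and the factor $20/2=10$.

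\textbf{Step 2.} Identify $e$ in each case.
\begin{itemize}
\item In case (1) with $m\ge1$, Theorem~\ref{K1-K2} gives $m=n/2=m_C-1$. Proposition~\ref{prop:volume}(1) then gives $e=m_C-1=m$, so $\nu(\Delta_C)=f_C+N-1+10m$.
\item For $m=0$ (Proposition~\ref{prop:K1K20}) and for case (2) (Proposition~\ref{prop:e1d3}(5)), $\cC^{\can}=W$, so $e=0$ and the formula of (2) follows directly.
\item Non-split types reduce to split ones by an unramified base change (Propositions~\ref{prop:non_split}, \ref{prop:volume}(3)). Every quantity involved ($f_C$, $N$ over $\bar k$, $\nu(\Delta_C)$ by Proposition~\ref{prop:mini}(3.b) up to the $\varepsilon$ correction) is compatible with such a change, so we may assume $k$ algebraically closed from the start.
\end{itemize}

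\textbf{Step 3.} Reconcile the coefficient. Step 2 yields coefficient $10$, while the statement says $11m$, so I would recheck the bookkeeping. Possibly Saito's formula must be applied to $\mathrm{Art}(\cC)$ rather than to the conductor-plus-components expression, or the $N-1$ term counts components of the model differently: the chain contributes $m-1$ extra components beyond $\cC^{\can}$. If the formula must be expressed with $N$ counting those chain components, an extra $m$ may appear from $\chi$ of the chain. I would settle the exact constant by testing on the explicit example of Theorem~\ref{thm:bound_n} with $g=2$, namely $y^2=(x^3+1)(x^3+t^{3n})$ over $\mathbb C[[t]]$. There $f_C=2$ (toric rank 2), $\nu(\Delta_C)=6n$, $m=n/2$ and $N=m+1$, which fixes the coefficient; I then adjust Step 1 accordingly.
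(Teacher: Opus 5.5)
Your Step 1 is where the argument breaks, and Steps 2--3 inherit the error. Saito's discriminant is not the Weierstrass section $\Delta\eta^{\otimes 10}$ measured against $(\det\omega_{\cA/\cO_K})^{\otimes 10}$. It is Deligne's discriminant, defined through Mumford's isomorphism $\det Rf_*\omega^{\otimes 2}\simeq(\det Rf_*\omega)^{\otimes 13}$. Heuristically, on $\overline{\mathcal M}_2$ that isomorphism degenerates along $\delta_0+\delta_1$, whereas the weight-$10$ form $\Delta\eta^{\otimes 10}$ has divisor $\delta_0+2\delta_1$ (Mumford's relation $10\lambda=\delta_0+2\delta_1$). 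So the identity $\nu(\Delta_C)=-\mathrm{Art}(\cC/\cO_K)+10e$ is false precisely in the case of interest.

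Your own example shows this concretely. For $y^2=(x^3+1)(x^3+t^{3n})$ we have $m=n/2$ and $N=m+1$, and $f_C=0$, not $2$: the stable reduction is two elliptic curves meeting at one point, so $\Jac(C)$ has good reduction (cf. the proof of Theorem~\ref{thm:g2-Euler}). Hence $-\mathrm{Art}(\cC/\cO_K)=f_C+N-1=m$. On the other hand, $\Delta\eta^{\otimes 10}$ measured against $\det\omega_{\cA/\cO_K}=\pi^{m}\det H^0(W_0,\omega_{W_0/\cO_K})$ has valuation $12m-10m=2m$. The missing $m$ cannot come from recounting components, since $N$ already includes the $m-1$ lines of the chain. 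Calibrating the constant on one example is not a proof either (and with the correct $f_C$ the example gives $12m=0+m+11m$).

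What you lack is a genuine comparison theorem between the minimal Weierstrass discriminant and Saito's discriminant in genus $2$. The paper imports it from \cite{Lcd}: Proposition 1 and Th\'eor\`eme 2 there give $\nu(\Delta_C)=f_C+N-1+11c(\cC)$ for an explicit invariant $c(\cC)$ of the minimal regular model. One then has $c(\cC)=m$ for type $[\cK_1-\cK_2-m]$ (\cite{Lcd}, Propositions 4 and 6), and $c(\cC)=0$ in case (2) because $\cC^{\can}/\qi=W/\qi$ is smooth (\cite{Lcd}, Proposition 4). In the cases at hand, your volume-form input (Proposition~\ref{prop:volume}) accounts for only $10m$ of the $11m$; the remaining $m$ is the extra term exhibited above.

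Finally, your reduction to $\bar k$ is not harmless for $\nu(\Delta_C)$. In the non-split case with $\varepsilon(W)=1$ (e.g. Example~\ref{ex:Vns}), the minimal discriminant drops by $10$ after the unramified extension (Proposition~\ref{prop:mini}(3.b)), while $f_C$, $N$ and $m$ do not change. So that reduction only yields the formula for the minimal discriminant over the strict henselization, and you must keep track of this $\varepsilon$-correction rather than discard it.
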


\begin{proof} Let $c(\cC)$ be defined by Equality (1) in \cite{Lcd}, page 51. Then
 combining Proposition 1 with Th\'eor\`eme 2 in {\it op. cit.}, we get 
   \[
\nu(\Delta_C)=f_C+N-1 + 11c(\cC).  
 \]

In Case (1), we have 
 $c(\cC)=m$ by \cite{Lcd}, Proposition 4 if $m=0$ and Proposition 6 if $m\ge 1$. 
In Case (2), $\cC^{\can}/\qi=W/\qi$ is smooth, so $c(\cC)=0$ by {\it loc. cit.},
Proposition 4.       
\end{proof}

\subsection{An example with \texorpdfstring{$X_0(22)$}{X0(22)}}\label{exp:22} Consider the modular curve  $C:=X_0(22)$ defined by
  the equation
      \begin{equation}\label{eq:mod} 
      y^2=x^6 + 12x^5 + 56x^4 + 148x^3 + 224x^2 + 192x + 64.   
      \end{equation}
      The command  PARI/gp {\tt hyperellminimalmodel(P)}, where
      $P$ is the right-hand side polynomial in \eqref{eq:mod}, 
      tells us  that 
  \begin{equation} \label{eq:min}  
    y_1^2+y_1=16x_1^6+ 48x_1^5 +
    56x_1^4 + 37x_1^3 +  14x_1^2 + 3x_1, 
  \end{equation}
  with $y=16y_1+8$, $x=4x_1$,  defines a minimal Weierstrass model
  $W_0$ over $\Z$,  with discriminant  $\Delta=2^{12}11^4$.
  Thus the curve 
  $C$  has good reduction at all primes $\ne 2, 11$ and Equation~\eqref{eq:mod}
  is minimal at all primes $\ne 2$. Let us determine
  some local factors of $A$ at the primes of bad reduction. 
  \medskip

  $\bullet$  {\bf Euler factor et Tamagawa number at $p=11$}. We  can use Equation~\eqref{eq:mod}.
  As   
\[ P(x) \equiv (x+3)^2(x+5)^2(x+9)^2 \mod 11,\]   
 $(W_0)_{\FF_{11}}$ is union of $2$ copies 
of $\PP^1_{\FF_{11}}$ intersecting transversely at $3$ rational points.
Thus 
    \[ L_{11}(\Jac(C), s)=(1-11^{-s})^{-2}. \]
 As the irreducible components of $\cC_{\FF_{11}}$ 
 are geometrically integral (they are isomorphic to $\PP^1_{\FF_{11}}$),
 the group of connected components 
of $\cA_{\FF_{11}}$ is a constant group.     
    {\tt genus2red(P)} tells us that the Tamagawa number of $A$ at
       $11$ is  $c_{11}=5$.  
    \medskip

    $\bullet$  {\bf Euler factor and Tamagawa number at $p=2$.}
  The fiber  $(W_0)_{\FF_2}$ has a singular point $p_0$ at $\infty$. Its
    normalization $\Gamma_0$ at $p_0$ is the elliptic curve $E$ over $\FF_2$ defined by the
    equation 
\[
z^2+z=u^3+u 
\]
(reduce Equation~\eqref{eq:min} mod $2$). Put $z_1=y_1/x_1^3$ and $t_1=1/x_1$. Then an  equation of $W_0$
is 
\[
z_1^3+t_1^3z_1=16+48t_1+56t_1^2+37t_1^3+14t_1^4+3t_1^5
\]
with $t_1(p_0)=0$. Following algorithm~\ref{algo:mwm}, we find that
with $t_1=2^2t_2$, $z_1=2^3z_2+4$, we have  
\[
z_2^2+(8t_2^3+1)z_2=3t_2+14t_2^2+33t_2^3+14t_2^4+48t_2^5 
\] 
which defines a new minimal Weierstrass model $W_2$ and there is
no others. The above equation mod $2$
is that of the normalization of $(W_2)_{\FF_2}$ at $p_2^*$. We find again
the elliptic curve $E$. So there are exactly two minimal Weierstrass
models $W_0, W_2$ over $\Z_{2\Z}$ (and also over $\Z$). The
reduction $\cC_{\FF_2}$ is union of two copies of $E$ 
intersecting transversely at a rational point. 
So the Tamagawa number $c_2=1$, and the Euler factor is
\[ L_2(A, s)=P(E, 2^{-s})^{-2}, \quad \text{with \ } P(E, T)=1+2T+2T^2.\]

$\bullet$ {\bf The conductor of $A$} is $11^2$ because  $A$ has
good reduction away from $11$ and multiplicative reduction at $11$. 
\medskip

$\bullet$ {\bf A volume form for $\cA$}  is 
\[
\omega= 2\ \frac{dx}{y} \wedge \frac{xdx}{y}.  
\]
Indeed let $\omega_0=(dx/2y) \wedge (xdx/2y)$ 
and  $\omega_1=2(dx_1/(2y_1+1) \wedge (x_1dx_1/(2y_1+1))$. 
Then $\omega_0$ is a basis over $\Z[1/2]$ because $W_0$ is stable
away from $2$, and $\omega_1$ a basis over $\Z_{2\Z}$ by
Proposition~\ref{prop:volume}.  
    Therefore $\omega=2^{3}\omega_0=\omega_1$  is a 
    basis over $\Z$.
\end{section}


\begin{thebibliography}{12}

\bibitem{Art} Michael Artin: {\it Lipman's proof of resolution of singularities for surfaces}, in Arithmetic Geometry, ed. Cornell and Silverman (1986), Springer. 

\bibitem{BLR} Siegfried Bosch, Werner L\"utkebohmert and Michel Raynaud, 
{\it N\'eron models},  Ergebnisse der Math., {\bf 3}. Folge, Bd. 21,
1990, Springer-Verlag.

\bibitem{BL} Siegfried Bosch, Qing Liu:  {\it Rational points of the group
  of components of a Néron model}, Manuscripta Math., {\bf 98} (1999), 275--293. 
  
\bibitem{BW} Irene Bouw, Stefan Wewers: 
  {\it Computing L-functions and semistable reduction of superelliptic curves},
  Glasgow Math. J. {\bf 59} (2017), 77--108.
  
\bibitem{DM} Pierre Deligne, David Mumford: {\it The irreducibility of the
 space of curves of given genus}, 
Publ. Math. IHES, {\bf 36} (1969), 75--109. 


\bibitem{DDMM}   Tim Dokchitser, Vladimir Dokchitser, C\'eline Maistret,
  Adam Morgan:   {\it Arithmetic of hyperelliptic curves over local fields}, 
 Math. Ann. {\bf 385} (2023), 1213--1322. 

  
\bibitem{FM} Jean Fresnel, Michel Matignon: {\it  Sur les espaces analytiques
    quasi-compacts de dimension 1 sur un corps valu\'e complet ultram\'etrique}, 
  Ann. Mat. Pura Appl. {\bf 145} (1986), 159--210. 

\bibitem{EGA_3} Alexander Grothendieck (r\'edig\'e avec la collaboration de
  Jean Dieudonn\'e) : {\'El\'ements de G\'eom\'etrie
    Alg\'ebrique},  III. \'Etude cohomologique des faisceaux coh\'erents,
  Premi\`ere partie, Publ. Math. IH\'ES, {\bf 11} (1961), 5--167. 
  
  
\bibitem{Lip} Joseph Lipman: {\it Rational singularities with applications
    to algebraic surfaces and unique factorization}, 
Publ. Math. IHES, {\bf 36} (1969), 195--279. 

\bibitem{Lcd} Qing Liu: {\it   Conducteur et discriminant minimal de courbes
    de genre 2}, Compositio Math., {\bf 94} (1994), 51--79. 
  
\bibitem{LC} Qing Liu: {\it Mod\`eles minimaux
  des courbes  de genre deux}, 
    J. f\"ur die reine und angew. Math., {\bf 453} (1994), 137--164. 
    
\bibitem{LTR} Qing Liu: {\it Mod\`eles entiers de courbes hyperelliptiques sur un anneau de valuation discr\`ete}, Trans. Amer. Math. Soc., {\bf 348} (1996),
  4577--4610.

\bibitem{LLR} Qing Liu, Dino Lorenzini, Michel Raynaud : {\it N\'eron models,
    Lie algebras, and reduction of curves of genus one}, 
    Invent. Math., {\bf 157} (2004), 455--518. 
  
  \bibitem{LB} Qing Liu: {\it Algebraic geometry and arithmetic curves},
  GTM , Oxford University Press, new edition (2006)  

\bibitem{LRN} Qing Liu: {\it Computing minimal Weierstrass equations
    of hyperelliptic curves}, Res. Number Theory {\bf 9}, 76 (2023).
  {\url{https://doi.org/10.1007/s40993-023-00483-5}}. 
    
\bibitem{Ld} Qing Liu: {\it  Desingularization of double covers of  regular surfaces}, preprint (2025) {\url{https://arxiv.org/abs/2504.16808}} 

\bibitem{Dino} Dino Lorenzini: {\it On the group of components of a Neron model},
J. Reine Angew. Math. {\bf 445} (1993), 109--160. 
  
\bibitem{MS} C\'eline Maistret, Andrew V. Sutherland: 
  {\it Computing Euler factors of genus 2 curves at odd primes of
    almost good reduction}, Res. Number Theory (2025) 11:37. 

  
\bibitem{Mus} Simone Muselli, {\it Models and integral differentials of
    hyperelliptic curves},  Glasgow Mathematical Journal, {\bf 66}
  (2024), 382--439. 
  
 \bibitem{Mil-AAV} James S. Milne: {\it On the Arithmetic of Abelian Varieties},
  Invent. Math.,  {\bf  7} (1972), 177--190.
  
\bibitem{MuSt} J. Steffen M\"uller, Michael Stoll: {\it 
Canonical heights on genus two Jacobians}, 
  Algebra and Number Theory, {\bf 10} (2016), 2153--2234.
  
\bibitem{NU} Yukihiko Namikawa and Kenji Ueno: {\it The complete
  classification of fibers in pencils of curves of genus two}, Manuscr. 
Math. {\bf 9} (1973), 143--186.

\bibitem{OS} Andrew Obus, Padmavathi Srinivasan, {\it
  Conductor-discriminant inequality for hyperelliptic curves in odd residue characteristic}, Int. Math. Res. Notices, {\bf 2024},  no. 9  (2024) 7343--7359.

  
\bibitem{Ogg} Andrew P. Ogg : {\it On pencils of curves of genus two},
  Topology {\bf 5} (1966), 355--362. 
  
 
\bibitem{pari} The PARI Group,  PARI/GP,  Univ. Bordeaux, {\url{http://pari.math.u-bordeaux.fr/}}

\bibitem{Pr} Rachel Pries: {\it Construction of covers with formal and rigid geometry}, in Courbes semi-stables et groupe fondamental en g\'eom\'etrie
  alg\'ebrique (Luminy, 1998), Birkh\"auser, Basel, 2000, pp. 157–167.  
  
\bibitem{vdP} Marius van der Put: {\it The class group of a one-dimensional
    affinoid space}. Ann. Inst. Fourier {\bf 30} (1980), 155--164. 

\bibitem{Sa} Mohamed Sa\"idi: {\it Wild ramification and a vanishing cycles formula},
J. Algebra {\bf 273} (2004), 108--128.

\bibitem{Silv} Joseph Silverman:
  {Advanced topics in the arithmetic of elliptic curves},
  Grad. Texts in Math., {\bf 151}, Springer-Verlag, New York, 1994.
  525 pp.
  
\bibitem{Tate} John Tate,  {\it Algorithm for determining the type of
    singular fiber in an elliptic pencil}, in Modular Functions of One
  Variable. IV, pages 33--52, Lecture Notes in Math., {\bf 476},
  Springer-Verlag (1975).  

\bibitem{GW} Gayn Winters, {\it On the existence of certain families
  of curves}, Amer. J.  Math. {\bf 96} (1974), pp. 215--228. 
  
\end{thebibliography}
\end{document}